\documentclass[a4paper]{amsart}
\pdfoutput=1
\usepackage{xcolor}
\usepackage{booktabs}
\usepackage{hyperref}
\hypersetup{colorlinks=true,
	linkcolor=blue!50!black,
	citecolor=blue!50!black,
	urlcolor=blue!50!black,
	filecolor=blue!50!black,
	pdfborder={0 0 0},
	pdftitle={Real quadratic fields and finite quantum dilogarithms I},
	pdfauthor={Danylo Radchenko and Campbell Wheeler}
}

\usepackage[a4paper,margin=0.9in]{geometry}
\usepackage{amssymb,amsfonts,amsmath,amsthm}
\usepackage{latexsym,mathtools,stmaryrd}
\usepackage{tikz}

\newcommand*\smat[4]{\begin{smallmatrix}#1&#2\\#3&#4\end{smallmatrix}}
\newcommand*\pmat[4]{\begin{pmatrix}#1&#2\\#3&#4\end{pmatrix}}

\newcommand{\cycle}[1]{\left(\mkern-4mu\left(#1\right)\mkern-4mu\right)}

\newcommand{\Mod}[1]{\ (\text{mod}\ #1)}
\newcommand{\ol}[1]{\overline{#1}}

\newcommand*\ang[1]{\langle#1\rangle}

\newcommand{\sm}{\smallsetminus}
\newcommand{\ds}{\displaystyle}

\newcommand{\CC}{\mathbb{C}}
\newcommand{\HH}{\mathbb{H}}

\newcommand{\QQ}{\mathbb{Q}}
\newcommand{\RR}{\mathbb{R}}

\newcommand{\ZZ}{\mathbb{Z}}

\newcommand{\calC}{\mathcal{C}}
\newcommand{\calD}{\mathcal{D}}

\newcommand{\Li}{\mathrm{Li}}
\newcommand{\Gal}{\operatorname{Gal}}

\newcommand{\tr}{\operatorname{tr}}
\newcommand{\re}{\operatorname{Re}}

\newcommand{\eps}{\varepsilon}
\renewcommand{\phi}{\varphi}
\newcommand{\sgn}{\mathrm{sgn}}

\newcommand{\fC}{\mathfrak{C}}

\newcommand{\fa}{\mathfrak{a}}
\newcommand{\fb}{\mathfrak{b}}
\newcommand{\fc}{\mathfrak{c}}
\newcommand{\ff}{\mathfrak{f}}
\newcommand{\fp}{\mathfrak{p}}

\newcommand{\fr}{\mathfrak{r}}

\newcommand{\lC}{\mathcal{C}}

\newcommand{\lO}{\mathcal{O}}
\newcommand{\lS}{\mathcal{S}}

\newcommand{\SL}{\operatorname{SL}}

\newcommand{\Cl}{\operatorname{Cl}}
\newcommand{\Hom}{\operatorname{Hom}}

\newcommand{\bk}{\mathbf{k}}
\newcommand{\bK}{\mathbf{K}}
\newcommand{\bp}{\mathbf{p}}
\newcommand{\bq}{\mathbf{q}}

\newcommand{\bF}{\mathbf{F}}

\newcommand{\ee}{\mathbf{e}}

\newcommand{\sfb}{\mathsf{b}}
\newcommand{\tq}{\widetilde{q}}
\def\Res#1{\underset{#1}{\mathrm{Res}}}

\title{Real quadratic fields and finite quantum dilogarithms I}
\author[Radchenko, Wheeler]{Danylo Radchenko and Campbell Wheeler}
\date{\today}
\subjclass[2010]{}
\keywords{}
\address{Institut des Hautes \'Etudes Scientifiques, CNRS, Laboratoire Alexander Grothendieck,
	35 route de Chartres, Bures-sur-Yvette 91440, France}
\email{danradchenko@gmail.com}
\address{Coburg 3058, Australia}
\email{wheeler@ihes.fr}

\newtheorem{theorem}{Theorem}
\newtheorem{proposition}{Proposition}
\newtheorem{lemma}{Lemma}

\theoremstyle{definition}
\newtheorem{definition}{Definition}
\newtheorem{remark}{Remark}
\newtheorem{corollary}{Corollary}

\usepackage{enumitem}
\newlist{thmenum}{enumerate}{1}
\setlist[thmenum]{
	label=\upshape(\roman*),       
	ref=\thetheorem.(\roman*),     
	leftmargin=*,                 
	itemsep=1.5pt,
	topsep=2pt,
	partopsep=0pt,
	parsep=0pt
}

\begin{document}
    \begin{abstract}
        We prove that Stark--Shintani ray class invariants (Stark units) associated to real quadratic fields are algebraic numbers. These invariants are given by special values of Faddeev's modular quantum dilogarithm, introduced by Garoufalidis--Kashaev--Zagier.
        Our main discovery is that special values of the modular quantum dilogarithm satisfy an explicit overdetermined system of polynomial equations, matching a variation on the defining equations of Andersen--Kashaev's notion of a quantum dilogarithm on a product of two cyclic groups.
        We give two and a half proofs that this system of equations defines a zero-dimensional variety.
        The simplest follow from an uncertainty principle for finite Fourier transform and $2$-adic valuation bounds. The last proof is more involved and shows finite quanatum dilogarithms can be used to categorify fusion rings introduced by Izumi, and the algebraicity of the special values then follows by Ocneanu's rigidity theorem. As a byproduct, we obtain an explicit infinite family of irrational near-group fusion categories. As a further application, we prove a family of quadratic relations for Stark units recently conjectured by Appleby, Flammia, and Kopp motivated by Zauner's conjecture about SIC-POVMs (complex equiangular lines).
    \end{abstract}
	\maketitle
    \begin{center}
    \emph{To Don Zagier, on the occasion of his 75th birthday.}
    \end{center}
    \vspace{0.2cm}
    \tableofcontents

\section{Introduction and main results}
The Kronecker--Weber theorem and theory of complex multiplication are some of the most beautiful discoveries of 19th century mathematics, forming a major part of the as yet unrealised dream of explicit class field theory: to find a description of all abelian extensions of number fields via special values of transcendental functions.
At the most basic level, a consequence of this idea is that certain special transcendental functions should take algebraic values at countably many algebraic arguments.
Indeed, Euler's formula implies that the function $\ee(x)=\exp(2\pi i x)$ always takes algebraic values for $x\in\QQ$. For example, one finds that
    \[
    \ee\Big(\frac{1}{5}\Big)
    =\frac{\sqrt{5}-1}{4}+i\sqrt{\frac{5+\sqrt{5}}{8}}\,.
    \]
Even more beautifully, the modular $j$-function, defined on the upper half-plane~$\HH=\{\tau\in\CC\colon \Im\tau>0\}$ by 
    \[j(\tau) = \frac{(1+240\sum_{m,n\ge1}n^3q^{mn})^3}{q\prod_{n=1}^{\infty}(1-q^n)^{24}} = q^{-1} + 744 + 196884q + \cdots\,,\]
where $q=\ee(\tau)$, takes algebraic values for all $\tau\in\HH$ satisfying $a\tau^2+b\tau+c=0$ for some $a,b,c\in\QQ$. An example, due to Weber, states that
    \[
    j(\sqrt{-14}) = \bigg(646 + 456\sqrt{2} + (462 + 322\sqrt{2})\sqrt{2\sqrt{2} - 1}\bigg)^3\,.
    \]

Besides Shimura--Taniyama's theory of complex multiplication for higher-dimensional abelian varieties~\cite{ST61}, it has long been unclear whether such a theory could exist for arbitrary number fields, and it is the subject of Hilbert's 12th problem~\cite{Hil00}. The next simplest case is that of a real quadratic field, and it already has significant number-theoretic differences with the rational and imaginary quadratic cases. Indeed, Dirichlet's unit theorem states that the latter two are the only cases with a torsion unit group.

A proposal for an answer to Hilbert's 12th problem, for totally real fields, was introduced by Stark~\cite{Sta76,Sta80} and Shintani~\cite{Shi77,Shi78}, in terms of leading terms at $s=0$ of Artin $L$-functions. Rationality of these leading terms, up to a regulator, are Beilinson's conjectures for the special case of Artin motives. In the so-called abelian rank one case, leading terms of related partial $\zeta$-functions are predicted to be logarithms of units, and Stark's conjectures describe the Galois action on them explicitly (see~\cite[Ch.~I,~IV]{Tate84}). A $p$-adic analogue of these predictions, the Gross–Stark conjecture, was proved by Dasgupta--Kakde~\cite{DK24} (see also Darmon--Vonk~\cite{DV21}); it gives explicit generators of the maximal abelian extension of a totally real field, but via $p$-adic rather than complex analytic means.

In the case of real quadratic fields, unravelling Shintani's work (see for example~\cite{Yam10b}, ~\cite{Yal26},~\cite{Kop24}) shows that Stark units are explicitly related to special values of Faddeev's quantum dilogarithm (or the double sine function~\cite{KK03}, a close relative of Barnes's double gamma function~\cite{Bar04}). Shintani was able to prove parts of Stark's conjectures in some very special examples of real quadratic fields.
In this paper, we will give a complete treatment of the algebraicity part of Stark's conjectures for all real quadratic fields. For example, our work leads to provable identities such as
    \[
    \exp\Big(\int_{\RR+i0}
        \frac{e^{-2izw}}{4\sinh(\sfb w)\sinh(\sfb^{-1} w)}\frac{dw}{w}\Big) = \frac{(\sqrt{3}+i)\sqrt{1+\sqrt{21}-\sqrt{6+2\sqrt{21}}}}{4}\,, 
    \]
where $\sfb = \sqrt{\frac{5+\sqrt{21}}{2}}$ and $z=i\sqrt{\frac{1}{12}}$.

\bigskip

We will now describe our results in detail. Consider the meromorphic functions $\Phi_{\gamma,m,n}(z;\tau)$ (introduced in~\cite[Eq. 11]{GKZ}) defined for $\gamma\in\SL_2(\ZZ)$, $m,n\in\ZZ$, and $(z,\tau)\in\CC\times(\CC\sm\RR)$ by
    \begin{equation}\label{eq:phigam.def}
    \Phi_{\gamma,m,n}(z;\tau)=\frac{(q^m\ee(z);\ee(\tau))_{\infty}}{(\tilde q^n\ee(z/(c\tau+d));\ee(\gamma\tau))_{\infty}}\,,\qquad \tilde q = \ee\Big(\frac{a\tau+b}{c\tau+d}\Big)\,,
    \end{equation}
where $(x;q)_{\infty}$ is the $q$-Pochhammer symbol (see~\S\ref{sec:qpochhammer}). We call this function \emph{Faddeev's modular quantum dilogarithm}. It follows from~\cite{Fad95}, that this function has a meromorphic continuation to all $z$ and $\tau$ such that $c\tau+d\notin\RR_{\leq0}$.
The product expansion also makes the location of the zeros and poles on explicit subsets of $\ZZ+\tau\ZZ$ clear (one can see that generic $c\tau+d\in\RR_{\leq 0}$ would lead to an accumulation of poles and zeros as a function in~$z$). We are interested in certain special values of this function, which we describe using the following notation: fix a hyperbolic $\gamma=(\smat abcd)\in\SL_2(\ZZ)$ with $c>0$ and trace $N+2$ for $N>0$, and let~$\tau$ be a fixed point of~$\gamma$ (this implies that $\eps=c\tau+d$ satisfies $\eps^2-(N+2)\eps+1=0$); for $u\in\ZZ^2$ if $u\gamma \equiv u \Mod{N}$ then denote $F_{\gamma}^{\pm}(u) := \eps^{\pm1/2}$ and otherwise let
\begin{equation}\label{eq:fgam.def}
    F_{\gamma}^{\pm}(u) := 
	\Phi_{\gamma,u_1,0}\Big(\frac{u_1\tau+u_2}{\eps^{-1}-1};\tau\Big)\,\mu_{\gamma}\,,
\end{equation}
where $\mu_\gamma$ is the multiplier system of the Dedekind eta function~$\eta$, namely $\mu_\gamma=\frac{\eta(\gamma z)}{\eta(z)\sqrt{cz+d}}$ for any $z\in\HH$.

We remark that if the formula in Equation~\eqref{eq:phigam.def} made sense when $\tau\in\RR$, then one would expect that $F_{\gamma}^{\pm}(u)\mu_{\gamma}^{-1}=1$, since the associated arguments satisfy $q^m\ee(z)=\tq^n\ee(z/\eps)$ and $q=\tq$. However, these values are not unity and are in fact related to Stark units.
Indeed, for a ray class $\fc\in \Cl_{\ff\infty_2}(K)$ of a real quadratic field $K$, the Stark unit\footnote{We also sometimes call it the Stark--Shintani ray class invariant. In this paper by a Stark unit we mean the real number~$\epsilon_{\fc}$, and not the conjectural unit in the corresponding ray class field.} can be defined as
    \[\epsilon_{\fc} = \exp(\zeta'(0,\fc)-\zeta'(0,\fr\fc))\,,\]
where $\fr$ is the class of $(t)$ for any $t\equiv 1\Mod{\ff}$ with $\sigma_2(t)<0$. Then there is a formula relating this number to the above special values: 
    \[\epsilon_{\fc} = |\Phi_{\gamma^{r},m,0}(z_0;\tau_0)|^{-t} 
    = |F^{+}_{\gamma^r}(u)|^{-t}\,,\]
where $t$ is either $1$ or $2$ (for details on what all the quantities above are, see~\S\ref{sec:starkshintani}). The algebraicity part of Stark's conjecture is then subsumed in the following:
\begin{theorem} \label{thm:faddeevqbar}
The numbers $F_{\gamma}^{\pm}(u)$ are algebraic, i.e., for all $u\in\ZZ^2$,
    \[
    F_{\gamma}^{\pm}(u)
    \in\overline{\QQ}\,.
    \]
\end{theorem}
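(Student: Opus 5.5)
The plan follows the strategy announced in the abstract. First, show that the finite collection of special values $\{F_\gamma^{\pm}(u)\}_{u\in(\ZZ/N)^2}$ satisfies an explicit overdetermined system of polynomial equations with coefficients in $\QQ(\ee(1/N))$ (or a slightly larger cyclotomic field). Second, show that this system cuts out a zero-dimensional variety. The conclusion then follows formally: every point of a zero-dimensional variety defined over a number field has algebraic coordinates.

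\emph{Reduction to $(\ZZ/N)^2$.} I would start from the functional equations of the $q$-Pochhammer symbol, $(x;q)_\infty=(1-x)(qx;q)_\infty$, applied to numerator and denominator of $\Phi_{\gamma,m,n}$. Together with the fixed-point relation $\gamma\tau=\tau$ and $\eps=c\tau+d$, these should show that shifting $u$ by $N\ZZ^2$ (or by $u\mapsto u\gamma$) changes $F^{\pm}_\gamma(u)$ only by an explicit algebraic factor. Indeed $u\gamma-u\in N\ZZ^2$ up to the action of $\gamma-1$, whose determinant is $-N$. So $F_\gamma^\pm$ is essentially a function on the finite group $(\ZZ/N)^2$, modulo $\gamma$-orbits.

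\emph{The polynomial system.} Next I would derive the quasi-periodicity relations in $z\mapsto z+1$ and $z\mapsto z+\tau$, which in the modular double are the two Weyl-type difference equations. Specialised at the points $z=(u_1\tau+u_2)/(\eps^{-1}-1)$, these should give the relations
\[
F(u+e_1)=(1-\zeta^{\ast})F(u), \qquad F^{\pm}(u)F^{\mp}(-u)=\text{(explicit root of unity)},
\]
together with a Fourier-type relation. This last relation is the finite analogue of the self-duality $\Phi_{\sfb}$ being fixed by Fourier transform, and it comes from the integral representation of Faddeev's dilogarithm evaluated by the residue theorem at the fixed point. The resulting equations match Andersen--Kashaev's definition of a quantum dilogarithm on $\ZZ/N\times\ZZ/N$, twisted by $\mu_\gamma$ and a quadratic phase.

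\emph{Zero-dimensionality.} This is the main obstacle, since a priori the system could have positive-dimensional solution families containing the transcendental point. My first attempt would treat a tangent vector to the solution variety at our point. Linearising the multiplicative difference equations forces the logarithmic derivative $\delta$ to be invariant under a translation, while the Fourier relation forces $\hat\delta$ to have the same invariance. The Donoho--Stark uncertainty principle $|\mathrm{supp}\,\delta|\cdot|\mathrm{supp}\,\hat\delta|\ge N^2$ should then force $\delta=0$. Because this gives only local rigidity at smooth points, I would instead aim to bound all solutions globally. The plan for that is to reduce the system modulo $2$, or use $2$-adic valuations of the factors $1-\zeta$, and show the system has no nonconstant families over a $2$-adic completion. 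If this fails, the fallback is to build from the solution a fusion category (near-group type, following Izumi) whose $F$-symbols are the values $F(u)$. Ocneanu rigidity then implies the category is defined over $\overline{\QQ}$, so after a gauge normalisation the values $F_\gamma^\pm(u)$ are algebraic.
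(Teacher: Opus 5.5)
\textbf{There is a genuine gap in your ``polynomial system'' step, and every zero-dimensionality argument after it depends on that step.} Your overall architecture matches the paper: an overdetermined system, then zero-dimensionality, then descent from a variety over a cyclotomic field. Your reflection and Fourier relations are also correct. But the system you write down is wrong in one place and incomplete in the essential place.

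\emph{There is no relation $F(u+e_1)=(1-\zeta^{\ast})F(u)$.} Translating $u$ by a vector of $\Lambda=\ZZ^2(\gamma-I)$ is exactly what moves $z=(u_1\tau+u_2)/(\eps^{-1}-1)$ by a lattice vector. For instance, $z\mapsto z+1$ is $u\mapsto u+(-c,a-1)$ and $z\mapsto z+\tau$ is $u\mapsto u+(d-1,-b)$. Under such shifts the value is exactly invariant by \eqref{eq:faddeevperiod} (Lemma~\ref{lem:lam.inv}). So the values live on $G=\ZZ^2/\Lambda$, which has order $N$, not on $(\ZZ/N\ZZ)^2$. A shift of $u$ outside $\Lambda$ moves $z$ off $\ZZ\tau+\ZZ$, and no difference equation relates the two values. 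Had such relations with root-of-unity factors existed, algebraicity would follow at once from $F(0)=\eps^{\pm1/2}$, and your ``main obstacle'' would not exist.

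\emph{You never obtain the five-term relation \eqref{eq:Fgpm.5term}.} This is the decisive omission. The paper derives it exactly like the Fourier relation: the finite sum becomes a contour integral minus its translate by $\eps-1$, the difference equations collapse this to one integral, and Theorem~\ref{thm:5term.mod.fad} evaluates it. Equivalently, $E=F^+_\gamma$ satisfies \eqref{eq:fivetermintro} with a \emph{nonzero} constant $C$. That nonzero defect is where the structure departs from Andersen--Kashaev, whose $C=0$ version has only trivial finite solutions.

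\emph{Reflection and Fourier alone only suffice for prime $N$.} The paper uses them alone only in that case, at the hyperplane at infinity. There, $Y_0=0$, $Y_uY_{-u}=0$ and $\widehat Y=\lambda\ang{\cdot}Y$ give $|\mathrm{supp}\,Y|=|\mathrm{supp}\,\widehat Y|\le(N-1)/2$. Tao's additive bound $|\mathrm{supp}\,f|+|\mathrm{supp}\,\widehat f|\ge p+1$ excludes this; the multiplicative Donoho--Stark bound would not. For composite $N$ this argument is unavailable.

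\textbf{Consequently, none of your three routes to zero-dimensionality has a working mechanism.}

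\emph{Linearisation.} It rests on the nonexistent difference equation.

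\emph{The $2$-adic route.} It is not about valuations of factors $1-\zeta$. The paper specialises the pentagon to $x=2u$, $y=-u$:
$\sqrt N E(2u)=E(u)^4+E(u)^2\sum_{t\ne0}\ang{t}E(u+t)E(u-t)$.
For $N$ odd, the terms for $t$ and $-t$ coincide. So at a coordinate of minimal valuation $m$ one gets $m\le v_2(E(2u))=4m$, hence $m\ge0$. Reflection then makes every value a $2$-adic unit. Chevalley's theorem finishes: an infinite constructible subset of $\ol{\QQ}$ is cofinite, so it would contain non-units. For $N$ even, the $2$-torsion terms do not pair up. The paper then needs the distribution relations of Proposition~\ref{prop:dist} across all $\gamma$ of the same trace, which you do not mention.

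\emph{The fusion-category fallback.} This is the paper's general argument, but it also runs on the pentagon. Verifying $\rho^2\cong\bigoplus_g\alpha_g\oplus\rho^{\oplus N}$ is Lemma~\ref{lem:izumikey}, which reduces to the dual pentagon. Moreover, here $E(0)=F^+_\gamma(0)>0$, so $d=-\sqrt N/E(0)<0$. Izumi's unitary Cuntz-algebra construction is therefore unavailable, and you need the non-unitary Leavitt-algebra version (Evans--Gannon). You must also show $E$ is an invariant of the category before Ocneanu rigidity can bound the number of solutions. The paper does this by recovering $E(g)/\sqrt N$ as $T_0'(T_0'\otimes 1_\rho)(1_\rho\otimes T_g)T_0$ for a canonically normalised basis $T_g$.
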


The proof of Theorem~\ref{thm:faddeevqbar} relies on our main result, which is much more surprising: we will exhibit an explicit (and immensely overdetermined) system of polynomial equations that these numbers satisfy. This system is defined over $\ZZ[\zeta_{2N},1/\sqrt{N}]$, where $\zeta_{2N}$ is a primitive $2N$-th root of unity. To give these explicit equations, we need to introduce more notation. For $u,v\in\ZZ^2$ define
	\begin{equation} \label{eq:gaussianpairing}
    \ang{u}_{\gamma} := (-1)^{u_1+u_2+u_1u_2}\ee_{2N}(\omega(u\gamma,u))\,,\qquad\text{and}\qquad \ang{u;v}_{\gamma} = \ee_N(\omega(u\gamma-u,v))\,,
    \end{equation}
where $\omega(v,w)=v_1w_2-v_2w_1$ and $\ee_N(x)=e^{2\pi i x/N}$.
If we let $\Lambda=\Lambda_{\gamma}=\{u\in\ZZ^2\colon u\gamma\equiv u\Mod{N}\}$ (that is also equal to $\ZZ^2(\gamma-I)$), then $\ang{u}_{\gamma}$ is $\Lambda$-periodic. Using the quasi-periodicity of $\Phi_{\gamma,m,n}$ in the $(\ZZ+\tau\ZZ)$-lattice, one can show that $u\mapsto F_{\gamma}^{\pm}(u)$ is also $\Lambda$-periodic (see Lemma~\ref{lem:lam.inv}). Letting $G=\ZZ^2/\Lambda$, we see that $\ang{u}_{\gamma}$ and $F_{\gamma}^{\pm}(u)$ both give well-defined functions on~$G$. This quotient has size $|G|=N$ and hence we have $N$ values of $F_\gamma^\pm$.\footnote{It is also worth reiterating that $F_{\gamma}^{+}(u)=F_{\gamma}^{-}(u)$ unless $u=0$ in $G$.}

\begin{theorem}\label{thm:fg.equs}
The numbers $F_\gamma^\pm(u)$ satisfy the following system of algebraic equations:
\begin{align}
    F_{\gamma}^{\pm}(0) &= \sqrt{\frac{N+2\pm\sqrt{N^2+4N}}{2}} \,,\label{eq:Fgpm.zero}\\
	F_{\gamma}^{+}(u)F_{\gamma}^{-}(-u) &= \ang{u}_{\gamma}^{-1} \,,\label{eq:Fgpm.reflection}\\[5pt]
	\frac{1}{\sqrt{N}}\sum_{x\in G}\ang{x;u}_{\gamma}F_{\gamma}^{+}(x) & =\mu_{\gamma}\frac{1}{F_{\gamma}^{-}(u)}\,,\label{eq:Fgpm.fourier}\\
	\frac{1}{\sqrt{N}}\sum_{x\in G}\ang{x;u}_{\gamma}\frac{F_{\gamma}^{+}(x)}{F_{\gamma}^{-}(x+v)} &=
    \frac{F_{\gamma}^{+}(u+v)}{F_{\gamma}^{-}(u)F_{\gamma}^{-}(v)}\,,\qquad (u,v)\ne (0,0)\,.\label{eq:Fgpm.5term}
\end{align}
\end{theorem}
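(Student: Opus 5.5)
The plan is to treat the four identities as specializations of the four basic structural properties of Faddeev's quantum dilogarithm: the normalization at the origin, the inversion (reflection) formula, the Fourier self-duality, and the pentagon identity, whose Fourier transform is the "beta-integral" or five-term form. The specialization uses the functional equations of $\Phi_{\gamma,m,n}$ in $z$ under the lattice $\ZZ+\tau\ZZ$, together with the fact that the chosen argument $z_u=(u_1\tau+u_2)/(\eps^{-1}-1)$ satisfies $z_u/\eps \equiv z_u + (u_1\tau+u_2)$ modulo the relevant lattice. This fixed-point property is what turns the transcendental identities into finite ones indexed by $G=\ZZ^2/\Lambda$. Throughout I will use Lemma~\ref{lem:lam.inv} ($\Lambda$-periodicity), so that every sum over $G$ is well defined.

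\textbf{Step 1: the value at $0$.} Equation~\eqref{eq:Fgpm.zero} is immediate from the convention $F^\pm_\gamma(0)=\eps^{\pm1/2}$. Indeed, $\eps^2-(N+2)\eps+1=0$ gives $\eps^{\pm1}=\tfrac{N+2\pm\sqrt{N^2+4N}}{2}$, and one only has to check that $\eps>1$ for the chosen fixed point $\tau$ (with $c>0$) to fix the signs.

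\textbf{Step 2: reflection.} For~\eqref{eq:Fgpm.reflection} I would use the product formula~\eqref{eq:phigam.def} directly. The product $\Phi_{\gamma,u_1,0}(z_u)\,\Phi_{\gamma,-u_1,0}(z_{-u})$ is a ratio of terms of the form $(x;q)_\infty(q/x;q)_\infty$, which are theta functions. The theta transformation under $\gamma$ then evaluates this product as $\mu_\gamma^{-2}$ times a Gaussian $\ee(\text{quadratic in }z_u)$. Substituting $z_u$, the quadratic exponent should reduce to $-\omega(u\gamma,u)/(2N)$ up to the sign $(-1)^{u_1+u_2+u_1u_2}$, and this reproduces $\ang{u}_\gamma^{-1}$. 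The case $u=0$ in $G$ is consistent because $\eps^{1/2}\eps^{-1/2}=1=\ang{0}_\gamma$.

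\textbf{Step 3: Fourier identities.} For~\eqref{eq:Fgpm.fourier} I would start from Faddeev's Fourier transform identity, which expresses $\int\Phi_\sfb(x)\,e^{2\pi i wx}\,dx$ as a Gaussian times $\Phi_\sfb(\text{shifted }w)^{-1}$, in its $\Phi_{\gamma,m,n}$ form. Rather than integrating over $\RR$, I would integrate over one fundamental period in a direction compatible with the quasi-periodicity, so that the integrand becomes a finite combination of the values at the points $z_x$, $x\in G$. Concretely, I would shift the contour by lattice vectors, use the functional equations to collect equal terms, and evaluate the leftover elementary integral, which is a Gaussian integral producing the $1/\sqrt{N}$ and the character $\ang{x;u}_\gamma$. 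The constant $\mu_\gamma$ on the right then comes from the eta-multiplier in the normalization~\eqref{eq:fgam.def}.

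Equation~\eqref{eq:Fgpm.5term} is handled in the same way. The input is the Fourier transform of $\Phi(x)/\Phi(x+v)$, which is Faddeev--Kashaev's beta-integral, equivalent to the pentagon. This is specialized by the same collapsing of the contour to a sum over $G$. The exclusion $(u,v)\neq(0,0)$ should appear because exactly then a pole or zero of the integrand coincides with a lattice point, where the value at $0$ is replaced by the conventional $\eps^{\pm1/2}$.

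I expect Step 3 to be the main obstacle: making rigorous the reduction of a continuous integral identity to an exact finite sum at these special real-quadratic arguments. This requires careful bookkeeping of the quasi-periodicity factors, convergence of the contour, and the exceptional residues at $u\in\Lambda$. If the contour-collapsing argument becomes unwieldy, my fallback is to verify both Fourier identities at the level of $q$-series. I would expand both sides in the variable $\ee(z)$ via the $q$-binomial theorem, and then use that the fixed-point relation $\gamma\tau=\tau$ identifies $q$ with $\tq$, which turns the Fourier kernel into the finite character $\ang{x;u}_\gamma$.
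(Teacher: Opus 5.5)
Your Steps~1 and~2 agree with the paper. \eqref{eq:Fgpm.zero} is the convention $F^\pm_\gamma(0)=\eps^{\pm1/2}$. \eqref{eq:Fgpm.reflection} is Proposition~\ref{prop:reflection} with $m=u_1$, $n=0$, after using \eqref{eq:faddeevperiod} to write $\Phi_{\gamma,u_1,0}(z_u)=\Phi_{\gamma,u_1+1,1}(z_u)$. The theta computation has to be done for $\Im\tau>0$ and then continued, since the products diverge at the real fixed point.

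The gap is in Step~3: the mechanism you describe cannot turn the continuous identity into a finite sum. The integrand $\Phi_{\gamma,m+1,0}(z;\tau)\,\ee(\cdots)$ of Theorem~\ref{thm:5term.mod.fad} is regular at the points $z_x$ ($x\ne0$). It is also not periodic: under $z\mapsto z+\eps-1$ it picks up a multiplier that equals $1$ only at the special points themselves. So integrating over a fundamental period and ``collecting equal terms'' never isolates the values $F^+_\gamma(x)$, and there is no leftover Gaussian integral; neither $1/\sqrt N$ nor $\ang{x;u}_\gamma$ arises that way. Your $q$-series fallback fails as well. The relation $q=\tq$ holds only at the real fixed point, where $|q|=1$ and the expansions diverge, and a hyperbolic $\gamma$ has no fixed point with $\Im\tau>0$.

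The missing idea is an auxiliary kernel whose simple poles sit exactly at the special points. The paper integrates
\[
\frac{\Phi_{\gamma,m,0}(z;\tau)}{\ee(z/\eps)-q^m\ee(z)}
\]
times the exponential factor of Theorem~\ref{thm:5term.mod.fad}, summed over $m\in\ZZ/c\ZZ$, along $\calC_{m,0}$ and along its translate by $\eps-1$. By the residue theorem, the difference of the two integrals is $\tfrac{\eps}{1-\eps}$ times the finite sum, over-counted by a factor $c$. The character $\ang{x;u}_\gamma$ is simply the exponential evaluated at the poles. Write the translated contour as a shift by $a\tau+b$, which sends the index $m$ to $m+1-a$, and apply the difference equations. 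The integrand then becomes the exponential factor times
\[
\frac{\Phi_{\gamma,m,0}(z;\tau)-\Phi_{\gamma,m+1,1}(z;\tau)}{\ee(z/\eps)-q^m\ee(z)}=\Phi_{\gamma,m+1,0}(z;\tau)\,.
\]
So the kernel cancels exactly, leaving the continuous Fourier integral of Theorem~\ref{thm:5term.mod.fad}. The $\sqrt N$ comes from comparing $\tfrac{\eps}{1-\eps}$ with $\sqrt{c\tau+d}=\sqrt\eps$, using $(1-\eps)^2=N\eps$.

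For \eqref{eq:Fgpm.5term}, the same kernel multiplies $\Phi_{\gamma,m,0}(z)/\Phi_{\gamma,m+p,0}(z+y)$, where $y$ is itself a special point ($q^p\ee(y)=\ee(y/\eps)$). This is integrated over deformed contours that separate the two families of poles. The combined multiplier again cancels the kernel, up to the factor $1-q^p\ee(y)$, and leaves the continuous pentagon integral \eqref{eq:5term.int}. The case $v=0$ is checked directly.
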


These equations are parallel to the functional equations of $\Phi_{\gamma,m,n}(z;\tau)$, but with integrals replaced by sums. One of the corollaries of equations~\eqref{eq:Fgpm.zero}--\eqref{eq:Fgpm.5term} is that the function $E(u)=F_{\gamma}^{+}(u)$ satisfies the cubic equations
    \begin{equation} \label{eq:fivetermintro}
    E(x)E(y)\ang{x;y}_{\gamma} = C+\frac{1}{\sqrt{N}}\sum_{t\in G}E(x-t)E(t)E(y-t)\ang{t}_{\gamma}\,,\qquad x,y\in G\,,
    \end{equation}
for a non-zero constant~$C$. This structure is very close to the one defined by Andersen and Kashaev in~\cite[Def.~9]{AK14b}, where an abstract notion of quantum dilogarithm on locally compact abelian groups is given. The difference is that in~\eqref{eq:fivetermintro} we require that $C\ne 0$, whereas Andersen and Kashaev have~$C=0$. Although we do not discuss this here, for~$C=0$ one can classify all solutions of Equation~\eqref{eq:fivetermintro}, for finite~$G$, and no interesting functions appear: all solutions essentially reduce to indicators of isotropic subgroups. For $C\ne 0$ the situation is opposite, since for all groups of the form $G=\ZZ/n\ZZ\times \ZZ/m\ZZ$ we get non-trivial solutions whose values are constructed from Stark units. In Theorem~\ref{thm:fqdilogbasicproperties}, we also show that for $C\ne 0$ Equation~\eqref{eq:fivetermintro} already implies all of the other identities from Theorem~\ref{thm:fg.equs}. We call any solution to Equation~\eqref{eq:fivetermintro} (with $C\ne0$) a~\emph{finite quantum dilogarithm} on~$G$.

We have several approaches that can be used to prove Theorem~\ref{thm:faddeevqbar}.
When $N=|G|$ is a prime, we give a short elementary proof based on Tao's uncertainty principle. Then we give an elementary $2$-adic proof that also has the benefit of proving these numbers are $2$-adic units. We then prove a more general statement (Theorem~\ref{thm:algebraicity}) that any solution of Equation~\eqref{eq:fivetermintro} (for $|G|>1$ and $C\ne 0$) has values in~$\ol{\QQ}$. 
When $N=|G|$ is odd, the $2$-adic methods also supply an equally elementary proof again with the benefit of proving they are $2$-adic units.
In general, the proof relies on the fact that exactly the same system of equations has been studied by Izumi~\cite{Izu93,Izu00,Izu01} in the context of near-group fusion categories. Specifically, Izumi studied fusion categories whose Grothendieck ring has basis elements $[\alpha_g]$ ($g\in G$) and $[\rho]$ satisfying the fusion rules
	\[[g][h] = [g+h]\,,\qquad [g][\rho] = [\rho][g]=[\rho]\,,
	\qquad [\rho]^2 = N[\rho] + \sum_{g\in G}[g]\,.\]
Izumi's original construction assumes an additional property of $E$ with regards to complex conjugation, but we show that it is not necessary, closely following the algebraic approach of Evans--Gannon~\cite{EG17}. As a result of this construction, we show that fusion categories of this type exist for all $G$ of the form $G=\ZZ/n\ZZ\times \ZZ/m\ZZ$. Similar constructions of a different class of fusion categories, namely of Haagerup--Izumi categories for odd cyclic groups, appeared recently in independent works~\cite{Huang26,Gannon26}.

\medskip
The paper is organised as follows. In~\S\ref{sec:preliminaries}, we collect basic properties of Faddeev's quantum dilogarithm and, following the paper of Garoufalidis--Kashaev--Zagier~\cite{GKZ}, its modular variant. In~\S\ref{sec:mainidentities}, we prove our main result, that is Theorem~\ref{thm:fg.equs}. In~\S\ref{sec:finitedilog}, we define an abstract notion of a finite quantum dilogarithm, give its basic properties, and show that Theorem~\ref{thm:fg.equs} provides us with examples for all groups of the form $\ZZ/n\ZZ\times \ZZ/m\ZZ$. We also prove a quadratic convolution identity for Stark units, conjectured in~\cite{AFK25}, and briefly discuss the connection to Zauner's conjecture. In~\S\ref{sec:algebraicity} we prove algebraicity of values of finite quantum dilogarithms, first using an elementary argument for prime cyclic groups, then using $2$-adic valuations, and then in full generality using Izumi fusion categories. Finally, in~\S\ref{sec:conclusions} we discuss directions and open questions that we plan to investigate in future work.

\subsection*{Acknowledgments}
The authors wish to thank Dustin Clausen, Stavros Garoufalidis, Maxim Kontsevich, Daniil Rudenko, Bora Yalkinoglu, and Don Zagier for enlightening conversations. Of fundamental importance to this paper was the introduction in~\cite{GKZ} of Faddeev's modular quantum dilogarithm and its functional equations. We want to thank the authors for sharing their manuscript with us many years ago, which contained all the necessary definitions and proofs. Any potential errors in exposition of their ideas are due entirely to the authors of the current paper. C.W. was supported by the Institut des Hautes \'{E}tudes Scientifiques, France. D.R. acknowledges funding by the European Union (ERC, FourIntExP, 101078782).

\smallskip
\noindent \textbf{AI use disclosure}
We used LLMs exclusively for the purpose of checking the mathematical calculations in Appendix~\ref{sec:appendixB}, which helped identify several mistakes in an earlier draft. All final calculations were independently verified by the authors, who take full responsibility for the results.

\section{Preliminaries}
\label{sec:preliminaries}
In this section, we will recall some basic properties of Faddeev's quantum dilogarithm~\cite{Fad95}. The main emphasis is that this function captures the failure of modularity of the infinite $q$-Pochhammer symbol, which is ``half of the Jacobi $\theta$-function'' and, while not being a Jacobi form, is a quantum Jacobi form~\cite{GZ23,GZ24}. Then we will discuss its full modular generalisation as the associated multiplicative cocycle~\cite{GKZ}. In~\S\ref{sec:starkshintani}, we will describe the relation with the Stark--Shintani invariants.

Throughout this paper, we will use the following notation: 
    \[\ee(x)=\exp(2\pi i x)\,,\qquad \ee_N(x)=\ee(x/N),\]
$(x;q)_n=\prod_{j=0}^{n-1}(1-q^jx)$ if $n\geq0$ and $(x;q)_n=\prod_{j=n}^{-1}(1-q^jx)^{-1}$ if $n<0$; $S=(\smat 0{-1}{1}0)$ and $T=(\smat 1{1}{0}1)$.

\subsection{Faddeev's quantum dilogarithm}
We will recall here some basic properties of Faddeev's quantum dilogarithm.
(See~\cite[Appendix A]{AK14a} for an excellent summary of its basic properties.)
We emphasise here its role as a \emph{quantum dilogarithm} but note that this and related functions were studied, at least as early as Barnes~\cite{Bar04} and is often also referred to as the double sine function.
It is most basically defined as the analytic continuation of the following integral:
    \[
    \phi_\sfb(z)
    =
    \exp\Big(\int_{\RR+i0}
    \frac{e^{-2izw}}{4\sinh(\sfb w)\sinh(\sfb^{-1} w)}\frac{dw}{w}\Big)\,,
    \]
which converges when $|\Im(z)|<|\Im(\frac{i}{2}(\sfb+\sfb^{-1}))|$.
When $\Im(\sfb^2)\neq 0$, the residue theorem and Equation~\eqref{eq:qpoch-log} imply that this function has the following product expansion:
\[
\begin{aligned}
    \phi_\sfb(z)
    &=
    \frac{(-q^{\frac{1}{2}}e^{2\pi\sfb z};q)_\infty}
    {(-\tq^{\frac{1}{2}}e^{2\pi\sfb^{-1}z};\tq)_\infty}\,,
    &&\quad\text{when}\quad\Im(\sfb^2)>0\\
    \phi_\sfb(z)
    &=
    \frac{(-\tq^{-\frac{1}{2}}e^{2\pi\sfb^{-1}z};\tq^{-1})_\infty}{(-q^{-\frac{1}{2}}e^{2\pi\sfb z};q^{-1})_\infty}\,,&&
    \quad\text{when}\quad\Im(\sfb^2)<0\\
\end{aligned}
\]
where $q=\ee(\sfb^2)$ and $\tq=\ee(-\sfb^{-2})$.
This formula makes it clear that $\phi_\sfb(z)$ is a solution to a $(q,\tq)$-holonomic bimodule.
This can in fact be used to explicitly construct the analytic continuation of $\phi_\sfb(z)$.
This function can then be shown to have a meromorphic continuation to $\Re(\sfb)>0$ and $z\in\CC$. 

In this paper, we are particularly interested in the modular properties of Faddeev's quantum dilogarithm.
It was driven home in~\cite{GZ23,GZ24} that $\phi_\sfb(x)$ appears as the multiplicative cocycle associated to the matrix $S=(\smat 0{-1}{1}0)\in\SL_2(\ZZ)$ of the quantum Jacobi form associated to the infinite Pochhammer symbol $(x;q)_\infty$.
Therefore, we will use a slight change of variable and consider the function for each $m,n\in\ZZ$ given by
\[
    \Phi_{S,m,n}(z;\tau)
    =
    \phi_\sfb(i\sfb^{-1}z+i(m-\tfrac{1}{2})\sfb+i(n+\tfrac{1}{2})\sfb^{-1})
    =
    \Phi_{S,0,0}(z+m\tau-n;\tau)\,,
\]
which has product expansion when $\Im(\tau)>0$ given by
\begin{equation}\label{eq:prod.fadeev}
    \Phi_{S,m,n}(z;\tau)
    =
    \frac{(q^m\ee(z);q)_\infty}
    {(\tq^n\ee(z/\tau);\tq)_\infty}\,.
\end{equation}
This function has poles when $z\in\ZZ_{\geq n}+\tau\ZZ_{>-m}$ and zeros when $z\in\ZZ_{<n}+\tau\ZZ_{\leq -m}$.
The difference equations are simply described by
\[
    \Phi_{S,m,n}(z;\tau)
    =
    (1-q^m\ee(z))\Phi_{S,m+1,n}(z;\tau)
    =
    (1-\tq^{n}\ee(z/\tau))^{-1}\Phi_{S,m,n+1}(z;\tau)\,.
\]
We also have reflection formulas $\overline{\Phi_{S,0,0}(z;\tau)}=\Phi_{S,1,1}(-\overline{z};\overline{\tau})^{-1}$ and $\Phi_{S,0,0}(z/\tau;1/\tau)=\Phi_{S,1,1}(z;\tau)^{-1}$.

We will also need to understand the basic asymptotic behaviour of this function. For $\tau\in\RR_{>0}$, we have $\Phi_{S,m,n}(z;\tau)=\mathrm{O}(1)$ when $z\to i\infty$ and $\Phi_{S,m,n}(z;\tau)=\mathrm{O}(\ee(-(z+(m-1/2)\tau+(n+1/2))^2/2\tau)$ when $z\to -i\infty$.
When $\tau\to 0$ with $z\notin\ZZ+\tau\ZZ$ fixed, we have $\Phi_{S,m,n}(z;\tau)=\mathrm{O}(\ee(\Li_2(\ee(z))/\tau))$, which partially explains why this function is called a quantum dilogarithm.

The main reason this function is called a quantum dilogarithm is that it satisfies analogues of the classical dilogarithmic identities (for an excellent introduction to the topic, we refer to Zagier~\cite{Zag07}). The three most interesting examples for us correspond to:
\begin{align}
    \Li_2(x)+\Li_2(x^{-1})
    &=-\frac{\pi^2}{6}-\frac{1}{2}\log(-x)^2\,,\\
    \Li_2(x)+\Li_2(1-x)
    &=\frac{\pi^2}{6}-\log(x)\log(1-x)\,,\\
    \Li_2(x)+\Li_2(y)-\Li_2(xy)+\Li_2\Big(\frac{1-x}{1-xy}\Big)-\Li_2\Big(y\frac{1-x}{1-xy}\Big)
    &=\frac{\pi^2}{6}-\log(x)\log\Big(\frac{1-x}{1-xy}\Big)\,.
\end{align}
(Note that $y=0$ in the third equality implies the second.)
Faddeev and Kashaev used the fact that these equations come from critical values of combinations of dilogarithms (explicitly,
\[
    \Li_2(z)-\Li_2(zy)+\log(z)\log(x)
    =
    \frac{\pi^2}{6}-\Li_2(x)-\Li_2(y)+\Li_2(xy)\,,
\]
where $-\log(1-z)+\log(1-zy)+\log(x)=0$) to describe non-commutative analogues of those functional equations. In~\cite{FK94}, they prove that for $\tau\in\RR_{>0}$, $w\in(0,\tau+1)$, and $y\in(-1,\tau-w)$,
we have the following three identities:
    \begin{align}
    \Phi_{S,1,1}(z;\tau)\Phi_{S,0,0}(-z;\tau)
    &=iq^{-\frac{1}{12}}\tq^{\frac{1}{12}}\ee\Big(-\frac{z^2}{2\tau}-\frac{z}{2}+\frac{z}{2\tau}\Big)\label{eq:PhiS.reflection}\\
    \int_{i\RR-0}\Phi_{S,1,0}(z;\tau)\;\ee\Big(\frac{zw}{\tau}\Big)\;dz
    &=\ee(1/8)\sqrt{\tau}q^{-\frac{1}{24}}\tq^{\frac{1}{24}}\;
    \Phi_{S,0,1}(w;\tau)^{-1}\label{eq:PhiS.fourier}\\
    \int_{i\RR-0}\frac{\Phi_{S,1,0}(z;\tau)}{\Phi_{S,0,0}(z+y;\tau)}
    \ee\Big(\frac{zw}{\tau}\Big)dz
    &=
    \ee(1/8)\sqrt{\tau}q^{-\frac{1}{24}}\tq^{\frac{1}{24}}
    \frac{\Phi_{S,0,0}(w+y;\tau)}{\Phi_{S,0,0}(y;\tau)\Phi_{S,0,1}(w;\tau)}\label{eq:PhiS.5term}\,.
    \end{align}
Applying a stationary phase approximation to these integrals as $\tau\to0$ (i.e., a semiclassical limit) recovers the three classical dilogarithmic identities above.

These identities are analogues of the Jacobi triple product formula and the $q$-binomial theorem, i.e., Equation~\eqref{eq:jac.trip}, Equation~\eqref{eq:euler-second} and Equation~\eqref{eq:q-binomial-series} respectively.  
They can be analytically continued outside of the region we have described here by collecting residues and applying the $(q,\tq)$-difference equations.
For example, when $y\in\RR_{<\tau-w}$ but $y\notin\ZZ_{<0}+\tau\ZZ_{\leq 0}$, the analytic continuation of the left hand side is given by exactly the same contour with the addition of the residues around all zeros of $\Phi_{S,0,0}(z+y;\tau)$ for $\Re(z)\geq0$. The proofs of these identities follow in an elementary manner from the $(q,\tq)$-difference equations. They can also be proved when $\Im(\tau)>0$ by a simple application of the residue theorem.

\subsection{A modular extension}
In this subsection, we will give some basic properties of the modular quantum dilogarithm. This function was defined and studied in~\cite{GKZ} and most of this section can be found there. The modular quantum dilogarithm arises from the cocycle associated to the quantum Jacobi form $(x;q)_\infty$ when we consider arbitrary group elements of $\SL_2(\ZZ)$---whereas Faddeev's quantum dilogarithm corresponds to the element $S=(\smat 0{-1}{1}0)$.

For $\gamma\in\SL_2(\ZZ)$, recall the function $\Phi_{\gamma,m,n}(z;\tau)$ from Equation~\eqref{eq:phigam.def}.
Here are some simple properties of this function:
\begin{align*}
	&\Phi_{\gamma,m,n}(z+a\tau+b;\tau) = \Phi_{\gamma,m+a,n+1}(z;\tau), 
	&&\Phi_{\gamma,m,n}(z+\tau;\tau) = \Phi_{\gamma,m+1,n+d}(z;\tau),\\
	&\Phi_{\gamma,m,n}(z+c\tau+d;\tau) = \Phi_{\gamma,m+c,n}(z;\tau), 
	&&\Phi_{\gamma,m,n}(z+1;\tau) = \Phi_{\gamma,m,n-c}(z;\tau)\,.
\end{align*}
Its quasi-periodicity in the index $(m,n)\in\ZZ^2$ is described by the following:
\begin{equation*}
\begin{aligned}
	&\Phi_{\gamma,m+1,n}(z;\tau) = (1-q^m\ee(z))^{-1}\Phi_{\gamma,m,n}(z;\tau),\\
	&\Phi_{\gamma,m,n+1}(z;\tau) = \Big(1-\tilde q^n\ee\big(\tfrac{z}{c\tau+d}\big)\Big)\Phi_{\gamma,m,n}(z;\tau).
\end{aligned}
\end{equation*}
As an immediate corollary, if $\gamma\tau^{*} = \tau^{*}$ and $q^m\ee(z^{*}) = \tilde q^n\ee(\frac{z^{*}}{c\tau^{*}+d})$, then
    \begin{equation} \label{eq:faddeevperiod}
    \Phi_{\gamma,m+k,n+k}(z^{*};\tau^{*})=\Phi_{\gamma,m,n}(z^{*};\tau^{*})\,,\qquad k\in\ZZ\,.
    \end{equation}
We also have an additional reflection formula: 
	\[\Phi_{\delta\gamma\delta,1-m,1-n}(z;-\tau) = \Phi_{\gamma,m,n}(z;\tau)^{-1},\]
where $\delta=(\smat 100{-1})$.
Of fundamental importance is the following cocycle property
\begin{equation}\label{eq:g2r}
    \Phi_{\gamma\gamma',m,n}(z;\tau)
    =
    \Phi_{\gamma,k,n}(\gamma'(z;\tau))
    \Phi_{\gamma',m,k}(z;\tau)\,,
\end{equation}
where $\gamma(z;\tau) = (\frac{z}{c\tau+d};\frac{a\tau+b}{c\tau+d})$.
An immediate consequence of this, which we will use later is that
\begin{equation}\label{eq:gamr.cocyc}
    \Phi_{\gamma^r,m,n}(z;\tau)
    =
    \prod_{j=0}^{r-1}\Phi_{\gamma,m_j,n_j}(\gamma^j(z;\tau))\,,
\end{equation}
where $m_{0}=m$, $n_{r-1}=n$ and $n_k=m_{k+1}$.

We also have the following slightly more involved formula (the generalisation of Equation~\eqref{eq:PhiS.reflection}) that can be derived using the modularity of the Jacobi $\theta$-function:
\begin{proposition}\label{prop:reflection}
	Faddeev's modular quantum dilogarithm satisfies
	\[\Phi_{\gamma,m+1,n+1}(z;\tau)\Phi_{\gamma,-m,-n}(-z;\tau) = \mu_{\gamma}^{-2}(-1)^{m+n}\ee(Q_{\gamma,m,n}(z,\tau))\,,\]
	where
	\[Q_{\gamma,m,n}(z,\tau)=-\frac{1}{2}\Big[\frac{c z^2}{c\tau+d} + \Big(2m+1 - \frac{2n+1}{c\tau+d}\Big) z + B_2(m+1)\tau - B_2(n+1)\frac{a\tau+b}{c\tau+d}\Big]\]
	and $B_2(x)=x^2-x+1/6$ is the 2nd Bernoulli polynomial.
\end{proposition}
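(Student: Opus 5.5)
Write $\eps=c\tau+d$, $\tq=\ee(\gamma\tau)$ and $x=\ee(z)$, and take $\Im\tau>0$ first (so $\Im\gamma\tau>0$); the general case follows by analytic continuation. Multiplying the product expansions in~\eqref{eq:phigam.def} gives
\[
\Phi_{\gamma,m+1,n+1}(z;\tau)\Phi_{\gamma,-m,-n}(-z;\tau)
=\frac{(q^{m+1}x;q)_\infty\,(q^{-m}x^{-1};q)_\infty}{(\tq^{n+1}\ee(z/\eps);\tq)_\infty\,(\tq^{-n}\ee(-z/\eps);\tq)_\infty}\,.
\]
Both numerator and denominator are, up to the factor $(q;q)_\infty$ resp.\ $(\tq;\tq)_\infty$, Jacobi theta products. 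By the triple product formula~\eqref{eq:jac.trip},
\[
\theta(x;q):=(x;q)_\infty(q/x;q)_\infty(q;q)_\infty=\sum_{k}(-1)^kq^{k(k-1)/2}x^k\,.
\]
The numerator is $\theta(q^{m+1}x;q)/(q;q)_\infty$ with shifted argument. Using $\theta(qy;q)=-y^{-1}\theta(y;q)$ repeatedly, I reduce it to $\theta(qx;q)$ times an explicit monomial $(-1)^{m}q^{\ast}x^{\ast}$. Concretely, $(q^{m+1}x;q)_\infty(q^{-m}x^{-1};q)_\infty$ equals $(-1)^m q^{-m(m+1)/2}x^{-m}$ times the $m=0$ product. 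I do the same for the denominator with $n$.

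This reduces the claim to the case $m=n=0$, namely
\[
\frac{\theta(qx;q)/(q;q)_\infty}{\theta(\tq\,\ee(z/\eps);\tq)/(\tq;\tq)_\infty}\,.
\]
Now $\gamma$ acts on $(z,\tau)$ by $\gamma(z;\tau)=(z/\eps,\gamma\tau)$, so I apply the Jacobi-form transformation law for the odd theta function $\vartheta_1(z;\tau)\propto q^{1/8}x^{-1/2}\theta(x;q)$. It has weight $1/2$, index $1/2$, and multiplier $\mu_\gamma^3$, because $\vartheta_1'(0)=2\pi\eta^3$. Explicitly,
\[
\vartheta_1\big(z/\eps;\gamma\tau\big)=\mu_\gamma^3\,\eps^{1/2}\,\ee\big(cz^2/(2\eps)\big)\,\vartheta_1(z;\tau)\,.
\]
The quotient $(q;q)_\infty/(\tq;\tq)_\infty$ is then handled by $\eta(\gamma\tau)=\mu_\gamma\eps^{1/2}\eta(\tau)$, which gives $\ee(\tau/24-\gamma\tau/24)\,\mu_\gamma^{-1}\eps^{-1/2}$. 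Combining, the factors of $\eps^{1/2}$ cancel and the multiplier becomes $\mu_\gamma^{-3}\mu_\gamma\cdot(\text{sign})=\mu_\gamma^{-2}$ up to the sign conventions $\theta(qx)=-x^{-1}\theta(x)$. The remaining exponential is $\ee$ of a quadratic in $z$: $-cz^2/(2\eps)$ from the index, linear terms $-z/2+z/(2\eps)$ from the $x^{-1/2}$ prefactors, and constants in $\tau,\gamma\tau$ from the $q^{1/8}$ and $q^{1/24}$ factors.

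Finally I reinsert the monomials from the first step. Their contributions are $(-1)^{m+n}$ together with the $m$- and $n$-dependent linear terms $-mz+nz/\eps$ and the constants. I then check that the total exponent matches $Q_{\gamma,m,n}$, using the identity $B_2(m+1)=m^2+m+1/6$ so that the constant terms combine into $-\tfrac12\big(B_2(m+1)\tau-B_2(n+1)\gamma\tau\big)$. As a sanity check, at $\gamma=S$ the result must reproduce~\eqref{eq:PhiS.reflection}.

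The main obstacle is the bookkeeping of eighth roots of unity and branches. This means pinning down the exact multiplier of $\vartheta_1$ as $\mu_\gamma^3$ (rather than $\mu_\gamma^3$ times a stray $\pm1$ or $\pm i$) and the branch of $\eps^{1/2}$, so that the constant is exactly $\mu_\gamma^{-2}(-1)^{m+n}$. I would fix these by using $\vartheta_1'(0)=2\pi\eta^3$ with the stated definition of $\mu_\gamma$, and verify the result at $\gamma=S$.
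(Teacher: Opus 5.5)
Your proposal is correct and follows the paper's route: the paper only remarks that the formula follows from the modularity of the Jacobi theta function, and your reduction to $m=n=0$ via $\theta(qy;q)=-y^{-1}\theta(y;q)$, followed by the $\vartheta_1$ law with multiplier $\mu_\gamma^3$ and the $\eta$ law, fills in exactly that sketch. One slip to fix: the factor that actually appears is $(\tq;\tq)_\infty/(q;q)_\infty=\ee(\tau/24-\gamma\tau/24)\,\mu_\gamma\eps^{1/2}$, not the inverse quotient you wrote, and it is this factor that yields your count $\mu_\gamma^{-3}\mu_\gamma=\mu_\gamma^{-2}$ with the $\eps^{1/2}$ cancelling (the $i$ and $-1$ from $\vartheta_1=iq^{1/8}x^{-1/2}\theta$ occur identically in numerator and denominator, so no stray sign remains).
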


More interestingly, $\Phi_{\gamma,m,n}$ is a meromorphic function of $(z,\tau)\in \CC\times (\CC\sm c^{-1}\RR_{\le -d})$ (i.e., except for when $c\tau+d<0$). Denoting for $z\in\ZZ\tau+\ZZ$, $z=k\tau+\ell=k'(a\tau+b)+\ell'(c\tau+d)$, the poles are in the set
	\[P_{\gamma,\tau} = \left\{z\in\ZZ\tau+\ZZ\,\colon k'+n \le 0, \quad k+m > 0 \right\},\]
and the zeros are in the set
	\[N_{\gamma,\tau} = \left\{z\in\ZZ\tau+\ZZ\,\colon k'+n > 0, \quad k+m \le 0 \right\}.\]
This all follows from the next proposition, which gives product representations of Faddeev's modular quantum dilogarithm reducing the general function to the case $\gamma=S=(\smat{0}{-1}{1}{0})$, $m=n=0$ (i.e., to Faddeev's quantum dilogarithm).
(Note also the relation to~\cite[Eq. (122)]{AK14b}.)
\begin{proposition}\label{prop:prod.id.mod,fad}
\emph{(i)}\cite[Thm. 2.1]{GKZ} For $m,n\in\ZZ$ and $\gamma=(\smat abcd)\in\SL_2(\ZZ)$ with $c\ne 0$, we have
    \begin{equation} \label{eq:modulartofaddeevC}
    \Phi_{\gamma,m,n}(z;\tau) = \prod_{j=0}^{c-1}\Phi_{S,0,0}(z+(m+j)\tau+k_j;c\tau+d)\,,\qquad k_j=\Big\lfloor \frac{d(m+j)-n}{c}\Big\rfloor\,.
    \end{equation}
\emph{(ii)} For $m,n\in\ZZ$ and $\gamma=\prod_{j=1}^{r}(\smat{b_j}{-1}{1}{0})$, where $b_j\ge2$, define $m_r = m$, $m_0 = n$, and choose $m_1,\dots,m_{r-1}$ to be arbitrary integers. Let $\tau_r = \tau$, $z_r = z$ and define $\tau_{i},z_i$ for $i=1,\dots,r-1$ via $\tau_{j-1} = b_j - \frac{1}{\tau_j}$ and $z_{j-1} = \frac{z_j}{\tau_j}$. Then
    \begin{equation} \label{eq:modulartofaddeevCF}
    \Phi_{\gamma,m,n}(z;\tau) = \prod_{j=1}^{r}\Phi_{S,0,0}\big(z_j+m_j\tau_j-m_{j-1};\,\tau_j\big)\,.
    \end{equation}
\end{proposition}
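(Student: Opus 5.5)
Part (i) is proved in \cite[Thm.~2.1]{GKZ}, but I would redo it directly from the product definition~\eqref{eq:phigam.def}, first in the half-plane where $\Im\tau>0$ and $\Im(\gamma\tau)>0$, and then extend both sides by meromorphic continuation. The idea is to rewrite numerator and denominator of $\Phi_{\gamma,m,n}$ as products of Pochhammer symbols in the common base $q_\eps:=\ee(c\tau+d)$; the variable $\eps=c\tau+d$ is the ``$\tau$'' of each $\Phi_{S,0,0}$ factor.

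\textbf{Numerator.} Split the index $\ell\ge0$ of $(q^m\ee(z);q)_\infty$ by residue mod $c$. Writing $\ell=j+ck$ with $0\le j<c$ and $k\ge0$, one has $q^{ck}=\ee(k(c\tau+d))\,\ee(-kd\tau)$. This is awkward, because $q^c$ is not $q_\eps$. The correct bookkeeping is instead to use $q^{m+j}\ee(z)=\ee(z+(m+j)\tau)$ and the numerator factor $(\ee(z+(m+j)\tau+k_j);q_\eps)_\infty$ of $\Phi_{S,0,0}(z+(m+j)\tau+k_j;\eps)$. These are equal because adding the integer $k_j$ does not change $\ee(\cdot)$, and $\ee(z+(m+j)\tau)q_\eps^{k}=\ee(z+(m+j+ck)\tau+kd)=q^{m+j+ck}\ee(z)$. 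Hence
\[\prod_{j=0}^{c-1}(\ee(z+(m+j)\tau);q_\eps)_\infty=(q^m\ee(z);q)_\infty,\]
exactly the numerator.

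\textbf{Denominator.} Each $\Phi_{S,0,0}(w;\eps)$ has denominator $(\ee(w/\eps);\ee(-1/\eps))_\infty$. The needed identity is
\[\prod_{j=0}^{c-1}\big(\ee((z+(m+j)\tau+k_j)/\eps);\ee(-1/\eps)\big)_\infty=\big(\tilde q^{\,n}\ee(z/\eps);\ee(\gamma\tau)\big)_\infty .\]
Two facts drive it. First, $\ee(-1/\eps)$ and $\ee(\gamma\tau)$ differ by $c$-th roots: $\gamma\tau=a/c-1/(c\eps)$. Second, $(m+j)\tau+k_j=\big((m+j)(\eps-d)+ck_j\big)/c$. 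Setting $r_j=d(m+j)-n-ck_j\in\{0,\dots,c-1\}$, one gets
\[\frac{(m+j)\tau+k_j}{\eps}=\frac{m+j}{c}-\frac{r_j+n}{c\eps}.\]

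The denominator factors are therefore in the base $\ee(-1/\eps)=\ee(\gamma\tau)^{c}\ee(-a)=\ee(\gamma\tau)^c$. As $j$ runs over $0,\dots,c-1$, the residue $r_j\equiv d(m+j)-n\pmod c$ runs over all residues, since $\gcd(d,c)=1$. So the $c$ sub-progressions $\{r_j+n+ck\}$ should tile the arithmetic progression $\{n+\ell\}_{\ell\ge0}$ of exponents of $\ee(-1/(c\eps))$. The phases then need matching: $\tilde q^{\,n+\ell}=\ee(\gamma\tau)^{n+\ell}=\ee(a(n+\ell)/c)\,\ee(-(n+\ell)/(c\eps))$, and this must agree with $\ee((m+j)/c)$ whenever $n+\ell=r_j+n+ck$. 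This follows from $ad\equiv1\pmod c$: with $n+\ell\equiv d(m+j)\pmod c$ we get $a(n+\ell)\equiv m+j\pmod c$.

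I expect this phase and index bookkeeping to be the main obstacle, and the place where the floor in $k_j$ is forced. The continuation step from the product half-plane is routine.

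Part (ii) comes from the cocycle property~\eqref{eq:g2r}, iterated as in~\eqref{eq:gamr.cocyc} but with varying factors, applied to $\gamma=\gamma_1\cdots\gamma_r$ with $\gamma_j=(\smat{b_j}{-1}{1}{0})$. Each $\gamma_j=T^{b_j}S$, and $\gamma_j(z;\tau)=(z/\tau,\,b_j-1/\tau)$, which produces exactly the recursion for $(z_{j-1},\tau_{j-1})$. Applying~\eqref{eq:g2r} repeatedly writes $\Phi_{\gamma,m,n}(z;\tau)$ as $\prod_j\Phi_{\gamma_j,m_j,m_{j-1}}(z_j;\tau_j)$ with arbitrary intermediate indices. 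The intermediate indices are arbitrary because the cocycle holds for any $k$ (the telescoping Pochhammer factors cancel).

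It then remains to check the case $c=1$ of (i): $\Phi_{\gamma_j,m',n'}(z';\tau')=\Phi_{S,0,0}(z'+m'\tau'+k_0;\tau')$ with $k_0=0\cdot m'-n'=-n'$ (here $d=0$). This gives the factor $\Phi_{S,0,0}(z_j+m_j\tau_j-m_{j-1};\tau_j)$. The orientation of indices in the iterated cocycle should be checked against the convention $m_r=m$, $m_0=n$.
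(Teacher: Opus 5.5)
Your proof is correct. The paper does not prove this proposition itself: (i) is quoted from \cite[Thm.~2.1]{GKZ} and (ii) is stated without argument. So what you give is a self-contained verification rather than a variant of a proof in the text.

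Your bookkeeping for (i) is right.
\begin{itemize}
\item The numerator reduces to the tiling $\{j+ck\}_{0\le j<c,\,k\ge0}=\ZZ_{\ge0}$.
\item In the denominator, the floor in $k_j$ is exactly what places $r_j=d(m+j)-n-ck_j$ in $\{0,\dots,c-1\}$. Any other integer choice would shift a progression by a multiple of $c$ and add or remove finitely many factors.
\item $\gcd(c,d)=1$ makes $j\mapsto r_j$ a bijection onto $\{0,\dots,c-1\}$.
\item $ad\equiv1\pmod c$ matches the phases $\ee(a(n+\ell)/c)=\ee((m+j)/c)$.
\end{itemize}
For (ii), peeling off $\gamma_r$ first in~\eqref{eq:g2r} and inducting gives $\prod_j\Phi_{\gamma_j,m_j,m_{j-1}}(z_j;\tau_j)$, with precisely the orientation $m_r=m$, $m_0=n$. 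The case $c=1$, $d=0$ of (i) then turns each factor into $\Phi_{S,0,0}(z_j+m_j\tau_j-m_{j-1};\tau_j)$.

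The only step you undersell is the continuation. In this paper the meromorphic continuation of $\Phi_{\gamma,m,n}$ across the real axis is itself deduced from this proposition. It uses Faddeev's result that the two product formulas for $\Phi_{S,0,0}$ are restrictions of one meromorphic function of $\tau\notin\RR_{\le0}$. So you cannot simply ``extend both sides'' from $\Im\tau>0$.

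You should also check (i) on $\Im\tau<0$, where both sides are given by products with the convention $(x;q)_\infty=\prod_{j\ge1}(1-xq^{-j})^{-1}$. Your argument carries over verbatim, now tiling $\ZZ_{<0}$ by $\{j-ck\}_{k\ge1}$ in the numerator and by $\{r_j-ck\}_{k\ge1}$ in the denominator. The phases match as before, since $n+r_j-ck\equiv d(m+j)\pmod c$.

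With the identity on both half-planes, the right-hand side of (i) furnishes the continuation to $c\tau+d\notin\RR_{\le0}$. Then (ii) extends by the identity theorem to wherever all $\tau_j\notin\RR_{\le0}$. This includes real $\tau>1$, since then $\tau_{j-1}=b_j-1/\tau_j>1$ inductively, which is the case used in Proposition~\ref{prop:starkfaddeev}.
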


To finish our discussion on the basic properties of Faddeev's modular quantum dilogarithm, we will describe the generalisations of Equation~\eqref{eq:PhiS.fourier}~and~\eqref{eq:PhiS.5term}.
This again follows from the $q$-binomial theorem.
We include a proof in \S\ref{app:mod.fad} for completeness.
Before giving the statement, we state the following:
\begin{lemma}[\cite{GKZ}*{Thm. 2.2}]\label{lem:asymp}
For $\tau\in\RR$ and $z\to\infty$ with fixed argument, we find that
\begin{align}
    \Phi_{\gamma,m,n}(z;\tau)
    =
    \Big\{\begin{array}{cl}
    \mathrm{O}(1) & \text{if }0<\arg(z)<\pi\,,\\
    \mathrm{O}(\ee(Q_{\gamma,-m,-n}(-z,\tau))) & \text{if }-\pi<\arg(z)<0\,.
    \end{array}
\end{align}
\end{lemma}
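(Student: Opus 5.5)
The plan is to reduce Lemma~\ref{lem:asymp} to the case $\gamma=S$, $m=n=0$ via the product formula of Proposition~\ref{prop:prod.id.mod,fad}(i), and then use the known asymptotics of $\Phi_{S,m,n}(z;\tau)$ for $\tau\in\RR_{>0}$ recalled earlier. Since $\tau\in\RR$ and we need $c\tau+d\notin\RR_{\le0}$, we have $\tau_c:=c\tau+d>0$. By~\eqref{eq:modulartofaddeevC},
\[
\Phi_{\gamma,m,n}(z;\tau)=\prod_{j=0}^{c-1}\Phi_{S,0,0}\big(z+(m+j)\tau+k_j;\tau_c\big)\,.
\]
Each factor has argument $z$ shifted by a fixed real constant. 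If $0<\arg(z)<\pi$, then every shifted argument tends to $i\infty$ up to a bounded real shift. Since $\Phi_{S,0,0}(\cdot;\tau_c)=\mathrm{O}(1)$ in the upper half-plane direction, the product is $\mathrm{O}(1)$. This step needs the recalled bound to hold uniformly along any ray in the upper half-plane, not only along $i\RR_{>0}$. I would get that uniformity from the integral representation of $\phi_\sfb$: the integrand decays in the relevant sector, and the integral representation is valid in the strip and continued by the difference equations.

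For $-\pi<\arg(z)<0$, I would avoid summing $c$ Gaussian factors directly. Instead I would use Proposition~\ref{prop:reflection} with $(m,n)\mapsto(-m,-n)$ and $z\mapsto -z$:
\[
\Phi_{\gamma,m,n}(z;\tau)=\mu_\gamma^{-2}(-1)^{m+n}\ee\big(Q_{\gamma,-m-1,-n-1}(-z,\tau)\big)\,\Phi_{\gamma,1-m,1-n}(-z;\tau)^{-1}\,.
\]
Now $-z$ lies in the upper half-plane. By the first case, $\Phi_{\gamma,1-m,1-n}(-z;\tau)$ is $\mathrm{O}(1)$, and it is also bounded below there. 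To see the lower bound, note that its reciprocal is again a product of $\Phi_{S,0,0}^{-1}$ factors. Using $\Phi_{S,0,0}(w;\tau)\Phi_{S,1,1}(-w;\tau)=$ Gaussian from~\eqref{eq:PhiS.reflection}, together with the difference equations, each factor tends to $1$ as $w\to i\infty$. Indeed, the log-integrand tends to zero, so $\phi_\sfb\to1$ in that direction.

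It then remains to compare $Q_{\gamma,-m-1,-n-1}(-z,\tau)$ with $Q_{\gamma,-m,-n}(-z,\tau)$. The two differ by a term linear in $z$, namely $\mp\frac12\big(2-\frac{2}{c\tau+d}\big)z$, plus a constant. For real $\tau$ and $\Im z\to-\infty$, the linear term contributes a factor of modulus $\exp(\pm2\pi(1-\tau_c^{-1})\Im z)$. So the exponents agree only up to such a factor, and the $\mathrm{O}$ must be read appropriately.

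\textbf{Main obstacle.} The real difficulty is this bookkeeping of linear terms and the convention for which $(m,n)$ appears in $Q$. I would first redo the second case through the product formula directly. Apply the lower-half-plane asymptotic $\mathrm{O}(\ee(-(w+(m'-\tfrac12)\tau_c+(n'+\tfrac12))^2/2\tau_c))$ to each factor and sum the $c$ quadratic exponents. The $z^2$ coefficients add up to $-\frac{c z^2}{2\tau_c}$, which matches $Q$. The linear terms would then be checked against $Q_{\gamma,-m,-n}(-z,\tau)$ using $\sum_j k_j$ and the standard Dedekind-sum identity $\sum_j\lfloor (d(m+j)-n)/c\rfloor$. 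Only the real part of the exponent matters, since $\tau$ is real, which keeps this check manageable.
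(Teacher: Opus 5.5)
Your route is the paper's: Proposition~\ref{prop:prod.id.mod,fad}(i) for $0<\arg z<\pi$, and Proposition~\ref{prop:reflection} to reduce $-\pi<\arg z<0$ to the first case. You are right that this reduction needs $\Phi_{\gamma,1-m,1-n}(-z;\tau)$ to be bounded \emph{below}, i.e.\ its reciprocal to be $\mathrm{O}(1)$; the paper's one-line proof leaves this implicit. Your justification settles it: each factor $\Phi_{S,0,0}(w+s_j;c\tau+d)$, with real shifts $s_j$, tends to $1$ as $w\to\infty$ along a fixed ray in the upper half-plane.

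There is, however, an index error in the second case, and it is what produces your ``main obstacle''. Substituting $(m,n,z)\mapsto(-m,-n,-z)$ in Proposition~\ref{prop:reflection} gives
\[
\Phi_{\gamma,1-m,1-n}(-z;\tau)\,\Phi_{\gamma,m,n}(z;\tau)=\mu_\gamma^{-2}(-1)^{m+n}\,\ee\big(Q_{\gamma,-m,-n}(-z,\tau)\big)\,.
\]
So the Gaussian factor is exactly $\ee(Q_{\gamma,-m,-n}(-z,\tau))$, not $\ee(Q_{\gamma,-m-1,-n-1}(-z,\tau))$. As a consistency check, $B_2(1-x)=B_2(x)$ gives $Q_{\gamma,-m,-n}(-z,\tau)=Q_{\gamma,m-1,n-1}(z,\tau)$, which is what you get by matching the factor $\Phi_{\gamma,m'+1,n'+1}(z)$ instead.

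With your index, the bound you derive is off from the claimed one by $\exp(\pm2\pi(1-(c\tau+d)^{-1})\Im z)$. This factor is exponentially large or small as $\Im z\to-\infty$, so as written the proposal does not prove the lemma. Your suggestion that the $\mathrm{O}$ ``must be read appropriately'' is also wrong: the statement holds exactly as written. Once the index is corrected, the second case follows at once from the first case plus the lower bound. The fallback of summing the $c$ Gaussian exponents and evaluating $\sum_j k_j$ via Dedekind-sum identities is then unnecessary.
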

\begin{proof}
We can use Proposition~\ref{prop:prod.id.mod,fad} to deduce the asymptotic behaviour from the known asymptotics of Faddeev's quantum dilogarithm.
Indeed, for $0<\arg(z)<\pi$ we see that $\Phi_{S,0,0}(z+(m+j)\tau+k_j;c\tau+d)=O(1)$ as $z\to\infty$.
For the other case, we use Proposition~\ref{prop:reflection} to reduce to the first case.
\end{proof}
\begin{theorem}[\cite{Dim15}*{Equ. 1.12},\cite{AK14b}*{\S8}, \cite{GKZ}*{Thm. 2.4}]\label{thm:5term.mod.fad}
For $c\neq 0$, $\tau\in\RR_{>0}$, $0<\frac{cw}{c\tau+d}+\ell$, $\frac{c(w+y)}{c\tau+d}+\ell-1<0$, $\calC_{m,n}=\frac{n}{c}-m\frac{c\tau+d}{c}+i\RR-0$, and all the zeros of $\Phi_{\gamma,m+p,n}(z+y;\tau)$ are to the left of the contours (if not, one can take a deformation of the contour inside a compact set depending on $y$ so that this is true), then Faddeev's modular quantum dilogarithm satisfies the equations:
\begin{align}
&\mu_\gamma^{-1}\sqrt{c\tau+d}\,\Phi_{\gamma,\ell,1-h}(w;\tau)^{-1}\\
&=\sum_{m\in\ZZ/c\ZZ}
\int_{\calC_{m+1,n}}
\Phi_{\gamma,m+1,n}(z;\tau)\ee\Big(\frac{(cz-n+(c\tau+d)m)w}{c\tau+d}+\ell (z+m\tau)+h\Big(\frac{z}{c\tau+d}+n\frac{a\tau+b}{c\tau+d}\Big)\Big)dz\,,\nonumber\\
&\mu_\gamma^{-1}\sqrt{c\tau+d}\,\frac{\Phi_{\gamma,p+\ell,-h}(w+y;\tau)}{\Phi_{\gamma,p,0}(y;\tau)\Phi_{\gamma,\ell,1-h}(w;\tau)}\label{eq:5term.int}\\
&=\sum_{m\in\ZZ/c\ZZ}
\int_{\calC_{m+1,n}}
\frac{\Phi_{\gamma,m+1,n}(z;\tau)\ee\big(\frac{(cz-n+(c\tau+d)m)w}{c\tau+d}+\ell (z+m\tau)+h(\frac{z}{c\tau+d}+n\frac{a\tau+b}{c\tau+d})\big)}{\Phi_{\gamma,m+p,n}(z+y;\tau)}dz\,.\nonumber
\end{align}
\end{theorem}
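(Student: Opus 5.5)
We prove Theorem~\ref{thm:5term.mod.fad} by reducing it to the $S$-case identities~\eqref{eq:PhiS.fourier} and~\eqref{eq:PhiS.5term}. The reduction uses the product formula~\eqref{eq:modulartofaddeevC} of Proposition~\ref{prop:prod.id.mod,fad}(i). A uniqueness argument based on $(q,\tq)$-difference equations serves as a fallback.

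\textbf{Step 1 (unfolding).} First we unfold the sum over $m\in\ZZ/c\ZZ$ together with the integral over $\calC_{m+1,n}$. Substitute $z\mapsto z-m\tau$ in the $m$-th summand, or equivalently $z'=cz-n+(c\tau+d)m$. The quasi-periodicity relations $\Phi_{\gamma,m,n}(z+c\tau+d;\tau)=\Phi_{\gamma,m+c,n}(z;\tau)$ and $\Phi_{\gamma,m,n}(z+\tau;\tau)=\Phi_{\gamma,m+1,n+d}(z;\tau)$ then show that the $c$ summands are the $c$ shifted pieces of a single integral over one vertical line in the variable $z'/(c\tau+d)$. The exponential factor $\ee(\cdots)$ is exactly the character that makes this gluing consistent. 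It is invariant under $m\mapsto m+c$, because the extra terms are integers times $\ell$ or $h$.

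\textbf{Step 2 (factorisation).} Next we insert~\eqref{eq:modulartofaddeevC}, which writes the integrand $\Phi_{\gamma,m+1,n}(z;\tau)$ as $\prod_{j}\Phi_{S,0,0}(z+(m+1+j)\tau+k_j;c\tau+d)$. This is a product of $c$ Faddeev dilogarithms with the common modulus $\tau'=c\tau+d$. A product does not turn into a product of Fourier transforms directly, so instead we proceed by induction on the continued fraction length using Proposition~\ref{prop:prod.id.mod,fad}(ii) and the cocycle~\eqref{eq:g2r}. For $\gamma=\gamma_1\gamma_2$, the Fourier transform of $\Phi_{\gamma}$ is computed by first applying the Fourier identity for $\Phi_{\gamma_2}$ to one factor. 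We then change variables $(z;\tau)\mapsto\gamma_2(z;\tau)$ and apply the identity for $\gamma_1$. Multipliers compose as $\mu_{\gamma_1\gamma_2}\sqrt{c\tau+d}=\mu_{\gamma_1}(\gamma_2\tau)\mu_{\gamma_2}\sqrt{\cdots}\sqrt{\cdots}$ by the eta cocycle, which tracks the prefactor. The base case $\gamma=(\smat{b}{-1}{1}{0})=T^bS$ is~\eqref{eq:PhiS.fourier}, respectively~\eqref{eq:PhiS.5term}, after the shift $z\mapsto z+b$. This also explains the $\ell,h$ twists, since $T$ contributes a Gaussian factor.

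\textbf{Alternative (uniqueness).} Alternatively we avoid factorisation altogether. Call both sides of each identity functions of $w$. We check that both sides satisfy the same first-order $q$- and $\tq$-difference equations in $w$, namely shifts $w\mapsto w+1$ and $w\mapsto w+c\tau+d$, up to the index changes on the left. On the right these come from shifting the contour and using the quasi-periodicity relations in $(m,n)$. For $\tau$ real and irrational, a meromorphic solution of both equations with controlled growth is unique up to a constant. The growth bounds come from Lemma~\ref{lem:asymp}, which gives decay of the integrand on the contour, ensures convergence under the stated strip conditions on $w$ and $y$, and justifies the contour shifts. The constant $\mu_\gamma^{-1}\sqrt{c\tau+d}$ is then fixed by a limit such as $w\to i\infty$ or a residue evaluation, compared with the $S$-case. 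The five-term identity~\eqref{eq:5term.int} is handled the same way, with difference equations in both $w$ and $y$.

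\textbf{Main obstacle.} The hardest step will be the bookkeeping of indices and phases: the indices $\ell$, $h$, $p$, the floors $k_j$, and the precise prefactor $\mu_\gamma^{-1}\sqrt{c\tau+d}$. We expect this normalisation constant, together with checking that the contour conditions propagate through the change of variables, to be where most care is needed. We would fix the constant by comparing with $\gamma=S$, and then by induction using the Dedekind eta multiplier cocycle.
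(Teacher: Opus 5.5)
\textbf{Your main route does not go through.}

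In Step~2 you rightly note that Proposition~\ref{prop:prod.id.mod,fad}(i) does not turn the Fourier transform of a product into a product of Fourier transforms. The induction via the cocycle~\eqref{eq:g2r} meets exactly the same obstruction. Write $\Phi_{\gamma_1\gamma_2}(z;\tau)=\Phi_{\gamma_1}(\gamma_2(z;\tau))\,\Phi_{\gamma_2}(z;\tau)$, replace one factor by its Fourier representation, and integrate out $z$ with the identity for the other factor. What remains is an integral over an auxiliary variable $u$ of $\Phi_{\gamma_2}(u;\tau)^{-1}\Phi_{\gamma_1}(\alpha w+\beta u;\gamma_2\tau)^{-1}$, times exponentials. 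These are two dilogarithms with different moduli and mixed arguments, and nothing at hand evaluates such an integral. The five-term identity only handles a quotient of the \emph{same} $\Phi_\gamma$ at shifted arguments. In addition, the kernels $\ee(\frac{czw}{c\tau+d})$ and $\ee(\frac{c_2zu}{c_2\tau+d_2})$ do not match, and the sums over $\ZZ/c_1\ZZ$ and $\ZZ/c_2\ZZ$ have no reason to assemble into the sum over $\ZZ/c\ZZ$. So the inductive step has no mechanism, and your plan for the constant (comparison with $\gamma=S$ propagated through the induction) falls with it. Step~1 does not help either: after $z'=cz-n+(c\tau+d)m$ the contours coincide, but the $c$ summands are integrals of genuinely different functions over that line.

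\textbf{The uniqueness fallback is the right kind of argument, but the steps you assert are where all the content lies.}

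On the right-hand side, $w\mapsto w+1$ and $w\mapsto w+c\tau+d$ merely relabel the family as $h\mapsto h+c$ and $\ell\mapsto\ell+c$; the kernels coincide, using $ad-bc=1$. So the equations you actually need are the unit steps $h\mapsto h+1$ and $\ell\mapsto\ell+1$.

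Proving, say, the $h$-step needs two ingredients. The first is $\tq^n\ee(\frac{z}{c\tau+d})\Phi_{\gamma,m+1,n}=\Phi_{\gamma,m+1,n}-\Phi_{\gamma,m+1,n+1}$. The second is the independence of the right-hand side from $n$. That independence comes from the substitution $z\mapsto z+a\tau+b$ with $\Phi_{\gamma,m,n}(z+a\tau+b;\tau)=\Phi_{\gamma,m+a,n+1}(z;\tau)$, which permutes the summands $m\mapsto m+a$ of $\ZZ/c\ZZ$. This is why the sum over $m$ is there at all; the invariance under $m\mapsto m+c$ that you check is the trivial part.

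Two further gaps remain. The integrals converge only on a vertical strip in $w$, which such steps generally leave. You must therefore first continue the right-hand side meromorphically, by moving contours through poles and collecting residues. Also, uniqueness up to a constant fails for rational $\tau$, so a continuity argument in $\tau$ is needed.

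\textbf{How the paper avoids all this.} It takes $\Im\tau>0$, where $\Phi_{\gamma,m,n}$ is an explicit ratio of $q$- and $\tq$-Pochhammer symbols. It then closes the contour and sums the residues over the lattice of poles. The sum over $m\in\ZZ/c\ZZ$, together with $\sum_k\ee(jk/c)=c\,\delta_{j,0}$, decouples the resulting triple sum into a $q$-series times a $\tq$-series, each summed by the $q$-binomial theorem. The residue at the simple pole supplies $(c\tau+d)(q;q)_\infty/(\tq;\tq)_\infty$, i.e.\ the $\eta$-multiplier normalisation. Finally the identity is continued back to real $c\tau+d>0$.
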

\begin{remark}
The poles of $\Phi_{\gamma,m,n}(z;\tau)$ are contained in a cone between the lines $-m\tau+\RR$ and $-n(a\tau+b)+(c\tau+d)\RR$.
These intersect at the point $(n-m(c\tau+d))/c$ and this is the origin of this choice of contour.
\end{remark}

\subsection{Relation to Stark--Shintani ray class invariants}
\label{sec:starkshintani}
In this subsection, we describe an explicit relation between special values of Faddeev's modular quantum dilogarithm and Stark–Shintani ray class invariants. The results in this section are not new. The main identity (Proposition~\ref{prop:starkfaddeev}) is essentially a reformulation of the results of Shintani~\cite{Shi77}, \cite{Shi78}, with the added observation, originally due to Yamamoto~\cite{Yam10b}, that products of special values of the double sine appearing in Shintani's formula can be rewritten as a limit of a ratio of two $q$-Pochhammer symbols. Such a formula can be found in a recent paper by Yalkinoglu~\cite{Yal26}, and the same idea has also been recently developed in great detail in a preprint of Kopp~\cite{Kop24}. We do not explicitly use their results here, and to make the paper more self-contained we give a concise description of the main formula following Yamamoto~\cite{Yam08}, \cite{Yam10a} (see also Tangedal~\cite{Tan07}).

Let $K$ be a real quadratic field of discriminant~$\Delta_{K}>0$ with two real embeddings $\sigma_i$, $i=1,2$ and corresponding infinite places $\infty_1,\infty_2$. It will be convenient to identify $K$ with its image $\sigma_1(K)$ in $\RR$. We denote by $x'$ the Galois conjugate of $x\in K$, so that with the above identification we have $\sigma_2(x)=x'$. For any subset $X\subset K$ let $X_{+}$ be the set of all totally positive elements of~$X$, i.e., $X_{+}:=\{x\in X \colon x,x'>0\}$. Let $\eps>1$ be the totally positive fundamental unit in $\lO_{K}$ (so $\eps$ is of norm $1$).

Given an integral ideal~$(0)\ne \ff\subsetneq \lO_K$, we denote the narrow ray class group of modulus~$\mathfrak{f}$ by $\Cl_{K}^{+}(\ff)$. By definition, the group $\Cl_{K}^{+}(\ff)$ is the quotient of the multiplicative group $\mathcal{I}(\ff)$ of all fractional ideals of~$K$ coprime to~$\ff$ by the subgroup of principal ideals $(\alpha)$ with $\alpha\in K_{+}$ and $\alpha\equiv 1\Mod{\ff}$.\footnote{The congruence $\alpha\equiv 1\Mod{\ff}$ means $v_{\fp}(\alpha-1)\ge v_{\fp}(\ff)$ for all $\fp|\ff$.}
We also denote by $\eps_{\ff}$ the generator of the group $(\lO_{K}^{\times}\cap (1+\ff))_{+}$ that satisfies $\eps_{\ff}>1$. Given a narrow ray class $\fC\in\Cl_{K}^{+}(\ff)$ we denote by $\zeta(s,\fC)$ the corresponding partial zeta function\footnote{In this sum $\fa$ runs over non-zero \emph{integral} ideals in~$\fC$.}:
	\[\zeta(s,\fC) := \sum_{\fa\in\fC}N(\fa)^{-s}\,.\]
We also define the sign classes $\fC_i=[(\mu_i)]$, where $\mu_i\equiv 1\Mod{\ff}$ satisfy
	\[\mu_1<0<\mu_1'\,,\qquad \mu_2'<0<\mu_2\,.\]

Let us recall an encoding of ideals belonging to a fixed ray class. Let $\fa$ be an integral ideal in the class~$\fC$ and fix a fractional ideal $\fb=\langle 1,\omega \rangle$, with $0<\omega'<1<\omega$, such that $\fa\fb=(z)\ff$ for some $z\in (K^{\times})_{+}$. Then~$\omega$ can be expanded into a purely periodic Hirzebruch--Jung continued fraction
	\[\omega = \cycle{b_0,\dots,b_{\ell-1}} := b_0-\frac{1}{b_1-\cdots\ds\frac{1}{b_{\ell-1}-\ds\frac{1}{b_0-\cdots}}}\,,\qquad b_i\ge2\,,\]
of (least) period $\ell$. We extend the sequence $b_n$ by $\ell$-periodicity $b_{n+\ell}=b_n$ to all $n\in\ZZ$ and denote $\omega_n:=\cycle{b_{n},\dots,b_{n+\ell-1}}$. Define $A_{n}$ by 
	\[A_0=1\,,\qquad \qquad A_{n+1}=\frac{A_n}{\omega_{n+1}}\,,\qquad n\in \ZZ\,.\]
With these definitions we have $A_{k+1}=b_kA_k-A_{k-1}$ and 
	\[\ang{A_{k+1},A_k}=\dots=\ang{A_0,A_{-1}}=\ang{1,\omega}=\fb\,,\]
and there is a unique pair $(x_k,y_k)\in\QQ^2\cap (0,1]\times [0,1)$ such that 
	\[x_kA_{k-1}+y_kA_k\in z+\fb\,.\]
The choice of $x_0,y_0$ is uniquely determined by $z$ and for other indices they are extended by
	\[\begin{pmatrix} x_{n+1} \\ y_{n+1} \end{pmatrix} \equiv 
	\begin{pmatrix} b_{n} &  1 \\ -1 & 0 \end{pmatrix} 
	\begin{pmatrix} x_{n} \\ y_{n} \end{pmatrix} \Mod{\ZZ^2}\,.\]
The sequence $\{(x_n,y_n)\}_n$ is also periodic, with period $r\ell$, where $r$ is defined by $\eps_{\ff}=\eps^r$. The following result is~\cite[Proposition 2.1.4]{Yam08}.
\begin{proposition}
	For each $n\in\ZZ$, let $Q_n$ be the quadratic form
	\[Q_n(x,y)=\frac{(x\omega_n+y)(x\omega_n'+y)}{\omega_n-\omega_n'}\,.\]
	Then 
	\[\zeta(s,\fC)=(\Delta_K^{1/2}N(\ff))^{-s}\sum_{n\Mod{r\ell}}Z_{Q_n}(s,x_n,y_n)\,,\]
	where $r=\log\eps_{\ff}/\log\eps$ and
	\[Z_{Q}(s,x,y)=\sum_{p,q\ge0}Q(x+p,y+q)^{-s}\,.\]
\end{proposition}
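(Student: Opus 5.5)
The plan is to decompose the cone of integral ideals in $\fC$ into Shintani-type cones indexed by the continued fraction data, and then sum over a fundamental domain for the action of $\eps_{\ff}$.

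First, fix $\fa_0$ and $\fb$ as in the setup, with $\fa_0\fb=(z)\ff$. An integral ideal $\fa\in\fC$ satisfies $\fa=\fa_0(\alpha)$ with $\alpha\in K_+$ and $\alpha\equiv1\Mod{\ff}$. Then $\fa\fb=(z\alpha)\ff$, and we get a bijection
\[
\fa \;\longleftrightarrow\; \xi=z\alpha\in (z+\fb\ff z^{-1}\cdots)_+\Big/(\lO_K^\times\cap(1+\ff))_+ .
\]
More cleanly, I would normalise so that $\fa$ corresponds to totally positive $\xi\in z+\fb$ (after rescaling by $\ff$), modulo multiplication by $\eps_{\ff}$, with $N(\fa)=N(\xi)/(N(\fb)N(\ff))$. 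Integrality of $\fa$ is automatic from $\xi\in z+\fb$ with the right normalisation. I would check this bookkeeping against the factor $(\Delta_K^{1/2}N(\ff))^{-s}$, using $N(\fb)^{-1}=\omega-\omega'$ up to $\Delta_K^{1/2}$ for $\fb=\ang{1,\omega}$.

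Next comes the cone decomposition. The totally positive quadrant $\RR_{>0}^2$ (via the two embeddings) is the disjoint union over $k\in\ZZ$ of the half-open cones
\[
C_k=\{xA_{k-1}+yA_k\colon x>0,\ y\ge0\}.
\]
This holds because $A_k\to0$ and $A_k'\to\infty$ as $k\to\infty$, with the reverse as $k\to-\infty$, and the slopes $A_k/A_k'$ are monotone. Monotonicity follows from $A_{k+1}=b_kA_k-A_{k-1}$ with $b_k\ge2$, together with the sign pattern given by $0<\omega'<1<\omega$. Since $\ang{A_{k-1},A_k}=\fb$, the points of $z+\fb$ lying in $C_k$ are exactly
\[
(x_k+p)A_{k-1}+(y_k+q)A_k,\qquad p,q\ge0,
\]
because $(x_k,y_k)\in(0,1]\times[0,1)$ is the unique representative matching the half-openness of the cone. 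The recursion for $(x_{n+1},y_{n+1})$ comes from rewriting $A_{k}$, $A_{k+1}$ in terms of $A_{k-1}$, $A_k$.

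Then I would compute the norm. We have
\[
N\big(xA_{k-1}+yA_k\big)=A_kA_k'\,(x\omega_k+y)(x\omega_k'+y),
\]
up to the index shift convention, using $A_{k-1}=\omega_kA_k$. I also need $A_kA_k'(\omega_k-\omega_k')=\omega-\omega'$, which is invariant along the continued fraction by a determinant/discriminant argument. Together these give $N(\xi)/N(\fb)\propto Q_k(x_k+p,y_k+q)$.

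Finally, multiplication by $\eps_{\ff}=\eps^r$ maps $C_k$ to $C_{k+r\ell}$. This is because multiplication by $\eps$ shifts $A_k$ by the period $\ell$, and it sends $z+\fb$ to itself since $\eps_{\ff}\equiv1$. So a fundamental domain is $\bigcup_{n\bmod r\ell}C_n$, which yields the formula. The main obstacle is the half-open boundary conventions: each ideal must be counted exactly once, which requires matching $(0,1]\times[0,1)$ with the cone faces and the $\eps$-shift. The normalisation constant also needs care.
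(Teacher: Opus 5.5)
Your argument is the standard cone-decomposition proof that underlies this result. The paper does not reprove it; it cites \cite[Proposition 2.1.4]{Yam08}. The geometric part is correct:
\begin{itemize}
\item the half-open cones $C_k$ tile $\RR_{>0}^2$;
\item the condition $x>0$, $y\ge0$ matches $(x_k,y_k)\in(0,1]\times[0,1)$;
\item $A_{k-1}A_k'-A_{k-1}'A_k=\omega-\omega'$ is invariant along the recursion;
\item $r\ell$ consecutive cones form a fundamental domain for $\eps_{\ff}$.
\end{itemize}

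The normalisation, however, is wrong as you wrote it, and you should fix it rather than leave it to be checked.
\begin{itemize}
\item \emph{No rescaling by $\ff$ is needed.} An integral $\fa'\in\fC$ is $\fa\beta$ with $\beta\in K_+$ and $\beta\in 1+\ff\fa^{-1}$. This encodes integrality together with $\beta\equiv1\Mod{\ff}$, using that $\fa$ is coprime to $\ff$. Then $\xi=z\beta$ runs over $(z+z\ff\fa^{-1})_+=(z+\fb)_+$ modulo $\eps_{\ff}$. Conversely, any $\xi\in(z+\fb)_+$ gives the integral ideal $\xi\ff\fb^{-1}\in\fC$, since $z+\fb\subseteq\fb\ff^{-1}$ and $\xi-z\in\fb=z\ff\fa^{-1}$.
\item \emph{Norm of $\fa'$.} From $\fa'\fb=(\xi)\ff$ you get $N(\fa')=N(\xi)N(\ff)/N(\fb)$. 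Your formula $N(\xi)/(N(\fb)N(\ff))$ would produce $(\Delta_K^{1/2}N(\ff)^{-1})^{-s}$ instead of the stated factor.
\item \emph{Norm of $\fb$.} It is $N(\fb)$, not $N(\fb)^{-1}$, that is proportional to $\omega-\omega'$: $N(\fb)=(\omega-\omega')\Delta_K^{-1/2}$.
\end{itemize}
Combining these with $N(\xi)=A_nA_n'(X\omega_n+Y)(X\omega_n'+Y)$ (from $A_{n-1}=\omega_nA_n$) and $A_nA_n'(\omega_n-\omega_n')=\omega-\omega'$ gives exactly $N(\fa')^{-s}=(\Delta_K^{1/2}N(\ff))^{-s}Q_n(X,Y)^{-s}$.

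Two smaller points. First, $\eps_{\ff}(z+\fb)=z+\fb$ needs $(\eps_{\ff}-1)z\in\ff\cdot\fa\fb\ff^{-1}=\fa\fb\subseteq\fb$, so it uses integrality of $\fa$, not just $\eps_{\ff}\equiv1$. Second, since $A_{k+\ell}=\eps^{-1}A_k$, multiplication by $\eps_{\ff}$ sends $C_k$ to $C_{k-r\ell}$ rather than $C_{k+r\ell}$; this is harmless.
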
 
Let $\zeta_2(s,\omega,z)$ be the Barnes double zeta function
	\[\zeta_2(s,\omega,z) = \sum_{p,q\ge0}(z+p\omega+q)^{-s}\,,\qquad \re s>2\,,\]
that has a meromorphic continuation to $s\in \CC$, let $G(z;\omega)=\zeta_2'(0,\omega,z)$, and denote by
	\[\lS(z;\omega) = \exp(G(1+\omega-z;\omega)-G(z;\omega))\]
the Shintani double sine function. Using Shintani's computation of $Z_Q'(0,x,y)$~\cite[Prop.~3]{Shi77}, one then gets the following result (see~\cite[Theorem 5.1.1]{Yam08}).
\begin{proposition}
	With the above notation we have
	\[\exp(-\zeta'(0,\fC)+\zeta'(0,\fC\fC_1\fC_2)) = \prod_{n\Mod {r\ell}}\lS(x_n\omega_n+y_n;\omega_n)\lS(x_n\omega_n'+y_n;\omega_n')\,.\]
\end{proposition}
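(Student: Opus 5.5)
The plan is to differentiate the formula of the previous proposition at $s=0$, once for $\fC$ and once for $\fC\fC_1\fC_2$, and to compare the two sides term by term using Shintani's evaluation of $Z_Q'(0,x,y)$.

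\emph{Step 1: the data attached to $\fC\fC_1\fC_2$.} The element $\mu_1\mu_2$ is totally negative and $\equiv 1\Mod{\ff}$. Hence $-\mu_1\mu_2$ is totally positive and $\equiv -1\Mod{\ff}$, so it differs from a generator of the trivial narrow class by the sign $-1$. Choose $\fa\in\fC$ with $\fa\fb=(z)\ff$, and use the same $\fb=\ang{1,\omega}$ for $\fC\fC_1\fC_2$. I then expect the encoding of $\fC\fC_1\fC_2$ to be obtained by replacing $z$ by $-z$ modulo $\fb$. At the level of the pairs $(x_n,y_n)\in(0,1]\times[0,1)$ this should amount to $(x_n,y_n)\mapsto(1-x_n,1-y_n)$, up to the boundary normalisation (the cases $y_n=0$ or $x_n=1$ are handled by shifting by $\ZZ^2$). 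With the same $\omega_n$ and the same period $r\ell$, this gives
\[
(1-x_n)\omega_n+(1-y_n)=1+\omega_n-(x_n\omega_n+y_n),
\]
and the analogous identity with $\omega_n'$. This is exactly the argument appearing in the first term of $\lS$.

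\emph{Step 2: Shintani's formula.} Differentiating the previous proposition gives
\[
\zeta'(0,\fC)=-\log(\Delta_K^{1/2}N(\ff))\,\zeta(0,\fC)+\sum_{n}Z_{Q_n}'(0,x_n,y_n).
\]
Shintani's \cite[Prop.~3]{Shi77} writes $Z_Q'(0,x,y)$ as
\[
G(x\omega+y;\omega)+G(x\omega'+y;\omega')
\]
plus an elementary term. That elementary term is a combination of $\log(\omega-\omega')$, $\log\omega$ and $\log\omega'$, with coefficients given by Bernoulli polynomials $B_1,B_2$ in $x$ and $y$, and it is tied to $Z_Q(0,x,y)$. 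I insert this for both classes and take the difference. By Step 1 the $G$-terms combine into
\[
\sum_n\log\bigl(\lS(x_n\omega_n+y_n;\omega_n)\,\lS(x_n\omega_n'+y_n;\omega_n')\bigr).
\]

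\emph{Step 3: cancellation of elementary terms (main obstacle).} It remains to show that the remaining terms cancel in the difference. For the global constant I use $\zeta(0,\fC)=\zeta(0,\fC\fC_1\fC_2)$. This holds because $Z_Q(0,x,y)$ is a polynomial in $B_1(x),B_1(y),B_2(x),B_2(y)$ that is invariant under $(x,y)\mapsto(1-x,1-y)$, since $B_k(1-t)=(-1)^kB_k(t)$ and the relevant terms have even total degree. The $\log(\omega_n-\omega_n')$ terms are multiplied by $Z_{Q_n}(0,x_n,y_n)$, so they cancel for the same reason. The odd-degree Bernoulli terms multiplying $\log\omega_n$ and $\log\omega_n'$ do not cancel individually: they change sign. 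My plan for them is first to check directly that they are invariant as well. If they are not, I will show that their sum over $n\Mod{r\ell}$ vanishes, using $\omega_n\omega_{n+1}\cdots$ expressed through $A_n$, the recursion $(x_{n+1},y_{n+1})\equiv(b_nx_n+y_n,-x_n)$, and the cyclic reindexing $n\mapsto n+1$, which should make these sums telescope. This bookkeeping, including the boundary cases where some $x_n$ or $y_n$ is an integer, is where I expect the real work to be. Once it is done, exponentiating the difference gives the claimed identity.
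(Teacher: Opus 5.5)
Your route is the one the paper intends: it gives no argument of its own beyond citing Shintani's formula for $Z_Q'(0,x,y)$ and Yamamoto's Theorem~5.1.1. Step~1 is correct: integral ideals of $\fC\fC_1\fC_2$ correspond to totally positive elements of $-z+\fb$, with the same $\fb$, $\omega_n$, $A_n$.

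The gap is Step~3. You leave it as a conditional plan, and that plan rests on a wrong guess about the shape of Shintani's formula. In the paper's normalisation $G(z;\omega)=\zeta_2'(0,\omega,z)$, I believe the formula reads as follows (I derived it myself; it is not quoted from \cite{Shi77}):
\[
Z_Q'(0,x,y)=G(x\omega+y;\omega)+G(x\omega'+y;\omega')+Z_Q(0,x,y)\log(\omega-\omega')+\frac14\Bigl(\frac1{\omega'}-\frac1\omega\Bigr)B_2(y)\log\frac{\omega'}{\omega}\,,
\]
where $Z_Q(0,x,y)=B_1(x)B_1(y)+\frac{\omega+\omega'}{4}B_2(x)+\frac14\bigl(\frac1\omega+\frac1{\omega'}\bigr)B_2(y)$ and $B_1(t)=t-\frac12$.

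To see this, set $\zeta_L(s)=\sum_{p,q\ge0}L(x+p,y+q)^{-s}$ for a linear form $L$, and take $L_1=X\omega+Y$, $L_2=X\omega'+Y$. Splitting the Mellin integral of $\sum_{p,q}(L_1L_2)^{-s}$ along $t_1=t_2$ gives its derivative at $0$ as $\zeta_{L_1}'(0)+\zeta_{L_2}'(0)+\frac12\sum_{\{j,k\}=\{1,2\}}\int_0^1\bigl(\zeta_{L_j+tL_k}(0)-\zeta_{L_j}(0)\bigr)\frac{dt}{t}$. Since $\zeta_{aX+bY}(0)=B_1(x)B_1(y)+\frac{a}{2b}B_2(x)+\frac{b}{2a}B_2(y)$, the cross term drops out of these differences and only a $B_2(y)$ term survives. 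So there are no odd Bernoulli terms multiplying $\log\omega$ or $\log\omega'$. Every elementary term is invariant under $(x,y)\mapsto(1-x,1-y)$ and cancels cone by cone. No telescoping over $n$ is needed.

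The boundary cases are also not handled by ``shifting by $\ZZ^2$'' alone. Write $(x_n^*,y_n^*)$ for the normalised pair attached to $-z$.
\begin{itemize}
\item If $y_n=0$, then $(x_n^*,y_n^*)=(1-x_n,0)$, not $(1-x_n,1)$. The difference $Z_{Q_n}(s,1-x_n,0)-Z_{Q_n}(s,1-x_n,1)=\sum_{k\ge0}Q_n(1-x_n+k,0)^{-s}$ is a whole Hurwitz-type series, and its derivative at $s=0$ contributes $\log\Gamma$-values.
\item Likewise, if $x_n=1$, then $Z_{Q_n}(s,1,1-y_n)-Z_{Q_n}(s,0,1-y_n)=-\sum_{k\ge0}Q_n(0,1-y_n+k)^{-s}$.
\item The case $x_n=1$, $y_n=0$ cannot occur, since it would force $z\in\fb$, i.e.\ $\ff\mid\fa$.
\end{itemize}
What you need is that these corrections cancel in pairs. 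By the recursion, $y_n=0$ if and only if $x_{n-1}=1$. In that case both series run over the points of $-z+\fb$ on the common ray $\RR_{>0}A_{n-1}$. Moreover $Q_n(t,0)=Q_{n-1}(0,t)$, because $\omega_{n-1}-\omega_{n-1}'=(\omega_n-\omega_n')/(\omega_n\omega_n')$. Hence $\sum_{n}Z_{Q_n}(s,x_n^*,y_n^*)=\sum_nZ_{Q_n}(s,1-x_n,1-y_n)$ identically in $s$, with $n$ running mod $r\ell$. Only after this do the arguments $1+\omega_n-(x_n\omega_n+y_n)$ of $\lS$ appear exactly, and only then does $\zeta(0,\fC)=\zeta(0,\fC\fC_1\fC_2)$ follow from the polynomial invariance. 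With these two points supplied, your argument is complete.
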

Yamamoto then shows~\cite[Theorem 5.2.3, Remark 5.2.4]{Yam08} that the refined invariants
	\[X_1(\fC) = \prod_{n\Mod {r\ell}}\lS(x_n\omega_n+y_n;\omega_n)\,,
	\qquad 
	X_2(\fC) = \prod_{n\Mod {r\ell}}\lS(x_n\omega_n'+y_n;\omega_n')\]
satisfy the symmetries
	\begin{align*}
	X_1(\fC) &= X_1(\fC\fC_1)\,,&& X_1(\fC) = X_1(\fC\fC_2)^{-1}\,,\\
	X_2(\fC) &= X_2(\fC\fC_1)^{-1}\,,&& X_2(\fC) = X_2(\fC\fC_2)\,.
	\end{align*}
In particular, this implies that
	\[\log X_1(\fC) = \frac{1}{2}\Big(-\zeta'(0,\fC)-\zeta'(0,\fC\fC_1)+\zeta'(0,\fC\fC_2)+\zeta'(0,\fC\fC_1\fC_2)\Big)\,.\]
Comparing this to Stark~\cite[p.~65]{Sta76}, under assumption $\fC_1\ne \fC_2$, Stark's number $\epsilon_{m}(\fc)$ is then
	\[\epsilon_{m}(\fc) = X_1(\fC)^{-mt}\,,\]
where $\fc$ is the image of $\fC$ in $\Cl_{K}^{+}(\ff)/\ang{\fC_1}$, and $t=1$ if $\fC_1=1$ and $t=2$ otherwise. 

It remains to relate the invariant $X_1(\fC)$ to Faddeev's modular quantum dilogarithm. We denote 
	\[\gamma=\pmat {b_0}{-1}{1}{0}\pmat {b_1}{-1}{1}{0}\cdots \pmat {b_{\ell-1}}{-1}{1}{0}\in \SL_2(\ZZ)\,.\]
With the standard action of $\SL_2(\ZZ)$ on $\mathbb{P}^1(\CC)$, we have $\gamma\omega_0=\omega_0$ and also $\eps_{\ff}=c\omega_0+d$, where $\gamma^r=(\smat abcd)$.

\begin{proposition} \label{prop:starkfaddeev}
With the above notation, we have
	\[X_1(\fC) = |\Phi_{\gamma^{r},m,0}(x_0\omega_0+y_0;\omega_0)|\,,\]
where $m=(d-1)x_0-cy_0\in\ZZ$.
\end{proposition}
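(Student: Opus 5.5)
We plan to prove Proposition~\ref{prop:starkfaddeev} by writing both sides as finite products of Faddeev's quantum dilogarithm $\Phi_{S,0,0}$ and matching the factors. The left side is a product over $n \Mod{r\ell}$ of Shintani double sines $\lS(x_n\omega_n+y_n;\omega_n)$. The right side can be expanded using the cocycle formula~\eqref{eq:gamr.cocyc} together with Proposition~\ref{prop:prod.id.mod,fad}(ii).

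The first step is to express $\lS(z;\omega)$ through $\Phi_{S,0,0}$. For $\Im\omega>0$ the Barnes double zeta function gives $\lS(z;\omega)$ as a ratio of two $q$-Pochhammer symbols, with $q=\ee(\omega)$ and $\tq=\ee(-1/\omega)$, times an exponential of a quadratic polynomial in $z$ (a Bernoulli-type term). Comparing with the product expansion~\eqref{eq:prod.fadeev}, we get
\[
\lS(z;\omega)=\ee(P(z,\omega))\,\Phi_{S,0,0}(z;\omega)^{\pm1}
\]
for an explicit quadratic polynomial $P$, up to a shift $z\mapsto z\pm1$ and an index shift. By analytic continuation this identity also holds for real $\omega>0$. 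When $\omega$ is real, $P(z,\omega)$ is real for real $z$, so the factor $\ee(P)$ has modulus one. This is the reason the statement carries an absolute value: we need not track these exponential factors exactly, only check that they have modulus one.

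Next we apply Proposition~\ref{prop:prod.id.mod,fad}(ii) to $\gamma^r=\prod_{j}\pmat{b_j}{-1}{1}{0}$, where the word has length $r\ell$. The matrix $\pmat{b}{-1}{1}{0}$ acts by $\tau\mapsto b-1/\tau$, so starting from $\tau_{r\ell}=\omega_0$ the intermediate values $\tau_j$ run through the $\omega_n$ with their indices suitably reversed. Correspondingly $z_{j-1}=z_j/\tau_j$ scales $z$ by the products $A_n$. Because the integers $m_1,\dots,m_{r\ell-1}$ in Proposition~\ref{prop:prod.id.mod,fad}(ii) are arbitrary, we choose them so that the arguments become $z_j+m_j\tau_j-m_{j-1}=x_n\omega_n+y_n$, possibly after using $\ZZ$-shifts absorbed by the difference equations. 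We verify this inductively from the recursion
\[
(x_{n+1},y_{n+1})\equiv(b_nx_n+y_n,\,-x_n)\Mod{\ZZ^2}.
\]
Concretely, the point $x_n\omega_n+y_n$ divided by $\omega_n$ should equal $x_{n+1}\omega_{n+1}+y_{n+1}$ modulo $\ZZ\omega_{n+1}+\ZZ$. This follows from $\omega_n=b_n-1/\omega_{n+1}$ (or $\omega_{n+1}$ in place of $\omega_n$, depending on the ordering convention) and from $A_{n+1}=A_n/\omega_{n+1}$. The boundary conditions $m_{r\ell}=m$ and $m_0=0$ then force the value of $m$. Periodicity of $(x_n,y_n)$ with period $r\ell$ closes the loop, and this yields $m=(d-1)x_0-cy_0$. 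That this is an integer follows from $(x_0,y_0)\gamma^r\equiv(x_0,y_0)$ modulo $\ZZ^2$, which is again periodicity.

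The main obstacle is the bookkeeping in this last step. The integers $m_j$ must be chosen so that each factor is exactly $\Phi_{S,0,0}$ at $x_n\omega_n+y_n$ with $(x_n,y_n)\in(0,1]\times[0,1)$, and not a shifted version. The identity also involves real $\tau$, where we must make sure that no factor hits a pole or branch issue.

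If an exact match fails, we use the fallback that any discrepancy is a product of factors $(1-\ee(\text{real}))^{\pm1}$ or of unimodular exponentials. The former would be detected by comparing zeros and poles, so after the matching only unimodular factors should remain, and taking absolute values removes them.
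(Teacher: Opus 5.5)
Your overall strategy is the paper's until the last step. That step fails as you describe it, and the idea that finishes the proof is missing.

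\textbf{The main gap: the endpoints and the index-shift identity.} Once every factor is matched exactly, the endpoints are not free. Write $z_j=u_j\omega_j+v_j$, so that $u_{r\ell}=x_0$, $v_{r\ell}=y_0$ and $u_{j-1}=-v_j$. Matching forces $m_j=x_j-u_j$. Hence $m_{r\ell}=x_{r\ell}-x_0=0$, and $m_0=x_0-u_0=(1-d)x_0+cy_0=-m$, using $(u_0,v_0)=(x_0,y_0)\gamma^{-r}$. So the termwise comparison gives $X_1(\fC)=|\Phi_{\gamma^r,0,-m}(z_{r\ell};\omega_0)|$, with the integer in the \emph{second} index. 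You cannot impose $m_0=0$, $m_{r\ell}=m$ and still have every factor match.

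What converts this into the statement is the index-shift periodicity~\eqref{eq:faddeevperiod}. Since $\gamma^r\omega_0=\omega_0$, we have $\tq=q$. Also $z_{r\ell}/\eps_{\ff}=z_0=u_0\omega_0+v_0$ with $(u_0,v_0)\equiv(x_0,y_0)\pmod{\ZZ^2}$, so $z_{r\ell}\equiv z_{r\ell}/\eps_{\ff}+m_0\omega_0\pmod{\ZZ}$. Therefore $\Phi_{\gamma^r,k,m_0+k}(z_{r\ell};\omega_0)$ is independent of $k$, and $k=m$ gives $\Phi_{\gamma^r,m,0}$.

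Your fallback cannot replace this. Imposing the wrong endpoints leaves finite products of factors $(1-q^k\ee(\cdot))^{\pm1}$ from the difference equations, evaluated at real points. Their moduli are $2|\sin\pi\theta|$, not $1$, and ``comparing zeros and poles'' says nothing about such numbers. They cancel only because, at this special point, each numerator factor coincides with a denominator factor, which is exactly~\eqref{eq:faddeevperiod}.

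\textbf{A smaller problem: the shift is essential.} The shift in the double-sine formula is not cosmetic. The correct relation is
$\lS(z;\tau)=\Phi_{S,0,0}(z-1;\tau)\exp\big[\frac{\pi i}{2\tau}\big(z^2-(\tau+1)z+\frac{1+3\tau+\tau^2}{6}\big)\big]$,
so the targets are $x_j\omega_j+y_j-1$. Matching then also forces $m_{j-1}=v_j-y_j+1$. The two prescriptions for each interior $m_j$ agree if and only if $x_j+y_{j+1}=1$, which the normalisation $(x_n,y_n)\in(0,1]\times[0,1)$ guarantees. With your unshifted target $x_n\omega_n+y_n$ you would need $x_j+y_{j+1}=0$, which never holds. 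So ``exactly $\Phi_{S,0,0}$ at $x_n\omega_n+y_n$'' is impossible.

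Also, dividing by $\omega_n$ takes $x_n\omega_n+y_n$ to $x_{n-1}\omega_{n-1}+y_{n-1}$ modulo $\ZZ\omega_{n-1}+\ZZ$, not to index $n+1$.
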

\begin{proof}
The double sine function can be expressed in terms of Faddeev's function via
    \[\lS(z;\tau) = \Phi_{S,0,0}(z-1;\tau)\exp
	  \Big[\frac{\pi i}{2\tau}\Big(z^2-(\tau+1)z+\frac{1+3\tau+\tau^2}{6}\Big)\Big]\,.\]
Therefore,
    \[X_1(\fC) = \prod_{j=1}^{N}|\Phi_{S,0,0}(x_j\omega_j+y_j-1;\omega_j)|\,,\]
where we set $N=r\ell$. Setting $\tau_j = \omega_j$, $z_N=x_0\omega_0+y_0=x_N\omega_N+y_N$, and $z_{j-1}=z_j/\omega_j$, $j=1,\dots,N$, we have
    \[\tau_{j-1}=b_{j-1}-\frac{1}{\tau_j}\,,\qquad z_{j-1}=\frac{z_j}{\tau_j}\,.\]
If we write $z_j=u_j\tau_j+v_j$, then $u_N=x_0$, $v_N=y_0$, and the recursion is $u_{j-1}=-v_j$, $v_{j-1}=u_{j}+v_jb_{j-1}$. Let us choose $m_j$ so that $z_j+m_j\tau_j-m_{j-1}=x_{j}\omega_{j}+y_j-1$, $j=1,\dots,N$. This can be achieved by setting $m_N=0$ and $m_{j}=x_j-u_j$, $j=0,\dots,N-1$; both sets of equations hold since $y_j+x_{j-1}=1$. Therefore, by part (ii) of Proposition~\ref{prop:prod.id.mod,fad}
    \[X_1(\fC) = |\Phi_{\gamma^r,0,m_0}(x_0\omega_0+y_0;\omega_0)|\,.\]
We compute $m_0=x_0-u_0=cy_0+(1-d)x_0=-m$.
Finally, since $\gamma\omega_0=\omega_0$ and $z_N=\frac{z_N}{c\omega_0+d}+m_0\omega_0$, by~\eqref{eq:faddeevperiod}
    \[\Phi_{\gamma^r,0,m_0}(z_N;\omega_0)= \Phi_{\gamma^r,-m_0,0}(z_N;\omega_0)\,,\]
proving the claim.
\end{proof}
This shows that for real quadratic fields all Stark units appear among special values of the modular quantum dilogarithm.

\section{Identities for special values of Faddeev's modular quantum dilogarithm}
\label{sec:mainidentities}

In this section, we will prove that certain specialisations of Faddeev's quantum dilogarithm at algebraic arguments lie on algebraic varieties defined over $\ZZ[\zeta_{2N},1/N]$, where $\zeta_{2N}$ is a primitive $2N$-th root of unity.
These varieties give analogues of the functional equations of the quantum dilogarithm.
This will be explained in detail in \S\ref{sec:finitedilog}. We will show in \S\ref{sec:algebraicity} that these varieties are in fact zero dimensional, implying algebraicity of the special values. Before proving the general result, we will treat the case of Faddeev's quantum dilogarithm $\Phi=\Phi_{S,0,0}$ that is technically easier.

\subsection{Special values of Faddeev's quantum dilogarithm}

The evaluation of Faddeev's quantum dilogarithm in the upper and lower half planes is explicitly given by Equation~\eqref{eq:prod.fadeev}.
It was shown in~\cite{GK17} how to compute the function~$\Phi_{S,m,n}(z;\tau)$ explicitly for $\tau\in\QQ$.
Our main interest is the value when~$\tau$ solves certain quadratic equations.
In particular, fix $N\in\ZZ_{>0}$ and consider $\tau=\eps$ where $\eps^2-(N+2)\eps+1=0$.
Thanks to~\cite{Lub99}, we can give explicit and convergent summation formulas for $\Phi_{S,m,n}(z,\eps)$ when $\Im(z)\neq 0$.
However, we also want $z\in\RR$ and in particular $z\in\frac{\eps-1}{N}\ZZ$.
The reason that these arguments are of particular interest is that one would naively expect their value to be unity.
Indeed, at these points we find that $q=\tq$ and $\ee(z)=\ee(z/\eps)$.
However, Faddeev's quantum dilogarithm is a cocycle and not a coboundary (which could be proved by calculating the value at these points to some numerical accuracy).
While these values are not unity, they are algebraic numbers (that are expected to be units), see Theorem~\ref{thm:algebraicity}. For now, we show that they give rise to points on a variety that is a finite analogue of the Equations~\eqref{eq:PhiS.reflection},~\eqref{eq:PhiS.fourier},~and~\eqref{eq:PhiS.5term}.

For $u\in\ZZ/N\ZZ$, define $F_N^{\pm}(u)=\Phi_{S,0,0}(\frac{u}{N}(1-\eps);\eps)\ee(\frac{N-1}{24})$ with $\pm=\sgn(u/(\eps-1))$ where $\eps^2-(N+2)\eps+1=0$.
This is well defined, since for $z=\frac{u}{N}(1-\eps)$ and $z\neq\eps-1$, we have
\[
    \Phi_{S,0,0}(z+(1-\eps);\eps)=\frac{1-q^{-1}\ee(z)}{1-\tq^{-1}\ee(z/\eps)}\Phi_{S,0,0}(z)=\Phi_{S,0,0}(z)\,.
\]
From this computation, we also see that $F_N^+(u)=\eps^{\pm \delta_{u,0}}F_{N}^-(u)$ and $F_{N}^+(0)>F_{N}^-(0)$.
(We note that $\Phi_{S,0,0}(0;\eps)=\eps$.)
We have the following:
\begin{theorem}\label{thm:fn.equs}
The numbers $F_N^{\pm}(u)$ satisfy the algebraic equations:
\begin{align}
    F_N^\pm(0)&=\sqrt{\frac{N+2\pm\sqrt{N^2+4N}}{2}}\,,\label{eq:Fpm.zero}\\
    F_N^{+}(u)F_N^{-}(-u)
    &=
    (-1)^u\ee\Big(-\frac{u^2}{2N}\Big)\,,\label{eq:Fpm.reflection}\\
    \frac{1}{\sqrt{N}}\sum_{x\in\ZZ/N\ZZ}\ee\Big(\frac{xu}{N}\Big)F_{N}^+(x)
    &=
    \ee\Big(\frac{N-1}{24}\Big)\frac{1}{F_{N}^{-}(u)}\,,\label{eq:Fpm.fourier}\\
    \frac{1}{\sqrt{N}}\sum_{x\in\ZZ/N\ZZ}\ee\Big(\frac{xu}{N}\Big)\frac{F_{N}^+(x)}{F_{N}^-(x+v)}
    &=
    \frac{F_N^+(u+v)}{F_{N}^-(u)F_{N}^-(v)}\,,\qquad (u,v)\ne (0,0)\,.\label{eq:Fpm.5term}
\end{align}
\end{theorem}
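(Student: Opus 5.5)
We prove the four identities separately. Each is obtained by specialising one of the analytic identities for $\Phi_{S,m,n}$ at the fixed point $\tau=\eps$, using that $\eps>1$ is real.

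\textbf{Value at zero and reflection.} We first show~\eqref{eq:Fpm.zero}. Apply~\eqref{eq:PhiS.reflection} at $z=0$ and combine it with the difference equation $\Phi_{S,0,0}(z)=(1-\ee(z))\Phi_{S,1,0}(z)$. Near $z=0$ this determines the value $\Phi_{S,0,0}(0;\eps)=\eps$ up to the $24$-th root of unity factor. Taking the two sign choices as the limits from the two sides, i.e.\ from $z=\pm\frac{\eps-1}{N}\cdot 0^+$, gives $F_N^\pm(0)$ with $F_N^+(0)F_N^-(0)=1$ and $F_N^+(0)/F_N^-(0)=\eps$, so $F_N^\pm(0)=\eps^{\pm1/2}$. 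This is exactly~\eqref{eq:Fpm.zero}, since $\eps^{1/2}+\eps^{-1/2}=\sqrt{N+4}$.

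For~\eqref{eq:Fpm.reflection}, substitute $z=\frac{u}{N}(1-\eps)$ into~\eqref{eq:PhiS.reflection}. Shifting $\Phi_{S,1,1}$ back to $\Phi_{S,0,0}$ at the special point uses two facts there: $q=\tq$ and $\ee(z)=\ee(z/\eps)$, so the shift factors $(1-\ee(z))/(1-\ee(z/\eps))$ cancel, with a boundary care at $u=0$ that explains the $\pm$. The exponent $-\frac{z^2}{2\tau}-\frac z2+\frac z{2\tau}$ then becomes, after using $(1-\eps)^2/\eps=-N$, the quadratic $-\frac{u^2}{2N}$ plus a linear term. That linear term produces $(-1)^u$. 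Finally, the constants $iq^{-1/12}\tq^{1/12}$ combine with $\ee(\frac{N-1}{12})$ from the two normalising factors to give $1$. This is routine bookkeeping.

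\textbf{Fourier and pentagon identities: reducing integrals to sums.} The key idea for~\eqref{eq:Fpm.fourier} and~\eqref{eq:Fpm.5term} is to take~\eqref{eq:PhiS.fourier} and~\eqref{eq:PhiS.5term} at $\tau=\eps$ and exploit that the integrand is \emph{quasi-periodic} under $z\mapsto z+(1-\eps)$. Because $q=\tq$ at the fixed point, $\Phi_{S,1,0}(z+1-\eps)/\Phi_{S,1,0}(z)=\frac{1-q^{-1}\ee(z)}{1-\tq^{-1}\ee(z/\eps)}$ is generically not $1$ for arbitrary $z$. So we do not integrate at the special point directly. Instead we choose $w=\frac{u}{N}(1-\eps)$ (and $y=\frac{v}{N}(1-\eps)$). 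The oscillatory factor $\ee(zw/\tau)$ then picks up exactly $\ee_N$-type roots of unity under $z\mapsto z+\frac{1-\eps}{N}$, while $\Phi_{S,1,0}(z+1-\eps;\eps)/\Phi_{S,1,0}(z;\eps)=(1-\ee(z))/(1-\ee(z/\eps))\cdot$(ratio) for general $z$.

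We then compute the integral by shifting the contour $i\RR$ to $i\RR+(1-\eps)$. By Lemma~\ref{lem:asymp} and the given decay, the horizontal pieces vanish. The difference of the two contour integrals is expressed as a sum of residues at the poles of $\Phi_{S,1,0}(z;\eps)$ between them, located at $z\in\ZZ_{\ge0}+\eps\ZZ_{>-1}$. The integral over the shifted contour relates to the original one via the functional equation: since $e^{2\pi i z}$ and $e^{2\pi i z/\eps}$ differ there, it differs by a factor that is not constant. The cleaner approach is to first write $\int_{i\RR}$ as $\int$ over a strip of width $1-\eps$ and fold it by periodicity. We therefore intend the following alternative. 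Deform $\eps$ slightly into the upper half plane, $\tau=\eps+i\delta$, and use the product formula~\eqref{eq:prod.fadeev}, where the integral is a pure residue sum over the lattice. Group the residues by $\ZZ/N\ZZ$ classes of $\frac{1-\eps}{N}$-translates. As $\delta\to0$, each class's geometric-series sum collapses, by $q^m\ee(z)=\tq^n\ee(z/\tau)$ at the limit, to a single value $F_N^+(x)$ times $\ee(xu/N)$. The overall normalisation $\ee(1/8)\sqrt\tau$ becomes $1/\sqrt N$ through the factor $(1-\eps)/N$ and $\sqrt{\eps}$. This yields~\eqref{eq:Fpm.fourier}, and the same argument applied to~\eqref{eq:PhiS.5term} yields~\eqref{eq:Fpm.5term}. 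The condition $(u,v)\ne(0,0)$ is needed so that no pole of the denominator collides with the residue set.

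\textbf{Expected obstacle.} The main difficulty is this limiting/folding step: justifying that the integral at the degenerate real point equals a finite Fourier sum exactly, with the correct handling of the sign $\pm$ at classes where $z$ is a lattice point, and with uniform control needed to interchange the limit with the residue sum. The first thing to try is to verify the claim numerically for $N=1,2$ to pin down the constants, and then to run the residue argument rigorously using the $(q,\tq)$-difference equations rather than analytic limits.
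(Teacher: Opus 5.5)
\textbf{Verdict: there is a genuine gap.} Your proofs of \eqref{eq:Fpm.fourier} and \eqref{eq:Fpm.5term} are missing their central step, and the route you propose for them cannot work.

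\textbf{The easier identities.} Your treatment of \eqref{eq:Fpm.reflection} is the paper's: specialise \eqref{eq:PhiS.reflection} and use $(1-\eps)^2=N\eps$. For \eqref{eq:Fpm.zero} your ingredients are usable, but the mechanism is misdescribed.
\begin{itemize}
\item $\Phi_{S,0,0}(\cdot\,;\eps)$ is holomorphic near $z=0$ (nearest pole $\eps$, nearest zero $-1$), so one-sided limits there coincide.
\item The two values $F_N^\pm(0)$ come from the representatives $z=0$ and $z=\eps-1$. At the latter, the shift multiplier $\frac{1-q^{-1}\ee(z)}{1-\tq^{-1}\ee(z/\eps)}$ is $0/0$ with limit $\eps$.
\item This ratio, together with reflection at $0$, only gives $F_N^\pm(0)^2=\eps^{\pm1}$. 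The sign needs the direct evaluation $\Phi_{S,0,0}(0;\tau)=\ee(1/8)\sqrt{\tau}\,q^{-1/24}\tq^{1/24}$ from $\eta$-modularity. In particular $\Phi_{S,0,0}(0;\eps)=\sqrt{\eps}\,\ee(\frac{1-N}{24})$, not $\eps$ times a root of unity.
\end{itemize}

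\textbf{Why the $\tau=\eps+i\delta$ regrouping fails.}
\begin{itemize}
\item The residues of $\Phi_{S,1,0}(z;\tau)\ee(zw/\tau)$ sit on the pole lattice $\ZZ+\tau\ZZ$. Summing them is exactly how \eqref{eq:PhiS.fourier} is proved, so it only reproduces the right-hand side.
\item In the limit these points lie in $\ZZ+\eps\ZZ$. That set does not contain the points $\frac{x}{N}(1-\eps)$ with $x\not\equiv0\pmod N$, so no regrouping can produce the values $F_N^+(x)$.
\item The relation $q=\tq$, which is what makes these points special, holds only at $\tau=\eps$.
\item Termwise limits involve $(q;q)_k$ with $q$ tending to a non-root of unity on the circle, so there is no termwise control.
\end{itemize}
What you call the ``expected obstacle'' is therefore the whole proof.

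\textbf{The missing idea} is an auxiliary kernel whose poles are exactly the torsion points. Take $w$ with $q\,\ee(w/\eps)=\ee(w)$, i.e.\ $w-1=\frac uN(1-\eps)$, and set
\[
f(z)=\ee\Big(\frac{zw}{\eps}\Big)\frac{\Phi_{S,0,0}(z;\eps)}{\ee(z/\eps)-\ee(z)}\,.
\]
Since $\frac{\eps}{1-\eps}=\frac{1-\eps}{N}$, the kernel has simple poles precisely at the points $\frac{x}{N}(1-\eps)$. Taking, as the paper does, the root $0<\eps<1$, the difference $\int_{i\RR-0}f-\int_{\eps-1+i\RR-0}f$ equals
\[
\frac{\eps}{1-\eps}\sum_{x=-N}^{-1}\ee\Big(\frac{xu}{N}\Big)\Phi_{S,0,0}\Big(\frac{x}{N}(1-\eps);\eps\Big).
\]
Now substitute $z\mapsto z+\eps-1$ in the second integral.
\begin{itemize}
\item The choice of $w$ and $q=\tq$ make both $\ee(zw/\eps)$ and the kernel pick up the same factor $q$, so these cancel.
\item $\Phi_{S,0,0}(z+\eps-1)=\Phi_{S,1,1}(z)$.
\item The difference equations give $\Phi_{S,0,0}-\Phi_{S,1,1}=(\ee(z/\eps)-\ee(z))\Phi_{S,1,0}$, so the kernel also cancels.
\end{itemize}
The difference of integrals is then exactly the integral in \eqref{eq:PhiS.fourier}. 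This is precisely the non-periodicity you noticed, turned into a telescoping identity.

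\textbf{The five-term identity} is the same computation with $\Phi_{S,0,0}(z)/\Phi_{S,0,0}(z+y)$, where $y=\frac vN(1-\eps)$. One integrates along a contour separating the two families of poles, then applies the analytically continued \eqref{eq:PhiS.5term}. The telescoped integrand carries a factor $1-\ee(y)$, which is why $v=0$ is checked directly. The case $(u,v)=(0,0)$ is excluded because the identity fails there by a $\delta$-term, not because poles collide.
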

\begin{proof}
Equation~\eqref{eq:Fpm.zero} follows from the modularity of the $\eta$-function.
Equation~\eqref{eq:Fpm.reflection} follows from the specialisation of Equation~\eqref{eq:PhiS.reflection} (essentially the modularity of the $\theta$-function).
To prove Equation~\eqref{eq:Fpm.fourier}, we use the residue theorem and Equation~\eqref{eq:PhiS.fourier}.
Note that $q=\tq$ and suppose that $q\ee(w/\eps)=\ee(w)$ and for simplicity that $0<\eps<1$ (a similar argument works when $\eps>1$).
Then $w-1=u(1-\eps)/N$ for some $u\in\ZZ$ and we have the following:
\[
\begin{aligned}
    &\frac{\eps}{1-\eps}\sum_{x=-N}^{-1}\ee\Big(\frac{xu}{N}\Big)\Phi_{S,0,0}\Big(\frac{x}{N}(1-\eps);\eps\Big)\\
    &=
    2\pi i\sum_{x=-N}^{-1}\Res{z=\frac{x}{N}(1-\eps)}\ee\Big(\frac{z(w-1)}{\eps}\Big)\frac{\Phi_{S,0,0}(z;\eps)}{1-\ee(z(\eps-1)/\eps)}\;dz\\
    &=
    \bigg(\int_{i\RR-0}-\int_{\eps-1+i\RR-0}\bigg)\ee\Big(\frac{zw}{\eps}\Big)\frac{\Phi_{S,0,0}(z;\eps)}{\ee(z/\eps)-\ee(z)}\;dz\\
    &=
    \int_{i\RR-0}\ee\Big(\frac{zw}{\eps}\Big)\frac{\Phi_{S,0,0}(z;\eps)}{\ee(z/\eps)-\ee(z)}\;dz
    -\int_{\eps-1+i\RR-0}\ee\Big(\frac{zw}{\eps}+\frac{w(1-\eps)}{\eps}\Big)\frac{\Phi_{S,0,0}(z;\eps)}{\tq^{-1}\ee(z/\eps)-q^{-1}\ee(z)}\;dz\\
    &=
    \int_{i\RR-0}\ee\Big(\frac{zw}{\eps}\Big)\frac{\Phi_{S,0,0}(z;\eps)-\Phi_{S,0,0}(z+\eps-1;\eps)}{\ee(z/\eps)-\ee(z)}\;dz\\
    &=
    \int_{i\RR-0}\Phi_{S,1,0}(z;\eps)\;\ee\Big(\frac{zw}{\eps}\Big)\frac{(1-\ee(z))-(1-\ee(z/\eps))}{\ee(z/\eps)-\ee(z)}\;dz\\
    &=\int_{i\RR-0}\Phi_{S,1,0}(z;\eps)\;\ee\Big(\frac{zw}{\eps}\Big)\;dz\\
    &=\ee(1/8)\sqrt{\eps}\ee\Big(\frac{-N-2}{24}\Big)\Phi_{S,0,0}\Big(\frac{u}{N}(1-\eps);\eps\Big)^{-1}\,.
\end{aligned}
\]
(To compute the residue we need to assume not just that $0<w$ but that $0<w-\eps$, which implies that $w-1>\eps-1$.)
Noting that for $0<\eps<1$ and $\ell\in\ZZ_{\geq0}$, we have $\Phi_{S,0,0}(\ell(1-\eps);\eps)\ee(\frac{N-1}{24})=\sqrt{\eps}=F_N^-(0)$ and $\Phi_{S,0,0}(\eps-1;\eps)\ee(\frac{N-1}{24})=\sqrt{1/\eps}=F_N^+(0)$.
This therefore implies Equation~\eqref{eq:Fpm.fourier}.

A similar trick also allows for a proof of the 5-term identity using Equation~\eqref{eq:PhiS.5term}. (Again we will assume $0<\eps<1$ with the other case following from a similar argument.)
We suppose that $v\neq0$, since the identity for $v=0$ follows from elementary considerations.
Let $v,u$ be represented in $(-N,0]$ with $y=v(1-\eps)/N$ and $w-1=u(1-\eps)/N$.
This implies that $\eps<w$, $y+w-1<0$, and $-y-1<-\eps$.
Consider the contour~$\calC$ in Figure~\ref{fig:contour.5term}.
\begin{figure}
\centering
\begin{tikzpicture}
\draw[<->](-4,0)--(4,0);
\draw[<->](0,-1)--(0,3);
\draw[red,<-,xshift=-0.1cm](0,3)--(0,2);
\draw[red](-0.3,2.8) node {$\calC$};

\foreach \x in {0,1,2,3}
{\draw[red,xshift=-0.1cm](0,2-\x*0.5) arc (90:0:2-\x*0.5);
\draw[red,xshift=-0.1cm] (2-\x*0.5,0) arc (0:-180:0.1);
\draw[red,xshift=-0.1cm](0,2-\x*0.5-0.2) arc (90:0:2-\x*0.5-0.2);
\filldraw[blue,xshift=-0.1cm](2-\x*0.5-0.1,0) circle (0.05cm);
}

\foreach \x in {2,1.5,1}
\draw[red,xshift=-0.1cm](0,\x-0.2)--(0,\x-0.5);

\draw[->,red,xshift=-0.1cm](0,0.3)--(0,-1);
\end{tikzpicture}
\caption{The contour \textcolor{red}{$\calC$} used to prove Equation~\eqref{eq:Fpm.5term}.
The \textcolor{blue}{blue} bullets are located at $-y-k\tau$ for $k\in\ZZ_{>0}$.
This is one particular choice, the main point is that asymptotically the contour is $i\RR-0$ (even this can be relaxed) and partitions the complex plan into two domains; the one on the right containing only poles of $\Phi_{S,1,0}(z;\eps)$ and one on the left containing only poles of $\Phi_{S,0,0}(z+y;\eps)^{-1}$.
We remark that the contour cannot be deformed to avoid poles when $-y+k\tau+\ell$ for $k-1,\ell\in\ZZ_{\geq0}$ lands at a point $i\tau+j$ with $i,j\in\ZZ$, which are exactly the poles of the RHS of Equation~\eqref{eq:PhiS.5term}.
}
\label{fig:contour.5term}
\end{figure}
Using in many places that $q=\tq$, $\ee(y)=\ee(y/\eps)$, and $\ee(w)=\tq\ee(w/\tau)$, we find that
\[
\begin{aligned}
&\frac{\eps}{1-\eps}\sum_{x\in\ZZ/N\ZZ}\ee\Big(\frac{xu}{N}\Big)\frac{F_{N}^+(x)}{F_{N}^-(x+v)}\\
&=
\bigg(\int_\calC-\int_{\calC+\eps-1}\bigg)
\frac{\Phi_{S,0,0}(z;\eps)}{\Phi_{S,0,0}(z+y;\eps)}\frac{\ee(wz/\eps)}{\ee(z/\eps)-\ee(z)}dz\\
&=
\int_\calC
\frac{\Phi_{S,0,0}(z;\eps)}{\Phi_{S,0,0}(z+y;\eps)}\frac{\ee(wz/\eps)}{\ee(z/\eps)-\ee(z)}\Big(1-\frac{(1-\ee(z/\eps))(1-\ee(y+z))}{(1-\ee(z))(1-\ee((y+z)/\eps))}\Big)dz\\
&=
\int_\calC
\frac{\Phi_{S,1,0}(z;\eps)}{\Phi_{S,0,0}(z+y-1;\eps)}\ee(wz/\eps)(1-\ee(y))dz\\
&=
\sqrt{\eps}
\frac{F_N^+(u+v)}{F_{N}^-(u)F_{N}^-(v)}\,.
\end{aligned}
\]
where the convergence and applicability of the residue theorem are guaranteed by the choice of contour---which avoids other unwanted singularities---and the inequalities above (since the integrand grows like $\ee(z(w+\eps)/\eps)$ when $z\sim i\infty$ and like $\ee(z(y+w-1)/\eps)$ when $z\sim-\infty$).
The final equality uses the analytic continuation of Equation~\eqref{eq:PhiS.5term}, where we see the contour $\calC$ exactly comes from the deformation that deforms around the zeros of $\Phi_{S,0,0}(z+y-1;\eps)$. 
\end{proof}

We will close with some brief and immediate observations.
Firstly, if $F_N^{\pm}(u)$ solves the equations of Theorem~\ref{thm:fn.equs}, then $F_N^{\pm}(-u)$ is another solution.
The two choices of solution for $\eps^2-(N+2)\eps+1$ give rise to two solutions related by this symmetry.
This can be proved using the reflection formulas for Faddeev's quantum dilogarithm.
A similar, but non-algebraic, equation can also be derived for the values $F_{N}^{\pm}(u)$ from the reflection formulas of the quantum dilogarithm.
In particular,
\[
    \overline{F_{N}^{\pm}(u)}
    =
    \frac{1}{F_{N}^{\mp}(-u)}\,.
\]
This immediately implies that
\[
    \overline{F_{N}^{\pm}(u)} = F_N^{\pm}(u)(-1)^u\ee\Big(\frac{u^2}{2N}\Big)\,.
\]
\begin{remark}
In defining $F_N^\pm$ for $\ZZ/N\ZZ$, we have implicitly used the fact that $\Phi_{T,m,n}(z;\tau)=1$. Indeed, it would be more natural to define
    \[F_N^{\pm}(u)=\Phi_{T^{N+2}S,0,0}\Big(\frac{u}{N}(1-\eps);\eps\Big)\ee\Big(\frac{N-1}{24}\Big)\,,\]
with $\pm=\sgn(u/(\eps-1))$ where $\eps^2-(N+2)\eps+1=0$.
\end{remark}

\subsection{Proof of Theorem \texorpdfstring{\ref{thm:fg.equs}}{2}}

We will now describe how the last section generalises to Faddeev's modular quantum dilogarithm.
This plays the important role of constructing solutions to new systems of equations similar to those in Theorem~\ref{thm:fn.equs} associated to a product of two cyclic groups.
It however---and equally importantly---constructs new solutions to the equations in Theorem~\ref{thm:fn.equs}.

Fix a hyperbolic $\gamma\in\SL_2(\ZZ)$ with $c>0$ and trace $N+2$ for $N>0$, and let $\tau$ be a fixed point. Recall $G$, $\Lambda$, $F_{\gamma}^{\pm}(u)$, $\ang{u}_{\gamma}$, and $\ang{u;v}_{\gamma}$ from the introduction, and in particular Equation~\eqref{eq:fgam.def}.
We note that
\[
    \ang{u;v}_\gamma
    =
    \frac{\ang{u+v}_\gamma}{\ang{u}_\gamma\ang{v}_\gamma}\,.
\]

\begin{lemma}\label{lem:lam.inv}
The functions $F_{\gamma}^{\pm}(u)$ are well-defined on~$G$.
\end{lemma}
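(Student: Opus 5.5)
The plan is to show directly that shifting $u$ by an element of $\Lambda=\ZZ^2(\gamma-I)$ leaves $F_\gamma^{\pm}(u)$ unchanged. The two ingredients are the $(\ZZ+\tau\ZZ)$-shift formulas for $\Phi_{\gamma,m,n}$ listed after Equation~\eqref{eq:phigam.def} and the index periodicity~\eqref{eq:faddeevperiod} at special points.

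If $u\in\Lambda$, then $u+w(\gamma-I)\in\Lambda$ as well, and $F_\gamma^{\pm}$ is the constant $\eps^{\pm1/2}$ on $\Lambda$ by definition. So we may assume $u\notin\Lambda$. Write
\[
z(u)=\frac{u_1\tau+u_2}{\eps^{-1}-1},\qquad \eps=c\tau+d .
\]

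\emph{Step 1: effect of the shift on $z(u)$.} Since $\gamma\tau=\tau$, we have $\gamma\binom{\tau}{1}=\binom{a\tau+b}{c\tau+d}=\eps\binom{\tau}{1}$. Hence for $w\in\ZZ^2$ the vector $wγ$ pairs with $\binom{\tau}{1}$ to give $\eps(w_1\tau+w_2)$. Therefore replacing $u$ by $u'=u+w\gamma-w$ changes $u_1\tau+u_2$ by $(\eps-1)(w_1\tau+w_2)$, and so
\[
z(u')=z(u)-\eps(w_1\tau+w_2)=z(u)-w_1(a\tau+b)-w_2(c\tau+d).
\]
The first index also changes: $u_1'=u_1+w_1(a-1)+w_2c$.

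\emph{Step 2: move the shift into the indices.} Apply the rules $\Phi_{\gamma,m,n}(z+a\tau+b;\tau)=\Phi_{\gamma,m+a,n+1}(z;\tau)$ and $\Phi_{\gamma,m,n}(z+c\tau+d;\tau)=\Phi_{\gamma,m+c,n}(z;\tau)$, iterated with integer multiplicities $-w_1$ and $-w_2$. This gives
\[
\Phi_{\gamma,u_1',0}(z(u');\tau)=\Phi_{\gamma,u_1'-w_1a-w_2c,\,-w_1}(z(u);\tau)=\Phi_{\gamma,u_1-w_1,\,-w_1}(z(u);\tau).
\]
These are identities of meromorphic functions, so they hold at our point as long as it is not a pole.

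\emph{Step 3: apply \eqref{eq:faddeevperiod}.} The hypothesis of \eqref{eq:faddeevperiod} with $m=u_1$, $n=0$ is $q^{u_1}\ee(z)=\tq^{0}\ee(z/\eps)$. This holds at $z=z(u)$, because
\[
z/\eps-z=z(\eps^{-1}-1)=u_1\tau+u_2
\]
and $\ee(u_1\tau+u_2)=q^{u_1}$; here we use $\tq=q$ since $\gamma\tau=\tau$. Then \eqref{eq:faddeevperiod} with $k=-w_1$ gives $\Phi_{\gamma,u_1-w_1,-w_1}(z(u);\tau)=\Phi_{\gamma,u_1,0}(z(u);\tau)$. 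The multiplier $\mu_\gamma$ does not depend on $u$, so $F^\pm_\gamma(u')=F^\pm_\gamma(u)$.

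The main point to check is that the evaluation point is never a pole or zero, so that the meromorphic identities may be specialised. For $u\notin\Lambda$, the point $z(u)$ should not lie in the lattice $\ZZ\tau+\ZZ$. I would verify this by writing $z(u)=k\tau+\ell$ and comparing with the relation $z(\eps^{-1}-1)=u_1\tau+u_2$. This comparison forces $u\equiv u\gamma \pmod N$, i.e.\ $u\in\Lambda$, because $(\eps^{-1}-1)$ acts on the lattice via the matrix $\gamma^{-1}-I$, whose determinant is $-N$. Since the poles and zeros of each $\Phi_{\gamma,m,n}$ lie in $\ZZ\tau+\ZZ$ (see $P_{\gamma,\tau}$ and $N_{\gamma,\tau}$), this shows all the values involved are finite and nonzero. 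It also explains why $u\in\Lambda$ has to be defined separately.
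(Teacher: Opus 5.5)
Your proof is correct and follows essentially the same route as the paper. You translate the $\Lambda$-shift of $u$ into a lattice shift of the argument $z(u)$, absorb that shift into the indices via the quasi-periodicity rules, and finish with the index periodicity~\eqref{eq:faddeevperiod}.

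The differences are cosmetic:
\begin{itemize}
\item The paper parametrises $\Lambda$ as $\ZZ^2(\gamma^{-1}-I)$, so the argument moves by $w_1\tau+w_2$. It then uses the $z\mapsto z+\tau$ and $z\mapsto z+1$ rules, and it also checks the $N\ZZ^2$ shifts separately, which is redundant.
\item You use $\ZZ^2(\gamma-I)$ and the $z\mapsto z+a\tau+b$, $z\mapsto z+c\tau+d$ rules.
\item You make explicit that $z(u)\notin\ZZ\tau+\ZZ$ for $u\notin\Lambda$, which is needed to specialise the meromorphic identities. The paper leaves this implicit.
\end{itemize}
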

\begin{proof}
Firstly, for all $m\in\ZZ$, note that
    \[
    \Phi_{\gamma,u_1,0}\Big(\frac{u_1\tau+u_2}{\eps^{-1}-1}\Big)
    =
    \Phi_{\gamma,u_1+m,m}\Big(\frac{u_1\tau+u_2}{\eps^{-1}-1}\Big)\,.
    \]
Recalling that $N=a+d-2$, notice that
\[
\begin{aligned}
\Phi_{\gamma,u_1+N,0}\Big(N\frac{\tau}{\eps^{-1}-1}+\frac{u_1\tau+u_2}{\eps^{-1}-1}\Big)
&=
\Phi_{\gamma,u_1+N,0}\Big(\tau-(a\tau+b)+\frac{u_1\tau+u_2}{\eps^{-1}-1}\Big)\\
&=
\Phi_{\gamma,u_1+N+1-a,d-1}\Big(\frac{u_1\tau+u_2}{\eps^{-1}-1}\Big)\\
&=
\Phi_{\gamma,u_1,0}\Big(\frac{u_1\tau+u_2}{\eps^{-1}-1}\Big)\,.
\end{aligned}
\]
Similarly,
\[
\begin{aligned}
\Phi_{\gamma,u_1,0}\Big(N\frac{1}{\eps^{-1}-1}+\frac{u_1\tau+u_2}{\eps^{-1}-1}\Big)
&=
\Phi_{\gamma,u_1,0}\Big(1-(c\tau+d)+\frac{u_1\tau+u_2}{\eps^{-1}-1}\Big)\\
&=
\Phi_{\gamma,u_1-c,-c}\Big(\frac{u_1\tau+u_2}{\eps^{-1}-1}\Big)\\
&=
\Phi_{\gamma,u_1,0}\Big(\frac{u_1\tau+u_2}{\eps^{-1}-1}\Big)\,.
\end{aligned}
\]
Finally, if $v(I-\gamma)=Nw$ then $w(\gamma^{-1}-I)=v$, and we find that $\frac{v_1\tau+v_2}{\eps^{-1}-1}=(v_1\tau+v_2)(1-\eps)/N=w_1\tau+w_2$.
Hence,
\[
\begin{aligned}
\Phi_{\gamma,u_1+v_1,0}\Big(\frac{(u_1+v_1)\tau+(u_2+v_2)}{\eps^{-1}-1}\Big)
&=
\Phi_{\gamma,u_1+v_1,0}\Big(w_1\tau+w_2+\frac{u_1\tau+u_2}{\eps^{-1}-1}\Big)\\
&=
\Phi_{\gamma,u_1+v_1+(1-d)w_1+cw_2,0}\Big(w_1\tau+w_2+\frac{u_1\tau+u_2}{\eps^{-1}-1}\Big)\\
&=
\Phi_{\gamma,u_1,0}\Big(\frac{u_1\tau+u_2}{\eps^{-1}-1}\Big)\,.
\end{aligned}
\]
Therefore, this verifies that $F_\gamma^{\pm}$ is $\Lambda$-invariant, and concludes the proof.
\end{proof}

We note that $G$ has fundamental representatives $(u_1,u_2)\in\ZZ^2$ with $0\le u_1< N/\gcd(a-1,b)$, $0\le u_2<\gcd(a-1,b)$ or similarly $0\le u_1< \gcd(c,d-1)$, $0\le u_2<N/\gcd(c,d-1)$.
The case $\gamma=T^{N+2}S=(\smat {N+2}{-1}{1}0)$ recovers the setup of the previous subsection, where it is easy to see that $F_{T^{N+2}S}^\pm=F_{N}^\pm$ and $\ang{u}_{T^{N+2}S}=(-1)^u\ee_{2N}(u^2)$.

\begin{proof}[Proof of Theorem~\ref{thm:fg.equs}.]

Equation~\eqref{eq:Fgpm.zero} follows from the very definition. (These correspond to values of $\Phi_{\gamma,m,n}$ on the lattice and their agreement again follows from the modularity of the $\eta$-function.)
Equation~\eqref{eq:Fgpm.reflection} follows from Proposition~\ref{prop:reflection}.
We can prove the other identities by proving for example that
\[
\frac{1}{c\sqrt{N}}\sum_{x_1=0}^{c-1}\sum_{x_2=0}^{N-1}\ang{x;u}_{\gamma}F_{\gamma}^{+}(x) = \mu_{\gamma}\frac{1}{F_{\gamma}^{-}(u)}.
\]
The sum on the left hand side over-counts by a factor of $c$, which is compensated by a factor of $1/c$.
Indeed, we find that we can---completely analogously to the proof of Theorem~\ref{thm:fn.equs}---compute the sums via the residue theorem along with the functional equations of Theorem~\ref{thm:5term.mod.fad}.

Denote $z_{m,k}=\frac{m\tau+k}{\eps^{-1}-1}$, and notice that
\[
    \ee\Big(\frac{cz_{m,w}z_{\ell,x}}{\eps}+mz_{\ell,x}+\ell(z_{m,w}+m\tau)\Big)
    =
    \ee_{N}((a-1)xm-b\ell m+cxw+(1-d)\ell w)
    =
    \ang{(m,w);(\ell,x)}_\gamma\,.
\]
Then, using the notation of Theorem~\ref{thm:5term.mod.fad}, we note that $\calC_{m,0}+\frac{m}{c}\eps=i\RR-0=\calC_{m',0}+\frac{m'}{c}\eps$.
Moreover, $a\tau+b=\frac{a\eps-1}{c}$ and if we take $m'\equiv m+1-a\pmod{c}$ with $0\leq m,m'<c$ we find that
\[
    a\tau+b+\Big\lfloor\frac{m'+1-a}{c}\Big\rfloor(c\tau+d)
    =
    \frac{\eps-1}{c}
    +\frac{(m'-m)\eps}{c}\,.
\]
Suppose that $q^\ell\ee(w)=\tq\ee(w/\eps)$ with $w=\frac{u_1\tau+u_2}{\eps^{-1}-1}$ and again that $0<\eps<1$ (the case $\eps>1$ following from a similar argument).
We find that
\[
\begin{aligned}
&\frac{\eps}{1-\eps}\sum_{x_1=0}^{c-1}\sum_{x_2=0}^{N-1}\ang{x;u}_{\gamma}F_{\gamma}^{+}(x)\\
&=\sum_{m=0}^{c-1}
\bigg(\int_{\calC_{m,0}}-\int_{\calC_{m,0}+\eps-1}\bigg)\frac{\ee\big(\frac{(cz+(c\tau+d)m)w}{c\tau+d}+\ell(z+m\tau)\big)}{\ee(z/\eps)-q^m\ee(z)}\Phi_{\gamma,m,0}(z;\tau)dz\\
&=\sum_{m,j=0}^{c-1}
\bigg(\int_{\calC_{m,0}+j\frac{\eps-1}{c}}-\int_{a\tau+b+\lfloor\frac{m'+1-a}{c}\rfloor(c\tau+d)+\calC_{m',0}+j\frac{\eps-1}{c}}\bigg)\\&\qquad\qquad\qquad\qquad\qquad\qquad\qquad\qquad\qquad\qquad\quad\frac{\ee\big(\frac{(cz+(c\tau+d)m)w}{c\tau+d}+\ell(z+m\tau)\big)}{\ee(z/\eps)-q^m\ee(z)}\Phi_{\gamma,m,0}(z;\tau)dz\\
&=\sum_{m,j=0}^{c-1}
\int_{\calC_{m,0}+j\frac{\eps-1}{c}}\frac{\ee\big(\frac{(cz+(c\tau+d)m)w}{c\tau+d}+\ell(z+m\tau)\big)}{\ee(z/\eps)-q^m\ee(z)}\Phi_{\gamma,m,0}(z;\tau)dz\\
&\qquad\qquad-\sum_{m,j=0}^{c-1}
\int_{a\tau+b+\calC_{m,0}+j\frac{\eps-1}{c}}\frac{\ee\big(\frac{(cz+(c\tau+d)(m+1-a))w}{c\tau+d}+\ell (z+(m+1-a)\tau)\big)}{\ee(z/\eps)-q^{m+1-a}\ee(z)}\Phi_{\gamma,m+1-a,0}(z;\tau)dz\\
&=\sum_{m,j=0}^{c-1}
\int_{\calC_{m,0}+j\frac{\eps-1}{c}}\frac{\ee\big(\frac{(cz+(c\tau+d)m)w}{c\tau+d}+\ell(z+m\tau)\big)}{\ee(z/\eps)-q^m\ee(z)}\big(\Phi_{\gamma,m,0}(z;\tau)-\Phi_{\gamma,m+1,1}(z;\tau)\big)\,dz\\
&=\sum_{m,j=0}^{c-1}
\int_{\calC_{m,0}+j\frac{\eps-1}{c}}\ee\big(\frac{(cz-n+(c\tau+d)m)w}{c\tau+d}+\ell(z+m\tau)\big)\Phi_{\gamma,m+1,0}(z;\tau)\,dz\\
&=
\frac{c\mu_\gamma\sqrt{\eps}}{F_{\gamma}^{-}(u)}\,.
\end{aligned}
\]

Finally, a similar computation produces the $5$-term relation.
We use contours $\calD_{m}(y)$, which are deformations of the contours $\calC_{m,0}$, which (just as described in Figure~\ref{fig:contour.5term}) asymptotically behave like $\calC_{m,0}$ and separates the complex plane into two domains with one containing the poles of $\Phi_{\gamma,m+1,0}(z;\tau)$ and the other containing the poles of $\Phi_{\gamma,m+p,0}(z+y;\tau)^{-1}$.
(Again, this can only not be achieved at the poles of the LHS of Equation~\eqref{eq:5term.int}.)
Then, supposing that $v\neq0$, $q^p\ee(y)=\ee(y/\eps)$, and $q^\ell\ee(w)=\tq\ee(w/\eps)$
\[
\begin{aligned}
&\frac{\eps}{1-\eps}\sum_{x_1=0}^{c-1}\sum_{x_2=0}^{N-1}
\ang{x;u}_{\gamma}\frac{F_{\gamma}^{+}(x)}{F_{\gamma}^{-}(x+v)}\\
&=\sum_{m=0}^{c-1}
\bigg(\int_{\calD_{m}(y)}-\int_{\calD_{m}(y)+\eps-1}\bigg)\frac{\ee\big(\frac{(cz+\eps m)w}{\eps}+\ell(z+m\tau)\big)}{\ee(z/\eps)-q^m\ee(z)}\frac{\Phi_{\gamma,m,0}(z;\tau)}{\Phi_{\gamma,m+p,0}(z+y;\tau)}dz\\
&=\sum_{m,j=0}^{c-1}
\int_{\calD_m(y)+j\frac{\eps-1}{c}}\frac{\ee\big(\frac{(cz+\eps m)w}{\eps}+\ell(z+m\tau)\big)}{\ee(z/\eps)-q^m\ee(z)}\frac{\Phi_{\gamma,m,0}(z;\tau)}{\Phi_{\gamma,m+p,0}(z+y;\tau)}dz\\
&\qquad\qquad-\sum_{m,j=0}^{c-1}
\int_{a\tau+b+\calD_m(y)+j\frac{\eps-1}{c}}\frac{\ee\big(\frac{(cz+\eps(m+1-a))w}{\eps}+\ell (z+(m+1-a)\tau)\big)}{\ee(z/\eps)-q^{m+1-a}\ee(z)}\frac{\Phi_{\gamma,m+1-a,0}(z;\tau)}{\Phi_{\gamma,m+p+1-a,0}(z+y;\tau)}dz\\
&=\sum_{m,j=0}^{c-1}
\int_{\calD_m(y)+j\frac{\eps-1}{c}}\frac{\ee\big(\frac{(cz+\eps m)w}{\eps}+\ell(z+m\tau)\big)}{\ee(z/\eps)-q^m\ee(z)}\frac{\Phi_{\gamma,m,0}(z;\tau)}{\Phi_{\gamma,m+p,0}(z;\tau)}\Big(1-\frac{(1-\ee(z/\eps))(1-q^{m+p}\ee(z+y))}{(1-q^m\ee(z))(1-\ee((z+y)/\eps))}\Big)\,dz\\
&=\sum_{m,j=0}^{c-1}
\int_{\calD_m(y)+j\frac{\eps-1}{c}}\ee\big(\frac{(cz-n+\eps m)w}{\eps}+\ell(z+m\tau)\big)\frac{\Phi_{\gamma,m+1,0}(z;\tau)}{\Phi_{\gamma,m+p-a,0}(z+y+a\tau+b;\tau)}(1-q^p\ee(y))\,dz\\
&=
c\sqrt{\eps}\mu_\gamma^{-1}\frac{\Phi_{\gamma,p+\ell,1}(w+y;\tau)}{\Phi_{\gamma,p+1,1}(y;\tau)\Phi_{\gamma,\ell,1}(w;\tau)}\\
&=
c\sqrt{\eps}\frac{F_{\gamma}^{+}(u+v)}{F_{\gamma}^{-}(u)F_{\gamma}^{-}(v)}\,.
\end{aligned}
\]
The case $v=0$ can be proved by elementary means.
\end{proof}

We close by summarising some simpler properties of the numbers $F_\gamma^\pm(u)$.
\begin{proposition}\label{prop:basic.sym.fpm}
If $F_\gamma^{\pm}(u)$, $u\in G$, satisfies the equations of Theorem~\ref{thm:fg.equs}, then $F_\gamma^{\pm}(-u)$ is another solution.
\end{proposition}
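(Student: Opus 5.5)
Set $\tilde F^{\pm}(u):=F_\gamma^{\pm}(-u)$; the plan is to check the four equations \eqref{eq:Fgpm.zero}--\eqref{eq:Fgpm.5term} for $\tilde F^{\pm}$ one at a time. Each should follow from the corresponding equation for $F^{\pm}_\gamma$ by the substitution $u\mapsto -u$ (and $v\mapsto -v$) together with a change of summation variable $x\mapsto -x$ in $G$. Only two facts about the pairings are needed. First, $\ang{-u}_\gamma=\ang{u}_\gamma$, because $\omega(u\gamma,u)$ is quadratic in $u$ and $u_1+u_2+u_1u_2\equiv -u_1-u_2+u_1u_2 \pmod 2$. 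Second, $\ang{x;u}_\gamma$ is bilinear, since it is $\ee_N$ of a bilinear form, so $\ang{-x;-u}_\gamma=\ang{x;u}_\gamma$.

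The equations then go as follows.
\begin{itemize}
\item Equation \eqref{eq:Fgpm.zero} involves only $u=0=-0$, so it is unchanged.
\item For \eqref{eq:Fgpm.reflection}, we have $\tilde F^{+}(u)\tilde F^{-}(-u)=F^{+}_\gamma(-u)F^{-}_\gamma(u)=\ang{-u}_\gamma^{-1}=\ang{u}_\gamma^{-1}$.
\item For \eqref{eq:Fgpm.fourier}, write the sum for $\tilde F^{+}$ at the point $u$, substitute $x\mapsto -x$, and use $\ang{-x;u}_\gamma=\ang{x;-u}_\gamma$. This turns it into the original identity at $-u$, whose right-hand side is $\mu_\gamma/F^{-}_\gamma(-u)=\mu_\gamma/\tilde F^{-}(u)$.
\item For \eqref{eq:Fgpm.5term}, the same substitution gives
\[
\sum_{x\in G}\ang{x;u}_\gamma\frac{F^{+}_\gamma(-x)}{F^{-}_\gamma(-x-v)}=\sum_{x\in G}\ang{x;-u}_\gamma\frac{F^{+}_\gamma(x)}{F^{-}_\gamma(x-v)}\,.
\]
This is the original relation at $(-u,-v)$, and $(-u,-v)\neq(0,0)$ exactly when $(u,v)\ne(0,0)$. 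Its right-hand side $F^{+}_\gamma(-u-v)/(F^{-}_\gamma(-u)F^{-}_\gamma(-v))$ is the required expression for $\tilde F$.
\end{itemize}

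I do not expect a real obstacle. The only point needing care is the sign conventions: the roles of $F^{+}$ and $F^{-}$ must not be swapped, and this holds because they differ only at $u=0$, which is fixed by negation. It also matters that $\mu_\gamma$ does not depend on $u$, so \eqref{eq:Fgpm.fourier} transforms cleanly.
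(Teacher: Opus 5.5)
Your verification is correct, and it is exactly the argument the paper leaves implicit (the proposition is stated there without proof): evenness of $\ang{\cdot}_\gamma$, bilinearity of $\ang{\cdot;\cdot}_\gamma$, the reindexing $x\mapsto -x$, and applying the original equations at $(-u,-v)$. Your closing caveat about $F^{+}$ versus $F^{-}$ is unnecessary, since the substitution never interchanges them, so the argument applies verbatim to any abstract solution of the system.
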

\begin{proposition}
The numbers $F_\gamma^{\pm}(u)$ satisfy
\[
    \overline{F_{\gamma}^{\pm}(u)}
    =
    \frac{1}{F_{\gamma}^{\mp}(-u)}\,.
\]
\end{proposition}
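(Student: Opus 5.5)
The plan is to derive the identity from a conjugation symmetry of $\Phi_{\gamma,m,n}$ itself. This symmetry is the general-$\gamma$ analogue of $\overline{\Phi_{S,0,0}(z;\tau)}=\Phi_{S,1,1}(-\overline{z};\overline{\tau})^{-1}$. We would then specialise it to the real arguments defining $F_\gamma^{\pm}$, and handle the case $u=0$ separately.

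\emph{Step 1 (conjugation of the modular quantum dilogarithm).} For $\Im\tau>0$ we work directly with the product~\eqref{eq:phigam.def}. Since $\overline{(x;\ee(\tau))_\infty}=(\overline{x};\ee(-\overline{\tau}))_\infty$, we have $\overline{\ee(z)}=\ee(-\overline z)$ and $\overline{z/(c\tau+d)}=\overline z/(c\overline\tau+d)$. Writing $\delta\gamma\delta=(\smat a{-b}{-c}d)$, this gives
\[
\overline{\Phi_{\gamma,m,n}(z;\tau)}=\Phi_{\delta\gamma\delta,m,n}(-\overline z;-\overline\tau)\,,
\]
where we use $-c(-\overline\tau)+d=c\overline\tau+d$. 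Combining this with the reflection formula $\Phi_{\delta\gamma\delta,1-m,1-n}(z;-\tau)=\Phi_{\gamma,m,n}(z;\tau)^{-1}$ yields
\[
\overline{\Phi_{\gamma,m,n}(z;\tau)}=\Phi_{\gamma,1-m,1-n}(-\overline z;\overline\tau)^{-1}\,.
\]
By meromorphic continuation in $(z,\tau)$, this identity persists to real $\tau$ with $c\tau+d>0$ and real $z$. The limit must be taken compatibly on both sides: approaching $\tau$ from $\HH$ on the left corresponds to approaching $\overline\tau$ from the lower half-plane on the right.

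\emph{Step 2 (specialisation).} Take $\tau$ the fixed point, which is real, and $z=z_u=\frac{u_1\tau+u_2}{\eps^{-1}-1}$, which is also real, so that $-\overline{z_u}=z_{-u}$. Step 1 with $m=u_1$, $n=0$ gives
\[
\overline{\Phi_{\gamma,u_1,0}(z_u;\tau)}=\Phi_{\gamma,1-u_1,1}(z_{-u};\tau)^{-1}\,.
\]
The pair $(z_{-u},\tau)$ satisfies $q^{-u_1}\ee(z_{-u})=\ee(z_{-u}/\eps)$, so~\eqref{eq:faddeevperiod} with $k=-1$ identifies $\Phi_{\gamma,1-u_1,1}(z_{-u};\tau)$ with $\Phi_{\gamma,-u_1,0}(z_{-u};\tau)$, which is the $\Phi$-part of $F_\gamma(-u)$. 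The multiplier $\mu_\gamma$ is a $24$-th root of unity, so $\overline{\mu_\gamma}=\mu_\gamma^{-1}$. Putting these together gives $\overline{F_\gamma(u)}=F_\gamma(-u)^{-1}$ up to the choice of superscripts. By Lemma~\ref{lem:lam.inv} this is independent of the chosen representative of $u\in G$.

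\emph{Step 3 (signs and $u=0$).} The main obstacle is the bookkeeping of the $\pm$ labels. We expect these to be exactly the two one-sided boundary values of a function with poles and zeros on $\ZZ+\tau\ZZ$, i.e.\ the two ways of approaching the real point. Conjugation $\tau\mapsto\overline\tau$ swaps the approach from $\HH$ with the approach from $-\HH$, and this is what turns $F^{\pm}$ into $F^{\mp}$. We would verify this directly from the definition, as in the computation $F_N^+(u)=\eps^{\pm\delta_{u,0}}F_N^-(u)$. Away from $u=0$ the two labels coincide, so only $u=0$ needs care, and there it can be checked by hand: $F_\gamma^{\pm}(0)=\eps^{\pm1/2}$ is real and positive, and $\eps^{\pm1/2}=1/\eps^{\mp1/2}$. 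This completes the argument.
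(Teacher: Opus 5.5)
Your argument is correct and is essentially the route the paper intends (the paper states this proposition without a detailed proof, as a consequence of the reflection formulas): you obtain $\overline{\Phi_{\gamma,m,n}(z;\tau)}=\Phi_{\gamma,1-m,1-n}(-\bar z;\bar\tau)^{-1}$ from the product formula and the $\delta$-reflection, specialise it at $z_u$, and combine it with \eqref{eq:faddeevperiod} (legitimate because $z_u\notin\ZZ\tau+\ZZ$ for $u\notin\Lambda$) and $|\mu_\gamma|=1$, which gives the claim off $\Lambda$. The heuristic in Step 3 about one-sided boundary values is unnecessary: on $\Lambda$ the labels are fixed by the definition $F_\gamma^{\pm}(0)=\eps^{\pm1/2}>0$, and your direct check already settles that case.
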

\noindent Together with~\eqref{eq:Fgpm.reflection} this gives the following:
\begin{corollary}\label{cor:argument}
We have
    \begin{equation} \label{eq:Fgconjugation}
    \overline{F_{\gamma}^{\pm}(u)} = \ang{u}_\gamma F_\gamma^{\pm}(u)\,.
    \end{equation}
\end{corollary}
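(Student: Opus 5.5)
The corollary should follow by combining the preceding proposition, $\overline{F_{\gamma}^{\pm}(u)}=1/F_{\gamma}^{\mp}(-u)$, with the reflection identity~\eqref{eq:Fgpm.reflection}. The key point is that~\eqref{eq:Fgpm.reflection} supplies exactly the reciprocal of $F_\gamma^{\mp}(-u)$ in terms of $F_\gamma^{\pm}(u)$.

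We treat the two signs separately. For the upper sign, \eqref{eq:Fgpm.reflection} reads $F_\gamma^+(u)F_\gamma^-(-u)=\ang{u}_\gamma^{-1}$. Hence
\[
\frac{1}{F_\gamma^-(-u)}=\ang{u}_\gamma F_\gamma^+(u),
\]
and substituting this into the proposition gives $\overline{F_\gamma^+(u)}=\ang{u}_\gamma F_\gamma^+(u)$.

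For the lower sign we apply~\eqref{eq:Fgpm.reflection} with $u$ replaced by $-u$. This gives $F_\gamma^+(-u)F_\gamma^-(u)=\ang{-u}_\gamma^{-1}$, so $1/F_\gamma^+(-u)=\ang{-u}_\gamma F_\gamma^-(u)$. It remains to check that $\ang{-u}_\gamma=\ang{u}_\gamma$. This holds because $\omega(-u\gamma,-u)=\omega(u\gamma,u)$ by bilinearity, and the sign $(-1)^{u_1+u_2+u_1u_2}$ is unchanged under $u\mapsto -u$, since $-u_1-u_2+u_1u_2\equiv u_1+u_2+u_1u_2 \pmod 2$. (Strictly, $\ang{\cdot}_\gamma$ is defined on $\ZZ^2$, but it is $\Lambda$-periodic, so evaluating it at the representative $-u$ is legitimate.) The proposition then yields $\overline{F_\gamma^-(u)}=\ang{u}_\gamma F_\gamma^-(u)$.

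Once the preceding proposition is taken as given, the argument is purely formal. The only point requiring care is the evenness $\ang{-u}_\gamma=\ang{u}_\gamma$, together with the bookkeeping of which sign of $F_\gamma$ appears at $0\in G$. That bookkeeping is harmless: at $u=0$ we have $\ang{0}_\gamma=1$, and $F_\gamma^{\pm}(0)=\eps^{\pm1/2}$ are real, so the identity is immediate there as well.
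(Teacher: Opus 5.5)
Your argument is correct and follows the paper's route: the paper also obtains the corollary by combining $\overline{F_{\gamma}^{\pm}(u)}=1/F_{\gamma}^{\mp}(-u)$ with the reflection identity~\eqref{eq:Fgpm.reflection}. Your explicit check that $\ang{-u}_\gamma=\ang{u}_\gamma$ for the lower sign is a detail the paper leaves implicit; for $u\ne0$ you could also skip it, since $F_\gamma^+(u)=F_\gamma^-(u)$ there.
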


\subsection{Relations between special values}\label{sec:rels.bet.g}
The previous identities have all been between values $F_\gamma^\pm(u)$ for a fixed~$\gamma\in\SL_2(\ZZ)$. However, there are some simple relations that the values satisfy for varying~$\gamma$. We summarise some of these relations in this section.
\begin{proposition}\label{prop:powers}
For any $r\ne 0$, the numbers $F_\gamma^\pm(u)$ satisfy the equations:
\[
\begin{aligned}
    F_{\gamma^r}^{\pm}(u^{(r)})
    &=
    F_{\gamma}^{\pm}(u)^r\,,\\
    F_{\gamma^r}^{\pm}(u\gamma)
    &=
    F_{\gamma^r}^{\pm}(u)\,,
\end{aligned}
\]
where $\gamma\tau=\tau$ and $\ee(\frac{u^{(r)}_1\tau+u^{(r)}_2}{\eps^{-r}-1}+u^{(r)}_1\tau)=\ee(\frac{u_1\tau+u_2}{\eps^{-1}-1}+u_1\tau)$.
\end{proposition}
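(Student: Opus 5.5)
The first identity should come from the cocycle formula \eqref{eq:gamr.cocyc} together with the fact that $\gamma^j$ fixes $\tau$. Since $\gamma\tau=\tau$, we have $\gamma^j(z;\tau)=(z\eps^{-j};\tau)$. The fixed point of $\gamma^r$ is the same $\tau$, with multiplier $\eps^r$, and $\mu_{\gamma^r}=\mu_\gamma^r$ because the $\eta$-multiplier is multiplicative on matrices fixing $\tau$. It therefore suffices to show
\[
\Phi_{\gamma^r,u^{(r)}_1,0}(z^{(r)};\tau)=\prod_{j=0}^{r-1}\Phi_{\gamma,m_j,n_j}(z^{(r)}\eps^{-j};\tau)=\Phi_{\gamma,u_1,0}(z;\tau)^r,
\]
where $z=\frac{u_1\tau+u_2}{\eps^{-1}-1}$ and $z^{(r)}=\frac{u^{(r)}_1\tau+u^{(r)}_2}{\eps^{-r}-1}$. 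The key observation is that the point $z^{(r)}$ is "$\gamma$-stable up to the lattice". The defining relation $\ee(z^{(r)}+u^{(r)}_1\tau)=\ee(z+u_1\tau)$ says that $z^{(r)}$ and $z$ agree modulo $\ZZ+\tau\ZZ$, up to the shift of index. Moreover, $z\eps^{-1}-z=u_1\tau+u_2\in\ZZ\tau+\ZZ$.

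The plan is as follows. First, I write $z^{(r)}=z+k\tau+\ell$ for integers $k,\ell$ determined by the hypothesis. Second, for each $j$ I write $z^{(r)}\eps^{-j}=z+(\text{lattice vector})$ and use the translation rules $\Phi_{\gamma,m,n}(z+\tau)=\Phi_{\gamma,m+1,n+d}(z)$ and $\Phi_{\gamma,m,n}(z+1)=\Phi_{\gamma,m,n-c}(z)$ to move every factor back to argument $z$. The indices are then shifted by an explicit linear function of the lattice vector. Third, I choose the free intermediate indices $m_j=n_{j-1}$ of the cocycle so that each factor becomes $\Phi_{\gamma,u_1+s_j,s_j}(z)$ for some integer $s_j$. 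By \eqref{eq:faddeevperiod}, applied at the special point where $q^{u_1}\ee(z)=\ee(z/\eps)$, each such factor equals $\Phi_{\gamma,u_1,0}(z)$. This is exactly the mechanism used in Lemma~\ref{lem:lam.inv} and in the proof of Proposition~\ref{prop:starkfaddeev}.

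The main obstacle is the index bookkeeping. One must verify that the endpoint constraints $m_0=u^{(r)}_1$ and $n_{r-1}=0$ are compatible with making every factor of the diagonal form $(u_1+s,s)$. My approach is to track the single invariant $m-n$ along the chain. Each translation shifts $m-n$ by $k(1-d)+\ell c$, which is the combination appearing in Lemma~\ref{lem:lam.inv}. The hypothesis on $u^{(r)}$ is designed precisely so that these shifts telescope. If a sign or an off-by-$c$ discrepancy appears, I would instead use the quasi-periodicity in the index, $\Phi_{\gamma,m+1,n}=(1-q^m\ee(z))^{-1}\Phi_{\gamma,m,n}$, together with its analogue in $n$. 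At the special point these two factors coincide, so they cancel in pairs and absorb the discrepancy. The case $u\in\Lambda$, where the values are $\eps^{\pm1/2}$, is immediate since $(\eps^{\pm1/2})^r=(\eps^r)^{\pm1/2}$. The case $r<0$ follows by combining the reflection formula of Proposition~\ref{prop:reflection} with $\Phi_{\gamma^{-1}}$ obtained from the cocycle identity $\Phi_{\gamma\gamma^{-1}}=1$.

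For the second identity, I apply the cocycle \eqref{eq:g2r} with $\gamma^r=\gamma\cdot\gamma^{r-1}=\gamma^{r-1}\cdot\gamma$. This gives
\[
\Phi_{\gamma,\ast,\ast}(z\eps^{-(r-1)})\,\Phi_{\gamma^{r-1},\ast,\ast}(z)=\Phi_{\gamma^{r-1},\ast,\ast}(z\eps^{-1})\,\Phi_{\gamma,\ast,\ast}(z).
\]
The argument attached to $u\gamma$ is $z\eps^{-1}$ up to the lattice, because $(u\gamma)_1\tau+(u\gamma)_2=\eps^{-1}(u_1\tau+u_2)$ up to the standard normalisation. The $\gamma$-factors agree by the same periodicity argument used above, so the $\gamma^{r-1}$-factors at $z$ and at $z\eps^{-1}$ must also agree. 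Combined with $\Lambda$-periodicity from Lemma~\ref{lem:lam.inv}, this gives $F^\pm_{\gamma^r}(u\gamma)=F^\pm_{\gamma^r}(u)$.
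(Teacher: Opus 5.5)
Your treatment of the first identity is the paper's argument (the cocycle \eqref{eq:gamr.cocyc}) written out, and it works. The closure of the index chain that you single out as the main obstacle is automatic. We have $\sum_j(m_j-n_j)=m_0-n_{r-1}$ and $\sum_{j=0}^{r-1}\eps^{-j}z^{(r)}(1-\eps^{-1})=z^{(r)}(1-\eps^{-r})$. Each summand $\eps^{-j}z^{(r)}(1-\eps^{-1})$ lies in $\ZZ\tau+\ZZ$, because $z^{(r)}\equiv z$ is $\gamma$-torsion. So once $m_1,\dots,m_{r-1}$ make the first $r-1$ factors special, the special-point condition for $\gamma^r$ at $z^{(r)}$ forces the last one to be special too. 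Your quasi-periodicity fallback could not have absorbed a genuine discrepancy anyway, since the two linear factors coincide only when the index difference is already the special one.

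The second identity is where the proposal has a genuine gap. Here $u$ is an arbitrary element of $G_{\gamma^r}$, so $z=z_u$ only satisfies $(\eps^{-r}-1)z\in\ZZ\tau+\ZZ$; in general it is \emph{not} $\gamma$-torsion. Hence the $\gamma$-factors $\Phi_{\gamma,*,*}(\eps^{-(r-1)}z)$ and $\Phi_{\gamma,*,*}(z)$ in your display are not special values of $\Phi_\gamma$; this is the paper's remark that these factors need not be algebraic. So \eqref{eq:faddeevperiod} does not apply to them. Since $\eps^{-(r-1)}z\equiv\eps z$ modulo $\ZZ\tau+\ZZ$, nothing makes them equal unless $\eps z\equiv z$, i.e.\ unless $u\gamma=u$ in $G_{\gamma^r}$, where the claim is trivial. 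Moreover, even granting that step, comparing two factorisations at the \emph{same} point $z$ gives a relation for $\Phi_{\gamma^{r-1}}$ at $z$ and $\eps^{-1}z$, not for $\Phi_{\gamma^r}$ at two different points, so your concluding step does not follow. (Also $z_{u\gamma}=\eps z_u$, not $\eps^{-1}z_u$; that slip is harmless.)

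What works, and what the paper means by ``multiplication by $\gamma$ permutes the factors'', is to use matching factorisations at the two points $z$ and $\eps^{-1}z=z_{u\gamma^{-1}}$. By \eqref{eq:g2r}, suppressing $\tau$, for any $k,n\in\ZZ$:
\[
\begin{aligned}
\Phi_{\gamma^r,k,k-(du_1-cu_2)}(\eps^{-1}z)&=\Phi_{\gamma,n,k-(du_1-cu_2)}(\eps^{-r}z)\,\Phi_{\gamma^{r-1},k,n}(\eps^{-1}z)\,,\\
\Phi_{\gamma^r,n+u_1,n}(z)&=\Phi_{\gamma^{r-1},k,n}(\eps^{-1}z)\,\Phi_{\gamma,n+u_1,k}(z)\,.
\end{aligned}
\]
The $\gamma^{r-1}$-factors are identical. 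Since $\eps^{-r}z=z+u_1\tau+u_2$, the translation rules give $\Phi_{\gamma,n,k-(du_1-cu_2)}(\eps^{-r}z)=\Phi_{\gamma,n+u_1,k}(z)$. The only periodicity needed is then \eqref{eq:faddeevperiod} for $\gamma^r$ itself, at $z$ (index difference $u_1$) and at $\eps^{-1}z$ (index difference $du_1-cu_2=(u\gamma^{-1})_1$). This gives $F^\pm_{\gamma^r}(u\gamma^{-1})=F^\pm_{\gamma^r}(u)$, which is equivalent to the claim.
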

\begin{proof}
This follows from the cocycle property of $\Phi_{\gamma,m,n}$ and in particular Equation~\eqref{eq:gamr.cocyc}.
For the second equation, we observe that multiplication by $\gamma$ permutes the factors in the product, which of course leaves the product fixed. (Note that the factors in this product are not necessarily algebraic in general.)
\end{proof}
\noindent Therefore, all values of $F_{\gamma}^{\pm}$ are determined by the values of $F_{\gamma^r}^{\pm}$ for any $r\neq0$ (using for example Corollary~\ref{cor:argument} to remove the ambiguity of $r$-th roots of unity).

The next identity relates different matrices with the same trace. Denote $z_{m,k}=\frac{m\tau+k}{\eps^{-1}-1}$. Suppose that $g=(m,k)\in G$ has order $\mathrm{ord}(g)$.
Moreover, suppose that $h$ is the smallest number such that $\ee(z_{hm,hk}+hm\tau)\in\ee(x/y+\tau\ZZ)$, for some $x,y\in\ZZ$.
Then, we observe that $h(m,k)(I-\gamma)^{-1}\in(\ZZ,x/y)$ and hence $h(m,k)\in(\ZZ,x/y)(I-\gamma)=(\ZZ-cx/y,\ZZ+(1-d)x/y)$. Therefore, we see that $c/y,(1-d)/y\in\ZZ$ and so
\[
    U_{y}^{-1}\gamma U_{y}=\begin{pmatrix}a&yb\\c/y&d\end{pmatrix}\in\SL_2(\ZZ)\,,\qquad\text{where}\qquad
    U_m=\begin{pmatrix}1&0\\0&m\end{pmatrix}.
\]
Therefore, suppose that for some $w\in\ZZ$ that $\ee(wyz_{m,k}+wym\tau)\in\ee(f/h)q^{y/h+y\ZZ}$, then we find that $(1,f)(I-U_{y}^{-1}\gamma U_{y})\in h\ZZ^2$, and hence 
\[
V_{f,h}U_{y}^{-1}\gamma U_{y}V_{f,h}^{-1}=\begin{pmatrix}a+cf/y&(-fya + y^2b - f^2c + fyd)/yh\\hc/y&d-cf/y\end{pmatrix}\in\SL_2(\ZZ)\,,
\qquad\text{where}\qquad
V_{f,h}=\begin{pmatrix}1&f\\0&h\end{pmatrix}.
\]
\begin{proposition}\label{prop:dist}
With the above notation, we find that
\begin{equation}\label{eq:dist.prop}
    \mu_\gamma^{-\mathrm{ord}(g)}\prod_{n=1}^{\mathrm{ord}(g)}F_\gamma^{(\pm)}(\ell+nm,j+nk)
    =
    \mu_{V_{f,h}U_{y}^{-1}\gamma U_{y}V_{f,h}^{-1}}^{-1}F_{V_{f,h}U_{y}^{-1}\gamma U_{y}V_{f,h}^{-1}}^{(\pm)}(h\ell,yj-f\ell)\,.
\end{equation}
\end{proposition}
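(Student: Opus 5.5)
We will prove \eqref{eq:dist.prop} as a distribution relation for $q$-Pochhammer symbols: a product of $q$-Pochhammer symbols over a coset of a cyclic subgroup of arguments is itself a $q$-Pochhammer symbol in a finer variable. The overall strategy is to work with the product definition \eqref{eq:phigam.def} for $\tau$ off the real axis, establish the identity there as an identity of meromorphic functions of $(z,\tau)$, and then specialise to the fixed point.

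\textbf{Step 1 (numerator).} For $\Im\tau>0$ and generic $z$, consider the product over $n=1,\dots,\mathrm{ord}(g)$ of
\[
\Phi_{\gamma,\ell+nm,0}\bigl(z+n z_{m,k};\tau\bigr).
\]
Its numerator is $\prod_n (q^{\ell+nm}\ee(z+nz_{m,k});q)_\infty$. By the choice of $h$, $y$, $f$, the multiplicative shifts $q^{m}\ee(z_{m,k})$ generate, modulo $q^{\ZZ}$, the $h$-th roots $\ee(f/h)q^{y/h}$. Hence this product collapses via
\[
\prod_{j\bmod h}(\zeta_h^j x;q)_\infty=(x^h;q^h)_\infty
\]
and its variant with fractional powers of $q$. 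The result is a single Pochhammer symbol in the variable $\tau'=V_{f,h}U_y^{-1}\tau$, i.e.\ $(y\tau+f)/h$ up to the conventions of the action. We must check that the base $\ee(\tau')$ matches the numerator of $\Phi_{\gamma',h\ell,0}$, where
\[
\gamma'=V_{f,h}U_y^{-1}\gamma U_yV_{f,h}^{-1}.
\]

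\textbf{Step 2 (denominator).} Do the same for the denominator $(\tq^{0}\ee(z/(c\tau+d));\gamma\tau)_\infty$. Since $\gamma'\tau'=V_{f,h}U_y^{-1}\gamma\tau$ and $c'\tau'+d'=c\tau+d$ (conjugation preserves the automorphy factor at the fixed point, and generically up to the explicit scaling), the same distribution formula applies with $\tau$ replaced by $\gamma\tau$. This yields an identity of the form
\[
\prod_n\Phi_{\gamma,\ell+nm,0}(\dots;\tau)=\Phi_{\gamma',h\ell,0}(z';\tau'),
\]
valid for generic $(z,\tau)$. By uniqueness of meromorphic continuation it then holds wherever both sides are defined.

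\textbf{Step 3 (specialisation and bookkeeping).} Specialise to the fixed point $\tau$ and $z=z_{\ell,j}$. The new argument becomes $z_{h\ell,\,yj-f\ell}$ for $\gamma'$ at its fixed point $\tau'$, with the same $\eps$. Then:
\begin{itemize}
\item Lemma~\ref{lem:lam.inv} together with \eqref{eq:faddeevperiod} absorbs the index shifts $(m,n)\mapsto(m+k,n+k)$ incurred in Steps 1--2.
\item The $\mu$ factors are matched by the definition \eqref{eq:fgam.def}, which accounts for $\mu_\gamma^{-\mathrm{ord}(g)}$ and $\mu_{\gamma'}^{-1}$.
\item Boundary cases where some $u$ lies in $\Lambda$ (values $\eps^{\pm1/2}$) are handled by the $\pm$ choice, via a limiting argument from the generic case as in Theorem~\ref{thm:fg.equs}.
\end{itemize}

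\textbf{Main obstacle.} The hardest part is Step 1's bookkeeping. We must verify that the roots of unity and fractional $q$-powers produced by the $h$ and $y$ data assemble exactly into $\ee(\tau')$, and that no stray constant factors remain. Such factors would be powers of $q^{1/24}$, i.e.\ eta-type constants, which must cancel against the $\mu$ ratio. We would first check the simplest cases $y=1$ (pure $V_{f,h}$) and $h=1$ (pure $U_y$) separately, then compose them.
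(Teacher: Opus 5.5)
Your plan is essentially the paper's proof: the paper also reduces \eqref{eq:dist.prop} to the two $q$-Pochhammer distribution relations. It splits $\prod_{n=1}^{\mathrm{ord}(g)}$ as $\prod_{p=0}^{y-1}\prod_{n=1}^{h}$, collapses the $p$-product with the roots-of-unity relation to land on $U_y^{-1}\gamma U_y$ (fixed point $y\tau$), and then collapses the $n$-product with $\prod_{j=0}^{h-1}(q^jx;q^h)_\infty=(x;q)_\infty$ to land on the $V_{f,h}$-conjugate, which is exactly your ``$h=1$, then $y=1$, then compose'' strategy. One correction: the roots-of-unity relation enters with modulus $y$ (it is what produces $U_y$), not modulus $h$ as written in your Step~1.
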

\begin{proof}
This follows from the identities
\[
    \prod_{j\in\ZZ/M\ZZ}(\ee_M(j)x;q)_\infty
    =
    (x^M;q^M)_\infty\,,
    \qquad\text{and}\qquad
    \prod_{j=0}^{M-1}(q^jx;q^M)_\infty
    =
    (x;q)_\infty\,,
\]
which give analogous identities for Faddeev's modular quantum dilogarithms,
and the following:
\[
\begin{aligned}
    \mu_\gamma^{-\mathrm{ord}(g)}
    \prod_{n=1}^{\mathrm{ord}(g)}F_\gamma^{(\pm)}(\ell+nm,j+nk)
    &=
    \mu_\gamma^{-\mathrm{ord}(g)}
    \prod_{p=0}^{y-1}
    \prod_{n=1}^{h}
    F_\gamma^{(\pm)}(\ell+(ph+n)m,j+(ph+n)k)\\
    &=
    \mu_{U_{y}^{-1}\gamma U_{y}}^{-\mathrm{ord}(h)}
    \prod_{n=1}^{h}
    F_{U_{y}^{-1}\gamma U_{y}}^{(\pm)}(\ell+nm,y(j+nk))\\
    &=
    \mu_{V_{f,h}U_{y}^{-1}\gamma U_{y}V_{f,h}^{-1}}^{-1}F_{V_{f,h}U_{y}^{-1}\gamma U_{y}V_{f,h}^{-1}}^{(\pm)}(h\ell,yj-f\ell)\,. \qedhere
\end{aligned}
\]
\end{proof}

\bigskip
\section{Finite quantum dilogarithms}
\label{sec:finitedilog}
In this section, we will reinterpret the identities from \S\ref{sec:mainidentities} in a more abstract way by introducing the notion of a finite quantum dilogarithm and deriving its basic properties. A more conceptual point of view will be given in~\S\ref{sec:izumi}.

\subsection{Andersen--Kashaev's framework for quantum dilogarithms}
\label{sec:andersenkashaev}

To motivate our definition we begin by recalling Andersen--Kashaev's abstract framework for quantum dilogarithms~\cite[\S6]{AK14b}. The difference is that we will be dealing exclusively with finite abelian groups~$G$, so the analytic aspects are not going to matter. Let $N=|G|$.
\begin{definition}
A metric group is a finite abelian group~$G$ together with a choice of a function $\ang{\cdot}\colon G\to \CC^{\times}$ such that $\ang{x}=\ang{-x}$ for all $x$ and 
    \[\ang{x;y} := \frac{\ang{x+y}}{\ang{x}\ang{y}}\]
is a non-degenerate bicharacter on~$G$.
\end{definition}
Let $G$ be a metric group. We normalise the counting measure on~$G$ by $1/\sqrt{N}$ and denote
    \[\int_Gf(x)dx := \frac{1}{\sqrt{N}}\sum_{x\in G} f(x)\,.\]
The Fourier transform
    \[\widehat{f}(y) = (\bF^{-1}f)(y) := \int_{G}f(x)\ang{x;-y}{}d\mu(x)\]
then satisfies $\bF^4=1$, $\bF^2f(x)=f(-x)$. (Here~$\bF$ is the operator with the kernel $\ang{x;y}$, so in our notation it is the inverse Fourier transform.)

\begin{definition} A u-operator is a group homomorphism $\mathbf{f}$ from $G$ to the group of unitary operators on $L^2(G)$. We denote it by $x\mapsto \mathbf{f}^x$.
\end{definition}

\begin{definition} Two u-operators $\bp$ and $\bq$ form a Heisenberg pair of level $\ell=\ell(\bp,\bq)$, if for all $x,y\in G$ they satisfy	the relation
	\[\bp^x\bq^y = \ang{x;y}{}^{\ell}\bq^y\bp^x\,.\]
\end{definition}
Any $u$-operator forms a Heisenberg pair of level~$0$ with itself. A standard example of a Heisenberg pair of level~$1$
is the pair defined by
    \[(\bp^x{}f)(y)=f(x+y)\,,\qquad\qquad (\bq^xf)(y)=\ang{y;x}{}f(y)\,.\]
For any $\bp$, $\bq$ forming a Heisenberg pair, we define their sum $\bp+\bq$ by
	\[(\bp+\bq)^x = \ang{x}{}^{\ell(\bp,\bq)}\bq^x\bp^x\,.\]
From now on, we reserve notation $\bp$, $\bq$ for the above mentioned canonical Heisenberg pair of level 1. Suppose that $g\colon G\to\CC$ and a u-operator $\mathbf{f}$ is given. Then we define
	\[g(\mathbf{f})=\int_{G}\widehat{g}(x)\,\mathbf{f}^xdx\,.\]
For the standard Heisenberg pair, we have
\begin{align*}
	g(\bq)f(x) &= g(x)f(x)\,,\\
	g(\bp) &= \bF g(\bq)\bF^{-1}\,,\\
	g(\bp+\bq) &= \ang{\bq}{}^{-1}g(\bp)\ang{\bq}{}\,.
\end{align*}
Indeed, for the first identity it follows from $\int_{G}\widehat{g}(x)\ang{y;x} = g(y)$, for the second it suffices to note that $\bp^x\bF = \bF \bq^x$, and the last one follows from 
	\[(\ang{\bq}(\bp+\bq)^xf)(y) = \ang{y}\ang{x}\ang{y;x}f(x+y) = \ang{x+y}f(x+y) = (\bp^x\ang{\bq}f)(y)\,.\]
In~\cite[\S6]{AK14b}, Andersen and Kashaev gave a general definition of a quantum dilogarithm $f\colon G\to \CC$ for a locally compact abelian group~$G$. The main property is the non-commutative pentagon relation
    \[f(\bp)f(\bq) =  f(\bq)f(\bp+\bq)f(\bp)\,,\]
but in addition $f$ is required to satisfy a reflection identity. In the case $G=\RR$, their definition agrees with the properties of Faddeev's quantum dilogarithm~$\Phi$. When specialised to finite groups however, their definition does not seem to give any interesting functions. We now give a definition that appears to work better for finite groups.

\subsection{Finite quantum dilogarithms}
 Our definition of a finite quantum dilogarithm differs from Andersen--Kashaev in that we allow for a specific defect in the pentagon relation.
\begin{definition}
    Let $G$ be a metric group and assume $|G|>1$. A \emph{finite quantum dilogarithm} for a metric group~$G$ is a function $E\colon G\to\CC$ satisfying
    \begin{equation} \label{eq:pentagonmatrix}
    \widehat{E}(\bp)\widehat{E}(\bq) - \widehat{E}(\bq)\widehat{E}(\bp+\bq)\widehat{E}(\bp) = C \delta(\bq)\delta(\bp)\,,
	\end{equation}
	for some non-zero constant $C\in\CC^{\times}$, where $\delta\colon G\to\CC$ is the delta function at~$0$.
\end{definition}
Equivalently, $E$ is a finite quantum dilogarithm if and only if it satisfies
\begin{equation} \label{eq:pentagonintegral}
	\ang{x;y}E(x)E(y) = C + \int_{G}E(y-z)E(z)\ang{z}E(x-z)dz
\end{equation}
for all $x,y\in G$, where $C\in \CC^{\times}$ is a non-zero constant (we recall that $\int_{G}f(x)dx=\frac{1}{\sqrt{N}}\sum_{x\in G}f(x)$). Indeed, the equivalence can be seen by writing
	\[\widehat{E}(\bp) = \int_{G}E(x)\bp^{-x}dx\,,\qquad \widehat{E}(\bq) = \int_{G}E(x)\bq^{-x}dx\,,
	\qquad \widehat{E}(\bp+\bq) = \int_{G}\ang{x}E(x)\bq^{-x}\bp^{-x}dx\,,\]
and comparing the coefficients of $\bq^{-y}\bp^{-x}$ on both sides of~\eqref{eq:pentagonmatrix}.

The following theorem gives the basic properties of finite quantum dilogarithms.

\begin{theorem} \label{thm:fqdilogbasicproperties}
	Let $E\colon G\to \CC$ be a finite quantum dilogarithm for a metric group $G$ with $|G|=N>4$. Then
	\begin{thmenum}
		\item $E(0)^2=\sqrt{N}E(0)+1$ and $C=-\widehat{E}(0)$;
		\item $E(u)E(-u) = \ang{u}^{-1}$ for all $u\ne 0$;
		\item $\widehat{E}(u) = \lambda\ang{u}E(u)$ for all $u\in G$, where $\lambda=\widehat{E}(0)/E(0)$ satisfies $\lambda^3=\int_{G}\ang{x}^{-1}dx$;
		\item the function $\widetilde{E}(u)=\ang{u}E(u)$ satisfies the dual pentagon equation
		\begin{equation} \label{eq:pentagonintegraldual}
		\ang{x;y}^{-1}\widetilde E(x)\widetilde E(y) = \widetilde{C} + \int_{G}\widetilde E(y-z)\widetilde E(z)\ang{z}^{-1}\widetilde E(x-z)dz
		\end{equation}
		for $\widetilde C=\lambda^{-2}C$.
	\end{thmenum}
\end{theorem}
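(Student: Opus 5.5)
We work directly with the scalar form \eqref{eq:pentagonintegral}, writing $P(x,y)$ for the identity $\ang{x;y}E(x)E(y)=C+\int_G E(y-z)E(z)\ang{z}E(x-z)\,dz$. Throughout we use $\ang{x;y}=\ang{x+y}/(\ang{x}\ang{y})$, the symmetry $\ang{x}=\ang{-x}$, and orthogonality $\int_G\ang{x;y}\,dy=\sqrt{N}\,\delta(x)$. The plan is to extract (i)--(iv) by specialising $(x,y)$ and by Fourier-transforming $P$ in one variable. Since $P(x,y)$ is symmetric after $z\mapsto$ appropriate substitutions, only a few specialisations are needed.

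\textbf{Step 1 (the $x=0$ or $y=0$ slice).} Put $y=0$. The integral becomes $\int_G E(-z)E(z)\ang{z}E(-z)\,dz$, independent of $x$. On the left we get $E(x)E(0)$, since $\ang{x;0}=1$. So $E(x)E(0)=C+K$ is constant in $x$, or else $E(0)=0$. A constant $E$ can be ruled out directly when $N>4$, by plugging into $P$ and comparing the $x$-dependence of $\ang{x;y}$. So the useful information must come elsewhere: from the $x$-dependence hidden in $\ang{x;y}$ after summing in $y$.

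\textbf{Step 2 (Fourier transform in $y$).} Multiply $P(x,y)$ by $\ang{y;-w}$ and integrate over $y$. On the right, substitute $y=z+t$. The triple product then factorises as
\[
\widehat{E}(w)\int_G E(z)\ang{z}\ang{z;-w}E(x-z)\,dz,
\]
plus $C\sqrt N\delta(w)$. On the left we get $E(x)\widehat{E}(w-x)$.

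Now transform in $x$ as well, multiplying by $\ang{x;-v}$ and integrating. The remaining convolution becomes a product involving $\widehat{E}(v)$ and the Fourier transform of $E\ang{\cdot}$. That transform can be rewritten via $\ang{z}\ang{z;-w}=\ang{z-w}\ang{w}^{-1}$. The upshot is a bilinear relation
\[
\Bigl(\int E(x)\widehat E(w-x)\ang{x;-v}\,dx\Bigr)=C N\delta(w)\delta(v)/\sqrt N+\widehat E(w)\widehat E(v)\,\ang{w}^{-1}\,\widehat{(E\ang{\cdot})}(v-w)\cdots,
\]
which is a Gauss-sum twisted form of \eqref{eq:pentagonmatrix}.

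Specialising $v,w$ to $0$ and comparing with $P(0,0)$ and $P(x,-x)$ should give:
\begin{itemize}
\item $C=-\widehat E(0)$;
\item the quadratic $E(0)^2=\sqrt N E(0)+1$;
\item the reflection $E(u)E(-u)=\ang{u}^{-1}$ for $u\neq0$, obtained from $P(u,-u)$ together with the transformed identity at $w=0$.
\end{itemize}

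\textbf{Step 3 (self-duality, part (iii)).} Compare the two expressions for the same matrix coefficient obtained by conjugating \eqref{eq:pentagonmatrix} with $\bF$ and with $\ang{\bq}$. These are the operator identities $g(\bp)=\bF g(\bq)\bF^{-1}$ and $g(\bp+\bq)=\ang{\bq}^{-1}g(\bp)\ang{\bq}$ listed before the definition. Conjugating the pentagon by $\bF$ swaps the roles of $\bp$ and $\bq$ up to sign. The defect $\delta(\bq)\delta(\bp)$ is a rank-one operator, and it is mapped to another rank-one operator.

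Uniqueness then forces $\widehat E$ and $\ang{\cdot}E$ to be proportional. The precise route is to use (ii) to rewrite $\widehat{\widehat E}$, then show that $\widehat E(u)/(\ang{u}E(u))$ is independent of $u$ by comparing $P$ and its Fourier-dual at two points. Evaluating at $u=0$ gives $\lambda=\widehat E(0)/E(0)$. Applying $\bF^2f(x)=f(-x)$ and (ii) three times gives $\lambda^3=\int_G\ang{x}^{-1}\,dx$.

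\textbf{Step 4 (part (iv)).} Part (iv) should follow formally. Substitute $E=\lambda^{-1}\ang{\cdot}^{-1}\widehat E$ into the transformed identity from Step 2, and use (ii) to invert the Gauss factors. This recovers \eqref{eq:pentagonintegraldual} with $\widetilde C=\lambda^{-2}C$.

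\textbf{Main obstacle.} I expect the main obstacle to be Step 3. It requires showing that the proportionality constant is genuinely independent of $u$, and it is where the hypothesis $N>4$ must enter. I would first try to derive a linear equation $\widehat E(u)\,\alpha=\ang{u}E(u)\,\beta$ with $\alpha,\beta$ independent of $u$, obtained by setting $x=u$ and integrating $P(u,y)$ over $y$. The remaining task is then to show $\alpha\ne0$. This should use the quadratic from (i), whose roots are real and nonzero, together with a counting argument excluding degenerate cases for small $N$.
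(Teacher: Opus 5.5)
\textbf{There are genuine gaps.} Only the first half of your Step 2 is sound. Multiplying $P(x,y)$ by $\ang{y;-w}$ and integrating does give the right starting identity,
\[E(x)\widehat{E}(w-x)=C\sqrt{N}\,\delta(w)+\widehat{E}(w)\,\widehat{p_x}(w)\,,\qquad p_x(z)=E(x-z)E(z)\ang{z}\,,\]
which is exactly where the paper begins. After that, the key claims are asserted (``should give'', ``uniqueness then forces'') rather than derived.

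Step 1 also contains an error. With $y=0$ the integrand is $E(-z)E(z)\ang{z}E(x-z)$, which depends on $x$; you replaced $E(x-z)$ by $E(-z)$. Taken literally, your conclusion would force $E(0)=0$, contradicting (i). Read correctly, this slice is the convolution identity $E(0)E=C+p_0\star E$, and its Fourier transform is what gives reflection. First you must show that $\widehat{E}$ vanishes nowhere. The paper does this by noting that $\mathrm{supp}\,\widehat{E}$ is invariant under the span of $\mathrm{supp}\,E$, and that this span must be all of $G$ since $C\neq0$. Then $\widehat{p_0}(w)=E(0)$ for $w\neq0$, and hence
\[E(u)E(-u)\ang{u}=\sqrt{N}E(0)\delta(u)+\kappa\,,\qquad \kappa=-C/\widehat{E}(0)\,.\]

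The central problem is that this yields (ii) and the quadratic in (i) only up to the unknown constant $\kappa$. Both are equivalent to $\kappa=1$, i.e.\ to $C=-\widehat{E}(0)$, and nothing you propose (specialising at $P(0,0)$, $P(x,-x)$) pins $\kappa$ down. In the paper, $\kappa=1$ is the \emph{last} thing proved, and it is extracted from (iii). So your order (first (i), (ii), then (iii) using (ii)) has no foundation at its first step.

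For (iii) there is no ``uniqueness'' principle for the pentagon to invoke. The missing idea is the symmetry $\widehat{p_x}(w)=\ang{x-w}\ang{w}^{-1}\widehat{p_x}(x-w)$, which comes from the change of variables $z\mapsto x-z$. Inserted into the identity above, it shows that $\widehat{E}(u)\widehat{E}(-u)\ang{u}^{-1}=d$ is constant on $u\neq0$. Setting $w=0$ then gives $E(x)(d-\widehat{E}(0)^2)=C\sqrt{N}\,\widehat{E}(x)\ang{x}^{-1}$ for $x\neq0$.

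Extending the proportionality to $x=0$ is a separate step, and this is where $N>4$ enters. It is used to show $\ang{\cdot}E$ is not constant on $G\sm\{0\}$; otherwise the reflection formula would force $\ang{\cdot}$ to be constant there. It does not enter through the reality of the roots of the quadratic. Only after this do the relations among $d$, $\lambda$, $C$, $E(0)$ combine to give $\kappa=1$.

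Finally, $\lambda^3=\int_G\ang{x}^{-1}dx$ does not come from $\bF^2$ and (ii). It comes from the identity $W^3=\big(\int_G\ang{x}^{-1}dx\big)\,\mathrm{id}$ for $Wf=\ang{\cdot}^{-1}\widehat{f}$, which holds for every $f$, applied to the eigenvector $E\neq0$. Your Step 4 matches the paper in spirit: there too, the dual pentagon is transformed back into the starting identity with constant $\lambda^2\widetilde{C}$.
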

\begin{proof}
	Let $p_x(z)=E(x-z)E(z)\ang{z}$. Then~\eqref{eq:pentagonintegral} can be rewritten using convolution as
	\[E(x)E(y)\ang{x;y} = C + (p_x\star E)(y)\,,\]
	which after Fourier inversion becomes
	\begin{equation} \label{eq:supportnew}
		E(x)\widehat{E}(w-x) = C\sqrt{N}\delta(w) + \widehat{E}(w)\widehat{p_x}(w)\,.
	\end{equation}
	This immediately implies that $0\in\mathrm{supp}(\widehat{E})$ and that $\mathrm{supp}(\widehat{E})$ is $H$-invariant, where $H$ is the span of $S=\mathrm{supp}(E)$. From~\eqref{eq:pentagonintegral} we also immediately get that $H=G$, since otherwise taking $x,y\not\in H$ would contradict $C\ne0$. Then we also get $\mathrm{supp}(\widehat{E})=G$. Taking $x=0$ in~\eqref{eq:supportnew} gives that $\widehat{p_0}(w)=E(0)$ for $w\ne 0$ and
		\[\widehat{p_0}(0) = E(0)-\frac{C\sqrt{N}}{\widehat{E}(0)}\,.\]
	This implies that 
	\begin{equation} \label{eq:reflection1}
    E(u)E(-u)\ang{u}=\sqrt{N}E(0)\delta(u)-\frac{C}{\widehat{E}(0)}\,.
    \end{equation}
	In particular, this implies that $E(u)\ne 0$ for all $u\in G$. Next, note that
		\[\widehat{p_x}(w) 
		= \int_{G}E(z)\ang{z}E(x-z)\ang{-w;z}dz 
		= \ang{x}\ang{-w;x}\int_{G}E(x-z)\ang{z}E(z)\ang{w-x;z}dz = \frac{\ang{x-w}}{\ang{w}}\widehat{p_{x}}(x-w) \,,\]	
	so
	\[\frac{E(x)\widehat{E}(w-x)-C\sqrt{N}\delta(w)}{\widehat{E}(w)\ang{w}^{-1}} = \frac{E(x)\widehat{E}(-w)-C\sqrt{N}\delta(x-w)}{\widehat{E}(x-w)\ang{x-w}^{-1}}\,.\]
	When $w\ne0$ and $x\ne w$, we get
	\[\widehat{E}(w-x)\widehat{E}(x-w)\ang{x-w}^{-1} = \widehat{E}(w)\ang{w}^{-1}\widehat{E}(-w)\,,\]
	so that $\widehat{E}(u)\widehat{E}(-u)\ang{u}^{-1} = d$ for some constant~$d$, for all $u\ne 0$. Then, setting $w=0$ and $x\ne 0$ gives
	\[E(x)\widehat{E}(-x)\widehat{E}(x)\ang{x}^{-1}-C\sqrt{N}\widehat{E}(x)\ang{x}^{-1} = E(x)\widehat{E}(0)^2\,,\]
	which can be rewritten as
	\begin{equation} \label{eq:weileigen}
	E(x)(d - \widehat{E}(0)^2) = C\sqrt{N}\widehat{E}(x)\ang{x}^{-1}\,,
	\end{equation}
	hence $\widehat{E}(x) = \lambda \ang{x}E(x)$ for all $x\ne 0$, for some constant $\lambda\in\CC^{\times}$.
    
    Thus to prove (i) and (ii) it remains to show that $C=-\widehat{E}(0)$ and $\lambda = \widehat{E}(0)/E(0)$. Taking $x=w\ne 0$ in~\eqref{eq:supportnew} and using the symmetry $\widehat{p_x}(x)=\widehat{p_x}(0)\ang{x}^{-1}$ gives
	\[E(x)\widehat{E}(0) = \widehat{E}(x)\ang{x}^{-1}\widehat{p_x}(0)\,,\]
	so $\widehat{E}(0)=\lambda\widehat{p_x}(0)$ for $x\ne 0$. Since $\widehat{p_x}(0)=(\widetilde{E}\star E)(x)$, we see that $\widehat{\widetilde{E}}(y)\widehat{E}(y)$ is constant on $y\ne 0$, so $\widehat{\widetilde{E}}(y)$ is proportional to $E(-y)$ on $G\sm \{0\}$. Therefore, for some $c\in \CC$, $\widetilde{E}(x)-c\widehat{E}(x)$ is constant, and since $\ang{x}E(x)$ is not constant on $G\sm\{0\}$\footnote{Since $|G|>4$, if $x\mapsto\ang{x}E(x)$ were constant on $G\sm\{0\}$, then by~\eqref{eq:reflection1} so would be~$\ang{\cdot}$, which is impossible for $N>4$.}, we see that $\widehat{E}(x)=\lambda\ang{x}E(x)$ for all $x$, so $\lambda=\widehat{E}(0)/E(0)$. This then also shows that $d=-\lambda^2C/\widehat{E}(0) = -C\widehat{E}(0)/E(0)^2$. Plugging this back into~\eqref{eq:weileigen} then gives
		\[-C\widehat{E}(0)/E(0)^2 - \widehat{E}(0)^2 = C\sqrt{N}\lambda\,.\]
	We can rewrite this as
        \[E(0)^2 = -\frac{C}{\widehat{E}(0)}(E(0)\sqrt{N}+1)\,,\]
    and using~\eqref{eq:reflection1} gives
        \[E(0)^2 = (E(0)^2-E(0)\sqrt{N})(E(0)\sqrt{N}+1)\,,\]
    which, since $E(0)\ne0$, is equivalent to $E(0)^2=\sqrt{N}E(0)+1$. Comparing this with~\eqref{eq:reflection1} again also gives us the identity $C=-\widehat{E}(0)$.

    To prove (iii), it remains to show that $\lambda^3$ is equal to the Gauss sum $\int_G\ang{x}^{-1}dx$. This is a general property of the Fourier--Weil operator $Wf(x)=\ang{x}^{-1}\widehat{f}(x)$: 
    \begin{align*}
    W^3f(x) &= \int_{G^3}\ang{x}^{-1}\ang{x;r}^{-1}\ang{r}^{-1}\ang{r;s}^{-1}\ang{s}^{-1}\ang{s;t}^{-1}f(t)drdsdt\\
            &= \int_{G^3}\ang{x+r}^{-1}\ang{r+t}\ang{r+s+t}^{-1}f(t)drdsdt\\
            &= \int_{G}\ang{s}^{-1}ds\int_{G^2}\ang{x+r}^{-1}\ang{r+t}f(t)drdt\\
            &= \int_{G}\ang{s}^{-1}ds\int_{G^2}\ang{t-x}\ang{x+r;t-x}f(t)drdt = f(x)\int_{G}\ang{s}^{-1}ds\,.
    \end{align*}
    Specialising this to $f=E\ne0$ gives $\lambda^3 = \int_{G}\ang{x}^{-1}dx$, as claimed.
    
	Finally, we show (iv). Multiplying~\eqref{eq:pentagonintegraldual} by $\ang{y;w}$ and integrating in $y$ gives an equivalent relation
	\[\lambda^{-1}\ang{x}E(x)E(w-x) = \sqrt{N}\widetilde{C}\delta_0(w) 
	+ \lambda^{-1}E(w)\int_{G}E(x-z)E(z)\ang{z;w}\ang{x-z}dz\,,\]
	where we used $\widehat{E}(x)=\lambda \ang{x}E(x)=\lambda \widetilde{E}(x)$.
    Multiplying both sides by $\lambda^2\ang{x}^{-1}\ang{w-x}$ gives
		\[ E(x)\widehat{E}(w-x) = \lambda^2\sqrt{N}\widetilde{C}\delta(w) 
    	+ \ang{w;-x}\widehat{E}(w)\int_{G}E(x-z)E(z)\ang{z;w}\ang{x-z}dz\,,\]
	or equivalently
		\[ E(x)\widehat{E}(w-x) = \lambda^2\sqrt{N}\widetilde{C}\delta(w) 
    	+ \widehat{E}(w)\int_{G}E(x-z)E(z)\ang{z-x;w}\ang{x-z}dz\,.\]
    Changing $z\mapsto x-z$ in the integral gives an equivalent identity
		\[E(x)\widehat{E}(w-x) = \sqrt{N}\lambda^2\widetilde{C}\delta(w)  + \widehat{E}(w)\widehat{p_x}(w)\,.\]
	Since this is just~\eqref{eq:supportnew}, (iv) is proved with $\widetilde{C}=\lambda^{-2}C$.
\end{proof}
\begin{remark}
It is convenient to extend/change the definition of finite quantum dilogarithm for $|G|\le 4$ by imposing the relation $E(0)^2=\sqrt{|G|}E(0)+1$; then Theorem~\ref{thm:fqdilogbasicproperties} holds for all~$G$. Without imposing the condition on $E(0)$, there are additional exceptional solutions for $G=\ZZ/2\ZZ$, $G=\ZZ/3\ZZ$, and $G=(\ZZ/2\ZZ)^2$.
\end{remark}
\begin{remark} \label{rem:dualpentagon}
It is not hard to see from the proof that~~\eqref{eq:pentagonintegraldual} also implies~\eqref{eq:pentagonintegral}, so the two pentagon relations are in fact equivalent.
\end{remark}
\begin{remark}
One can define finite quantum dilogarithms with values in any field $\bk$, provided that it contains enough roots of unity and its characteristic is coprime to $2|G|$. The proof of Theorem~\ref{thm:fqdilogbasicproperties} does not use any special properties of~$\CC$.
\end{remark}

It is sometimes more convenient to use the following form of the pentagon relation (it can be seen from~\eqref{eq:supportnew} using parts (ii) and (iii) of Theorem~\ref{thm:fqdilogbasicproperties}):
    \begin{equation} \label{eq:pentagonproduct3}
    \frac{1}{\sqrt{N}}\sum_{t\in G}E(t)\ang{t}E(x-t)\ang{t;y} =
	\ang{x+y}E(-x-y)E(x)E(y) - NE(0)\delta(x)\delta(y)\,.
    \end{equation}

We can apply Plancherel's theorem to~\eqref{eq:supportnew} to get nontrivial identities involving only $|E(x)|^2$.
\begin{corollary}\label{cor:abs2identity}
    If $E\colon G\to\CC$ is a finite quantum dilogarithm, then for all $x\in G\sm \{0\}$ we have
    \[\sum_{u+v=x}|E(u)|^2|E(v)|^2 = |E(x)|^2\sum_{u+v=-x}|E(u)|^2|E(v)|^2\,.\]
\end{corollary}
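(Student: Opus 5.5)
The plan is to apply Plancherel in the variable $y$ to the product form~\eqref{eq:pentagonproduct3} of the pentagon relation, for fixed $x\ne 0$; then each side of that relation will produce exactly one side of the claimed identity.

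First I check that $|\ang{u}|=1$ for all $u\in G$, so that the Gaussian factors drop out in absolute value. Since $\ang{\cdot;\cdot}$ is a bicharacter on a finite group, its values are roots of unity. From $\ang{0;0}=\ang{0}^{-1}$ we get $|\ang{0}|=1$. Then from $\ang{u;-u}=\ang{0}/(\ang{u}\ang{-u})=\ang{0}/\ang{u}^2$, using $\ang{-u}=\ang{u}$, we get $|\ang{u}|=1$ for every $u$.

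Next, fix $x\ne0$. The term $NE(0)\delta(x)\delta(y)$ in~\eqref{eq:pentagonproduct3} then vanishes, so for all $y\in G$
\[
\frac{1}{\sqrt N}\sum_{t\in G}p_x(t)\ang{t;y}=\ang{x+y}E(-x-y)E(x)E(y)\,,\qquad p_x(t)=E(t)\ang{t}E(x-t)\,.
\]
The left-hand side is the (unitarily normalised) Fourier transform of $p_x$ with respect to the non-degenerate bicharacter $\ang{\cdot;\cdot}$. Since $y\mapsto\ang{\cdot;y}$ runs over all characters of $G$, the map $f\mapsto \frac{1}{\sqrt N}\sum_t f(t)\ang{t;\cdot}$ is unitary on $\ell^2(G)$. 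Plancherel then gives
\[
\sum_{y\in G}\big|\ang{x+y}E(-x-y)E(x)E(y)\big|^2=\sum_{t\in G}|p_x(t)|^2 .
\]

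Finally I use $|\ang{\cdot}|=1$ to simplify both sides. The right-hand side becomes $\sum_{u+v=x}|E(u)|^2|E(v)|^2$, taking $u=t$ and $v=x-t$. The left-hand side becomes $|E(x)|^2\sum_{y}|E(y)|^2|E(-x-y)|^2=|E(x)|^2\sum_{u+v=-x}|E(u)|^2|E(v)|^2$. This is exactly the claimed identity.

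The only point needing care is the validity of~\eqref{eq:pentagonproduct3} itself, which the paper derives from~\eqref{eq:supportnew} together with parts (ii) and (iii) of Theorem~\ref{thm:fqdilogbasicproperties}. I take it as given; the rest is routine.
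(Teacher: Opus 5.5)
Your proof is correct and is essentially the paper's argument: both apply Plancherel to $p_x(z)=E(x-z)\ang{z}E(z)$ for $x\ne0$. The paper reads off $|\widehat{p_x}(w)|=|E(x)E(w-x)E(-w)|$ from \eqref{eq:supportnew} together with parts (ii)--(iii) of Theorem~\ref{thm:fqdilogbasicproperties}, while you use the already-packaged form \eqref{eq:pentagonproduct3}, and your explicit check that $|\ang{u}|=1$ fills in a point the paper uses tacitly.
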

\begin{proof}
    With $p_x(z) = E(x-z)\ang{z}E(z)$, we can rewrite~\eqref{eq:supportnew} as
    \[\widehat{p_x}(w)=\begin{cases}
        E(x)\widehat{E}(w-x)\widehat{E}(w)^{-1}\,, &w\ne 0\,,\\
        E(0)\,, &w = 0\,.
    \end{cases}\]
    Therefore, for $x\ne 0$ and all $w$ we have $|\widehat{p_x}(w)|=|E(x)E(w-x)E(-w)|$. Then
    \[\sum_{z}|E(x-z)E(z)|^2 = \sum_{z}|p_x(z)|^2 = \sum_{w}|\widehat{p_x}(w)|^2 = \sum_{w}|E(x)E(w-x)E(-w)|^2\,,\]
    for all $x\ne0$, which is equivalent to the claim.
\end{proof}

There is another curious property of finite quantum dilogarithms, analogous to the integral operator version of the pentagon relation given by Goncharov in~\cite{Gon08}. Given a finite quantum dilogarithm $E\colon G\to \CC$, let us define the linear operator $\bK=\bK_E$ on $\CC^{G}$ by
    \[\bK f(x) := E(x)\widehat{f}(-x)\,.\]
It turns out that this operator has a hidden $5$-fold symmetry.
\begin{theorem}
    For all $f\colon G\to \CC$ we have that $(\bK^5-\lambda^2)f$ is a $\CC$-linear combination of $1$, $\delta$, and $E$. More precisely, we have
    \[
    (\lambda^{-2}\bK^5-1)f(x) = \sqrt{N}E(0)\big(f(0)+E(0)\widehat{f}(0)\big)\delta(x) + E(0)\widehat{f}(0) + \Big(E(0)\int{f(y)E(-y)\ang{y}dy}\Big)E(x)\,.
    \]
\end{theorem}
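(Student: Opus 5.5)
The plan is to factor $\bK$ through the Fourier transform and the multiplication operators supplied by Theorem~\ref{thm:fqdilogbasicproperties}, and then to use the pentagon relation in the form~\eqref{eq:pentagonproduct3} to collapse iterated kernels. Write $M_g$ for multiplication by $g$ and $R$ for the reflection $Rf(x)=f(-x)$. Then $\bK = M_E R\,\mathcal{F}$, where $\mathcal{F}f=\widehat f$, and $R\mathcal{F}=\bF$ is the operator with kernel $\ang{x;y}$. So $\bK f(x)=E(x)\int_G\ang{x;y}f(y)\,dy$, and $\bK$ has kernel $k(x,y)=E(x)\ang{x;y}$. The fifth power has kernel
\[
k^{(5)}(x_0,x_5)=\int_{G^4}\prod_{i=0}^{4}E(x_i)\ang{x_i;x_{i+1}}\,dx_1\cdots dx_4 .
\]
The goal is to show this equals $\lambda^2\delta$-kernel plus rank-$\le 3$ corrections built from $1$, $\delta$, and $E$.

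I would carry out the integrations one variable at a time. Integrating $x_1$ gives $\int_G E(x_1)\ang{x_1;x_0+x_2}\,dx_1=\widehat E(-x_0-x_2)$. By part (iii) of the theorem this equals $\lambda\ang{x_0+x_2}E(-x_0-x_2)$, and part (ii) turns it into $\lambda\, E(x_0+x_2)^{-1}$ away from $x_0+x_2=0$. At $x_0+x_2=0$ we instead get the $E(0)$-term, using (i) to express $E(0)E(0)$ through $\sqrt N E(0)+1$. This produces a defect term supported on $x_2=-x_0$.

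Next I integrate $x_3$, which similarly gives $\lambda\ang{x_2+x_4}E(-x_2-x_4)$. We are left with an integral over $x_2$ and $x_4$ of the shape $E(x_2)E(x_4)E(-x_0-x_2)E(-x_2-x_4)\cdot(\text{Gaussian phases})$. The $x_2$-sum is now exactly the cubic sum on the left of~\eqref{eq:pentagonproduct3}, after substituting $t=x_2$ and matching phases via $\ang{u;v}=\ang{u+v}/(\ang u\ang v)$. Applying~\eqref{eq:pentagonproduct3} replaces it by a product of three $E$'s, plus the term $NE(0)\delta\delta$. The product then telescopes with the remaining factor $E(x_4)$ via (ii), leaving $\lambda^2\delta(x_0-x_5)$ up to the reflection bookkeeping.

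The correction terms come from three sources:
\begin{itemize}
\item the $\delta$ defect in~\eqref{eq:pentagonproduct3}, which produces the $\delta(x)$ term with coefficient involving $f(0)$;
\item the exceptional points $u=0$ where (ii) fails, which produce $\delta(x)E(0)\widehat f(0)$ and the constant $E(0)\widehat f(0)$;
\item a leftover piece, which should have kernel $E(x_0)E(-x_5)\ang{x_5}$ and gives the $E$-term.
\end{itemize}
I would identify each of these and then simplify the coefficients using $E(0)^2=\sqrt N E(0)+1$ and $C=-\widehat E(0)=-\lambda E(0)$.

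The main obstacle is the exact phase and sign bookkeeping. This includes matching the Gaussian factors to the precise form of~\eqref{eq:pentagonproduct3}, and tracking all the exceptional-point contributions (the cases $u=0$ in (ii), and the $\delta$ in the pentagon) so that they combine into exactly the three stated correction terms with the stated coefficients.

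As a check, I would apply the final formula to $f=\delta$, $f=1$, and $f=E$, where $\bK$ is computable directly from Theorem~\ref{thm:fqdilogbasicproperties}. For instance, $\bK\delta=N^{-1/2}E$, and $\bK E = E\cdot\widehat E(-\cdot)=\lambda\ang{\cdot}^{\,}E\,E(-\cdot)=\lambda$ off $0$, by (ii) and (iii). This pins down the constants, and also shows that the span of $\{1,\delta,E\}$ is nearly $\bK$-stable, which is the source of the rank-$3$ defect.
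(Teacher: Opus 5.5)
The step that fails is the collapse of the $x_2$-sum. Integrating out $x_1$ and $x_3$ first is fine, but keep the results in the form $\widehat E(u)=\lambda\widetilde E(u)$ with $\widetilde E(u)=\ang{u}E(u)$. Do not rewrite them as $\lambda E(-u)^{-1}$ plus a defect, because that destroys the pentagon shape. The kernel is then
\[
(\bK^5)_{x_0,x_5}=\lambda^2E(x_0)\int_{G^2}\widetilde E(-x_0-x_2)\,E(x_2)\,\widetilde E(-x_2-x_4)\,E(x_4)\,\ang{x_4;x_5}\,dx_2\,dx_4\,,
\]
and the $x_2$-integral is cubic in $E$. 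The left-hand side of~\eqref{eq:pentagonproduct3}, however, is a twisted convolution of only \emph{two} values of $E$. So that identity does not apply here, and no ``product of three $E$'s plus $NE(0)\delta\delta$'' is produced. The relation that does match is the dual pentagon~\eqref{eq:pentagonintegraldual}, part (iv) of Theorem~\ref{thm:fqdilogbasicproperties}, taken with $z=x_2$, $x=-x_4$, $y=-x_0$. It goes in the cubic-to-quadratic direction:
\[
\int_G\widetilde E(-x_0-x_2)E(x_2)\widetilde E(-x_2-x_4)\,dx_2=\ang{x_0;x_4}^{-1}\widetilde E(-x_0)\widetilde E(-x_4)-\lambda^{-2}C\,.
\]

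With this correction, your bookkeeping of the defect terms also changes. The constant $-\lambda^{-2}C$ integrates against $E(x_4)\ang{x_4;x_5}$ to $-CE(x_0)\widehat E(-x_5)=\lambda^2E(0)E(x_0)\ang{x_5}E(-x_5)$, and this is the entire $E$-term. For the quadratic part, use~\eqref{eq:reflection1} in the form $\widetilde E(-u)E(u)=1+\sqrt N E(0)\delta(u)$, first at $u=x_4$ and then at $u=x_0$. This gives
\[
(\bK^5)_{x_0,x_5}=\lambda^2\Big[\big(1+\sqrt N E(0)\delta(x_0)\big)\big(\sqrt N\delta(x_0-x_5)+E(0)\big)+E(0)E(x_0)\ang{x_5}E(-x_5)\Big]\,,
\]
which integrates against $f(x_5)\,dx_5$ to exactly the stated formula. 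So the constant term and both $\delta$-terms come from the exceptional point $u=0$ of the reflection formula, and no $\delta\delta$ defect appears anywhere.

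Once repaired, your route is a genuine and slightly shorter alternative to the paper's. The paper integrates out the outer variables $x_1,x_4$ and expands the middle factor $E(x_2)\ang{x_2;x_3}E(x_3)$ via~\eqref{eq:pentagonintegral}, in the quadratic-to-cubic direction. It then needs the auxiliary evaluation $\widehat{p_x}(0)=E(0)+\sqrt N\delta(x)$ from~\eqref{eq:supportnew}, plus a cancellation between $CE(0)^2$ and $\lambda E(0)^3$. Your ordering needs only the dual pentagon and the reflection formula.
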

\begin{proof}
Let us compute the kernel of $\bK^5$ directly: its $(x_0,x_5)$-entry is
    \begin{align*}
    (\bK^5)_{x_0,x_5}&=\int_{G^4}E(x_0)\ang{x_0;x_1}E(x_1)\ang{x_1;x_2}E(x_2)\ang{x_2;x_3}E(x_3)\ang{x_3;x_4}E(x_4)\ang{x_4;x_5}dx_1dx_2dx_3dx_4 \\
    &=E(x_0)\int_{G^2}\widehat{E}(-x_0-x_2)E(x_2)\ang{x_2;x_3}E(x_3)\widehat{E}(-x_3-x_5)dx_2dx_3\\
    &=CE(0)^2E(x_0)+E(x_0)\int_{G^3}\widehat{E}(-x_0-x_2)E(x_2-t)E(t)\ang{t}E(x_3-t)\widehat{E}(-x_3-x_5)dx_2dx_3dt\\
    &=CE(0)^2E(x_0)+\lambda^2E(x_0)\int_{G}\widehat{p_{-x_0-t}}(0)E(t)\ang{t}\widehat{p_{-x_5-t}}(0)dt\,,
    \end{align*}
    where, as in the proof of Theorem~\ref{thm:fqdilogbasicproperties}, we set $p_x(z) = E(x-z)E(z)\ang{z}$. By~\eqref{eq:supportnew}, we have
    \[\widehat{p_x}(0) = \sqrt{N} + \frac{E(x)\widehat{E}(-x)}{\widehat{E}(0)} = \sqrt{N}+\frac{(E(0)^2-1)\delta(x)+1}{E(0)} = E(0)+\sqrt{N}\delta(x)\,,\]
    so we may rewrite
    \[(\bK^5)_{x_0,x_5} = E(x_0)\Big(CE(0)^2 + \lambda E(0)^3 + \lambda^2E(0)(E(-x_0)\ang{x_0}+E(-x_5)\ang{x_5})+\lambda^2\sqrt{N}\delta(x_0-x_5)E(-x_0)\ang{x_0}\Big).\]
    Since $C=-\widehat{E}(0)$ and $\lambda = \widehat{E}(0)/E(0)$, the first two terms inside the brackets cancel. Therefore,
    \[\bK^5f(x) = \lambda^2 E(x)E(-x)\ang{x}\Big(f(x) + E(0)\int_{G}f(y)dy\Big)+\lambda^2E(0)E(x)\int_{G}f(y)E(-y)\ang{y}dy\,,\]
    concluding the proof, since $E(x)E(-x)\ang{x}=\sqrt{N}E(0)\delta(x)+1$.
\end{proof}

From the above calculation, we see that the two vector spaces
    \[U = \CC1+\CC\delta+\CC E\,\qquad\mbox{ and }\qquad V = \Big\{f\in \CC^A\colon f(0)=\widehat{f}(0)=\int_{G}f(y)E(-y)\ang{y}dy=0\Big\}\]
are $\bK$-invariant and give a direct sum decomposition of $\CC^A$. 
It is also easy to check that the restriction of~$\bK$ to~$U$ has characteristic polynomial $X^3-\widehat{E}(0)X-\widehat{E}(0)$.\footnote{For small $N$, when $\dim(U)<3$, it annihilates $\bK|_{U}$.}

\subsection{Finite quantum dilogarithms from real quadratic fields}
Let us now verify that the equations of Theorem~\ref{thm:fg.equs} fit our abstract definition of a finite quantum dilogarithm. As before, for $\gamma\in\SL_2(\ZZ)$ with trace $N+2$ and $N\ge1$, we set $G=\ZZ^2/\Lambda_{\gamma}$, where $\Lambda_{\gamma}=N\ZZ^2+\ker(\gamma-1)=\mathrm{im}(\gamma-1)$, and take
	\[\ang{u} := (-1)^{u_1+u_2+u_1u_2}\ee_{2N}(\omega(u\gamma,u))\,,\qquad\text{and}\qquad \ang{u;v} = \ee_N(\omega(u\gamma-u,v))\,.\]
With this structure~$G$ becomes a metric group of order~$N$. Let us define
    \[E(u) = F_{\gamma}^{+}(u)\,.\]
Note that $E(0)^2=\sqrt{N}E(0)+1$ is true by~\eqref{eq:Fgpm.zero}. From~\eqref{eq:Fgpm.fourier} and~\eqref{eq:Fgpm.reflection} we get that $\widehat{E}(x)=\lambda \ang{x}E(x)$ with $\lambda=\mu_{\gamma}$. 
By~\eqref{eq:Fgpm.reflection} and~\eqref{eq:Fgpm.5term}, we get
    \[\frac{1}{\sqrt{N}}\sum_{x\in G}\ang{x;u}E(x)E(-x-v)\ang{x+v} =
    E(u+v)E(-u)E(-v)\ang{u}\ang{v}\,,\qquad (u,v)\ne (0,0)\,.\]
In view of $E(0)^2=\sqrt{N}E(0)+1$, we can rewrite this as
    \[\frac{1}{\sqrt{N}}\sum_{x\in G}\ang{-x;u}E(-x-v)E(x)\ang{x} =
    \ang{u+v}E(u+v)E(-u)E(-v) - NE(0)\delta(u)\delta(v)\,.\]
Multiply both sides by $\ang{u;z}\ang{v;w}$ and integrate over~$u$ and~$v$. Then the identity becomes
    \[\widehat{E}(w)E(z)\ang{z}\ang{z;w}^{-1} = -E(0)+\int_{G^2}\ang{u+v}E(u+v)E(-u)E(-v)\ang{u;z}\ang{v;w}dudv\]
or, using $\ang{u+v}E(u+v)=\lambda^{-1}\widehat{E}(u+v)$, it can be rewritten as
    \begin{align*}
    \lambda E(w)\ang{w}E(z)\ang{z}\ang{z;w}^{-1} &= -E(0)+\lambda^{-1}\int_{G^3}E(t)\ang{-u-v;t}E(-u)E(-v)\ang{u;z}\ang{v;w}dudvdt \\
     &= -E(0)+\lambda\int_{G^3}E(t-u)\ang{t-u}E(t)E(t-v)\ang{t-v}dt\,.
    \end{align*}
This is precisely the dual pentagon relation~\eqref{eq:pentagonintegraldual}, so in view of Remark~\ref{rem:dualpentagon} we see that~$E$ fits the definition of a finite quantum dilogarithm. We can summarise this as.
\begin{proposition} \label{prop:rqffinitedilog}
For any $\gamma\in\SL_2(\ZZ)$ with trace $N+2\ge3$, setting $G$ and $\ang{\cdot}$ as above, the function $E\colon G\to\CC$ given by $E(u)=F_{\gamma}^{+}(u)$ is a finite quantum dilogarithm on~$G$.
\end{proposition}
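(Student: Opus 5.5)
The derivation just given in the text is essentially the proof; the plan is to organise it cleanly and check the hypotheses. I will first confirm that $(G,\ang{\cdot})$ is a metric group of order $N$. Since $\ang{u;v}=\ee_N(\omega(u(\gamma-1),v))$, the pairing is bilinear. Its left kernel is the set of $u$ with $u(\gamma-1)\in N\ZZ^2$, and because $\det(\gamma-1)=2-\tr\gamma=-N$, this kernel is exactly $\Lambda=\ZZ^2(\gamma-1)$. So the pairing is non-degenerate on $G=\ZZ^2/\Lambda$, and $|G|=|\det(\gamma-1)|=N$. Evenness $\ang{u}=\ang{-u}$ is clear. For $\Lambda$-periodicity of $\ang{\cdot}$ and the identity $\ang{u+v}/(\ang{u}\ang{v})=\ang{u;v}$, I will use $\omega(u\gamma,v)+\omega(v\gamma,u)=\omega(u(\gamma-1),v)+\omega(u,v)+\omega(v(\gamma-1),u)+\omega(v,u)$ together with $\omega(v\gamma,u\gamma)=\omega(v,u)$. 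These give the bicharacter up to the sign factor, and the factor $(-1)^{u_1+u_2+u_1u_2}$ is exactly what corrects the parity, as in the standard Weil-representation quadratic form. This parity bookkeeping is routine but fiddly.

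Next I will record the consequences of Theorem~\ref{thm:fg.equs} for $E=F_\gamma^+$. Equation~\eqref{eq:Fgpm.zero} gives $E(0)^2=\sqrt N E(0)+1$. Combining \eqref{eq:Fgpm.fourier} with \eqref{eq:Fgpm.reflection} gives $\widehat E(x)=\mu_\gamma\ang{x}E(x)$. For $u=0$ this also needs $F^-_\gamma(0)=F^+_\gamma(0)^{-1}$, which holds since $\eps^{1/2}\eps^{-1/2}=1$, and the sign conventions match because $\widehat{\cdot}$ uses $\ang{x;-y}$ while $\ang{\cdot}$ is even. Then I will feed \eqref{eq:Fgpm.reflection} into \eqref{eq:Fgpm.5term}, replacing each $1/F^-_\gamma(w)$ by $\ang{-w}E(-w)$; at $w=0$ this needs $E(0)E(0)^{-1}$-type corrections. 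This yields the displayed identity for $(u,v)\neq(0,0)$. Using $E(0)^2=\sqrt NE(0)+1$ to evaluate the sum at $(u,v)=(0,0)$ directly produces the $-NE(0)\delta(u)\delta(v)$ defect, so the identity holds for all $u,v$.

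Finally I will take the double Fourier transform in $(u,v)$ against $\ang{u;z}\ang{v;w}$, replace $\ang{s}E(s)$ by $\lambda^{-1}\widehat E(s)$, and expand $\widehat E$ as an integral. The $u$ and $v$ integrals then become Fourier inversions, which collapse to $E(t-u)\ang{t-u}$-type factors. This gives the dual pentagon relation \eqref{eq:pentagonintegraldual} for $\widetilde E=\ang{\cdot}E$ with constant $\widetilde C=-\lambda^{-1}E(0)$, which is non-zero. By Remark~\ref{rem:dualpentagon} this is equivalent to \eqref{eq:pentagonintegral} with $C=\lambda^2\widetilde C\neq 0$, so $E$ is a finite quantum dilogarithm.

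Two points need care. First, Remark~\ref{rem:dualpentagon} is only asserted, so I would verify it by running the proof of Theorem~\ref{thm:fqdilogbasicproperties}(iv) backwards. Its final step shows that \eqref{eq:pentagonintegraldual} with constant $\widetilde C$ is equivalent to \eqref{eq:supportnew} with $C=\lambda^2\widetilde C$, given $\widehat E=\lambda\widetilde E$, which we have; and \eqref{eq:supportnew} is the Fourier transform of \eqref{eq:pentagonintegral}. Second, for $N\le4$ the definition carries the extra condition on $E(0)$, which we already have. The main obstacle I expect is the $(u,v)=(0,0)$ boundary case and the sign and normalisation conventions in $\ang{\cdot}$, rather than any conceptual difficulty.
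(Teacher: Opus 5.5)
Your proposal is correct and is essentially the paper's own argument, namely the derivation immediately preceding the proposition. That derivation gets $E(0)^2=\sqrt N E(0)+1$ from \eqref{eq:Fgpm.zero} and $\widehat E=\mu_\gamma\ang{\cdot}E$ from \eqref{eq:Fgpm.fourier} and \eqref{eq:Fgpm.reflection}; it then rewrites the five-term relation with the $-NE(0)\delta(u)\delta(v)$ defect, takes a double Fourier transform to reach the dual pentagon \eqref{eq:pentagonintegraldual} with $\widetilde C=-\lambda^{-1}E(0)$, and concludes via Remark~\ref{rem:dualpentagon}.

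Your extra checks usefully fill in what the paper only asserts: the metric-group structure, and the justification of the Remark by reversing the proof of Theorem~\ref{thm:fqdilogbasicproperties}(iv). One small tightening: identifying $\{u: u(\gamma-1)\in N\ZZ^2\}$ with $\ZZ^2(\gamma-1)$ uses $\operatorname{adj}(\gamma-1)=\gamma^{-1}-1=-\gamma^{-1}(\gamma-1)$, not merely $\det(\gamma-1)=-N$.
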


Taking $\gamma=(\smat{ab+1}{a}{b}{1})$ for $a|b$ gives $G\cong \ZZ/a\ZZ\times \ZZ/b\ZZ$. Since any product of two cyclic groups is isomorphic to a group of this form, we get the following immediate corollary.
\begin{corollary} \label{cor:existence}
Finite quantum dilogarithms exist for all groups $G$ of the form $G=\ZZ/n\ZZ\times \ZZ/m\ZZ$.
\end{corollary}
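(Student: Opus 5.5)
The plan is to deduce the statement from Proposition~\ref{prop:rqffinitedilog}. We exhibit, for every $n,m\ge1$, a hyperbolic $\gamma\in\SL_2(\ZZ)$ with $c>0$ and trace at least $3$ whose associated metric group $G=\ZZ^2/\Lambda_\gamma$ is isomorphic to $\ZZ/n\ZZ\times\ZZ/m\ZZ$. Since any finite group $\ZZ/n\ZZ\times\ZZ/m\ZZ$ is isomorphic to $\ZZ/a\ZZ\times\ZZ/b\ZZ$ with $a\mid b$ (Smith normal form / elementary divisors), it suffices to treat the case $a\mid b$. Then $\gamma=\smat{ab+1}{a}{b}{1}$ is the natural candidate.

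First check that this $\gamma$ has the required properties:
\begin{itemize}
\item Its determinant is $(ab+1)\cdot1-ab=1$.
\item Its lower-left entry is $c=b>0$.
\item Its trace is $ab+2$, so $N=ab\ge1$ and the trace is at least $3$; in particular $\gamma$ is hyperbolic.
\end{itemize}
Next compute $\Lambda_\gamma=\mathrm{im}(\gamma-I)=\ZZ^2(\gamma-I)$, which is the row lattice of
\[
\gamma-I=\begin{pmatrix}ab&a\\ b&0\end{pmatrix}.
\]
Its Smith normal form is found by elementary row and column operations over $\ZZ$. The gcd of the entries is $\gcd(ab,a,b)=\gcd(a,b)=a$, since $a\mid b$. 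The determinant has absolute value $ab$. So the elementary divisors are $a$ and $ab/a=b$, giving $G\cong\ZZ/a\ZZ\times\ZZ/b\ZZ$. As a sanity check, $|G|=ab=N$, consistent with the paper.

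Finally, Proposition~\ref{prop:rqffinitedilog} shows that $E=F^+_\gamma$ is a finite quantum dilogarithm on this $G$, with the metric structure $\ang{\cdot}_\gamma$. This yields existence for $\ZZ/a\ZZ\times\ZZ/b\ZZ$, and hence for every $\ZZ/n\ZZ\times\ZZ/m\ZZ$.

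The degenerate case $a=b=1$ gives the trivial group. The definition assumes $|G|>1$, so that case is either excluded or treated as vacuous under the remark extending the definition to small $|G|$.

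There is no real obstacle here. The only care needed is that "exist for $G$" means for some metric structure on $G$, and here that structure is the one transported from $\ang{\cdot}_\gamma$ via the isomorphism.
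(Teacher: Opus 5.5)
Your proposal is correct and is the paper's own argument: take $\gamma=(\smat{ab+1}{a}{b}{1})$ with $a\mid b$, observe $G\cong\ZZ/a\ZZ\times\ZZ/b\ZZ$, and apply Proposition~\ref{prop:rqffinitedilog}. Your Smith normal form computation for $\gamma-I$ (gcd of the entries equal to $a$, determinant $\pm ab$) simply makes explicit the isomorphism that the paper states without proof.
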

\begin{remark}
Since the only construction we have is intimately tied to real quadratic fields, it is tempting to conjecture that finite quantum dilogarithms only exist for groups $G=\ZZ/n\ZZ\times \ZZ/m\ZZ$. It is easy to check by hand that none exist for $G=(\ZZ/2\ZZ)^k$, $k\ge3$, and using computer algebra we have also verified that finite quantum dilogarithms do not exist for $G=(\ZZ/3\ZZ)^3$.
\end{remark}

\textbf{An example.}
Recall that conjugacy classes of matrices of trace $\ge3$ in $\SL_2(\ZZ)$ are in bijection with cycles\footnote{Here by a cycle we mean an equivalence class of a tuple modulo cyclic permutations.} $(b_0,\dots,b_{\ell-1})$ of integers $b_i\ge2$, not all equal to~$2$, see Neumann~\cite[Prop. 6.3]{Neu81}. The map from cycles to conjugacy classes is given by
    \[(b_0,\dots,b_{\ell-1})\quad\mapsto\quad \pmat{b_0}{-1}{1}0 \pmat{b_1}{-1}{1}0\cdots\pmat{b_{\ell-1}}{-1}{1}0 = T^{b_0}S\,T^{b_1}S\,\cdots \,T^{b_{\ell-1}}S\,.\]
This gives a convenient way to catalogue finite quantum dilogarithms coming from different conjugacy classes in $\SL_2(\ZZ)$. Here we give just one example.

For $N=5$, Proposition~\ref{prop:rqffinitedilog} gives two non-trivial examples of finite quantum dilogarithms on $G=\ZZ/5\ZZ$ (up to the symmetry $E(x)\mapsto E(-x)$): one coming from the conjugacy class with cycle~$(3,3)$ and another from the cycle~$(7)$. Let us denote them by~$E_a$ and~$E_b$. The values at~$0$ are the same: $E_a(0)=E_b(0)=\frac{3+\sqrt{5}}{2}$. Exceptionally, $E_a$ is an even function and its values on~$G\sm\{0\}$ are roots of unity:
    \[E_a(1) = E_a(-1) = \zeta_{10}\,,\qquad E_a(2)=E_a(-2)=\zeta_{10}^{-1}\,,\]
with the Gaussian $\ang{x}_{a} = \zeta_{10}^{3x(x+5)}$. For $E_b$ we have $\ang{x}_b=\zeta_{10}^{x(x+5)}$ and the non-trivial values of~$E_b$ are
    \[E_b(1)=r_3\zeta_{5},\quad E_b(2)=r_2\zeta_{5}^4,\quad 
      E_b(3)=r_1\zeta_{5}^4,\quad E_b(4)=r_4\zeta_{5},\]
where $r_1<r_2<r_3<r_4$ are the four roots of $x^4-x^3-\frac{3+3\sqrt{5}}{2}x^2-x+1$. In this case one can verify directly that all finite quantum dilogarithms on~$\ZZ/5\ZZ$ are Galois conjugate to either~$E_a$ or~$E_b$.

\medskip
\textbf{Ideal-theoretic point of view.} So far we were labelling special values of the modular quantum dilogarithm by a pair of $\gamma\in\SL_2(\ZZ)$ and a point in $\ZZ^2$. A more conceptual point of view is to attach these special values to torsion elements of quadratic ideals fixed by the unit. Specifically, by the Latimer-MacDuffee theorem~\cite{LM33}\footnote{More precisely, by its easy to derive $\SL_2(\ZZ)$-variant.}, conjugacy classes of $\SL_2(\ZZ)$ matrices of trace~$t>2$ are in bijection with oriented ideal classes for the order $\ZZ[\eps]$, where $\eps^2-t\eps+1=0$. For $\gamma=(\smat abcd)\in\SL_2(\ZZ)$ with $\tr\gamma=N+2$, we set $\tau = \frac{a-d+\sqrt{N(N+4)}}{2c}$ to be its attractive fixed point; then $\eps=c\tau+d>1$ is a unit of trace $N+2$. Taking $I_{\tau}=\ZZ\tau+\ZZ$ with orientation given by $(\tau,1)$ gives an ideal over $\ZZ[\eps]$, since multiplication by $\eps$ maps $(\tau,1)$ to $(a\tau+b,c\tau+d)$. We can then view $G$ as the subgroup of $(\QQ\otimes I_{\tau})/I_{\tau}$ fixed by multiplication by~$\eps$, or otherwise, as the group
    \[G=(\tfrac{1}{\eps-1}I_{\tau})/I_{\tau}.\]
For any $z\in G$, we consider the number
\begin{equation}\label{eq:fgam.defalt}
    F_{\gamma}(z) := 
	\Phi_{\gamma,u,0}(z;\tau)\,\mu_{\gamma}\,,
\end{equation}
where $z(\eps^{-1}-1)=u\tau+v$, $u,v\in\ZZ$ (for $z\in I_{\tau}$ we set $F_{\gamma}(z)=\eps^{1/2}$ as before). By Lemma~\ref{lem:lam.inv}, this is well-defined.

\medskip
\subsection{Application: quadratic identities related to Zauner's conjecture}
The result of Theorem~\ref{thm:fg.equs} appears to be the first non-trivial proved family of identities for Stark units, but there have been some explicit conjectural identities discovered recently in connection with Zauner's conjecture (see~\cite{Zau11}) concerning collections of $M^2$ complex equiangular lines in $\CC^M$.\footnote{They are called SIC-POVMs in the quantum information theory literature.} For details, we refer to the papers of Appleby--Flammia--McConnell--Yard~\cite{AFMY17}, Kopp~\cite{Kop21}, Appleby--Flammia--Kopp~\cite{AFK25}, and the references therein. Here we give an explicit twisted convolution identity related to Zauner's conjecture that can be easily derived from the pentagon relation. This identity is equivalent to the principal ideal case of the identity conjectured by Appleby--Flammia--Kopp in~\cite[Conj.~1.35]{AFK25}.

Let $\alpha=(\smat{M-1}{-1}{1}{0})$, $M\ge4$, $\gamma=\alpha^3$ and let $\eps>1$ satisfy $\eps^{2}-(M-1)\eps+1=0$ (note that this is the unit corresponding to~$\alpha$, not~$\gamma$). It will be convenient to work in the above ideal-theoretic formulation, with the special values $F_{\gamma}(z)$ defined by~\eqref{eq:fgam.defalt}.
The relevant group is $G=\frac{1}{\eps^3-1}\ZZ[\eps]/\ZZ[\eps]$, of order $M^2(M-3)$.
Since $\eps^3\equiv 1\Mod{M}$, the $M$-torsion subgroup of~$G$ is just $(\frac{1}{M}\ZZ[\eps])/\ZZ[\eps]$, isomorphic to $(\ZZ/M\ZZ)^2$; we denote it by~$H$.
For the Gaussian $\ang{\cdot}\colon G\to\CC^{\times}$ we have the following useful formulas covering both~$H$ and~$G$:
	\begin{align*}
	\Big\langle\frac{k\eps+l}{M}\Big\rangle &= \ee_M(k^2-kl+l^2)\,,\\
	\Big\langle\frac{a\eps+b}{\eps^{-3}-1}\Big\rangle &= 
	(-1)^{a+b+ab}\ee_{2M(M-3)}\big((M-2)(a^2+b^2+(M-1)ab)\big)\,.
	 \end{align*}
From this we calculate the following formulas for the bicharacter:
	\begin{align*}
	\Big\langle\frac{a_1\eps+a_2}{\eps^{-3}-1};\frac{b_1\eps+b_2}{\eps^{-3}-1}\Big\rangle &= 
	(-1)^{a_1b_2+a_2b_1}\ee_{2M(M-3)}((M-2)(2a_1b_1+2a_2b_2+(M-1)(a_1b_2+a_2b_1)))\\
	&= 
	\ee_{M(M-3)}((M-2)(a_1b_1+a_2b_2)+(a_1b_2+a_2b_1))\,,\\
	\Big\langle\frac{a_1\eps+a_2}{\eps^{-3}-1};\frac{v_1\eps+v_2}{M}\Big\rangle &= 
	\ee_{M(M-3)}(((M-2)a_1+a_2)((2-M)v_2-v_1)+((M-2)a_2+a_1)(v_2+(M-2)v_1))\\
	&=\ee_{M}(a_1v_2-a_2v_1)\,.
    \end{align*}
Note that by Proposition~\ref{prop:powers}, $F_{\gamma}(z)$ has a 3-fold symmetry, since it is invariant under multiplication by $\eps$:
    \begin{equation} \label{eq:fgamma3sym}
    F_{\gamma}(z)=F_{\gamma}(\eps z)\,.
    \end{equation}

Recall from~\S\ref{sec:andersenkashaev} the standard Heisenberg pair $\bp,\bq$ of operators acting on $L^2(\ZZ/M\ZZ)$.
    
\begin{theorem} \label{thm:ghostsic}
Let $\gamma=(\smat{M-1}{-1}{1}{0})^3$, $\eps^2-(M-1)\eps+1=0$, $\eps>1$, and consider the operator
	\[P = \frac{1}{M}\sum_{i,j=0}^{M-1}F_{\gamma}\Big(\frac{i\eps+j}{M}\Big)(-1)^{i+j}\ee_{2M}(i^2+j^2)\bq^i\bp^j\,.\]
	Then
	\begin{equation} \label{eq:operatorquadratic}
    (P-\eps^{1/2})(P+\eps^{-1/2})=0\,.
    \end{equation}
\end{theorem}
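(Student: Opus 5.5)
The plan is to expand $P$ in the Weyl basis $\{\bq^i\bp^j\}$, compute $P^2$ as a twisted convolution, and identify that convolution with a pentagon relation on $G$ restricted to $H$. Since $(\eps^{1/2}-\eps^{-1/2})^2=\eps+\eps^{-1}-2=M-3$, the claim~\eqref{eq:operatorquadratic} is equivalent to
\[
P^2=\sqrt{M-3}\,P+I .
\]

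\emph{Step 1: rewrite $P^2$ as a sum over $H$.} Write $W(h)=(-1)^{i+j}\ee_{2M}(i^2+j^2)\bq^i\bp^j$ for $h=\frac{i\eps+j}{M}\in H$, so that $P=\frac1M\sum_{h\in H}E(h)W(h)$ with $E=F_\gamma$. Using $\bp^j\bq^{i'}=\ee_M(ji')\bq^{i'}\bp^j$, one gets $W(h)W(h')=\kappa(h,h')W(h+h')$ for an explicit phase $\kappa$. I will check that $\kappa(h',h-h')$ can be written through the restriction of the Gaussian, $\ang{\frac{k\eps+l}{M}}=\ee_M(k^2-kl+l^2)$, and the bicharacter $\ang{\cdot;\cdot}$, possibly after substituting $h'\mapsto\eps h'$ and using~\eqref{eq:fgamma3sym}. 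Here the $-kl$ cross term has to come from the three-fold symmetry. The target is
\[
P^2=\frac1{M^2}\sum_{x\in H}\Big(\sum_{t\in H}E(t)\ang{t}E(x-t)\ang{t;y_x}\Big)W(x),
\]
for a suitable $y_x$ depending linearly on $x$, or $y_x=0$ after a change of variables.

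\emph{Step 2: restrict the pentagon relation from $G$ to $H$.} Let $K=H^{\perp}\subset G$ be the annihilator of $H$ under $\ang{\cdot;\cdot}$. By the displayed formula $\ang{\frac{a_1\eps+a_2}{\eps^{-3}-1};\frac{v_1\eps+v_2}{M}}=\ee_M(a_1v_2-a_2v_1)$, the subgroup $K$ has order $M-3$. Since $\eps^3-1=(\eps-1)\eps M$ and $N(\eps-1)=3-M$, it should be exactly the $\eps$-fixed subgroup $\frac1{\eps-1}\ZZ[\eps]/\ZZ[\eps]$. Average~\eqref{eq:pentagonproduct3} (with $N=|G|=M^2(M-3)$) over $y\in y_x+K$. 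Because $\sum_{y\in K}\ang{t;y}=|K|\,\mathbf 1_H(t)$, the left side becomes the inner sum of Step 1 times $|K|/\sqrt N=\sqrt{M-3}/M$. The right side becomes
\[
\sum_{y\in y_x+K}\ang{x+y}E(-x-y)E(x)E(y)\;-\;NE(0)\,\delta(x)\,\#\{y\in y_x+K: y=0\}.
\]

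\emph{Step 3 (main obstacle): evaluate the $K$-sum.} The sum over $K$ on the right must equal $\sqrt{M-3}\,E(x)$ times a constant, plus a delta correction producing $I$. I would attack it by applying Theorem~\ref{thm:fqdilogbasicproperties}(ii) to rewrite $E(-x-y)\ang{x+y}=E(x+y)^{-1}$ for $x+y\ne0$. I would then use Proposition~\ref{prop:powers} to relate $E|_K$ to $F_\alpha^3$, and~\eqref{eq:fgamma3sym} to fold the sum over $\eps$-orbits. Finally I would use the finite-quantum-dilogarithm structure of $F_\alpha$ on $K$ (Proposition~\ref{prop:rqffinitedilog} for $\alpha$, with $|K|=M-3$, $F_\alpha(0)^2=\sqrt{M-3}F_\alpha(0)+1$) to evaluate the resulting sum. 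I expect Theorem~\ref{thm:fqdilogbasicproperties}(iii) (the Fourier eigenrelation, with $\lambda=\mu$) on $K$ to collapse it, and $E(0)=\eps^{3/2}$ versus $F_\alpha(0)=\eps^{1/2}$ to produce the constant.

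Collecting the terms then gives coefficient $\frac{\sqrt{M-3}}M E(x)$ in front of $W(x)$, i.e.\ $\sqrt{M-3}\,P$, while the $x=0$ delta terms give exactly $I$. If the direct evaluation in Step 3 fails, the fallback is to apply Corollary~\ref{cor:abs2identity}-type Plancherel arguments on the coset $y_x+K$ to pin down the constant.
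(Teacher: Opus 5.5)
Your Steps 1 and 2 follow the paper's proof. Expanding in the basis $\bq^r\bp^s$, the phase attached to $u=\frac{i\eps+j}{M}$ is $\ang{u}\,\ee_M(-ir+j(r-s))=\ang{u}\ang{w;u}$ with $w=\frac{(r-s)\eps+r}{\eps^{-3}-1}$. One then averages over the coset $w+K$, where $K=H^\perp$ is generated by $\frac{\eps-1}{M-3}$, so as to pass from $H$ to $G$ and apply \eqref{eq:pentagonproduct3}. Two small corrections to Step 1:
\begin{itemize}
\item The cross term in $\ang{\frac{k\eps+l}{M}}=\ee_M(k^2-kl+l^2)$ comes directly from $\bp^j\bq^{i'}=\ee_M(ji')\bq^{i'}\bp^j$, not from the three-fold symmetry.
\item $y_x$ cannot be taken to be $0$: it is pinned down modulo $K$ by a nontrivial character of $H$.
\end{itemize}

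The genuine gap is Step 3, which is the only non-formal part, and the tools you propose for it do not apply. For $x\neq0$ the coset $y_x+K$ is disjoint from $K$. Multiplication by $\eps-1$ kills $K$, since $K$ is the $\eps$-fixed subgroup. On the other hand, $(\eps-1)w\equiv\frac{r\eps+s}{M}=x\Mod{\ZZ[\eps]}$ (check this using $\eps^3\equiv1$ and $\eps^2\equiv-\eps-1$ modulo $M$). Consequently:
\begin{itemize}
\item The relation $E|_K=F_\alpha^3$ from Proposition~\ref{prop:powers} never enters the sum.
\item Neither do the finite-dilogarithm and Fourier structure of $F_\alpha$ on $K$.
\item A Plancherel argument in the style of Corollary~\ref{cor:abs2identity} only controls $|E|^2$, so it cannot evaluate a sum of complex numbers.
\end{itemize}

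The missing idea is a single observation that makes the coset sum trivial term by term. For $y\in w+K$ one has $(\eps-1)y\equiv x$, because $(\eps-1)\frac{\eps-1}{M-3}=\eps\in\ZZ[\eps]$. Hence $\eps y\equiv x+y$, and \eqref{eq:fgamma3sym} gives $E(x+y)=E(\eps y)=E(y)$. Moreover $x+y\neq0$: otherwise $(\eps-1)(-x)\equiv x$, which forces $\eps x\equiv 0$. So Theorem~\ref{thm:fqdilogbasicproperties}(ii) turns each term $\ang{x+y}E(-x-y)E(x)E(y)$ into $E(x)E(y)/E(x+y)=E(x)$. The coset sum is therefore exactly $(M-3)E(x)$. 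With your normalisation this gives the coefficient $\frac{\sqrt{M-3}}{M}E(x)$ of $W(x)$ in $P^2$. The case $x=0$ is a direct check using $E(0)^2=M\sqrt{M-3}\,E(0)+1$. Without this observation, and the explicit congruence $(\eps-1)y_x\equiv x$ that it relies on, your argument does not close.
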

\begin{proof}
	Expanding~\eqref{eq:operatorquadratic} in the basis $\bq^r\bp^s$ gives an equivalent identity
	\[\frac{1}{M}\sum_{i,j=0}^{M-1}F_{\gamma}\Big(\frac{i\eps+j}{M}\Big)F_{\gamma}\Big(\frac{(r-i)\eps+(s-j)}{M}\Big)\Big\langle\frac{i\eps+j}{M}\Big\rangle\ee_M(i(-r) + j(r-s)) = \sqrt{M-3}F_{\gamma}\Big(\frac{r\eps+s}{M}\Big)+M\delta(r)\delta(s)\,,\]
	and using the above formula for pairings, we see that it is equivalent to
	\[\frac{1}{M}\sum_{i,j=0}^{M-1}F_{\gamma}\Big(\frac{i\eps+j}{M}\Big)F_{\gamma}\Big(\frac{(r-i)\eps+(s-j)}{M}\Big)\Big\langle\frac{i\eps+j}{M}\Big\rangle\ang{w;\tfrac{i\eps+j}{M}} = \sqrt{M-3}F_{\gamma}\Big(\frac{r\eps+s}{M}\Big)+M\delta(r)\delta(s)\,,\]
    with $w=\tfrac{(r-s)\eps+r}{\eps^{-3}-1}$. Note that we have an exact sequence
		\[0\longrightarrow H\longrightarrow G \xlongrightarrow{\cdot M} A \longrightarrow 0\,,\]
	where $A\cong \ZZ/(M-3)\ZZ$ is generated by $\frac{\eps-1}{M-3}$. Denoting $v=\frac{r\eps+s}{M}$, the above identity can be rewritten as
	\[\frac{1}{M}\sum_{u\in H}F_{\gamma}(u)\ang{u}F_{\gamma}(v-u)\ang{w;u} = \sqrt{M-3}F_{\gamma}(v)+M\delta(v)\]
	For $v=0$, the identity is trivial. For $v\ne 0$, we can rewrite the left hand side as
	\begin{equation} \label{eq:sicmaintrick}
    \begin{aligned}
    \frac{1}{M}&\sum_{u\in H}F_{\gamma}(u)\ang{u}F_{\gamma}(v-u)\ang{w;u} \\
	&= \frac{1}{M(M-3)}\sum_{j=0}^{M-4}\sum_{u\in H}F_{\gamma}(u)\ang{u}F_{\gamma}(v-u)\ang{w+j\tfrac{\eps-1}{M-3};u} \\
	&= \frac{1}{M(M-3)}\sum_{j=0}^{M-4}\sum_{u\in G}F_{\gamma}(u)\ang{u}F_{\gamma}(v-u)\ang{w+j\tfrac{\eps-1}{M-3};u}\\
	&= \frac{1}{\sqrt{M-3}}\sum_{j=0}^{M-4}F_{\gamma}(v)F_{\gamma}(w+j\tfrac{\eps-1}{M-3})\widetilde F_{\gamma}(-v-w-j\tfrac{\eps-1}{M-3})\,,
	\end{aligned}
    \end{equation}
	where in the last step we used~\eqref{eq:pentagonproduct3} with the shorthand $\widetilde{F}_{\gamma}(x)=\ang{x}F_{\gamma}(x)$.
	We claim that
	\[F_{\gamma}(w+j\tfrac{\eps-1}{M-3})= F_{\gamma}(v+w+j\tfrac{\eps-1}{M-3})\,.\]
	Indeed, $(\eps-1)^2=(M-3)\epsilon$, and note that
		\[(\eps-1)w = (\eps-1)\frac{(r-s)\eps+r}{\eps^{-3}-1} = \frac{r\eps+s}{M}+((1-M)r+(M-2)s)\eps+(r-s) \equiv v \Mod{\ZZ[\eps]}\,,\]
	so by~\eqref{eq:fgamma3sym}
	\[F_{\gamma}(w+j\tfrac{\eps-1}{M-3})=F_{\gamma}((w+j\tfrac{\eps-1}{M-3})\eps)= F_{\gamma}(v+w+j\tfrac{\eps-1}{M-3})\,.\]
	Thus, by the reflection property of~$F_{\gamma}$ (Theorem~\ref{thm:fqdilogbasicproperties} (ii)), in the last sum in~\eqref{eq:sicmaintrick} all terms are equal to $F_{\gamma}(v)$, and we get the claim.
\end{proof}

Since $\tr(P)=\eps^{3/2}=(M-1)\eps^{1/2}+(-\eps^{-1/2})$, Theorem~\ref{thm:ghostsic} implies that the $(-\eps^{-1/2})$-eigenspace of $P$ is 1-dimensional. Using the fact that
    \[\ol{F_{\gamma}\Big(\frac{i\eps+j}{M}\Big)} = F_{\gamma}\Big(\frac{i\eps+j}{M}\Big)\Big\langle\frac{i\eps+j}{M}\Big\rangle\,,\]
it is easily checked that the operator~$P$ satisfies $P^*=RPR$, where $Rf(x)=f(-x)$. This gives a factorisation of $P-\eps^{1/2}$ as $(g_M(i)\ol{g_M(-j)})_{i,j}$ using only the eigenvector $g_M\colon \ZZ/M\ZZ\to\CC$, and together with the reflection relation for $F_{\gamma}$, it is then easy to show that $g_M$, suitably normalised, has the following property:
    \[\sum_{j\Mod{M}}g_M(j)g_M(j+k+l)\overline{g_M(-j-k)g_M(-j-l)} = \frac{\delta(k)+\delta(l)}{M+1}\,.\]
This is exactly analogous to the property characterising the so-called fiducial vectors $f_M\colon \ZZ/M\ZZ\to \CC$, whose existence is the subject of Zauner's conjecture:
    \[\sum_{j\Mod{M}}f_M(j)f_M(j+k+l)\overline{f_M(j+k)f_M(j+l)} = \frac{\delta(k)+\delta(l)}{M+1}\,.\]

\begin{remark}
It is straightforward to adapt the proof to get an analogous convolution identity for $\gamma=\alpha^3$ for any~$\alpha$ with trace $\ge3$. This proves~\cite[Conj.~1.35]{AFK25} in full generality; the only reason we do not state this explicitly is the somewhat nontrivial task of matching our notation to~\cite{AFK25}. The above special case already covers all cyclic groups of order $\ge 4$, and the only remaining obstacle to proving Zauner's conjecture in full is to show that there exists~$\sigma\in\Gal(\ol{\QQ}/\QQ)$ for which the values $F_{\gamma}(u)^{\sigma}$, $u\ne 0$, all land on the unit circle.
\end{remark}

\bigskip
\section{Algebraicity of values of finite quantum dilogarithms}
\label{sec:algebraicity}
In this section, we will prove the following result. 
\begin{theorem} \label{thm:algebraicity}
	Let $E\colon G\to \CC$ be a finite quantum dilogarithm on a finite abelian group~$G$. Then $E(u)\in \overline{\QQ}$ for all $u\in G$.
\end{theorem}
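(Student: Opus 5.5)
The idea is to show that the solution set of the pentagon system \eqref{eq:pentagonintegral} is a zero-dimensional variety. The equations are polynomial in the $N$ unknowns $E(u)$ and the constant $C$, with coefficients in a number field $k=\QQ(\zeta_{2N'},\sqrt{N},\ang{x})$. By Theorem~\ref{thm:fqdilogbasicproperties}, every solution also satisfies $E(0)^2=\sqrt NE(0)+1$, $C=-\widehat E(0)$ and $E(u)E(-u)=\ang u^{-1}$. So $E$ takes values in $\CC^\times$, and we may regard the solution set as an affine variety $X$ over $k$. The metric $\ang{\cdot}$ takes values in roots of unity up to a normalisation: since $\ang{x;y}$ is a bicharacter of the finite group $G$ and $\ang{x}=\ang{-x}$, we have $\ang{x}^2\ang{2x}^{-1}\cdot\ldots$ algebraic. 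If needed, first reduce to the case where $\ang{\cdot}$ is $\overline{\QQ}$-valued by rescaling $\ang{x}\mapsto \ang{x}\chi(x)$, where $\chi$ is a character. One could instead note that $\ang{0}=1$ and that the values $\ang{x}$ are determined up to a quadratic character by the bicharacter. Since a $k$-variety with only finitely many $\CC$-points has all its points over $\overline{\QQ}$, it suffices to show that $X(\CC)$ is finite.

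To prove finiteness, I would argue by contradiction, using the standard valuation trick. If $X$ were positive-dimensional, it would contain a curve. Taking a place of its function field at infinity, we obtain a solution $E$ over a field $\bK=\overline{k}((t))$ (or its algebraic closure, the Puiseux series) with some $E(u)$ not algebraic over $k$. Equivalently, some $E(u)$ has valuation $v(E(u))\neq 0$, because the constants have valuation $0$ and the reflection relation forces $v(E(u))+v(E(-u))=0$. By the Remark after Theorem~\ref{thm:fqdilogbasicproperties}, all identities remain valid over $\bK$. So the task becomes a non-archimedean one: show that every finite quantum dilogarithm over a valued field with residue characteristic $0$ and constants of valuation $0$ satisfies $v(E(u))=0$ for all $u$. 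Note that $v(E(0))=0$ already, by (i).

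For the valuation argument, set $a(u)=v(E(u))$, so that $a(-u)=-a(u)$ and $a(0)=0$. We use $\widehat E(u)=\lambda\ang uE(u)$ from (iii), where $v(\lambda)=0$ since $\lambda^3$ is a Gauss sum. Applying the ultrametric inequality to the Fourier relation gives
\[a(u)\ge \min_x a(x)\quad\text{for all } u.\]
This is vacuous on its own, so we combine it with Corollary~\ref{cor:abs2identity}-type identities, applied to $E$ times $E(-\cdot)$ rather than absolute values, and with \eqref{eq:pentagonproduct3}. Let $m=\min_x a(x)<0$ be attained on the set $S$. Then \eqref{eq:pentagonproduct3} with $x\in S$ and $y$ chosen so that $-x-y\in -S$ expresses a term of valuation $m+m-m$ on the right. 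On the left, the sum has terms of valuation $a(t)+a(x-t)$. Taking $x$ to be the element of $S$ at which the valuation is achieved, the term $t=x$ or $t=0$ gives valuation $m$. The aim is to show that the terms of minimal valuation $2m$ (those with $t\in S$, $x-t\in S$) either force $2m\ge m$, a contradiction, or cancel. Since $\bF$ is invertible with unit-valuation kernel, the minimum valuation of $\widehat f$ equals that of $f$ whenever there is no cancellation.

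The main obstacle is exactly this possibility of cancellation among the minimal-valuation terms. Valuation counting alone works when $N$ is prime; this is the role of Tao's uncertainty principle, which gives $|\mathrm{supp}\, f|+|\mathrm{supp}\,\widehat f|\ge N+1$ for the leading-order parts. The plan is to pass to the "leading coefficient" functions $e(u)=$ (coefficient of $t^{m}$ in $E(u)$) supported on $S$. One then derives from \eqref{eq:supportnew} that $e$ and its Fourier transform satisfy a degenerate ($C=0$) pentagon, or an Andersen--Kashaev-type equation, forcing $\mathrm{supp}(e)$ to be a coset-like set. This contradicts the uncertainty bound for cyclic groups of prime order. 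For general $G$, the uncertainty principle fails, and I expect to need the paper's alternative route instead. That route realises $E$ as the data of an Izumi near-group fusion category and invokes Ocneanu rigidity: the category has no deformations, so the moduli of solutions is discrete. This is where I expect the real work to lie.
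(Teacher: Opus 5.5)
Your proposal leaves a genuine gap: it does not prove the theorem for general $G$, which is the actual statement. Your residue-characteristic-zero valuation set-up can only close when $|G|$ is prime, via Tao's uncertainty principle. For every other $G$ you defer to ``the paper's alternative route'' without supplying any of it. In the paper that route is the entire proof, and it has three nontrivial components, none of which appears in your sketch.

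(a) From an \emph{arbitrary} complex solution $E$ one must build a categorification of the Izumi fusion ring $R_G$. Citing Izumi does not suffice. His $C^*$-construction assumes an extra compatibility of $E$ with complex conjugation, which a general (in particular a hypothetical transcendental) solution need not have. The paper instead constructs an explicit unital endomorphism $\rho$ of the Leavitt algebra $L_{2N}$, together with automorphisms $\alpha_g$. The pentagon relation, in its dual form~\eqref{eq:pentagonintegraldual}, is used precisely to verify the fusion rule $\rho^2(x)=\sum_g S_g\alpha_g(x)S_g'+\sum_h T_h\rho(x)T_h'$. The hom-spaces are computed via Lemma~\ref{lem:leavitt}, and Evans--Gannon's framework supplies semisimplicity and rigidity.

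(b) One must show that $E$ is an invariant of the resulting category, up to relabelling by $\mathrm{Aut}(G)$. The paper does this via $\frac{E(g)}{\sqrt N}1_\rho=T_0'(T_0'\otimes1_\rho)(1_\rho\otimes T_g)T_0$, with the canonically normalised $T_g=U_{-g}T_0$.

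(c) Only then does Ocneanu rigidity (finitely many fusion categories with a given fusion ring) give finitely many solutions, and hence $E(u)\in\overline{\QQ}$.

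Your summary ``the category has no deformations, so the moduli of solutions is discrete'' treats (b) as automatic. It is exactly what lets finiteness of categories constrain $E$: a priori the map $E\mapsto\mathcal{C}_E$ could have positive-dimensional fibres.

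Your prime-order sketch also takes an unjustified detour. The claim that the leading coefficients satisfy a degenerate pentagon forcing $\mathrm{supp}(e)$ to be coset-like is unproved, and the discussion of~\eqref{eq:pentagonproduct3} at valuation $2m$ is not needed. What works is purely linear, and it is the paper's argument, phrased there as emptiness of the intersection with the hyperplane at infinity:
\begin{itemize}
\item Comparing $t^m$-coefficients in $\widehat E=\lambda\ang{\cdot}E$ gives $\widehat e=\lambda\ang{\cdot}e$ exactly, so $\mathrm{supp}\,\widehat e=\mathrm{supp}\,e=S$.
\item The facts $a(0)=0$ and $a(-u)=-a(u)$ force $0\notin S$ and $S\cap(-S)=\varnothing$.
\item Hence $2|S|\le p-1<p+1$, contradicting Tao.
\end{itemize}

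Your framework also cannot reproduce the paper's elementary proof for odd $|G|$. That proof uses a $2$-adic valuation and the fact that terms in the specialised pentagon identity pair up with a factor $2$; in residue characteristic $0$ that factor is a unit.

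Finally, your worry about $\ang{\cdot}$ is unnecessary. Since $\ang{0}=1$ and $\ang{nx}=\ang{x}^n\ang{x;x}^{\binom n2}$, every $\ang{x}$ is automatically a root of unity.
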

We will first give a simple argument that works when $|G|$ is a prime.
For odd $|G|$, we give a $2$-adic proof that extends to even $|G|$ if one assumes additional relations across multiple groups (that in particular hold for the for values of $F_\gamma^\pm$).
For general abelian groups, the proof is more involved, but it essentially follows from known results on fusion categories.
Indeed, thanks to the works of Izumi~\cite{Izu93,Izu01} and Evans--Gannon~\cite{EG17}, we can give an interpretation of a finite quantum dilogarithm as the input data for constructing a certain fusion category. Algebraicity then follows from Ocneanu's rigidity theorem~\cite{ENO05}. As a byproduct of the construction, we get that non-unitary near-group fusion categories of Izumi type exist for all groups~$G$ of the form $\ZZ/n\ZZ\times \ZZ/m\ZZ$.



\subsection{Algebraicity for \texorpdfstring{$N$}{N} prime}

When $|G|$ is prime, there is a particularly simple proof.

\begin{theorem}
	Let $E\colon \ZZ/p\ZZ\to \CC$ be a finite quantum dilogarithm on a cyclic group of prime order. Then $E(u)\in \overline{\QQ}$ for all $u\in \ZZ/p\ZZ$.
\end{theorem}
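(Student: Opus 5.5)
The plan is to show that the set of finite quantum dilogarithms on $\ZZ/p\ZZ$ (with a fixed Gaussian $\ang{\cdot}$) is a zero-dimensional affine variety defined over $\ol{\QQ}$. Its points are then algebraic automatically. The variety is cut out by the polynomial equations~\eqref{eq:pentagonintegral}, together with $C\neq0$, which we encode by an auxiliary variable $D$ with $CD=1$. The Gaussian takes values in roots of unity, because a non-degenerate quadratic refinement on $\ZZ/p\ZZ$ has the form $\ang{x}=\zeta^{ax^2}$ up to the finitely many admissible normalisations. So the defining equations have coefficients in $\QQ(\zeta_{4p},\sqrt{p})$. It therefore suffices to show that every solution is isolated. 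Equivalently, the Zariski tangent space at each complex solution should vanish, since then each irreducible component is a point and all coordinates lie in $\ol{\QQ}$.

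To compute the tangent space, I take a solution $E$ and a first-order deformation $E+\eta f$ with $\eta^2=0$. By Theorem~\ref{thm:fqdilogbasicproperties}, any nearby solution still satisfies (i)--(iii). Hence $f$ satisfies three linearised conditions.
\begin{itemize}
\item[(a)] $f(0)(2E(0)-\sqrt{p})=0$, so $f(0)=0$, since $E(0)^2=\sqrt pE(0)+1$ has simple roots.
\item[(b)] $f(u)E(-u)+E(u)f(-u)=0$ for $u\ne0$, i.e.\ $g(u):=f(u)/E(u)$ is odd.
\item[(c)] $\widehat f=\lambda'\ang{\cdot}E+\lambda\ang{\cdot}f$. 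Here $\lambda^3$ is a fixed Gauss sum, so $\lambda'=0$, and thus $\widehat f(u)=\lambda\ang{u}f(u)$.
\end{itemize}
I then linearise the pentagon identity, in the convolution form~\eqref{eq:supportnew} or~\eqref{eq:pentagonproduct3}. The goal is to extract a linear relation showing that $f$ is supported on a small set while $\widehat f$ is also supported on a small set.

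The key tool is Tao's uncertainty principle for $\ZZ/p\ZZ$: a nonzero $f$ satisfies $|\mathrm{supp} f|+|\mathrm{supp}\widehat f|\ge p+1$. Condition (c) makes $f$ and $\widehat f$ have the same support, which only gives $|\mathrm{supp} f|\ge (p+1)/2$. So I need extra vanishing coming from the linearised pentagon. Concretely, I would differentiate~\eqref{eq:pentagonproduct3} and substitute $\widehat{f}=\lambda\ang{\cdot}f$. Dividing by the nonvanishing values of $E$ (from~\eqref{eq:reflection1}), this should give, for each fixed $x$, that the function $t\mapsto E(t)\ang{t}E(x-t)\bigl(g(t)+g(x-t)-g(x)-\dots\bigr)$ has Fourier transform equal to an expression in $y$ of the form $(\text{nonzero})\cdot(g(x)+g(y)-g(x+y)\cdots)$. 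Taking suitable combinations should yield a function $h$, built from $g$ (for instance $h(t)=g(t)+g(x-t)-g(x)$ for fixed $x$), such that $h$ and $\widehat{h}$ times a nowhere-zero factor are related in a way forcing each to vanish on a large set. That contradicts the uncertainty principle unless $h\equiv0$. Then $h\equiv 0$ for all $x$ makes $g$ additive, $g(t)=kt$, and plugging back into (c) forces $k=0$.

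The main obstacle is this last extraction: arranging the linearised pentagon so that a single function and its Fourier transform both have support of size at most about $p/2$, cleanly enough for Tao's bound to apply. If that fails, I would fall back on the alternative of proving that the map $f\mapsto \bK$-type operator linearisation is injective using the $\bK$-invariant decomposition $U\oplus V$. Here the point would be that the derivative of $\bK^5=\lambda^2$ on $V$ gives $\sum_{i}\bK^i\,\delta\bK\,\bK^{4-i}=0$, and that this can only vanish for $\delta\bK=0$ when $p$ is prime, again via the uncertainty principle for the kernels involved.
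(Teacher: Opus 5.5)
There is a genuine gap, and it is at the step you call the ``main obstacle''. Everything after condition (c) is only a hope. You never write down the linearised pentagon, you never produce a function $h$ for which $h$ and $\widehat h$ both have support of size about $p/2$, and the fallback via $\bK^5$ is equally unsupported.

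The difficulty is structural, not computational. Linearising the reflection relation $E(u)E(-u)=\ang{u}^{-1}$ gives only $g(u)+g(-u)=0$, which says nothing about supports. So at first order Tao's inequality has nothing to work with, as you yourself observe.

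Also, vanishing of the Zariski tangent space is sufficient for isolatedness but not equivalent to it. At a non-reduced isolated point it fails. So your strategy targets a stronger statement than needed, and that statement might be false.

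The missing idea is to look at infinity instead of at first order. Take the affine variety $\mathcal X$ cut out by (i)--(iii) of Theorem~\ref{thm:fqdilogbasicproperties} alone, with $\lambda$ fixed as one of the three cube roots of the Gauss sum. Every finite quantum dilogarithm lies on $\mathcal X$, and the pentagon is not needed beyond this. Homogenise with $X_u=Y_u/Z$ and set $Z=0$. The top-degree parts of the equations become $Y_0^2=0$, $Y_uY_{-u}=0$ for $u\neq0$, and $\widehat Y=\lambda\ang{\cdot}Y$.

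Thus $Y$ vanishes at $0$ and at one element of each pair $\pm u$, so $|\mathrm{supp}\,Y|\le (p-1)/2$. The Fourier condition gives $\mathrm{supp}\,\widehat Y=\mathrm{supp}\,Y$. Hence $|\mathrm{supp}\,Y|+|\mathrm{supp}\,\widehat Y|\le p-1<p+1$, and Tao's uncertainty principle forces $Y=0$. So the projective closure of $\mathcal X$ misses the hyperplane at infinity.

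A positive-dimensional projective variety meets every hyperplane, so $\mathcal X$ is finite. Since it is defined over a number field, its points, and hence the values $E(u)$, lie in $\ol{\QQ}$.

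The product relation $X_uX_{-u}=\ang{u}^{-1}$ turns into a support condition only in this asymptotic limit. That is exactly the information your tangent-space linearisation throws away.
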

\begin{proof}
	Let $X_u$ be a variable corresponding to $E(u)$ and consider the variety~$\mathcal{X}$ defined by equations (i)-(iii) in Theorem~\ref{thm:fqdilogbasicproperties}, namely:
	\begin{align*}
		X_0^2&=\sqrt{N}X_0+1\,,\\
		X_uX_{-u}&=\ang{u}^{-1}\,,\qquad u\ne 0\,,\\
		\frac{1}{\sqrt{N}}\sum_{v}X_v\ang{-u;v} &= \lambda \ang{u}X_u\,,\qquad u\in \ZZ/p\ZZ\,.
	\end{align*}
	Consider the projectivisation of this variety, with $X_u=Y_u/Z$, where $(Y_0\colon \cdots\colon Y_{p-1}\colon Z)\in\mathbb{P}^p(\CC)$, and look at its intersection with the hyperplane at infinity, given by $Z=0$. For any point of that intersection we have $Y_0=0$, $Y_uY_{-u}=0$, $u\in\ZZ/p\ZZ$, and $\widehat{Y}_u=\lambda \ang{u}Y_u$, where $\widehat{Y}_u=N^{-1/2}\sum_{v}Y_v\ang{-u;v}$. In particular, the supports of $u\mapsto Y_u$ and $u\mapsto \widehat{Y}_u$ are equal and have size at most $(p-1)/2$. By Tao's uncertainty principle~\cite{Tao05} for Fourier transform on cyclic groups of prime order, this implies that $Y_u=0$ for all $u$, and the intersection of the closure of $\mathcal{X}$ with the hyperplane at infinity is empty. Therefore, the variety~$\mathcal{X}$ is zero-dimensional, and since it is defined over a cyclotomic field, this shows that $E(u)\in\overline{\QQ}$ for all~$u$.
\end{proof}
Note that this already implies algebraicity of Stark units in infinitely many non-trivial cases (namely, whenever $\eps_{\ff}=\eps$ has trace $p+2$, see~\S\ref{sec:starkshintani}).

\subsection{Algebraicity by $2$-adic bounds}
\label{sec:2adic}

For all $G$ of odd order, there is a $2$-adic proof of algebraicity for any finite quantum dilogarithm.
There is also a similar proof that $F_\gamma$ is algebraic when the associated $G$ has even order, which makes use of the additional distribution relations given in Proposition~\ref{prop:dist}.
Similar arguments were also used in a recent work~\cite{Gannon26}.

\begin{theorem}\label{thm:2adic1}
Let $E\colon G\to \ol{\QQ}$ be a finite quantum dilogarithm on $G$ of odd order $N=|G|$. Then $E(u)$ are $2$-adic units for all $u\in G$.
\end{theorem}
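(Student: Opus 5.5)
Fix a prime $\mathfrak{P}$ of $\ol{\QQ}$ above $2$ and write $v$ for the corresponding valuation, normalised so that $v(2)=1$ and extended to the finite extension generated by the values of $E$. Since $N$ is odd, $\sqrt{N}$ and every $N$-th (indeed $2N$-th, up to the factor $-1$) root of unity is a $2$-adic unit. Hence $\ang{u}$, $\ang{u;w}$, $\lambda$ and the normalising factor $1/\sqrt{N}$ all have valuation $0$; for $\lambda$ this uses $\lambda^3=\int_G\ang{x}^{-1}dx$, and the Gauss sum of a non-degenerate form on a group of odd order has absolute value $1$ and is an algebraic integer times a unit. The plan is to show $v(E(u))=0$ for every $u$ using only the relations (i)--(iii) of Theorem~\ref{thm:fqdilogbasicproperties}.

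First, $E(0)^2=\sqrt{N}E(0)+1$, and the discriminant of this quadratic is $N+4$, which is odd. The two roots therefore have product $-1$ and sum $\sqrt N$, a unit, so both roots are units and $v(E(0))=0$. Next, (ii) gives $E(u)E(-u)=\ang{u}^{-1}$, so $v(E(u))+v(E(-u))=0$ for $u\neq0$. Because $N$ is odd, $u\neq -u$, and the nonzero elements split into pairs $\{u,-u\}$. Let $m=\min_u v(E(u))$. Suppose $m<0$; we aim for a contradiction.

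The key step is to reduce modulo $\mathfrak{P}$. Set $S=\{u: v(E(u))=m\}$; then $0\notin S$, and $S\cap(-S)=\emptyset$, since $u\in S$ forces $v(E(-u))=-m>0$. Consider $f(u)=\pi^{-m}E(u)$, where $\pi$ is a uniformiser, and reduce it to the residue field $\bar{\mathbb F}_2$. The reduction $\bar f$ is nonzero exactly on $S$. The Fourier relation (iii), $\widehat{E}(u)=\lambda\ang{u}E(u)$, is an identity with unit coefficients, so it reduces to $\widehat{\bar f}=\bar\lambda\,\overline{\ang{\cdot}}\,\bar f$ over $\bar{\mathbb F}_2$. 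Since $N$ is odd, the characters $\ang{\cdot;w}$ still form a perfect pairing over $\bar{\mathbb F}_2$, so the mod-$2$ Fourier transform is invertible. Consequently $\bar f$ is a nonzero function whose support $S$ equals the support of its Fourier transform, with $|S|\le (N-1)/2$.

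The main obstacle is that no uncertainty principle is available for general odd $G$; Tao's principle only applies for prime order. I would therefore not rely on support sizes alone but instead exploit the full pentagon~\eqref{eq:pentagonintegral} together with $C=-\widehat E(0)$, which has valuation $v(E(0))=0$. Take $x,y\in S$ with $y=-x$ impossible; for instance, take $x=y\in S$. The left side of~\eqref{eq:pentagonintegral} then has valuation $2m$. On the right, the term $E(x-z)E(z)E(x-z)$ (for $y=x$) has valuation $2v(E(x-z))+v(E(z))\ge 3m$, so the comparison needs care. I would first try $x\in S$ and $y=-x$, where the left side $\ang{x;-x}E(x)E(-x)$ is a unit. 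In that case the right side is $C$ plus a sum of terms $E(-x-z)E(z)E(x-z)$, and the relation pins down the cancellation pattern. Iterating this bookkeeping over the minimal-valuation set $S$, in the form of a combinatorial sum-free/antisymmetry statement ($S\cap -S=\emptyset$, $S$ being closed under the reductions forced by~\eqref{eq:pentagonproduct3} and by the reduced Fourier eigen-relation), should show that $S$ must be empty. This is the step I expect to require the most work.

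Finally, if $S$ is empty then $m\ge0$ for every valuation. Applying this also to $-u$ gives $v(E(u))=0$ for all $u$. As the prime $\mathfrak{P}$ above $2$ was arbitrary, each $E(u)$ is a $2$-adic unit.
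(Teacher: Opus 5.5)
The preliminary steps are correct. For odd $N$ the quantities $\sqrt N$, $\ang{u}$ and $\ang{u;w}$ are $2$-adic units, $E(0)$ is a unit, and $v(E(u))+v(E(-u))=0$ for $u\ne0$. Hence if $m=\min_u v(E(u))<0$, it is attained only at nonzero $u$, with $S\cap(-S)=\emptyset$.

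There is a genuine gap: the decisive step, ruling out $m<0$, is never carried out. The two specialisations you try give no contradiction. For $x=y\in S$ in~\eqref{eq:pentagonintegral}, the left side has valuation $2m$, but you only know the right-hand terms have valuation $\ge 3m$, which is smaller when $m<0$. For $y=-x$, the left side is a unit and the right side is $C$ plus terms whose cancellation you do not control.

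The reduction modulo $\mathfrak P$ also stalls, and not only because $G$ need not be cyclic of prime order. Tao's principle rests on Chebotarev's theorem on minors of the Fourier matrix, which is a characteristic-zero fact and fails over $\bar{\mathbb F}_2$. Already on $\ZZ/7\ZZ$, the indicator of $\{0,1,3\}$ has Fourier transform vanishing at three points, since $1+x+x^3$ divides $x^7-1$ over $\mathbb F_2$. A useful diagnostic: nothing you actually use distinguishes residue characteristic $2$ from any other prime not dividing $N$. The argument is missing the one input where $2$ enters.

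That input is the following specialisation, which is the paper's proof. Suppose $m<0$ and take $u$ with $v(E(u))=m$, so $u\ne 0$. Put $x=2u$, $y=-u$ in~\eqref{eq:pentagonproduct3}, substitute $t=u+s$, and use $E(-u)=\ang{u}^{-1}E(u)^{-1}$. This gives
\[
\sqrt{N}\,E(2u)=E(u)^4+E(u)^2\sum_{s\in G\sm\{0\}}\ang{s}E(u+s)E(u-s)\,.
\]
The summand is invariant under $s\mapsto -s$, since $\ang{s}=\ang{-s}$. This involution has no fixed points on $G\sm\{0\}$ because $N$ is odd, so the sum equals $2$ times a sum of terms of valuation $\ge 2m$. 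Hence the right side has valuation exactly $4m$, while the left side has valuation $v(E(2u))\ge m$. So $4m\ge m$, i.e.\ $m\ge 0$, a contradiction.

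Once $m\ge0$, the relation $v(E(u))+v(E(-u))=0$ forces $v(E(u))=0$ for all $u$. Neither the Fourier eigenrelation, nor $\lambda$, nor any support argument is needed.
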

\begin{proof}
A specialisation of the $5$-term equation~\eqref{eq:pentagonproduct3} with
$x=2u$, $y=-u$ gives the following:
    \[
    \sqrt{N}E(2u) =
    E(u)^4+
    E(u)^2\sum_{t\in G\sm 0}\ang{t}E(u+t)E(u-t)\,,\qquad u\ne 0\,.
    \]
Let $v_2$ be any valuation above~$2$. If $E(u)$ takes the smallest valuation (i.e., $v_2(E(u))\leq v_2(E(x))$ for all $x\in G$), then $v_2(E(u))\leq v_2(E(2u))=4v_2(E(u))$, since every term in the sum appears twice.
Therefore, we conclude that $v_2(E(u))\geq0$ for all $u$.
Then reflection formula implies that $v_2(E(u)E(-u))=0$ and hence $v_2(E(u))=0$.
\end{proof}
\begin{corollary}\label{cor:2adic}
Theorem~\ref{thm:algebraicity} holds for odd $N$.
\end{corollary}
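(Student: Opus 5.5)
The plan is to reduce the statement to Theorem~\ref{thm:2adic1} by a specialisation argument. Theorem~\ref{thm:2adic1} only controls finite quantum dilogarithms that already take values in $\ol{\QQ}$. So I will show that if some complex-valued solution were transcendental, the solution variety would be positive-dimensional. It would then contain $\ol{\QQ}$-points with negative $2$-adic valuation, which Theorem~\ref{thm:2adic1} forbids.

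First I set up the variety. Since $\ang{0;y}=1$ forces $\ang{0}=1$, and $\ang{\cdot;\cdot}$ is a bicharacter on a group of order $N$, the values $\ang{x}$ are roots of unity. Concretely, $\ang{nx}=\ang{x}^{n}\ang{x;x}^{n(n-1)/2}$, and $\ang{Nx}=1$. Hence all structure constants lie in a number field $K\supset\QQ(\sqrt{N})$ containing suitable roots of unity.

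Let $\mathcal{X}\subset\mathbb{A}^{G}$ be the affine $K$-variety in the variables $X_u$, $u\in G$, cut out by three conditions:
\begin{itemize}
\item the relation $X_0^2=\sqrt{N}X_0+1$;
\item the pentagon relation~\eqref{eq:pentagonintegral} with the constant $C$ replaced by the polynomial $-\widehat{X}(0)=-\frac{1}{\sqrt N}\sum_v X_v$;
\item the relation $\widehat X(0)^{\,3}=X_0^3\int_G\ang{x}^{-1}dx$.
\end{itemize}
By Theorem~\ref{thm:fqdilogbasicproperties} (and the remark extending it to small $|G|$), every complex finite quantum dilogarithm is a point of $\mathcal{X}(\CC)$. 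Conversely, at every point of $\mathcal{X}$ we have $X_0\neq 0$ from the first relation. Then $\widehat X(0)\neq0$, because the Gauss sum $\int_G\ang{x}^{-1}dx$ is non-zero (it is $\lambda^3$ for the Weil operator, which is invertible). So $C\neq0$, and every $\ol{\QQ}$-point of $\mathcal{X}$ is a genuine $\ol{\QQ}$-valued finite quantum dilogarithm.

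Now suppose some $E$ has a transcendental value. Then $\mathcal{X}$, being defined over $K$, has an irreducible component $Y$ of positive dimension on which some coordinate $X_u$ is non-constant. Otherwise every component would be a finite set of $\ol{\QQ}$-points. Restricting to an irreducible curve in $Y$ through two points with different $X_u$-values, the morphism $X_u\colon Y\to\mathbb{A}^1$ is dominant. Its image is therefore cofinite, and in particular contains $\ol{\QQ}$-points with value $2^{-k}$ for all large $k$. Pick such a point $P\in Y(\ol{\QQ})$. It gives a finite quantum dilogarithm on $G$, of odd order, with values in $\ol{\QQ}$ and with $v_2(X_u(P))<0$ for any valuation above $2$. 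This contradicts Theorem~\ref{thm:2adic1}, so $\mathcal{X}$ is zero-dimensional and all $E(u)\in\ol{\QQ}$.

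The main point to check carefully is that $\mathcal{X}$ is defined so that its $\ol{\QQ}$-points are exactly the objects to which Theorem~\ref{thm:2adic1} applies, i.e.\ that $C\neq0$ holds identically on $\mathcal{X}$. The other thing to confirm is that the proof of Theorem~\ref{thm:2adic1} uses only $v_2(\sqrt N)=0$ (true for $N$ odd) and the pairing $t\leftrightarrow -t$ in the sum, which has no fixed points on $G\sm\{0\}$ for $N$ odd. Neither step depends on how the solution was obtained. The dominance/cofiniteness step is standard (Chevalley's theorem for a non-constant morphism from an irreducible curve to $\mathbb{A}^1$).
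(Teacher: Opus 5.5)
Your proposal is correct and follows essentially the paper's argument: project the solution variety onto a coordinate axis, use Chevalley's theorem to conclude that the image is either finite or cofinite, and rule out the cofinite case via the $2$-adic unit property of Theorem~\ref{thm:2adic1}. Your explicit encoding of $C\ne0$ through the closed conditions $C=-\widehat X(0)$ and $\widehat X(0)^3=X_0^3\int_G\ang{x}^{-1}dx$ makes precise what the paper leaves implicit as ``the variety defining $E$''.
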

\begin{proof}
We can project the variety defining $E$ onto any of the coordinate axes $E(u)$ to obtain a constructible set in $\ol{\QQ}$ from Chevalley's theorem.
We have just shown that this constructible set must have zero $2$-adic valuation and hence must consist of a finite set of points (since any infinite constructible subset of $\ol{\QQ}$ is cofinite, and will take arbitrary valuations). Therefore the set of solutions over $\ol{\QQ}$ is finite, and so the variety itself is $0$-dimensional. Since it is defined over a cyclotomic field, this shows that for any solution~$E$ the values~$E(u)$ are algebraic.
\end{proof}
\begin{corollary}
For $E,G$ as in Theorem~\ref{thm:2adic1}, we find that
\[
    E(2u)\equiv \frac{1}{\sqrt{N}}E(u)^4\pmod{2}\,.
\]
\end{corollary}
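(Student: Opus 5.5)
The plan is to go back to the displayed identity from the proof of Theorem~\ref{thm:2adic1}, which expresses $\sqrt{N}E(2u)$ for $u\ne0$ via \eqref{eq:pentagonproduct3} with $x=2u$, $y=-u$:
\[
\sqrt{N}E(2u) = E(u)^4 + E(u)^2\sum_{t\in G\sm 0}\ang{t}E(u+t)E(u-t)\,,
\]
and to reduce it modulo $2$. By Theorem~\ref{thm:2adic1} every $E(x)$ is a $2$-adic unit. Since $N$ is odd, $\sqrt{N}$ is also a $2$-adic unit. So the congruence is equivalent to showing that the sum $\sum_{t\ne0}\ang{t}E(u+t)E(u-t)$ is $\equiv0\pmod 2$, i.e.\ lies in $2\mathcal{O}$ for the ring of integers $\mathcal{O}$ of a number field containing all the values. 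We then divide by $\sqrt{N}$.

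The key step is the pairing $t\leftrightarrow -t$. Because $N$ is odd, $t\ne -t$ for $t\ne0$, so $G\sm\{0\}$ splits into $(N-1)/2$ pairs $\{t,-t\}$. Since $\ang{t}=\ang{-t}$ by definition of a metric group, the terms for $t$ and $-t$ are both $\ang{t}E(u+t)E(u-t)$. Hence
\[
\sum_{t\ne0}\ang{t}E(u+t)E(u-t) = 2\sum_{\{t,-t\}}\ang{t}E(u+t)E(u-t)\,.
\]
Here $\ang{t}$ is a root of unity, since $\ang{\cdot}$ is a quadratic refinement of a bicharacter on a finite group. Together with the integrality of the $E$ values at all places above $2$ from Theorem~\ref{thm:2adic1}, this shows the sum is $2$ times an element that is integral at every prime above $2$.

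The only point needing care is the meaning of ``$\pmod 2$''. I will interpret it in the localisation of $\mathcal{O}$ at the primes above $2$, equivalently in $\mathcal{O}_v$ for each valuation $v\mid 2$. In that ring everything above is integral and $\sqrt{N}^{-1}$ makes sense. One should also check that the $u=0$ case is consistent: there $E(0)\equiv E(0)^4/\sqrt{N}$ follows from $E(0)^2=\sqrt{N}E(0)+1$, since $E(0)^3=\sqrt{N}E(0)^2+E(0)$ gives
\[
E(0)^4=\sqrt{N}E(0)^3+E(0)^2 = (N+1)E(0)^2+\sqrt{N}E(0) \equiv \sqrt{N}E(0)\pmod 2,
\]
using that $N+1$ is even. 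So no genuine obstacle is expected; the main care point is the bookkeeping of the valuation ring.
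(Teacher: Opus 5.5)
Your proposal is correct and is essentially the argument the paper leaves implicit. You reduce the identity $\sqrt{N}E(2u)=E(u)^4+E(u)^2\sum_{t\ne0}\ang{t}E(u+t)E(u-t)$ from the proof of Theorem~\ref{thm:2adic1} modulo~$2$, using that the $t$ and $-t$ terms coincide and that all values are $2$-adic units; your separate check of $u=0$ via $E(0)^2=\sqrt{N}E(0)+1$ covers a case the paper does not spell out.
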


The above proof does not immediately apply to $G$ of even order, but a similar proof works if we assume additional structure, which is available for the values of~$F_\gamma$. When one considers varieties associated to all groups given by a product of two cyclic groups with a fixed order along with the additional equations coming from Proposition~\ref{prop:dist}, we obtain a similar statement.

\begin{theorem}
Let $\mathcal{X}_N$ be the variety defined by the equations of Theorem~\ref{thm:fg.equs} and Proposition~\ref{prop:dist} for $G_\gamma$ as~$\gamma$ runs over all conjugacy classes with fixed $N=\tr(\gamma)-2=|G_\gamma|$. Then all coordinates of $\mathcal{X}_N(\ol{\QQ})$ are $2$-adic units and $\mathcal{X}_N$ is zero-dimensional.
\end{theorem}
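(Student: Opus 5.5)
The plan is to follow the proof of Theorem~\ref{thm:2adic1}: first show that every coordinate of a point of $\mathcal{X}_N(\ol{\QQ})$ has nonnegative $2$-adic valuation, then use the reflection relation~\eqref{eq:Fgpm.reflection} to upgrade this to $2$-adic units, and finally get zero-dimensionality exactly as in Corollary~\ref{cor:2adic}. The essential change from the odd case is that the minimum valuation is taken over the whole family, not over a single $\gamma$. Fix a valuation $v_2$ above $2$ and a point of $\mathcal{X}_N(\ol{\QQ})$. Set
\[
m=\min_{\gamma,\,u} v_2\big(F_\gamma(u)\big),
\]
where $\gamma$ runs over the finitely many conjugacy classes of trace $N+2$ and $u\in G_\gamma$. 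The multipliers $\mu_\gamma$ are roots of unity, so they do not affect valuations. The goal is to show $m\ge 0$; suppose instead $m<0$, and choose $(\gamma,u)$ with $v_2(E(u))=m$, where $E=F^+_\gamma$. Then $u\neq 0$, since $E(0)$ is a unit by~\eqref{eq:Fgpm.zero}.

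As in the odd case, specialise~\eqref{eq:pentagonproduct3} at $x=2u$, $y=-u$:
\[
\sqrt{N}E(2u)=E(u)^4+E(u)^2\sum_{t\ne0}\ang{t}E(u+t)E(u-t).
\]
The left side has valuation at least $m+\tfrac12 v_2(N)>m$, because $N$ is now even. For $2t\ne0$, the terms $t$ and $-t$ are equal because $\ang{t}=\ang{-t}$, so each such pair contributes a multiple of $2$ and has valuation at least $4m+v_2(2)>4m$. Only the nonzero $2$-torsion elements $t\in G_\gamma[2]$ (at most three of them) contribute unpaired terms, namely $E(u)^2\ang{t}E(u+t)^2$. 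If their total had valuation greater than $4m$, the right side would have valuation exactly $4m<m$, which contradicts the bound on the left side. So everything comes down to controlling
\[
S(u)=\sum_{t\in G_\gamma[2]}\ang{t}E(u+t)^2 .
\]

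This is the step I expect to be the main obstacle. The values $\ang{t}$ at $2$-torsion points are $4$th roots of unity, which are congruent to $1$ modulo $(1-i)$. I would therefore write $S(u)$, modulo terms of larger valuation, as a square of a linear combination of the values on the coset $u+G_\gamma[2]$. When $G_\gamma[2]\ne0$, Proposition~\ref{prop:dist} applies to $g$ of order $2$ and identifies the product $E(u)E(u+g)$ with a single value $F_{\gamma'}$ for a conjugate-type matrix $\gamma'$ of the same trace. So pairwise products over the coset are values in the family, and these have valuation at least $m$. Replacing $u$ by the coset element of smallest valuation, and comparing $S(u)$ with the same identity for $\gamma'$ at the point $h\ell$, should give $v_2(S(u))>2m$ unless a cancellation forces a smaller value elsewhere in the family, which would contradict the minimality of $m$. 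If this bookkeeping does not close on the first try, the fallback is to induct on the $2$-part of $G_\gamma$: the distribution relation passes to $\gamma'$, whose group has smaller $2$-torsion, and the base case is the odd-order argument of Theorem~\ref{thm:2adic1}.

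Once $m\ge0$ is established, the reflection relation $F^+_\gamma(u)F^-_\gamma(-u)=\ang{u}^{-1}$ is a product of two elements of nonnegative valuation equal to a unit, so every value is a $2$-adic unit. Zero-dimensionality then follows as in Corollary~\ref{cor:2adic}. The projection of $\mathcal{X}_N$ to any coordinate is constructible by Chevalley's theorem, and it takes only unit $2$-adic values, so it is finite. Hence $\mathcal{X}_N(\ol{\QQ})$ is finite, and since $\mathcal{X}_N$ is defined over a cyclotomic field, $\mathcal{X}_N$ is zero-dimensional.
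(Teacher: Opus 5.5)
\textbf{The crux is left unproved.} Your setup matches the paper's. You minimise $v_2$ over the whole family of trace-$(N+2)$ classes and specialise \eqref{eq:pentagonproduct3} at $x=2u$, $y=-u$. You pair $\pm t$ for $2t\ne0$ and reduce everything to the unpaired terms $\ang{t}E(u)^2E(u+t)^2$ with $t\in G_\gamma[2]\sm\{0\}$. However, the step you flag as the main obstacle is never carried out, and neither route you sketch for it works as stated.

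The target you name, $v_2(S(u))>2m$, points the wrong way. $S(u)$ contains the $t=0$ term $E(u)^2$, whose valuation is exactly $2m$. The contradiction needs $v_2(S(u))=2m$, i.e.\ that the $t\ne0$ terms have valuation strictly above $2m$. A strict inequality $v_2(S(u))>2m$ would push the right-hand side above $4m$ and give no contradiction.

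The fallback induction is also ill-founded. Every $\gamma'$ produced by Proposition~\ref{prop:dist} has the same trace, so $|G_{\gamma'}|=N$ is still even. You therefore never reach the odd-order base case of Theorem~\ref{thm:2adic1}.

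\textbf{The fix.} The missing step is one line, using the observation you already made. For $t\ne0$ with $2t=0$, Proposition~\ref{prop:dist} gives $E(u)E(u+t)=\zeta\,F_{\gamma_t}(u_t)$. Here $\zeta$ is a root of unity (a ratio of multipliers $\mu$) and $\gamma_t$ has the same trace, so $v_2(E(u)E(u+t))\ge m$. Hence
\[v_2\big(\ang{t}E(u)^2E(u+t)^2\big)\ge 2m>4m,\]
because $m<0$. Equivalently, $v_2(E(u+t))\ge 0>m$, so $v_2(S(u))=2m$. The right-hand side then has valuation exactly $4m<m\le v_2(\sqrt N E(2u))$, which is a contradiction. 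No congruences modulo $(1-i)$, no comparison with the identity for $\gamma'$, and no induction are needed.

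This is exactly the paper's argument ("$E_{\gamma_x}(u_x)^2$ is only quadratic"). The rest of your proposal agrees with the paper: upgrading to units via \eqref{eq:Fgpm.reflection}, and zero-dimensionality via Chevalley's theorem as in Corollary~\ref{cor:2adic}.
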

\begin{proof}
For odd $N$, this follows from Theorem~\ref{thm:2adic1}.
We start with the same identity used in the proof of Theorem~\ref{thm:2adic1}.
For even $N$, it takes the more general form
\[
    \sqrt{N}E_\gamma(2u)
    =
    E_\gamma(u)^4
    +\sum_{0\neq x\in G,\;2x=0}\ang{x}E_\gamma(u)^2E_\gamma(u+x)^2
    +
    E_\gamma(u)^2\sum_{x\in G,\;2x\neq0}\ang{x}E_\gamma(u+x)E(u-x)\,.
\]
Notice that the equations from Proposition~\ref{prop:dist} imply that for $2x=0$ we have
\[
E_\gamma(u)E_\gamma(u+x)=E_{\gamma_x}(u_x)
\]
for some $\gamma_x$ with $\tr(\gamma_x)=\tr(\gamma)$ and $u_x\in G_{\gamma_x}$.
Therefore, we see that
\[
    \sqrt{N}E_\gamma(2u)
    =
    E_\gamma(u)^4
    +\sum_{0\neq x\in G,\;2x=0}\ang{x}E_{\gamma_x}(u_x)^2
    +
    E_\gamma(u)^2\sum_{x\in G,\;2x\neq0}\ang{x}E_\gamma(u+x)E(u-x)\,.
\]
Supposing that $v_p(E_\gamma(u))\leq v_p(E_{\gamma'}(u'))$ for all $\tr(\gamma')=\tr(\gamma)$ and $u'\in G_{\gamma'}$, we deduce that
\[
v_2(E_\gamma(u))\leq v_2(E_\gamma(2u))\leq 4v_2(E_\gamma(u))\,,
\]
since $E_{\gamma_x}(u_x)^2$ is only quadratic and in the other sum all terms appear twice.
Therefore, we find that $0\leq v_2(E_\gamma(u))$.
We can conclude by the same arguments as in the end of Theorem~\ref{thm:2adic1} and Corollary~\ref{cor:2adic}.
\end{proof}
Note, in particular, that this proves Theorem~\ref{thm:faddeevqbar}.

\subsection{Fusion categories and their construction via endomorphisms of the Leavitt algebra}
We will work over~$\CC$. Recall (see~\cite{ENO05}) that a fusion category $\lC$ is a $\CC$-linear semisimple rigid tensor category with simple unit object $\mathbf{1}$, finitely many isomorphism classes of simple objects and finite-dimensional spaces of morphisms. Here rigid means that for every object $X$ there is a (left) dual object $X^{*}$ and a choice of evaluation $ev_{X}\colon X^{*}\otimes X\to \mathbf{1}$ and co-evaluation $db_{X}\colon \mathbf{1}\to X\otimes X^{*}$ morphisms satisfying the snake relations $(id_X\otimes ev_X)\circ (db_X\otimes id_X) = id_X$ and $(ev_{X}\otimes id_{X^*})\circ (id_{X^*}\otimes db_{X}) = id_{X^*}$, and similarly there exist right dual objects ${}^*X$ with the corresponding co-evaluation and evaluation morphisms.

The decomposition of tensor products of simple objects of~$\lC$ is encoded by $N_{ij}^{k}$, the multiplicity of $X_k$ in the direct sum decomposition of $X_i\otimes X_j$, where $X_i$, $i\in I$ denotes the set of (equivalence class representatives of) simple objects. The numbers $N_{ij}^{k}$ give the structure constants of the Grothendieck ring $K_0(\lC)$: 
	\begin{equation} \label{eq:fusionrules}
	[X_i][X_j] = \sum_{k}N_{ij}^{k}[X_k]\,.
	\end{equation}
The relations~\eqref{eq:fusionrules} are sometimes called fusion rules.
A ring $R$ with a $\ZZ$-basis $r_i$, indexed by a finite set~$I$ containing $0$, $r_0=1$, and associative multiplication $r_ir_j=\sum_{k}N_{ij}^{k}r_k$ with $N_{ij}^{k}\ge0$, together with an involution $*$ on $I$ and anti-involution $*$ on $R$, such that $r_i^*=r_{i^*}$ and $N_{ij}^{0}=\delta_{i,j^{*}}$ is called a fusion ring. If there is a fusion category $\lC$ whose multiplicity numbers are $N_{ij}^{k}$, it is called a categorification of the fusion ring~$R$. For more details on fusion categories, we refer to~\cite{EGNO17}.

Next, recall that the Leavitt algebra $L_N=L(1,N)$ of type $(1,N)$, $N\ge2$, is the quotient of the free associative algebra $\CC\ang{x_1,\dots,x_N,x_1',\dots,x_N'}$ by the two-sided ideal generated by the relations 
	\begin{equation} \label{eq:leavitt}
	x_i'x_j = \delta_{i,j}\,,\qquad x_1x_1'+\dots+x_Nx_N'=1\,.
	\end{equation}
For a sequence $I=(i_1,\dots,i_k)$, $i_j\in\{1,\dots,N\}$, we set $x_I=x_{i_1}\cdots x_{i_k}$ and $x_I'=x_{i_k}'\cdots x_{i_1}'$. We also set $x_{\varnothing}=x_{\varnothing}'=1$. 

The Leavitt algebra has an anti-linear involutive anti-automorphism $'$, given on generators by $(x_i)' = x_i'$, $(x_j')' = x_j$, and acting on scalars by complex conjugation. We will not assume that endomorphisms respect this anti-automorphism, but if $\beta$ is an endomorphism of $L_N$, then so is $\tilde{\beta}$ defined by $\tilde{\beta}(x)=\beta(x')'$.

We will use the following properties of the Leavitt algebra.
	\begin{lemma} \label{lem:leavitt}
	Let $N\ge2$ and let $L_N$ be the Leavitt algebra defined by~\eqref{eq:leavitt}. Then
	\begin{thmenum}
	\item The set of reduced words $x_Ix_J'$ where $I=(i_1,\dots,i_k)$, $J=(j_1,\dots,j_l)$, and $(i_k,j_l)\ne (1,1)$ if $k,l\ge1$ forms a basis of~$L_N$ as a vector space;
	\item Centre of $L_N$ consists of scalars: $\mathcal{Z}(L_N)=\CC\,1$;
	\item If $y\in L_N$ satisfies $x_iy=yx_j$ for some $i\ne j$, then $y=0$;
    \item The centraliser of an element $u=x_1+P(x_2,\dots,x_N)$, where $P$ is a non-commutative polynomial without a constant term, is $\CC[u]$. The same conclusion holds for $u=x_1'+P(x_2',\dots,x_N')$.
	\end{thmenum}
	\end{lemma}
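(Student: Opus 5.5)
Part (i) I will take as the standard normal form for Leavitt path algebras. Spanning follows by rewriting. Any word reduces via $x_i'x_j=\delta_{ij}$ to the form $x_Ix_J'$. Whenever the last letters are $x_1$ and $x_1'$, I replace $x_1x_1'$ by $1-\sum_{i\ge2}x_ix_i'$. This strictly decreases the number of occurrences of the pattern $x_1x_1'$ weighted by length, so the rewriting terminates. For linear independence I will use the Bergman diamond lemma with the reduction system $x_i'x_j\to\delta_{ij}$ and $x_1x_1'\to 1-\sum_{i\ge2}x_ix_i'$. The only overlap ambiguities are $x_1'x_1x_1'$ and $x_1x_1'x_1$. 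I will check that both resolve, e.g.\ $x_1'(1-\sum_{i\ge 2}x_ix_i')=x_1'$, and similarly for the other. The irreducible words are exactly the reduced words described.

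For (iii) I will work with the basis from (i). The key step is that left multiplication by $x_i$ on reduced words is injective and degree raising: $x_i\cdot x_Ix_J'=x_{iI}x_J'$ stays reduced unless $I=\varnothing$ and we create $x_1x_1'$. In that case it expands as $x_J'-\sum_{k\ge2}x_kx_k'x_{J\setminus}'$. Right multiplication by $x_j$ acts on $x_Ix_J'$ by $x_J'x_j$, which either cancels a letter ($\delta$) or kills the term. Now suppose $y\neq0$. I will compare the terms of $y$ whose $x$-prefix length $|I|$ is maximal. In $x_iy$ these produce terms with prefix of length $|I|+1$ beginning with $x_i$. In $yx_j$ the prefix lengths stay at most $|I|$, except for terms with $J=\varnothing$, where $x_Ix_j$ has prefix beginning with the first letter of $I$, or with $x_j$ when $I=\varnothing$. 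A careful induction on the maximal $|I|$, tracking first letters ($i\ne j$), then forces $y=0$. The main technical care is the case $I=\varnothing$ with the $x_1x_1'$ rewriting. An alternative I will keep in reserve is the grading $\deg x_i=1$, $\deg x_i'=-1$: since the relation $x_iy=yx_j$ preserves degree, each graded component satisfies it, and the zero-degree part is an ultramatricial algebra, where the argument is a dimension count.

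Part (ii) follows from (iii) applied to the graded pieces. If $z$ is central, then $x_iz=zx_i$ for all $i$. Consider $y=x_1'zx_2$. Multiplying $x_1zx_1'=zx_1x_1'$ out, I will show $z$ commutes with every $x_Ix_J'$, so $x_2'z x_1 = zx_2'x_1=0$ and $x_1'zx_1=z$. Expanding $z$ in the basis and using centrality with both $x_i$ and $x_i'$ then forces every non-scalar basis coefficient to vanish. Concretely, $z-\lambda$ satisfies $x_i'(z-\lambda)x_j=\delta_{ij}(z-\lambda)$. Applying this with a homogeneous component of maximal $|I|$ and comparing lengths gives $z-\lambda=0$.

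Part (iv) is the main obstacle. The plan is to use the grading by word length in the $x$'s (the algebra generated by $x_1,\dots,x_N$ alone is free, embedded via (i)). First I will show that the centraliser of $u$ lies in the free subalgebra: any $y$ with $uy=yu$ has its maximal-$x'$-length part satisfying a leading-term relation of the form $x_1 y_{top}=y_{top}x_1$ modulo lower terms. Arguing as in (iii), the $x'$-content must then vanish. Once inside the free algebra $\CC\langle x_1,\dots,x_N\rangle$, I will invoke Bergman's centraliser theorem: the centraliser of a non-scalar element is a polynomial ring $\CC[w]$. Since $u$ has a linear term $x_1$ not a proper power, $u=f(w)$ forces $\deg f=1$, so the centraliser is $\CC[u]$. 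The statement for $x_1'+P(x_2',\dots)$ follows by applying the anti-automorphism $'$. I expect the reduction to the free subalgebra to be the delicate step, because the $x_1x_1'$ rewriting can lower $x'$-length.
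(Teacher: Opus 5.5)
Your plan is sound overall, but it takes a different route from the paper on (iii) and (iv), and two steps in (iv) need repair.

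\textbf{Part (iii).} The paper avoids normal-form bookkeeping entirely. From $x_iy=yx_j$ you get $y=x_i'yx_j$, hence $y=(x_i')^kyx_j^k$ for all $k$. Take $k$ larger than every $|I|,|J|$ in the expansion of $y$. Then each term $(x_i')^kx_Ix_J'x_j^k$ is either $0$ or $(x_i')^{k-|I|}x_j^{k-|J|}=0$, since $x_i'x_j=0$. Your top-$|I|$ induction also works: the $J=\varnothing$ part of maximal length satisfies $x_if=fx_j$ in the free algebra, so $f=0$, and then the remaining top terms of $x_iy$ are unmatched. It is just much longer.

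\textbf{Part (ii).} The paper only cites this. The same trick gives a clean version of it: a central $z$ satisfies $z=(x_1')^kzx_1^k$, which collapses $z$ to a combination of powers of $x_1$ and of $x_1'$, and then $z=x_2'zx_2$ leaves only the constant. The first half of your (ii), about $x_1'zx_2$ and $z$ commuting with every $x_Ix_J'$, is vacuous for central $z$.

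\textbf{Part (iv), comparison.} The paper uses the automorphism $x_1\mapsto x_1+P$, $x_i'\mapsto x_i'-x_i'Px_1'$ for $i\ge2$ (other generators fixed) to reduce to $P=0$, then cites Evans--Gannon. Your reduction to the free subalgebra followed by Bergman's centraliser theorem is legitimate. However, it imports a deep theorem that the automorphism makes unnecessary. Once $u=x_1$, your reduction step plus the elementary fact that the centraliser of $x_1$ in $\CC\langle x_1,\dots,x_N\rangle$ is $\CC[x_1]$ already suffices.

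\textbf{First repair: the top-order relation.} It is not $x_1y_{\mathrm{top}}=y_{\mathrm{top}}x_1$. Every monomial of $u$ has degree $\ge1$, so right multiplication by $u$ strictly lowers positive $x'$-length, while left multiplication never raises it. Hence if $\ell\ge1$ is the maximal $x'$-length in $y$, the length-$\ell$ part of $uy$ must vanish by itself. Write $y_\ell=\sum_{|K|=\ell}g_Kx_K'$ with $g_K\in\CC\langle x_1,\dots,x_N\rangle$ and set $c_K=g_K(0)$. The only rewriting is of $c_Kx_1x_K'$ for $K$ ending in $1$. This yields $ug_K=c_Kx_1$ for such $K$, and $ug_{(K^-,k)}=c_{(K^-,1)}x_k$ for $k\ge2$. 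Comparing the coefficient of $x_1$ in degree one, and using that the free algebra is a domain, forces all $c_K$ and $g_K$ to vanish; this uses $N\ge2$. So this step is easier than you feared, but it is not ``as in (iii)''.

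\textbf{Second repair: $\deg f=1$.} Your justification is wrong as stated. A linear term $x_1$ does not stop $u$ from being a higher-degree polynomial in another element: $x_1+x_1^2=f(x_1)$ has centraliser $\CC[x_1]\ne\CC[x_1+x_1^2]$. What you need is that $x_1$ occurs only linearly and $P$ involves only $x_2,\dots,x_N$. Apply the homomorphism $\CC\langle x_1,\dots,x_N\rangle\to\CC[x_1]$ killing $x_2,\dots,x_N$. It sends $u\mapsto x_1$ and $f(w)\mapsto f(\bar w)$, so $x_1=f(\bar w)$ forces $\deg f=1$, and hence $\CC[w]=\CC[u]$.

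\textbf{Part (i).} Your diamond-lemma check misses the overlaps $x_i'x_1x_1'$ and $x_1x_1'x_j$ with $i,j\ne1$. They also resolve, so the basis claim stands.
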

	\begin{proof}
	Parts (i) and (ii) are well known, see Leavitt's work such as~\cite{Lea65} and its citations. For (iii), note that for all $k\ge0$ we have, by induction,
		\[y=(x_i')^k y x_j^k\,.\]
	Let $y=\sum_{I,J}c_{I,J} x_Ix_J'$ be the decomposition of $y$ into the reduced words basis, and assume that $|I|,|J|<k$ for all pairs $(I,J)$ that occur. Then for all $I,J$ with $c_{I,J}\ne 0$, we have
	\[(x_i')^kx_Ix_J'x_j^k = (x_i')^{k-|I|}x_j^{k-|J|} = 0\,,\]
	by~\eqref{eq:leavitt}, so $y=(x_i')^k y x_j^k = 0$. For part (iv), when $P=0$, this is explained in~\cite[Proof of Proposition~1]{EG17}. For non-zero $P$, simply note that $x_1\mapsto x_1+P(x_2,\dots,x_N)$, $x_1'\mapsto x_1'$, $x_i\mapsto x_i$, $x_i'\mapsto x_i'-x_i'P(x_2,\dots,x_N)x_1'$, $i\ge2$, is an automorphism of $L_N$, so the statement reduces to the case $P=0$. For the last statement, applying the anti-involution reduces it to the statement we have just proved.
	\end{proof}
Let us also note that two unital endomorphisms $\sigma,\tau$ of $L_N$ are equal if and only if $\sigma(x_i)=\tau(x_i)$ for $i=1,\dots,N$. Necessity is evident, and for sufficiency, simply note that if $\sigma(x_i)=\tau(x_i)$ for all~$i$, then
	\begin{equation}\label{eq:completenesstrick}
	\sigma(x_i') = \sum_j\sigma(x_i')\tau(x_jx_j') = \sum_j\sigma(x_i')\tau(x_j)\tau(x_j') = \sum_j\sigma(x_i'x_j)\tau(x_j') = \tau(x_i')\,.
	\end{equation}

Following Evans--Gannon~\cite{EG17}, we construct tensor categories from systems of endomorphisms of~$L_N$. First, consider the endomorphism category of $L_N$, whose objects are unital endomorphisms $\beta\colon L_N\to L_N$ and whose morphisms are elements of the intertwiner spaces:
	\[\Hom(\sigma,\tau) = \{x\in L_N\colon x\sigma(y) = \tau(y)x\mbox{ for all }y\in L_N\}\,.\]
The monoidal structure is given by composition of endomorphisms on objects, and on morphisms, if $r\in \Hom(\beta,\gamma)$ and $s\in\Hom(\sigma,\tau)$, we set
	\[r\otimes s = r\beta(s) = \gamma(s)r \in \Hom(\beta\circ\sigma,\gamma\circ\tau)\,.\]

To construct a tensor category, Evans and Gannon consider a set $\mathcal{E}$ of unital endomorphisms of $L_N$ that contains the identity and is closed under composition, apply idempotent completion to it, and then take formal direct sums. Here is their construction in more detail: First one considers the full monoidal subcategory $\lC(\mathcal{E})$ of the endomorphism category of $L_N$. It is a strict pre-additive tensor category. 
Next, one considers the category $\ol{\mathcal{C}(\mathcal{E})}$:
\begin{itemize}
	\item Objects of $\ol{\mathcal{C}(\mathcal{E})}$: pairs $(p,\beta)$, where $\beta$ is an element of $\mathcal{E}$ and $p^2=p\in \Hom(\beta,\beta)$ is an idempotent.
	\item Morphisms: $\Hom((p,\beta),(q,\gamma)) = q\Hom(\beta,\gamma)p$.
	\item Tensor product: $(p,\beta)\otimes (q,\gamma) = (p\beta(q), \beta\circ \gamma)$.
\end{itemize}
Finally, one adds formal direct sums to form $\ol{\lC(\mathcal{E})}^{ds}$:
\begin{itemize}
	\item Objects of $\ol{\lC(\mathcal{E})}^{ds}$: formal finite direct sums $\bigoplus_{i}(p_i,\beta_i)$.
	\item Morphisms: matrices of intertwiners $\Hom(\bigoplus_{j=1}^{m}(p_j,\beta_j),\bigoplus_{i=1}^{n}(q_i,\gamma_i)) = (q_i\Hom(\beta_j,\gamma_i)p_j)_{i=1,j=1}^{n,m}$. Composition is given by matrix product.
	\item Tensor product: tensor product of $\lC(\mathcal{E})$ extended as Kronecker product to matrices.
\end{itemize}
The following result is \cite[Lemma 2]{EG17}.
\begin{proposition} \label{prop:evansgannon}
Let $\mathcal{E}$ be a set of endomorphisms of $L_N$, $N\ge2$. Assume that $id_{L_N}\in\mathcal{E}$, $\mathcal{E}$ is closed under composition, $\dim_{\CC}\Hom(\beta,\gamma)<\infty$ and $\Hom(\beta,\gamma) = \Hom(\tilde\beta,\tilde\gamma)$ for all $\beta,\gamma\in\mathcal{E}$. Then $\ol{\lC(\mathcal{E})}^{ds}$ is a semisimple strict $\CC$-linear tensor category with finite dimensional spaces of morphisms. Moreover, if $\lC(\mathcal{E})$ is rigid, then so is $\ol{\lC(\mathcal{E})}^{ds}$.
\end{proposition}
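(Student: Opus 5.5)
The plan is to verify the ingredients of a semisimple tensor category one layer at a time, following the three-stage construction $\lC(\mathcal{E})\to\ol{\lC(\mathcal{E})}\to\ol{\lC(\mathcal{E})}^{ds}$. First I will check that $\lC(\mathcal{E})$ is a strict $\CC$-linear pre-additive monoidal category. Composition of intertwiners is multiplication in $L_N$, which is bilinear. The tensor product $r\otimes s=r\beta(s)=\gamma(s)r$ is well defined; the equality of the two expressions is exactly the intertwining property of $r$ applied to $s$. The interchange law is a direct computation with intertwiners. Associativity is strict because composition of endomorphisms is strictly associative. Closure of $\mathcal{E}$ under composition and $id\in\mathcal{E}$ provide the objects and the unit.

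Next I will pass to the idempotent completion $\ol{\lC(\mathcal{E})}$ and then to formal direct sums. Both are standard formal constructions (Karoubi envelope, then additive envelope), and the monoidal structure extends. For $(p,\beta)\otimes(q,\gamma)=(p\beta(q),\beta\gamma)$ I need $p\beta(q)$ to be an idempotent in $\Hom(\beta\gamma,\beta\gamma)$. This holds because $p\in\Hom(\beta,\beta)$ commutes with $\beta(q)$. Finite dimensionality of Hom spaces is inherited, since $q\Hom(\beta,\gamma)p$ is a subspace of a finite-dimensional space.

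The main point, and the expected obstacle, is semisimplicity. My approach is to show that each endomorphism algebra $\Hom(\beta,\beta)$ is a finite-dimensional semisimple algebra. Then every idempotent splits into primitive ones in the Karoubi envelope, and in an additive Karoubian category with finite-dimensional semisimple endomorphism algebras every object is a finite direct sum of simples with $\mathrm{End}=\CC$.

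For semisimplicity I will use the hypothesis $\Hom(\beta,\gamma)=\Hom(\tilde\beta,\tilde\gamma)$ together with the anti-linear anti-involution $'$. If $x\in\Hom(\beta,\gamma)$, applying $'$ to $x\beta(y)=\gamma(y)x$ gives $\tilde\beta(y')x'=x'\tilde\gamma(y')$. Hence $x'\in\Hom(\tilde\gamma,\tilde\beta)=\Hom(\gamma,\beta)$, so $'$ restricts to an anti-linear involution on $\Hom(\beta,\beta)$, making it a finite-dimensional $*$-algebra. To obtain semisimplicity I want this $*$ to be positive, i.e.\ $x'x=0\Rightarrow x=0$. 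I will try to extract this from the reduced-word basis of Lemma~\ref{lem:leavitt}(i) by examining the leading term of $x'x$ in some filtration. If that proves awkward, the alternative is to show that the Jacobson radical $J$ of $\Hom(\beta,\beta)$ is $'$-stable and nilpotent, and then produce a contradiction by applying a Leavitt-type argument of the kind in Lemma~\ref{lem:leavitt}(iii) to a nonzero element $x\in J$ with $x'x$ nilpotent.

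Finally, rigidity. Given a dual in $\lC(\mathcal{E})$ with evaluation and coevaluation satisfying the snake relations, the dual of $(p,\beta)$ will be built from the dual of $\beta$ cut down by the transposed idempotent (the image of $p$ under the duality). The zigzag identities then follow from those for $\beta$ by naturality. Direct sums of rigid objects are rigid, which gives the last claim.
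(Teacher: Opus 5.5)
You have the right overall architecture: $\lC(\mathcal{E})$, then idempotent completion, then direct sums, with semisimplicity coming from an involution on the endomorphism algebras. Your computation that $x\in\Hom(\beta,\gamma)$ implies $x'\in\Hom(\tilde\gamma,\tilde\beta)=\Hom(\gamma,\beta)$ is exactly how the hypothesis enters. The paper itself gives no argument here; it quotes Evans--Gannon~\cite{EG17}.

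\textbf{Main gap: properness of the involution.} The one non-formal step is that $'$ is \emph{proper} on $L_N$, i.e.\ $x'x=0\Rightarrow x=0$. You only hope for this, yet it carries the entire weight of semisimplicity. Closure under an anti-linear anti-involution proves nothing by itself: upper triangular $2\times2$ matrices are preserved by $X\mapsto P\overline{X}^{T}P$ with $P=(\smat{0}{1}{1}{0})$, and they are not semisimple.

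Your fallback does not get around this. To derive a contradiction from $0\ne x$ in the radical, you need the self-adjoint nilpotent $x'x$ to vanish, and then $x'x=0$ to force $x=0$. That is properness twice. Lemma~\ref{lem:leavitt}(iii) concerns relations $x_iy=yx_j$ and says nothing about $x'x$. The leading-term idea is not carried out either; with the rewriting $x_1x_1'=1-\sum_{i\ge2}x_ix_i'$ it is not even clear what the leading term of $x'x$ is.

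Either of the following would supply the missing positivity argument:
\begin{itemize}
\item \emph{Cuntz algebra.} Map $L_N\to\mathcal{O}_N$ by $x_i\mapsto s_i$, $x_i'\mapsto s_i^{*}$. This intertwines $'$ with the adjoint, and it is injective because $L_N$ is simple. Then $x'x=0$ gives $\|x\|^2=0$, and $\Hom(\beta,\beta)$ is a finite-dimensional C*-algebra.
\item \emph{Algebraic substitute.} Grade $L_N$ by $\deg x_i=1$, $\deg x_i'=-1$. The degree-zero part is the union of the matrix algebras $M_{N^k}(\CC)$ spanned by $x_Ix_J'$ with $|I|=|J|=k$, and on these $'$ is conjugate transpose. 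The degree-zero component of $x'x$ is $\sum_n x_n'x_n$. Each summand is a sum of terms $a'a$ with $\deg a=0$: for $n\ge0$ write $x_n=\sum_{|I|=n}x_I(x_I'x_n)$, and for $n<0$ use $x_n'x_n=(x_Kx_n)'(x_Kx_n)$ with $|K|=-n$. The normalised trace then forces every $x_n=0$.
\end{itemize}

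\textbf{Second, smaller gap: Schur's lemma across different $\beta,\gamma$.} Semisimplicity of each $\Hom(\beta,\beta)$ on its own does not make $\ol{\lC(\mathcal{E})}^{ds}$ semisimple. The endomorphism algebra of $(p,\beta)\oplus(q,\gamma)$ has off-diagonal blocks $q\Hom(\beta,\gamma)p$ and $p\Hom(\gamma,\beta)q$, which the diagonal algebras do not control. Projective representations of the $A_2$ quiver show the danger: they give indecomposables with endomorphism ring $\CC$ and a nonzero map between non-isomorphic ones.

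Your general principle needs semisimple endomorphism algebras for \emph{all} objects, so you must also prove Schur's lemma across different $\beta,\gamma$. This again follows from properness. Take minimal self-adjoint projections $e\in\Hom(\beta,\beta)$ and $f\in\Hom(\gamma,\gamma)$; in a finite-dimensional C*-algebra every idempotent is equivalent to a projection. For $0\ne x\in f\Hom(\beta,\gamma)e$ you have $x'\in e\Hom(\gamma,\beta)f$. Hence $x'x\in e\Hom(\beta,\beta)e=\CC e$, and it is nonzero by properness. So $x$ is an isomorphism $(e,\beta)\cong(f,\gamma)$.

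The remaining steps are fine: the strict monoidal structure, the idempotent and direct-sum completions, and rigidity via transposed idempotents.
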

With this result, if one can in addition show that there are finitely many isomorphism classes of simple objects and define compatible duality and (co-)evaluation maps, one gets a fusion category. The rigidity data for~$\lC(\mathcal{E})$ is supplied by a choice of $e_{\beta}\in\Hom(\beta^{*}\beta,1)$ and $b_{\beta}\in\Hom(1,\beta\beta^{*})$ satisfying
	\[\beta(e_{\beta})b_{\beta}=1\,,\qquad \qquad e_{\beta}\beta^{*}(b_{\beta})=1\,.\]

\subsection{Non-unitary Izumi fusion categories}
\label{sec:izumi}
Let $G$ be a finite abelian group of order $|G|=N$. Following Izumi~\cite{Izu93}, consider a fusion ring~$R_G$ with the index set
	\[I=\{g\colon g\in G\}\cup \{\rho\}\]
and fusion rules
	\[[g][h] = [g+h]\,,\qquad [g][\rho] = [\rho][g]=[\rho]\,,
	\qquad [\rho]^2 = N[\rho] + \sum_{g\in G}[g]\,.\]
(The involution is given by $[g]^{*}=[-g]$ and $[\rho]^{*}=[\rho]$.) We will call $R_G$ the Izumi fusion ring of~$G$. What follows largely repeats~\cite[\S5]{Izu01}, but since we do not impose the unitary structure, we need to realise the fusion category using endomorphisms of the Leavitt algebra $L_{2N}$.

\textbf{Construction.}
The input data for Izumi's construction\footnote{In~\cite{Izu01} the notation is $a,b,c,d$, with $a(x)=\ang{x}$, $b(x)=E(x)/\sqrt{N}$, $c=\lambda$, $d=d$, and the pairing is $\ang{x;y}'=\ang{x;y}^{-1}$.} is a metric group $(G,\ang{\cdot})$ of order $N=|G|$, a finite quantum dilogarithm $E\colon G\to \CC^{\times}$ with $\widehat{E}(x)=\lambda\ang{x}E(x)$ as in Theorem~\ref{thm:fqdilogbasicproperties}~(iii). We also denote $d=-\sqrt{N}/E(0)$, so that $d^2=N(d+1)$, and we fix a choice of $\sqrt{d}$.

Let $L_{2N}$ be the Leavitt algebra of rank $2N$ (over $\CC$), whose generators are $\{S_g\}_{g\in G}$, $\{S_g'\}_{g\in G}$, $\{T_g\}_{g\in G}$, $\{T_g'\}_{g\in G}$, subject to the relations
\begin{equation} \label{eq:leavitt2Na}
	S_g'S_h = \delta_{g,h}1\,,\qquad\qquad
	T_g'T_h = \delta_{g,h}1\,,\qquad\qquad
	S_g'T_h = T_h'S_g=0\,,
\end{equation}
for all $g,h\in G$, together with
	\begin{equation} \label{eq:leavitt2Nb}
	\sum_{g\in G}S_gS_g' + \sum_{g\in G}T_gT_g' = 1\,.
	\end{equation}

The construction is done in several steps. First, for all $g\in G$ we define an automorphism $\alpha_{g}\colon L_{2N}\to L_{2N}$:
\begin{equation}\label{eq:alphag}
	\begin{aligned}
	\alpha_g(S_h) &= S_{g+h}\,, &&\alpha_g(T_h) = \ang{g;h}^{-1}T_{h}\,,\\ 
	\alpha_g(S_h') &= S_{g+h}'\,, &&\alpha_g(T_h') = \ang{g;h}T_{h}'\,.
	\end{aligned}
\end{equation}
These are evidently well-defined automorphisms of $L_{2N}$. 

Next, we define a representation $U\colon G\to L_{2N}^{\times}$ by
	\[U_g = \sum_{h}\ang{g;h}^{-1}S_hS_h' + \sum_{h}T_{h-g}T_h'\,.\]
It is evident by~\eqref{eq:leavitt2Nb} that $U_0=1$, so we only need to verify that $U_{g_1}U_{g_2}=U_{g_1+g_2}$:
	\begin{align*}
	U_{g_1}U_{g_2} &= \Big(\sum_{h}\ang{g_1;h}^{-1}S_hS_h' + \sum_{h}T_{h-g_1}T_h'\Big)
	 								\Big(\sum_{h}\ang{g_2;h}^{-1}S_hS_h' + \sum_{h}T_{h}T_{h+g_2}'\Big)\\
	   &= \sum_{h}\ang{g_1+g_2;h}^{-1}S_hS_h' + \sum_{h}T_{h-g_1}T_{h+g_2}' = U_{g_1+g_2}\,,
	\end{align*}
where we used the Leavitt relations~\eqref{eq:leavitt2Na}. We also note that
	\begin{equation} \label{eq:alphagUg}
	\alpha_{g_1}(U_{g_2}) = \sum_{h}\ang{g_2;h}^{-1}S_{g_1+h}S_{g_1+h}' + \sum_{h}\ang{g_1;h-g_2}^{-1}\ang{g_1;h}T_{h-g_2}T_h' = \ang{g_1;g_2}U_{g_2}\,.
	\end{equation}

The main part of the construction is the endomorphism associated to $\rho$. The definition of the action of~$\rho$ on the generators~$S_g$ and~$S_g'$ is
\begin{align*}
	\rho(S_g) &= U_g\Big(\frac{1}{d}\sum_{h}S_h + \frac{1}{\sqrt{d}}\sum_{h}\ang{h}T_hT_{-h}\Big)U_{-g} = \Big(\frac{1}{d}\sum_{h}\ang{g;h}^{-1}S_h + \frac{1}{\sqrt{d}}\sum_{h}\ang{h}T_{h-g}T_{-h}\Big)U_{-g}\,,\\ 
	\rho(S_g') &= U_g\Big(\frac{1}{d}\sum_{h}S_h' + \frac{1}{\sqrt{d}}\sum_{h}\ang{h}^{-1}T_{-h}'T_{h}'\Big)U_{-g} = U_g\Big(\frac{1}{d}\sum_{h}\ang{g;h}S_h' + \frac{1}{\sqrt{d}}\sum_{h}\ang{h}^{-1}T_{-h}'T_{h-g}'\Big)\,.
\end{align*}
The definition of the action of $\rho$ on the generators $T_h$ uses the values of~$E$:
\begin{align*}
	\rho(T_g) =\; &\frac{\lambda}{\sqrt{d}\sqrt{N}}\sum_{h,k\in G}\ang{h-g;k}S_hT_k'
	+\frac{\lambda^{-1}}{\ang{g}\sqrt{N}}\sum_{h,k\in G}\ang{g+h;k}^{-1}T_kS_hS_h'\\
	+&\frac{1}{\sqrt{N}}\sum_{h,k\in G}\ang{h}E(h+g)\ang{g;k}^{-1}T_{h+k}T_{-h}T_k'
\end{align*}
and
\begin{align*}
	\rho(T_g') =\; &\frac{\lambda^{-1}}{\sqrt{d}\sqrt{N}}\sum_{h,k\in G}\ang{h-g;k}^{-1}T_kS_h'
	+\frac{\lambda \ang{g}}{\sqrt{N}}\sum_{h,k\in G}\ang{g+h;k}S_hS_h'T_k'\\
	+&\frac{1}{\sqrt{N}}\sum_{h,k\in G}\frac{\ang{g+h}}{\ang{h}}E(-h-g)\ang{g;k}T_kT_{-h}'T_{h+k}'\,.
\end{align*}

In Appendix~\ref{sec:appendixBwelldefined}, we check that $\rho$ is indeed a unital endomorphism of~$L_{2N}$.

\textbf{Compositions with $\alpha_g$.} Next, we would like to show that $\alpha_g\circ \rho=\rho$ and $\rho(\alpha_g(x)) = U_g\rho(x)U_{-g}$. The property $\alpha_g\circ \rho=\rho$ is straightforward from definitions using~\eqref{eq:alphagUg}. The property $\rho(\alpha_g(x))=U_g\rho(x)U_{-g}$ is obvious for $x=S_r$ and $x=S_r'$. For $x=T_r$ it easily follows from
	\begin{align*}
	U_{g}S_hT_{k}'U_{-g} &=  \ang{g;h}^{-1}S_hT_{k-g}'\,,\\
	U_{g}T_{k}S_hS_h'U_{-g} &=  \ang{g;h}T_{k-g}S_hS_h'\,,\\
	U_{g}T_{h+k}T_{-h}T_{k}'U_{-g} &=  T_{h+k-g}T_{-h}T_{k-g}'\,,
	\end{align*} 
and similarly for $x=T_r'$.

\textbf{The fusion rule for $\rho^2$.} The most important part of the construction is to verify the fusion rule\footnote{This exactly tells us that in $\ol{\lC(\mathcal{E})}^{ds}$ one has $\rho^2\cong \bigoplus_{g}\alpha_g\oplus \rho^{\oplus N}$.}
	\begin{equation} \label{eq:rho2fusion1}
	\rho^2(x) = \sum_{g\in G}S_g\alpha_g(x)S_g' + \sum_{h\in G}T_h\rho(x)T_h'\,.
	\end{equation}
This is also the part that crucially uses the pentagon relation~\eqref{eq:pentagonintegral}. The proof is essentially that of Izumi~\cite{Izu01}, but because we work in the purely algebraic setting, several $C^{*}$-algebraic shortcuts are unavailable, making the arguments longer. We provide the details in Appendix~\ref{sec:appendixBfusion}.

\textbf{Duals and (co-)evaluation morphisms.}
We define the dual (both left and right) of $\alpha_g$ to be $\alpha_{-g}$ and set both evaluation and co-evaluation morphisms to be $e_{\alpha_g}=b_{\alpha_g}=1$ (since the relevant hom-space is $\Hom(1,1)=\CC 1$). The object $\rho$ is self-dual, and since $S_0\in\Hom(1,\rho^2)$ and $S_0'\in \Hom(\rho^2,1)$, we set $e_{\rho}=dS_0'$, $b_{\rho}=S_0$, the snake relations are then simply
	\[\rho(dS_0')S_0 = dS_0'\rho(S_0) = 1\,.\]

\textbf{Intertwiner spaces.} The only remaining non-trivial check is to compute the full intertwiner spaces between the basic endomorphisms. First, $\Hom(\alpha_g,\alpha_g)=\CC 1$, since $\mathcal{Z}(L_{2N})=\CC 1$. For $h\ne g$ we have $\Hom(\alpha_g,\alpha_h)=0$, since if $y\in \Hom(\alpha_g,\alpha_h)$, then $yS_{g}=S_{h}y$, and by Lemma~\ref{lem:leavitt} (iii) we get $y=0$. Next, if $y\in \Hom(\rho,\rho)$, then $y$ is a centraliser of both $\rho(S_0)$ and $\rho(S_0')$, and by Lemma~\ref{lem:leavitt}~(iv), this means that $y\in \CC[\rho(S_0)]\cap \CC[\rho(S_0')]=\CC 1$, so $\Hom(\rho,\rho)=\CC 1$.

Next, we have $\Hom(\rho,\rho)\cong \Hom(1,\rho^2)$, with the two maps given by $x\mapsto \rho(x)S_0$ and $y\mapsto dS_0'\rho(y)$. Therefore, $\Hom(1,\rho^2)=\CC S_0$.
If $y\in \Hom(1,\rho)$, then $T_0y\in \Hom(1,\rho^2)=\CC S_0$, and multiplying by $T_0'$ gives $y=0$. Similarly, we get $\Hom(1,\rho)=\Hom(\rho,1)=0$, and applying $\alpha_g$ gives $\Hom(\alpha_g,\rho)=\Hom(\rho,\alpha_g)=0$. Therefore $\Hom(\alpha_g,\alpha_g)=\CC1$, $\Hom(\rho,\rho)=\CC1$, and all other intertwiner spaces between these endomorphisms are trivial.

\medskip

Note that, because of the exact identity $\alpha_g\circ \rho=\rho$, all endomorphisms generated by $\rho$ and $\alpha_g$ are of the form $\rho^m\alpha_g$. For $m=0$ they are just $\alpha_g$, and for $m=1$, they are all isomorphic to $\rho$ because of $\rho\alpha_g=\mathrm{Ad}(U_g)\rho\cong \rho$. For $m=2$, Equation~\eqref{eq:rho2fusion} implies that
	\[\rho^2\alpha_g(x) = \sum_{h}S_h\alpha_{g+h}(x)S_h' + \sum_h(T_hU_g)\rho(x)(U_{-g}T_h')\,,\]
giving a direct sum decomposition for $\rho^2\alpha_g$. For $m>2$, Equation~\eqref{eq:rho2fusion} implies that
	\[\rho^m\alpha_g(x) = \sum_{k}S_k\rho^{m-2}\alpha_g(x)S_k' + \sum_hT_h\rho^{m-1}\alpha_g(x)T_h'\,,\]
so---by induction---all endomorphisms in $\mathcal{E}=\{\rho^m\alpha_g\colon m\ge0, g\in G\}$ are split as direct sums of $\alpha_g$ and $\rho$. This proves that there are finitely many simple objects, the finite-dimensionality of morphism spaces, and, by the computation of hom spaces between $\alpha_g$ and $\rho$, semisimplicity (after adjoining finite direct sums). Rigidity also extends from simple objects (via Proposition~\ref{prop:evansgannon}). We have thus proved the following:
\begin{theorem}
	The system of endomorphisms $\mathcal{E}=\{\rho^m\alpha_g\colon m\ge0, g\in G\}$ gives rise, via Evans--Gannon construction, to a categorification of the Izumi fusion ring $R_G$.
\end{theorem}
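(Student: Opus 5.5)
The plan is to assemble the pieces established above in the order required by Proposition~\ref{prop:evansgannon}, and then check that the resulting semisimple rigid category has exactly the fusion rules of $R_G$. First we take $\mathcal{E}=\{\rho^m\alpha_g\colon m\ge0,g\in G\}$. It contains $id_{L_{2N}}=\alpha_0$, and it is closed under composition because $\alpha_g\circ\rho=\rho$ and $\alpha_g\alpha_h=\alpha_{g+h}$. Hence any word in $\rho$ and the $\alpha_g$ collapses to the form $\rho^m\alpha_g$.

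We then verify the hypotheses of Proposition~\ref{prop:evansgannon}.
\begin{enumerate}
\item \emph{Finite-dimensional hom spaces.} By the fusion rule~\eqref{eq:rho2fusion1} and the recursive decompositions of $\rho^m\alpha_g$, each object of $\mathcal{E}$ is conjugate, via the Cuntz-type isometries $S_h,T_h$, to a finite formal sum of $\alpha_h$'s and $\rho$'s. Therefore $\Hom(\beta,\gamma)$ embeds into a finite direct sum of the spaces $\Hom(\alpha_g,\alpha_h)$, $\Hom(\alpha_g,\rho)$, $\Hom(\rho,\rho)$, which were computed above to have dimension at most one.
\item \emph{The condition $\Hom(\beta,\gamma)=\Hom(\tilde\beta,\tilde\gamma)$.} I would first check directly that $\tilde\alpha_g=\alpha_g$, which is clear since the $\ang{g;h}$ are roots of unity and are conjugated consistently. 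For $\rho$, the natural expectation is that $\tilde\rho$ is the analogous endomorphism built from the data $(\ol{\ang{\cdot}},\ol{E},\ol\lambda,\ol d)$. This is the point where Izumi's unitarity assumption was used, and I expect it to be the main obstacle. My first attempt would avoid identifying $\tilde\rho$ altogether. Instead, I would argue that the only place Evans--Gannon use this hypothesis is to make the intertwiner spaces closed under the anti-involution~$'$, which is what gives semisimplicity. Since we have already shown directly that the simple objects $\alpha_g,\rho$ have one-dimensional endomorphism spaces and vanishing cross-homs, semisimplicity of $\ol{\lC(\mathcal{E})}^{ds}$ follows without this hypothesis: every object decomposes, via the explicit isometries, into a sum of these simples.
\end{enumerate}

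For rigidity, I would use the duals and the morphisms $e_{\alpha_g}=b_{\alpha_g}=1$, $e_\rho=dS_0'$, $b_\rho=S_0$. The snake identities reduce to $\rho(S_0')S_0=d^{-1}$ and $S_0'\rho(S_0)=d^{-1}$. These follow from the formulas for $\rho(S_0)$ and $\rho(S_0')$, using the relations $S_h'S_0=\delta_{h,0}$ and $T'S=0$. Rigidity of composites follows by tensoring these duality data, and Proposition~\ref{prop:evansgannon} then extends it to $\ol{\lC(\mathcal{E})}^{ds}$.

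Finally, the simple objects are exactly $\alpha_g$ for $g\in G$ together with $\rho$. They are pairwise non-isomorphic by the hom computations, and there are finitely many of them. The Grothendieck ring has the following relations:
\begin{itemize}
\item $[\alpha_g][\alpha_h]=[\alpha_{g+h}]$;
\item $[\alpha_g][\rho]=[\rho]$, since $\alpha_g\rho=\rho$;
\item $[\rho][\alpha_g]=[\rho]$, since $\rho\alpha_g=\mathrm{Ad}(U_g)\rho$;
\item $[\rho]^2=N[\rho]+\sum_g[\alpha_g]$, by~\eqref{eq:rho2fusion1}.
\end{itemize}
The duality matches $[g]^*=[-g]$ and $[\rho]^*=[\rho]$. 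This identifies $K_0$ with $R_G$ and completes the argument.
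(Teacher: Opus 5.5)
Your proposal is correct and follows essentially the same route as the paper: closure of $\mathcal{E}$ via $\alpha_g\circ\rho=\rho$, the explicit intertwiner computations, the recursive splitting of $\rho^m\alpha_g$ from the fusion rule for $\rho^2$, and the duality data $e_\rho=dS_0'$, $b_\rho=S_0$, with semisimplicity obtained directly from the hom computations rather than through the $\Hom(\beta,\gamma)=\Hom(\tilde\beta,\tilde\gamma)$ hypothesis, as the paper also does. Two small slips: the identity $S_0'\rho(S_0)=d^{-1}$ uses $S_0'T_h=0$ rather than $T'S=0$, and you should state explicitly that $S_0\in\Hom(1,\rho^2)$ and $S_0'\in\Hom(\rho^2,1)$, which follows from the fusion rule.
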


From Corollary~\ref{cor:existence}, we immediately get.
\begin{corollary}
	Non-unitary Izumi fusion categories exist for $G=\ZZ/n\ZZ\times \ZZ/m\ZZ$ for all $n,m\ge1$.
\end{corollary}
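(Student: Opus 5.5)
The statement is an existence result, so the plan is to feed the output of Corollary~\ref{cor:existence} into the Izumi/Evans--Gannon construction just described. Given $n,m\ge1$, Corollary~\ref{cor:existence} (which is Proposition~\ref{prop:rqffinitedilog} applied to $\gamma=(\smat{ab+1}{a}{b}{1})$ with $a\mid b$) produces a metric group structure $\ang{\cdot}$ on $G\cong\ZZ/n\ZZ\times\ZZ/m\ZZ$ together with a finite quantum dilogarithm $E=F_\gamma^+$ on it. The degenerate cases $|G|\le 4$, which include $n=m=1$, are covered by the remark after Theorem~\ref{thm:fqdilogbasicproperties}. The reason is that $E(0)=F_\gamma^+(0)$ satisfies $E(0)^2=\sqrt{N}E(0)+1$ by~\eqref{eq:Fgpm.zero}, so the extended definition applies and Theorem~\ref{thm:fqdilogbasicproperties} holds there too.

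Next I extract the input data needed by the construction from Theorem~\ref{thm:fqdilogbasicproperties}. Part~(ii) shows that $E$ takes values in $\CC^\times$. Part~(iii) gives $\widehat E(x)=\lambda\ang{x}E(x)$ with $\lambda\neq0$; in the real-quadratic case one has explicitly $\lambda=\mu_\gamma$. Part~(i) shows that $d=-\sqrt N/E(0)$ is well defined and satisfies $d^2=N(d+1)$, so $d\ne0$ and a square root $\sqrt d$ can be fixed. With these choices, $\alpha_g$, $U_g$ and $\rho$ are defined on $L_{2N}$ exactly as in~\S\ref{sec:izumi}.

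I then invoke the theorem just proved: the system $\mathcal{E}=\{\rho^m\alpha_g\}$ gives, via Proposition~\ref{prop:evansgannon}, a fusion category whose Grothendieck ring is $R_G$. That theorem has three inputs, namely well-definedness of $\rho$, the fusion rule~\eqref{eq:rho2fusion1} obtained from the pentagon relation, and the intertwiner computations. None of them uses unitarity or the conjugation relation $\overline{E}=\ang{\cdot}E$; they rely only on the algebraic identities of Theorem~\ref{thm:fqdilogbasicproperties}. They therefore apply to our $E$ without change.

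The main point to check is whether the phrase ``non-unitary'' is part of the claim or only indicates that unitarity is not assumed. If it only indicates the latter, the argument above is already complete. If the statement is to say that the category genuinely admits no unitary structure, I would try a Galois conjugate: $E^\sigma$ for $\sigma\in\Gal(\ol{\QQ}/\QQ)$ is again a finite quantum dilogarithm, by Theorem~\ref{thm:algebraicity} and the remark that the theory works over any field of suitable characteristic. Choosing $\sigma$ so that $\sigma(\sqrt N)=-\sqrt N$ sends the root $E(0)$ of $X^2-\sqrt N X-1$ to the root of $X^2+\sqrt NX-1$, and hence changes the sign of $d$. This forces the dimension of $\rho$ to be negative, which rules out any unitary (pseudo-unitary) structure on that conjugate category.
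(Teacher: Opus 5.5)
Your core argument is exactly the paper's proof. The paper derives the corollary immediately from Corollary~\ref{cor:existence} together with the theorem that $\mathcal E=\{\rho^m\alpha_g\}$ categorifies $R_G$, a construction that uses only the algebraic identities of Theorem~\ref{thm:fqdilogbasicproperties}. The paper's proof adds nothing about the absence of a unitary structure.

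Your optional last paragraph, however, does not work and should be replaced.

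\emph{The Galois action on $d$ is misidentified.} Since $d$ is a root of $d^2=Nd+N$, it lies in $\QQ(\sqrt{N(N+4)})$. So whether $\sigma$ changes its sign is decided by $\sigma(\sqrt{N(N+4)})$, not by $\sigma(\sqrt N)$. Also, for square $N$ (for instance $G=\ZZ/n\ZZ\times\ZZ/n\ZZ$), no $\sigma$ moves $\sqrt N$ at all.

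\emph{The direction is backwards.} For $E=F_\gamma^+$, \eqref{eq:Fgpm.zero} gives $E(0)=\eps^{1/2}>0$ and $\sqrt N=\eps^{1/2}-\eps^{-1/2}>0$. Hence already
$d=-\sqrt N/E(0)=\eps^{-1}-1=\tfrac{N-\sqrt{N^2+4N}}{2}<0$,
which is the non-Frobenius--Perron root. A conjugation that does flip the sign of $d$ (for example $\sqrt5\mapsto-\sqrt5$ when $N=5$) produces $d=\mathrm{FPdim}(\rho)>0$. That gives a pseudo-unitary category, which is the conjectural unitary case with $E(0)<0$ discussed in the concluding remarks.

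\emph{A direct argument works instead.} If you want genuine non-unitarity, note that $e_\rho b_\rho=dS_0'S_0=d$. This gives $|\rho|^2=d^2<\mathrm{FPdim}(\rho)^2$. So the category built from $F_\gamma^+$ itself is not pseudo-unitary and hence admits no unitary structure, with no Galois twist needed.
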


\medskip
\subsection{Reconstruction and the proof of algebraicity}
Let us show that the function $E\colon G\to\CC$ can be recovered from the (strict) fusion category itself, up to an automorphism of~$G$. For this, we will first see that the basis $T_g\in \Hom(\rho,\rho^2)$ can be uniquely reconstructed up to a simultaneous rescaling. Pick any isomorphisms $a_g\colon \alpha_g\otimes \rho \to \rho$ and let $T_0\ne0$ satisfy
    \[(a_g\otimes1_{\rho})(1_{\alpha_g}\otimes T_0) = T_0a_g\]
for all $g\in G$. In the above construction, this corresponds to $\alpha_g(T_0)=T_0$. Since $\alpha_g(T_h)=\ang{g;h}^{-1}T_h$, $T_0\in\Hom(\rho,\rho^2)$ is well-defined up to rescaling.

Next, choose an isomorphism $u_g\colon \rho \to \rho\otimes \alpha_g$ in such a way that $U_g\in\Hom(\rho\otimes\rho,\rho\otimes\rho)$, defined by
    \[U_g = (1_{\rho}\otimes a_g)(u_g\otimes 1_{\rho})\,,\]
acts identically on $\Hom(1,\rho^2)$. This corresponds to $U_gS_0=S_0$ and~$U_g$ is canonically defined (it is well-defined up to rescaling, and the action on $\Hom(1,\rho^2)$ pins down the scaling). Then we simply define $T_g = U_{-g}T_0$. Note that the whole family~$T_g$ is well-defined up to rescaling. We may then define the dual basis $T_h'\in \Hom(\rho^2,\rho)$ to satisfy the relation $T_h'T_g=\delta_{g,h}1_{\rho}$. Then it is easy to compute that
	\[\frac{E(g)}{\sqrt{N}}1_{\rho} = T_0'(T_0'\otimes 1_{\rho})(1_{\rho}\otimes T_g)T_0\,.\]
Since the right hand side does not depend on the choice of scalar in the definition of $T_g$, $T_g'$, we see that the values of~$E$ are canonically defined from the category, and the only freedom is in relabelling the group elements using an automorphism.

By Ocneanu's rigidity theorem~\cite[Theorem~2.28]{ENO05}, there exist at most finitely many inequivalent fusion categories with a given fusion ring. Since we have just showed that the values of~$E$ are invariants of the category itself, there are only finitely many solutions to the system of equations defining~$E$, thus all values of~$E$ are algebraic. This concludes the proof of Theorem~\ref{thm:algebraicity}.

\section{Concluding remarks}
\label{sec:conclusions}
We close the paper with a brief overview of interesting facts and questions, some of which we intend to investigate in a sequel to this paper.

\medskip

\noindent $\bullet$ {\bf Realness and unitarity.}
All finite quantum dilogarithms that we have constructed satisfy $E(0)>0$ and $\overline{E(x)} = \ang{x}E(x)$. Taking their Galois conjugates satisfying~$E(0)<0$, as far as we can check, always gives~$E$ for which $\overline{E(x)} = \ang{-x}E(-x)$ (in particular these conjugates satisfy~$|E(x)|=1$ for $x\ne0$). We refer to the two types of behaviour with respect to complex conjugation as real, if $\overline{E(x)} = \ang{x}E(x)$, and unitary, if $\overline{E(x)} = \ang{-x}E(-x)$. That unitary solutions should exist (for $G=\ZZ/n\ZZ\times \ZZ/m\ZZ$) would follow from one of the predictions of Stark's conjectures, but it would be interesting to see if all finite quantum dilogarithms are either real or unitary. This question is closely related to the distinction between unitary and pseudo-unitary fusion categories.

\medskip

\noindent $\bullet$ {\bf Invariants of 3-manifolds.} The basic structure of a finite quantum dilogarithm can be used to define $3$-manifold invariants, mapping class group representations etc.. These TQFTs interpolate the usual $\mathrm{SU}(2)$ Witten-Reshetikhin-Turaev invariants to real quadratic levels. This is of course illustrated in their links to fusion categories. A further study of these TQFT may be of independent interest.

\medskip

\noindent $\bullet$ {\bf Artin map and Galois action.}
Part of Stark's conjectures that we have not touched upon here predicts a precise rule for how the Artin map should act on the values of $F_\gamma^{\pm}$. One can hope for a situation analogous to the usual theory of complex multiplication. One of the difficulties in using our results to prove this lies in the general lack of understanding of properties of solutions of~\eqref{eq:pentagonintegral} with values in $\CC$ and in non-Archimedean local fields.

\newpage
\appendix
\section{Some standard identities and a proof}
\subsection{\texorpdfstring{$q$}{q}-Pochhammer symbol}
\label{sec:qpochhammer}
We define
	\begin{align*}
	(x;q)_\infty&:=\prod_{j=0}^{\infty}(1-xq^j)\,,\qquad &&|q|<1\,,\\
	(x;q)_\infty&:=\prod_{j=1}^{\infty}\frac{1}{1-xq^{-j}}
	=\frac{1}{(xq^{-1};q^{-1})_\infty}\,,\qquad &&|q|>1\,.
	\end{align*}
For \(n\in\ZZ\), we define
	\begin{equation}
	(x;q)_n:=\frac{(x;q)_\infty}{(xq^n;q)_\infty}.
	\label{eq:qpoch-finite}
	\end{equation}
Equivalently,
	\begin{align}
	(x;q)_n
	&=\prod_{j=0}^{n-1}(1-xq^j),
	&&n\geq0,
	\label{eq:qpoch-positive}\\
	(x;q)_{-k}
	&=\prod_{j=1}^{k}\frac{1}{1-xq^{-j}}=\frac{1}{(xq^{-k};q)_k}
	=\frac{(-1)^kx^{-k}q^{\binom{k+1}{2}}}
	{(x^{-1}q;q)_k},
	&&k\geq1.
	\label{eq:qpoch-negative}
	\end{align}
In particular, the finite product in~\eqref{eq:qpoch-positive} is only meant for non-negative indices. 
For $m,n\in\ZZ$, the extended symbol satisfies
\begin{align}
	(x;q)_{m+n}
	&=(x;q)_m(xq^m;q)_n,
	\label{eq:qpoch-splitting}\\
	(x;q^{-1})_n
	&=(-1)^nx^nq^{-\binom n2}(x^{-1};q)_n,
	\label{eq:qpoch-base-inversion}\\
	(x;q)_{-n}
	&=\frac{1}{(xq^{-n};q)_n}.
	\label{eq:qpoch-reflection}
\end{align}

The following $q$-expansions are very useful:
\begin{align}
	(x;q)_\infty
	&=\sum_{n=0}^{\infty}
	\frac{(-1)^nq^{\binom n2}x^n}{(q;q)_n},
	&& |x|<\max(1,|q|),
	\label{eq:euler-first}\\
	\frac{1}{(x;q)_\infty}
	&=\sum_{n=0}^{\infty}\frac{x^n}{(q;q)_n},
	&&|x|<1,|q|<1 \mbox{ or }|q|>1, \label{eq:euler-second}\\
	\log(x;q)_\infty &=-\sum_{n=1}^{\infty}\frac{x^n}{n(1-q^n)} =-\Li_2(x;q).
	&&|x|<\max(|q|,1).
	\label{eq:qpoch-log}
\end{align}
The $q$-binomial theorem states that
	\begin{equation}
	\sum_{n=0}^{\infty}
	\frac{(a;q)_n}{(q;q)_n}x^n
	=\frac{(ax;q)_\infty}{(x;q)_\infty}.
	\label{eq:q-binomial-series}
	\end{equation}
Finally, for $|q|<1$, Jacobi's triple product identity states that
\begin{equation}\label{eq:jac.trip}
    \sum_{n\in\ZZ}(-1)^nq^{n(n+1)/2}x^n
=(qx;q)_\infty(x^{-1};q)_\infty(q;q)_\infty\,.
\end{equation}

\subsection{Proof of Theorem \texorpdfstring{\ref{thm:5term.mod.fad}}{3}}\label{app:mod.fad}

In this appendix, we detail the proof of Theorem~\ref{thm:5term.mod.fad}.
It uses the residue and the $q$-binomial theorems (see Equation~\eqref{eq:q-binomial-series}).

\begin{proof}[Proof of Theorem~\ref{thm:5term.mod.fad}]
Note that the contours are to the left of all the poles of $\Phi_{\gamma,m+1,n}$.
We will prove the second equation, which implies the first after taking the analytic continuation and the limit $y\to i\infty$.
We will prove this result by analytic continuation and the residue theorem.
Suppose that $c>0$ (a similar proof works when $c<0$).
Take $\Im(\tau)>0$, then take a slightly rotated contour $\calC$.
The integral will be convergent and we can use Equation~\eqref{eq:phigam.def} to compute $\Phi_{\gamma,m,n}(z;\tau)$.
Moreover, the asymptotics for $\arg(c\tau+d)<\arg(z)<\pi$ and $\arg(-c\tau-d)<\arg(z)<0$ are the same as in Lemma~\ref{lem:asymp}.
In the region $0<\arg(z)<\arg(c\tau+d)$, the asymptotics are governed by a quotient of $\theta$-functions and are again convergent.
Therefore, we can apply Jordan's lemma to the cones to compute this integral via residues.
We find that Equation~\eqref{eq:phigam.def} implies that
\[
\begin{aligned}
&\sum_{m\in\ZZ/c\ZZ}\int_{\calD(y)}\frac{\Phi_{\gamma,m+1,n}(z;\tau)}{\Phi_{\gamma,m+p,n}(z+y;\tau)}\ee\Big(\frac{(cz-n+(c\tau+d)m)w}{c\tau+d}+\ell(z+m\tau)+h\Big(\frac{z}{c\tau+d}+n\frac{a\tau+b}{c\tau+d}\Big)\Big)dz\\
&=
\sum_{m\in\ZZ/c\ZZ}\int_{\calD(y)}\frac{(q^{m+1}\ee(z);q)_\infty(\tq^{n}\ee(\frac{z+y}{c\tau+d});\tq)_\infty}{(\tq^{n}\ee(\frac{z}{c\tau+d});\tq)_\infty(q^{m+p}\ee(z+y);q)_\infty}\\
&\qquad\qquad\qquad\qquad\times\ee\Big(\frac{(cz-n+(c\tau+d)m)w}{c\tau+d}+\ell(z+m\tau)+h(\frac{z}{c\tau+d}+n\frac{a\tau+b}{c\tau+d})\Big)dz
\end{aligned}
\]
We can compute this integral by Jordan's lemma and the residue theorem to find that it equals
\[
\begin{aligned}
&-2\pi i\sum_{m\in\ZZ/c\ZZ}\sum_{i,j,k}
\underset{z=i(c\tau+d)+j\tau+k}{\mathrm{Res}}\frac{(q^{m+1}\ee(z);q)_\infty(\tq^{n}\ee(\frac{z+y}{c\tau+d});\tq)_\infty}{(\tq^{n}\ee(\frac{z}{c\tau+d});\tq)_\infty(q^{m+p}\ee(z+y);q)_\infty}\\
&\qquad\qquad\qquad\qquad\times\ee\Big(\frac{(cz-n+(c\tau+d)m)w}{c\tau+d}+\ell(z+m\tau)+h(\frac{z}{c\tau+d}+n\frac{a\tau+b}{c\tau+d})\Big)dz\\
&=
-2\pi i\sum_{m\in\ZZ/c\ZZ}\sum_{i,j,k}
\underset{z=0}{\mathrm{Res}}\frac{(q^{m+1+ci+j}\ee(z);q)_\infty(\tq^{n+dj-ck}\ee(\frac{z+y}{c\tau+d});\tq)_\infty}{(\tq^{n+dj-ck}\ee(\frac{z}{c\tau+d});\tq)_\infty(q^{m+ci+j+p}\ee(z+y);q)_\infty}q^{\ell(ci+j+m)}\tq^{h(n+dj-ck)}\\
&\qquad\qquad\qquad\qquad\times\ee\Big(\frac{(cz+(ck-n-dj)+(c\tau+d)(m+ci+j))w}{c\tau+d}+\ell z+h\frac{z}{c\tau+d}\Big)dz\\
&=
-2\pi i
\sum_{m\in\ZZ/c\ZZ}\sum_{i,j,k}
\underset{z=0}{\mathrm{Res}}\frac{(q\ee(z);q)_\infty(\ee(\frac{z+y}{c\tau+d});\tq)_\infty}{(\tq\ee(\frac{z}{c\tau+d});\tq)_\infty(q^p\ee(z+y);q)_\infty}
\frac{q^{\ell(ci+j+m)}\tq^{h(n+dj-ck)}}{1-\ee(\frac{z}{c\tau+d})}\\
&\qquad\qquad\qquad\qquad\times\frac{(q^p\ee(z+y);q)_{ci+j+m}(\tq^{-1}\ee(\frac{z+y}{c\tau+d});\tq^{-1})_{ck-dj-n}}{(q\ee(z);q)_{ci+j+m}(\tq^{-1}\ee(\frac{z}{c\tau+d});\tq^{-1})_{ck-dj-n}}\\
&\qquad\qquad\qquad\qquad\times\ee\Big(\frac{(cz+(ck-n-dj)+(c\tau+d)(m+ci+j))w}{c\tau+d}\Big)dz\\
&=
\sum_{m\in\ZZ/c\ZZ}(c\tau+d)
\frac{(q;q)_\infty(\ee(\frac{y}{c\tau+d});\tq)_\infty}{(\tq;\tq)_\infty(q^p\ee(y);q)_\infty}
\sum_{i,j,k}
\frac{(q^p\ee(y);q)_{ci+j+m}(\tq^{-1}\ee(\frac{y}{c\tau+d});\tq^{-1})_{ck-dj-n}}{(q;q)_{ci+j+m}(\tq^{-1};\tq^{-1})_{ck-dj-n}}\\
&\qquad\qquad\qquad\qquad\times q^{\ell(ci+j+m)}\tq^{h(n+dj-ck)}\ee\Big(\frac{(ck-n-dj)w}{c\tau+d}+(m+ci+j)w\Big)
\end{aligned}
\]
We can then use the fact that $\sum_k\ee(jk/c)=c\,\delta_{j,0}$ to rewrite the sum and apply the $q$-binomial theorem.
We obtain that the original integral is equal to
\[
\begin{aligned}
&\sum_{m,s,r\in\ZZ/c\ZZ}(c\tau+d)
\frac{(q;q)_\infty(\ee(\frac{y}{c\tau+d});\tq)_\infty}{(\tq;\tq)_\infty(q^p\ee(y);q)_\infty}
\sum_{i,j,k}
\frac{q^{\ell i}\tq^{-hk}(q^p\ee(y);q)_{i}(\tq^{-1}\ee(\frac{y}{c\tau+d});\tq^{-1})_{k}}{(q;q)_{i}(\tq^{-1};\tq^{-1})_{k}}\\
&\qquad\qquad\qquad\qquad\times \ee\Big(iw+k\frac{w}{c\tau+d}+\frac{r(i-j-m)+s(k+dj+n)}{c}\Big)\\
&=
\sum_{m,s\in\ZZ/c\ZZ}\frac{c\tau+d}{c}
\frac{(q;q)_\infty(\ee(\frac{y}{c\tau+d});\tq)_\infty}{(\tq;\tq)_\infty(q^p\ee(y);q)_\infty}
\frac{(\tq^{1-h}\ee(\frac{w}{c\tau+d}+\frac{s}{c});\tq)_{\infty}(q^{\ell+p}\ee(w+y+\frac{ds}{c});q)_{\infty}}{(q^\ell\ee(w+\frac{ds}{c});q)_{\infty}(\tq^{-h}\ee(\frac{w+y}{c\tau+d}+\frac{s}{c});\tq)_{\infty}}\ee\Big(\frac{sn-smd}{c}\Big)\\
&=
(c\tau+d)
\frac{(q;q)_\infty(\ee(\frac{y}{c\tau+d});\tq)_\infty(\tq^{1-h}\ee(\frac{w}{c\tau+d});\tq)_{\infty}(q^{\ell+p}\ee(w+y);q)_{\infty}}{(\tq;\tq)_\infty(q^p\ee(y);q)_\infty(q^\ell\ee(w);q)_{\infty}(\tq^{-h}\ee(\frac{w+y}{c\tau+d});\tq)_{\infty}}\\
&=
(c\tau+d)\frac{(q;q)_\infty}{(\tq;\tq)_\infty}
\frac{\Phi_{\gamma,p+\ell,-h}(w+y;\tau)}{\Phi_{\gamma,p,0}(y;\tau)\Phi_{\gamma,\ell,1-h}(w;\tau)}\,.
\end{aligned}
\]

\noindent This identity then analytically continues to $c\tau+d\in\RR_{>0}$.
\end{proof}

\section{Properties of the endomorphism $\rho$}
\label{sec:appendixB}

In this appendix, we will give the omitted checks from~\S\ref{sec:izumi} that~$\rho$ is a well-defined endomorphism of the Leavitt algebra and that~$\rho^2$ satisfies the stated fusion rule. Recall that $E\colon G\to\CC$ is a finite quantum dilogarithm, $\lambda$ is defined by $\widehat{E}(u)=\lambda \ang{u}E(u)$, and $d=-\sqrt{N}/E(0)$ satisfies $d^2=Nd+N$. In addition to the properties of~$E$ stated in Theorem~\ref{thm:fqdilogbasicproperties}, we will make use of the relation
    \begin{equation} \label{eq:qdilogconvolution}
	\sum_{h}\ang{h+g_1}E(-h-g_1)E(h+g_2) = N\delta_{g_1,g_2}-\frac{N}{d}
    \end{equation}
that directly follows from~\eqref{eq:reflection1} together with the Fourier transform formula for~$E$. We keep the rest of notation as in~\S\ref{sec:izumi}.

\subsection{Well-definedness}
\label{sec:appendixBwelldefined}

For $\rho$ to be a well-defined endomorphism, we need to check the Leavitt relations~\eqref{eq:leavitt2Na} and~\eqref{eq:leavitt2Nb}. We compute them directly
	\[\rho(S_{g_1}')\rho(S_{g_2}) = U_{g_1}\Big(d^{-2}\sum_{h}\ang{g_1-g_2;h} + d^{-1}N\delta_{g_1,g_2}\Big)U_{-g_2} = \delta_{g_1,g_2}\,,\]
	\begin{align*}
	\rho(S_{g_1}')\rho(T_{g_2}) &= 
	U_{g_1}\Big(\frac{1}{d}\sum_{h}\ang{g_1;h}S_h' + \frac{1}{\sqrt{d}}\sum_{h}\ang{h}^{-1}T_{-h}'T_{h-g_1}'\Big)\rho(T_{g_2})\\
	&= U_{g_1}\Big(\frac{\lambda}{\sqrt{N}d\sqrt{d}}\sum_{h,k}\ang{g_1;h}\ang{h-g_2;k}T_k' + \frac{1}{\sqrt{d}\sqrt{N}}\sum_{h}E(h+g_2)\ang{g_2;-g_1}^{-1}T_{-g_1}'\Big)\\
	&= U_{g_1}\Big(\frac{\lambda\sqrt{N}}{d\sqrt{d}}\ang{g_2;g_1}T_{-g_1}' + \frac{\lambda E(0)}{\sqrt{d}}\ang{g_2;-g_1}^{-1}T_{-g_1}'\Big) = 0\,,
	\end{align*}
	\begin{align*}
	\rho(T_{g_1}')\rho(S_{g_2}) &= 
	\rho(T_{g_1}')\Big(\frac{1}{d}\sum_{h}\ang{g_2;h}^{-1}S_h + \frac{1}{\sqrt{d}}\sum_{h}\ang{h}T_{h-g_2}T_{-h}\Big)U_{-g_2} \\
	&= \Big(\frac{\lambda^{-1}}{d\sqrt{d}\sqrt{N}}\sum_{h,k}\ang{h-g_1;k}^{-1}\ang{g_2;h}^{-1}T_k + \frac{1}{\sqrt{d}\sqrt{N}}\sum_{h}\ang{g_1+h}E(-h-g_1)\ang{g_1;-g_2}T_{-g_2}\Big)U_{-g_2}\\
	&= \Big(\frac{\lambda^{-1}\sqrt{N}}{d\sqrt{d}}\ang{g_2;g_1}^{-1}T_{-g_2} + \frac{\lambda^{-1} E(0)}{\sqrt{d}}\ang{g_2;g_1}^{-1}T_{-g_2}\Big)U_{-g_2} = 0\,,
	\end{align*}
and
	\begin{align*}
	\rho(T_{g_1}')\rho(T_{g_2})
	&= \frac{1}{d}\sum_{k}\ang{g_1-g_2;k}T_kT_k' + \frac{\ang{g_1}}{N\ang{g_2}}\sum_{h,k}\ang{g_1-g_2;k}S_hS_h' \\
	&\qquad+\frac{1}{N}\sum_{h,k}\ang{h+g_1}E(-h-g_1)E(h+g_2)\ang{g_1-g_2;k}T_kT_k'\\
	&=\delta_{g_1,g_2}\Big(\sum_{h}S_hS_h' + \sum_kT_kT_k'\Big) = \delta_{g_1,g_2}\,,
	\end{align*}
where we used~\eqref{eq:qdilogconvolution}. For the completeness relation~\eqref{eq:leavitt2Nb}, we first note that the coefficients of $S_hT_{-r}'T_{r+k}'$ and $T_{-r}T_{r+k}S_h'$ in $\sum_{g}\rho(S_g)\rho(S_g')+\sum_{h}\rho(T_h)\rho(T_h')$ cancel, and the remaining sum then simplifies to
	\begin{align*}
	\sum_g \rho(S_g)\rho(S_g') + \sum_h\rho(T_h)\rho(T_h') =\; 
	&\frac{N}{d^2}(N+d)\sum_hS_hS_h' + \sum_{h,k}T_kS_hS_h'T_k'\\
	&+\sum_{h,k,r}\frac{\ang{h}}{\ang{r}}\Big(\frac1d+\frac1N\sum_{g}E(h+g)E(-r-g)\ang{r+g}\Big)T_{h+k}T_{-h}T_{-r}'T_{r+k}'\\
	=\;& \sum_hS_hS_h' +\sum_{h,k}T_kS_hS_h'T_k'+\sum_{h,k}T_{h+k}T_{-h}T_{-h}'T_{h+k}'\\
	=\;& \sum_hS_hS_h' +\sum_{k}T_k\Big(\sum_hS_hS_h'+\sum_{h}T_hT_h'\Big)T_k' = 1\,,
	\end{align*}
where we again used~\eqref{eq:qdilogconvolution}.

\subsection{The fusion rule for $\rho^2$}
\label{sec:appendixBfusion}
Here we show that $\rho$ satisfies the fusion rule
	\begin{equation} \label{eq:rho2fusion}
	\rho^2(x) = \sum_{g\in G}S_g\alpha_g(x)S_g' + \sum_{h\in G}T_h\rho(x)T_h' =: \Psi(x)\,.
	\end{equation}
Since both sides of~\eqref{eq:rho2fusion} are unital endomorphisms, it is enough to check it for all $x=S_r$ and $x=T_r$ (the identity for $x=S_r'$ and $T_r'$ is then automatic, see~\eqref{eq:completenesstrick}). First, we need the following computation (this is~\cite[Lemma 5.1]{Izu01}).
\begin{lemma}
For all $g\in G$ we have
	\begin{align} 
	\label{eq:rhoUg}
	\rho(U_g)&=\sum_hS_{h-g}S_h'+\sum_h\frac{\ang{h+g}}{\ang{h}}T_{h+g}U_gT_h'\,,\\
	\label{eq:rho2Sg}
	\rho^2(S_g)&=\sum_hS_{h}S_{g+h}S_h' + \sum_hT_{h}\rho(S_g)T_h'\,,\\
	\label{eq:rho2Ug}
	\rho^2(U_g)&=\sum_hS_{h}\alpha_h(U_g)S_h'+\sum_hT_{h}\rho(U_g)T_h'\,,\\
	\label{eq:Sgintertwine}
	\rho^2(x)S_g&=S_g\alpha_g(x)\,,\qquad\qquad\mbox{for all }x\,.
	\end{align}
\end{lemma}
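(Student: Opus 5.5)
We verify the four identities in turn, by direct computation on generators. Equation~\eqref{eq:Sgintertwine} will then be a formal consequence of~\eqref{eq:rho2Sg}, \eqref{eq:rho2Ug} and the $T$-part of the fusion rule.

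\textbf{Step 1: the formula~\eqref{eq:rhoUg}.} We expand
\[\rho(U_g)=\sum_h\ang{g;h}^{-1}\rho(S_h)\rho(S_h')+\sum_h\rho(T_{h-g})\rho(T_h')\]
using the explicit formulas for $\rho$ on generators. We then reduce products with the Leavitt relations~\eqref{eq:leavitt2Na}.

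The $S$-part is a finite Fourier sum. The coefficients of $S_\cdot S_\cdot'$ collapse by orthogonality of characters, using $d^{-2}(N+d)N=1$ as in the completeness check of Appendix~\ref{sec:appendixBwelldefined}. This produces $\sum_hS_{h-g}S_h'$ together with $T$-terms.

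The mixed terms $S_hT'T'$ and $TTS_h'$ should cancel exactly as in that completeness computation. The one change is the extra character $\ang{g;\cdot}$, which shifts indices.

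The remaining $TTT'T'$ terms contain the sum
\[\frac1N\sum_{k}E(h+k)E(-r-k-g)\ang{r+k+g}\]
with a shifted argument. This sum is evaluated by~\eqref{eq:qdilogconvolution}, and it should reassemble into $\sum_h\frac{\ang{h+g}}{\ang{h}}T_{h+g}U_gT_h'$ after expanding $U_g$.

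\textbf{Step 2: the formula~\eqref{eq:rho2Sg}.} We use $\rho(S_g)=U_g\rho(S_0)U_{-g}$, written as
\[\rho(S_g)=U_g\Big(\tfrac1d\sum_hS_h+\tfrac1{\sqrt d}\sum_h\ang{h}T_hT_{-h}\Big)U_{-g}.\]
Applying $\rho$ gives
\[\rho^2(S_g)=\rho(U_g)\Big(\tfrac1d\sum_h\rho(S_h)+\tfrac1{\sqrt d}\sum_h\ang{h}\rho(T_h)\rho(T_{-h})\Big)\rho(U_{-g}).\]
By Step 1 it is enough to treat $g=0$, i.e.\ to compute $\rho^2(S_0)$. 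The conjugation by $\rho(U_{\pm g})$ then transports the answer, since~\eqref{eq:rhoUg} acts on the $S$-summands by shifts and on the $T$-summands by $\mathrm{Ad}(U_g)$ up to the phases $\ang{h+g}/\ang{h}$. These phases should match $\ang{\cdot}$ in $\rho(S_g)$.

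For $g=0$ the computation is direct. The quadratic term $\sum_h\ang{h}\rho(T_h)\rho(T_{-h})$ produces triple-$T$ products whose coefficients are convolutions of the form $\sum_t E(x-t)E(t)\ang{t}\ang{t;y}$. These are exactly the convolutions rewritten by the pentagon identity~\eqref{eq:pentagonproduct3}. Applying it, together with $\widehat E=\lambda\ang{\cdot}E$ and $E(u)E(-u)\ang{u}=1$ for $u\neq0$, should collapse everything to $\sum_hS_hS_hS_h'+\sum_hT_h\rho(S_0)T_h'$.

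\textbf{Step 3: the formula~\eqref{eq:rho2Ug}.} This should follow from~\eqref{eq:rhoUg} applied twice, after expanding $\rho(S_{h-g})\rho(S_h')$ and $\rho(T_{h+g})\rho(U_g)\rho(T_h')$. A cleaner route is to note that both sides are representations of $G$ in $g$, so it suffices to check the identity on $S_0$-like components. We plan to use the completeness relation to split the right-hand side as a sum of $S_h(\cdot)S_h'$ and $T_h(\cdot)T_h'$ blocks and compare block by block.

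\textbf{Step 4: the intertwining relation~\eqref{eq:Sgintertwine}.} We check it on the generators. On $S_r$ it follows from~\eqref{eq:rho2Sg}, by multiplying on the right by $S_g$ and using $S_h'S_g=\delta_{h,g}$, $T_h'S_g=0$. The $S_r'$ case then follows by the argument of~\eqref{eq:completenesstrick}.

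The $T_r$ case is the hard step. It needs $\rho^2(T_r)S_g=\ang{g;r}^{-1}S_gT_r$. We first reduce to $g=0$: by~\eqref{eq:rho2Ug}, $\rho^2(U_g)S_0=S_0\alpha_0(U_g)$ up to a shift, and $\rho(S_g)=U_g\rho(S_0)U_{-g}$, so $S_g$ is obtained from $S_0$ by conjugation. For $g=0$ we need $\rho^2(T_r)S_0=S_0T_r$, and we will prove it by expanding $\rho(\rho(T_r))$ and multiplying by $S_0$ on the right.

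Most terms die by~\eqref{eq:leavitt2Na}. The survivors carry double sums in $E$ that must be simplified with~\eqref{eq:pentagonintegral} itself, i.e.\ the non-zero constant $C=-\widehat E(0)$ enters at this point. I expect this bookkeeping of the $TTT'$ terms, and matching the defect constant $C$ against the $S_0T_r$ coefficient, to be the main obstacle. I would first verify it numerically on the $\ZZ/5\ZZ$ examples $E_a,E_b$ to fix the phase conventions.
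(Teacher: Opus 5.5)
Your plan is a brute-force expansion. That is viable in principle for \eqref{eq:rhoUg}, \eqref{eq:rho2Sg} and the core check $\rho^2(T_r)S_0=S_0T_r$. Your reduction of \eqref{eq:rho2Sg} to $g=0$ by conjugating with $\rho(U_{\pm g})$ is exactly what the paper does.

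Otherwise the paper avoids full expansions. It computes sandwiched pieces: $S_{h_1}'\rho(U_g)T_{h_2}$, $T_{h_1}'\rho(U_g)T_{h_2}$, $S_0'\rho^2(S_0)$, $T_0'\rho^2(S_0)$, and $\rho(S_0')X$, $\rho(T_0')X$ for $X=\rho^2(T_0)S_0-S_0T_0$. It then transports these by $\alpha_g$-, $U_g$- and $\rho(U_g)$-equivariance, and reassembles with \eqref{eq:leavitt2Nb} or its image under $\rho$. This sidesteps normal-form issues in $L_{2N}$, whose words are not linearly independent (Lemma~\ref{lem:leavitt}(i)).

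It also shows that the lemma uses only \eqref{eq:qdilogconvolution}, $\widehat E=\lambda\ang{\cdot}E$ and $E(0)=-\sqrt N/d$. So your expectation that the pentagon and the defect constant $C$ enter in Step~4 is misplaced. The $S_0$-coefficient of $\rho(T_0'\rho(T_0))S_0$ vanishes simply because $E(0)=-\sqrt N/d$. In a direct expansion, the triple products of $E$ collapse after one Fourier summation. Likewise, the convolutions in Step~2 are degenerate instances of \eqref{eq:pentagonproduct3} that reduce to \eqref{eq:qdilogconvolution}. The pentagon is first needed in Lemma~\ref{lem:izumikey}. (Also, the scalar identity in Step~1 is $N(1+d)=d^2$; $N(N+d)=d^2$ fails for $N>1$.)

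Two steps fail as written.

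In Step~3, fully expanding $\rho(T_{h+g})\rho(U_g)\rho(T_h')$ is possible but long. Observing that both sides are representations of $G$ reduces nothing. The idea that makes this step short is the following.
\begin{itemize}
\item From \eqref{eq:rhoUg}, $\rho(U_g)S_r=S_{r-g}$ and $\rho(U_g)T_r=\frac{\ang{r+g}}{\ang{r}}T_{r+g}U_g$.
\item Applying $\rho$ gives $\rho^2(U_g)\rho(S_r)=\rho(S_{r-g})$ and $\rho^2(U_g)\rho(T_r)=\frac{\ang{r+g}}{\ang{r}}\rho(T_{r+g})\rho(U_g)$.
\item A direct check shows that the right-hand side $\Psi(U_g)$ of \eqref{eq:rho2Ug} satisfies the same relations.
\item Any element $Y$ is determined by the products $Y\rho(S_r)$, $Y\rho(T_r)$, because $1=\rho\big(\sum_rS_rS_r'+\sum_rT_rT_r'\big)$.
\end{itemize}

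In Step~4, your reduction from $S_g$ to $S_0$ is wrong. The relation $\rho(S_g)=U_g\rho(S_0)U_{-g}$ concerns $\rho(S_g)$, and $U_gS_0U_{-g}=S_0U_{-g}\neq S_g$ for $g\ne0$. The identity \eqref{eq:rho2Ug} reduces $T_r$ to $T_0$ (via $T_r=U_{-r}T_0$ and $\rho^2(U_{-r})S_0=S_0U_{-r}$), not $S_g$ to $S_0$. The correct reduction applies $\alpha_g$ to $\rho^2(x)S_0=S_0x$, using $\alpha_g\circ\rho=\rho$ (hence $\alpha_g\circ\rho^2=\rho^2$) and $\alpha_g(S_0)=S_g$.

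Finally, drop your opening claim that \eqref{eq:Sgintertwine} follows from the $T$-part of the fusion rule. The paper proves that part using \eqref{eq:Sgintertwine}, so the argument would be circular. Your Step~4 rightly proves $\rho^2(T_r)S_0=S_0T_r$ directly instead.
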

\begin{proof}
	First, we claim that
		\[S_{h_1}'\rho(U_g)T_{h_2} = T_{h_1}'\rho(U_g)S_{h_2} = 0\,.\]
	Indeed, 
		\[\sum_{r\in G}S_{h_1}'\ang{r;g}^{-1}\rho(S_r)\rho(S_r')T_{h_2} = 
		\sum_{r\in G}\frac{\ang{r;g}^{-1}}{d\sqrt{d}}\ang{r;h_1}^{-1}\ang{h_2+r}^{-1}T_{-h_2-r}'\]
	and 
		\begin{align*}
		\sum_r S_{h_1}'\rho(T_{r-g})\rho(T_r')T_{h_2} &= 
		\frac{\lambda}{N\sqrt{d}}
		\sum_r\Big(\sum_{k\in G}\ang{h_1-r+g;k}T_k'\Big)
		\Big(\sum_{h,k\in G}\frac{\ang{r+h}}{\ang{h}}E(-h-r)\ang{r;k}T_kT_{-h}'T_{h+k}'\Big)
		T_{h_2}\\
		&=\frac{\lambda}{N\sqrt{d}}\sum_{r,k\in G}\frac{\ang{r+h_2-k}}{\ang{h_2-k}}E(k-h_2-r)\ang{h_1+g;k}T_{k-h_2}' \\
		&= -\frac{1}{d\sqrt{d}}\sum_{k}\frac{\ang{h_1+g;k}}{\ang{h_2-k}}T_{k-h_2}'\,.
		\end{align*}
	Summing up these two identities proves the first equality $S_{h_1}'\rho(U_g)T_{h_2}=0$, and the second equality is analogous. Next, we calculate
	\[\sum_{r\in G}S_{h_1}'\ang{r;g}^{-1}\rho(S_r)\rho(S_r')S_{h_2} = \frac{1}{d^2}\sum_{r\in G}\ang{r;h_2-h_1-g} = \frac{N}{d^2}\delta_{h_1+g,h_2}\]
	and
	\[\sum_{r\in G}S_{h_1}'\rho(T_{r-g})\rho(T_r')S_{h_2} = \frac{1}{dN}\sum_{r,k\in G}\ang{h_1-(r-g);k}\ang{h_2-r;k}^{-1} = \frac{N}{d}\delta_{h_1+g,h_2}\,,\]
	so $S_{h_1}'\rho(U_g)S_{h_2} = \delta_{h_1+g,h_2}$. Finally, we have
	\begin{equation} \label{eq:intermediate}
	\sum_{r\in G}T_{h_1}'\ang{r;g}^{-1}\rho(S_r)\rho(S_r')T_{h_2} = \frac{1}{d}\sum_{r\in G}\frac{\ang{h_1+r}}{\ang{h_2+r}}\ang{g;-r}T_{-r-h_1}T_{-r-h_2}'
	\end{equation}
	and
	\begin{align*}
	\sum_{r\in G}&T_{h_1}'\rho(T_{r-g})\rho(T_r')T_{h_2} = \frac{1}{N}\sum_{r\in G}\frac{\ang{r}}{\ang{r-g}}\Big(\sum_{h\in G}\ang{g;h_1}\ang{r+h;h_2-h_1}S_hS_h'\Big)\\
	 &+\frac{1}{N}\sum_{r\in G}\Big(\sum_{k\in G}\ang{h_1-k}E(h_1-k+r-g)\ang{g;k}\frac{\ang{r+h_2-k}}{\ang{h_2-k}}E(k-h_2-r)T_{k-h_1}T_{k-h_2}'\Big)\\
	 &=\frac{1}{N}\sum_{r\in G}\frac{\ang{r;g}}{\ang{g}}\Big(\sum_{h\in G}\ang{g;h_1}\ang{r+h;h_2-h_1}S_hS_h'\Big)
	   +\sum_{k\in G}\frac{\ang{h_1-k}}{\ang{h_2-k}}(\delta_{h_1,h_2+g}-1/d)\ang{g;k}T_{k-h_1}T_{k-h_2}'\,.
	\end{align*}
	The $-1/d$ part in the second sum cancels with the one in~\eqref{eq:intermediate}, and we are left with
	\begin{align*}
	T_{h_1}'\rho(U_g)T_{h_2} &= \delta_{h_1,h_2+g}\Big(\sum_{h\in G}\frac{\ang{g;h_1}}{\ang{g}}\ang{h;g}^{-1}S_hS_h'+\sum_{k\in G}\frac{\ang{h_1-k}}{\ang{h_2-k}}\ang{g;k}T_{k-h_1}T_{k-h_2}'\Big)\\
	&=\delta_{h_1,h_2+g}\Big(\sum_{h\in G}\frac{\ang{h_1}}{\ang{h_2}}\ang{h;g}^{-1}S_hS_h'+\sum_{k\in G}\frac{\ang{h_1}}{\ang{h_2}}T_{k-h_1}T_{k-h_2}'\Big)\,.
	\end{align*}
	Together with~\eqref{eq:leavitt2Nb} this proves~\eqref{eq:rhoUg}.
	
	Next, we check that
	\begin{equation} \label{eq:s0}
		\rho^2(S_0) = \sum_{g\in G}S_g^2S_g' + \sum_{h\in G}T_h\rho(S_0)T_h'\,.
	\end{equation}
	Indeed, we have
	\begin{align*}
		S_0'\rho^2(S_0) &= \frac{1}{d}\sum_{h}S_0'\rho(S_h) + \frac{\lambda}{d\sqrt{N}}\sum_{h}\ang{h}\Big(\sum_{k\in G}\ang{-h;k}T_k'\Big)\rho(T_{-h})\\
		&=\frac{1}{d^2}\sum_{h}U_{-h} + \frac{N}{d}S_{0}S_{0}' + \frac{\lambda}{dN}\sum_{h,k}\ang{h}\ang{-h;k}\sum_{h'\in G}\ang{h'}E(h'-h)\ang{h;k-h'}T_{-h'}T_{k-h'}'\\
		&=\frac{1}{d^2}\sum_{h}U_{-h} + \frac{N}{d}S_{0}S_{0}' + \frac{\lambda}{dN}\sum_{h,k,h'}\ang{h'-h}E(h'-h)T_{-h'}T_{k-h'}'\\
		&=\frac{N}{d^2}S_0S_0'+\frac{1}{d^2}\sum_{k,h'}T_{-h'}T_{k-h'}' + \frac{N}{d}S_{0}S_{0}' -\frac{1}{d^2}\sum_{k,h'}T_{-h'}T_{k-h'}' = S_0S_0'\,.
	\end{align*}
	Applying $\alpha_g$ then gives $S_g'\rho^2(S_0) = S_gS_g'$. Similarly, 
	\begin{align*}
		T_0'\rho^2(S_0) &= \frac{1}{d\sqrt{d}}\sum_{h}\ang{h}T_hU_h + \frac{1}{\lambda \sqrt{d}\sqrt{N}}\sum_{h,r}S_rS_r'\rho(T_{-h})
		+ \frac{1}{\sqrt{d}\sqrt{N}}\sum_{h,k}\ang{h-k}E(h-k)T_kT_k'\rho(T_{-h})\\
		&=\frac{1}{d\sqrt{d}}\sum_{h}\ang{h}T_hU_h + \frac{1}{d}\sum_r S_rT_0'
		+ \frac{1}{\sqrt{d}\sqrt{N}}\sum_{h,k}\ang{h-k}E(h-k)T_kT_k'\rho(T_{-h})\,.
	\end{align*}
	From~\eqref{eq:qdilogconvolution} we can compute that
	\[\frac{1}{\sqrt{N}}\sum_{h}\ang{h-k}E(h-k)T_k'\rho(T_{-h}) = \ang{k}\Big(T_{-k}T_0'-\frac{U_k}{d}\Big)\,,\]
	so
	\[T_0'\rho^2(S_0) = \frac{1}{d}\sum_r S_rT_0' +\frac{1}{\sqrt{d}}\sum_{k}\ang{k}T_kT_{-k}T_0' = \rho(S_0)T_0'\,.\]
	Since $T_g'\rho^2(S_0) = T_g'U_{-g}\rho^2(S_0)U_g = T_0'\rho^2(S_0)U_g=\rho(S_0)T_0'U_g=\rho(S_0)T_g'$, we get~\eqref{eq:s0}.
	This now implies~\eqref{eq:rho2Sg} for all $g$, since
		\[\rho^2(S_g)=\rho(U_g)\rho^2(S_0)\rho(U_{-g})\,,\qquad \Psi(S_g)=\rho(U_g)\Psi(S_0)\rho(U_{-g})\,.\]
	
	Next, to get~\eqref{eq:rho2Ug}, note that by~\eqref{eq:rhoUg}, we have
		\[\rho^2(U_g)\rho(S_r)=\rho(S_{r-g})\,,\qquad \rho^2(U_g)\rho(T_r)=\frac{\ang{r+g}}{\ang{r}}\rho(T_{r+g})\rho(U_g)\,.\]
	Direct computation gives also
		\[\Psi(U_g)\rho(S_r)=\rho(S_{r-g})\,,\qquad \Psi(U_g)\rho(T_r)=\frac{\ang{r+g}}{\ang{r}}\rho(T_{r+g})\rho(U_g)\,,\]
	and since $1=\rho(\sum_rS_rS_r'+\sum_rT_rT_r')$, this proves~\eqref{eq:rho2Ug}.
	
	Finally, to prove~\eqref{eq:Sgintertwine}, because of $\alpha_g\circ\rho=\rho$ and~\eqref{eq:rho2Sg} (together with the completeness trick~\eqref{eq:completenesstrick}) it is enough to show that $\rho^2(T_g)S_0=S_0T_g$ for all~$g$, and by~\eqref{eq:rho2Ug}, it suffices to check it for $g=0$. Let us first check that
		\[\rho(S_0')\big(\rho^2(T_0)S_0-S_0T_0\big) = 0\,,\qquad 
		\rho(T_0')\big(\rho^2(T_0)S_0-S_0T_0\big) = 0\,.\]
	For the first identity we write
	\[\rho(S_0')\rho^2(T_0)S_0=\frac{\lambda}{\sqrt{d}\sqrt{N}}\sum_h\rho(T_h')S_0
		=\frac1{dN}\sum_{h,k}\ang{h;k}T_k=\frac{T_0}{d}=\rho(S_0')S_0T_0.\]
	For the second identity, first note that
	\[T_0'\rho(T_0)=\frac{\lambda^{-1}}{\sqrt{N}}\sum_h S_hS_h'+\frac{1}{\sqrt{N}}\sum_k \ang{k}E(-k)T_kT_k'\,.\]
	Applying \(\rho\) and multiplying by \(S_0\) gives
	\[\rho(T_0'\rho(T_0))S_0
		=\frac{1}{\lambda d\sqrt{N}}\sum_hU_h\rho(S_0)
		+\frac{1}{\lambda N\sqrt{d}}\sum_{k,h}\ang{k}E(-k)\ang{k;h}\rho(T_k)T_h\,.\]
	Since
	\[\rho(T_k)T_h=\frac{\lambda}{\sqrt{d}\sqrt{N}}\sum_g\ang{g-k;h}S_g
	+\frac{1}{\sqrt{N}}\sum_g \ang{g}E(g+k)\ang{k;h}^{-1}T_{g+h}T_{-g}\,,\]
	we can rewrite $\rho(T_0'\rho(T_0))S_0$ as
	\[\frac{\sqrt N}{\lambda d^2}S_0 +\frac1{\lambda d\sqrt{d}\sqrt{N}}\sum_{h,g}\ang{g}T_{g-h}T_{-g} +\frac{E(0)}{\lambda d}S_0 +\frac{1}{\lambda\sqrt{d}\sqrt{N}}\sum_hT_hT_0
	-\frac{1}{\lambda d\sqrt{d}\sqrt{N}}\sum_{g,h}\ang{g}T_{g+h}T_{-g}.\]
	(Here again we used Fourier transform and~\eqref{eq:qdilogconvolution}.)
	The two double sums cancel, and the coefficient of~$S_0$ vanishes because $E(0)=-\sqrt{N}/d$. Hence
	\[\rho(T_0')\rho^2(T_0)S_0=\frac1{\lambda\sqrt{d}\sqrt N}\sum_hT_hT_0=\rho(T_0')S_0T_0\,.\]
	From the above established identities we get
		\[U_g\big(\rho^2(T_0)S_0-S_0T_0\big) = \rho^2(T_0)S_0-S_0T_0\,,
		\qquad \rho(U_g)\big(\rho^2(T_0)S_0-S_0T_0\big) = \alpha_{-g}\big(\rho^2(T_0)S_0-S_0T_0\big)\,,\]
	and so we get also
	 \[\rho(S_g')\big(\rho^2(T_0)S_0-S_0T_0\big) = U_g\rho(S_0')U_{-g}\big(\rho^2(T_0)S_0-S_0T_0\big) = U_g\rho(S_0')\big(\rho^2(T_0)S_0-S_0T_0\big) = 0\,,\]
	and, similarly, $\rho(T_g')(\rho^2(T_0)S_0-S_0T_0)=0$.
	This proves $\rho^2(T_0)S_0=S_0T_0$, and, by preceding remarks, also the full identity~\eqref{eq:Sgintertwine}.
\end{proof}

It remains to prove~\eqref{eq:rho2fusion} for $x=T_g$. The key calculation is the following~(see \cite[Lemma 5.2, Theorem 5.3]{Izu01}).
\begin{lemma} \label{lem:izumikey}
Denote $\hat{T}_{g}=\frac{1}{\sqrt{N}}\sum_{h}\ang{g;h}T_h$. Then
	\begin{equation} \label{eq:izumikey}
	\rho(T_0'\rho(T_0))\hat{T}_0 = \rho(T_0')\hat{T}_0\rho(T_0)\,.
	\end{equation}
\end{lemma}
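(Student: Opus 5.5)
Both sides of \eqref{eq:izumikey} will be expanded directly in the reduced-word basis of $L_{2N}$ (Lemma~\ref{lem:leavitt}~(i)), using the explicit formulas for $\rho(T_g)$, $\rho(T_g')$, $\rho(S_g)$, $\rho(S_g')$. The plan is to show that the coefficients agree term by term, with the pentagon relation \eqref{eq:pentagonintegral} (in the equivalent forms \eqref{eq:supportnew} and \eqref{eq:pentagonproduct3}) supplying the single non-trivial identity.

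For the left-hand side, start from the formula for $T_0'\rho(T_0)$ already computed in the previous lemma:
\[
T_0'\rho(T_0)=\frac{\lambda^{-1}}{\sqrt N}\sum_h S_hS_h'+\frac1{\sqrt N}\sum_k\ang{k}E(-k)T_kT_k'\,.
\]
Apply $\rho$ to get $\frac{\lambda^{-1}}{\sqrt N}\sum_h\rho(S_h)\rho(S_h')+\frac{1}{\sqrt N}\sum_k\ang{k}E(-k)\rho(T_k)\rho(T_k')$, and then multiply on the right by $\hat T_0=\frac1{\sqrt N}\sum_hT_h$. Two facts simplify this.
\begin{itemize}
\item $\rho(S_h')\hat T_0$ and $\rho(T_k')\hat T_0$ are short: only the $T'$-parts of these images act nontrivially on $T_h$.
\item The character sums over $h$ in $\hat T_0$ collapse to delta functions.
\end{itemize}
The result should be a linear combination of words of three shapes: $S_a$ times a $T$-word, $T_aS_bS_b'\cdots$, and triple-$T$ words $T_aT_bT_c$. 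The coefficients of the triple-$T$ words will be convolutions of the form $\sum_k \ang{k}E(-k)E(\cdot)E(\cdot)\ang{\cdot;\cdot}$.

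For the right-hand side, compute $\rho(T_0')\hat T_0=\frac{1}{\sqrt N}\sum_h\rho(T_0')T_h$. This is explicit from the definition of $\rho(T_0')$: it gives a $T$-part, a $T_kT_{-h}'$-type part and an $S_hS_h'$ part, each acting on $T_h$. Then multiply on the right by $\rho(T_0)$, using the Leavitt relations \eqref{eq:leavitt2Na} to kill the cross terms $S'T$ and $T'S$. Next, organise both sides by the same word types and compare coefficients. The $S$-containing words should match using only the Fourier relation $\widehat E=\lambda\ang{\cdot}E$ (Theorem~\ref{thm:fqdilogbasicproperties}~(iii)), the reflection relation (ii), and \eqref{eq:qdilogconvolution}, much as in the proof of \eqref{eq:Sgintertwine}. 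The constants $d$, $\lambda$ should cancel via $E(0)=-\sqrt N/d$ and $d^2=N(d+1)$.

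The main obstacle is the coefficient of the triple-$T$ words. On one side it is a single sum of the form $\frac1{\sqrt N}\sum_t E(t)\ang{t}E(x-t)\ang{t;y}$; on the other it is a product $E(\cdot)E(\cdot)E(\cdot)$ times Gaussians. This is exactly where the cubic identity enters. I would first Fourier-transform the hat variable to rewrite the coefficient in the shape \eqref{eq:pentagonproduct3}, and then apply it. Its delta correction term $-NE(0)\delta(x)\delta(y)$ should cancel against the leftover $S$-free contribution from the $\lambda^{-1}\rho(S_h)\rho(S_h')$ part; this bookkeeping is where sign and Gaussian-factor errors are most likely. If the direct match is awkward, I would instead use the dual pentagon \eqref{eq:pentagonintegraldual} with $\widetilde E=\ang{\cdot}E$, since the Gaussian weights produced by $\rho(T_k')$ naturally give $\widetilde E$. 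Finally, I would double-check the formula numerically on the $N=5$ example $E_b$.
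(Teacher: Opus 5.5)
Your plan is essentially the paper's proof. The paper also starts from the formula for $T_0'\rho(T_0)$, expands both sides, and matches coefficients of three kinds of words:
\begin{itemize}
\item $\hat T_0S_hS_h'$, handled by the Fourier transform of $E$;
\item $S_0T_k'$, handled by the Fourier transform of $E$, reflection and the Gaussian sum $\lambda^3$;
\item $\hat T_0T_rT_k'$, which is exactly the dual pentagon relation \eqref{eq:pentagonintegraldual}, your fallback, after a change of variables.
\end{itemize}
In that last identity the constant $\widetilde C=-\lambda^{-1}E(0)$ comes from the $\lambda^{-1}\rho(S_hS_h')$ part, not from a delta term.

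Two minor corrections. The relevant words are $T_aT_bT_c'$, not $T_aT_bT_c$. Also, equality only requires that both sides have the same coefficients on the same (possibly non-reduced) words, so you do not need the basis of Lemma~\ref{lem:leavitt}~(i).
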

\begin{proof}
	Note that $U_g\hat{T}_0=\hat{T}_0$ and
	\begin{equation} \label{eq:rhotprimethat}
	\rho(T_h')\hat{T}_0 = \ang{h}\Big(\lambda S_{-h}S_{-h}' + \frac{1}{\sqrt{N}}\sum_{k}E(k-h)\ang{h;k}^{-1}\hat{T}_hT_k'\Big)\,.
	\end{equation}
	
	We expand both sides. For the left hand side we compute
	\begin{align*}
	\rho(T_0'\rho(T_0))\hat{T}_0 &= \frac{\lambda^{-1}}{\sqrt{N}}\sum_{h}\rho(S_hS_h')\hat{T}_0 + \frac{1}{\sqrt{N}}\sum_{h}\ang{h}E(-h)\rho(T_hT_h')\hat{T}_0\\
		  &= \frac{\lambda^{-1}}{\sqrt{N}}\sum_{h}U_h\rho(S_0S_0')U_{-h}\hat{T}_0 + \frac{1}{\sqrt{N}}\sum_{h}\ang{h}E(-h)\rho(T_hT_h')\hat{T}_0\\
		  &= \frac{\lambda^{-1}}{\sqrt{N}}\sum_{h}U_h\rho(S_0S_0')\hat{T}_0 + \frac{\lambda}{\sqrt{N}} \sum_{h}\ang{h}^2E(-h)\rho(T_h)S_{-h}S_{-h}' \\
		  &\qquad+ \frac{1}{N}\sum_{h,k}\ang{h}^2E(-h)E(k-h)\ang{h;k}^{-1}\rho(T_h)\hat{T}_hT_k'\,.
	\end{align*}
	Since 
	\begin{equation} \label{eq:rhoS0primeThat}
	\lambda \ang{h}\rho(T_h)S_{-h}S_{-h}' = \hat{T}_0S_{-h}S_{-h}'\,,\qquad
	\rho(S_0')\hat{T}_{0} = \frac{1}{\sqrt{N}\sqrt{d}}\sum_{r}\frac{T_r'}{\ang{r}}\,,
	\end{equation}
	and
	\[\sum_{h}U_h\rho(S_0) = \frac{N}{d}S_0+\frac{\sqrt{N}}{\sqrt{d}}\hat{T}_0\sum_{r}\ang{r}T_r\,,\]
	we can further rewrite the last expression as
	\begin{align*}
	\frac{\lambda^{-1}}{d\sqrt{d}}&\sum_k \ang{k}^{-1}S_0T_k' + \frac{\lambda^{-1}}{d\sqrt{N}}\sum_{r,k}\frac{\ang{r}}{\ang{k}}\hat{T}_0T_rT_k' + \frac{1}{\sqrt{N}} \sum_{h}\ang{h}E(-h)\hat{T}_0S_{-h}S_{-h}' \\
	&\quad+ \frac{1}{N}\sum_{h,k}\ang{h}^2E(-h)E(k-h)\ang{h;k}^{-1}\rho(T_h)\hat{T}_hT_k' \\
	=\frac{\lambda^{-1}}{d\sqrt{d}}&\sum_k \ang{k}^{-1}S_0T_k' + \frac{\lambda^{-1}}{d\sqrt{N}}\sum_{r,k}\frac{\ang{r}}{\ang{k}}\hat{T}_0T_rT_k' + \frac{1}{\sqrt{N}} \sum_{h}\ang{h}E(-h)\hat{T}_0S_{-h}S_{-h}' \\
	&\quad +\frac{\lambda}{N\sqrt{d}}\sum_{h,k}\frac{\ang{h}^2}{\ang{h;k}}E(-h)E(k-h)S_0T_k'+\frac{1}{N\sqrt{N}}\sum_{r,h,k}\frac{\ang{h}^2}{\ang{h;k}}E(-h)E(k-h)\ang{r}E(h-r)\hat{T}_0T_rT_k'\,,
	\end{align*}
	where we have used
	\[\rho(T_h)\hat{T}_h = \frac{\lambda}{\sqrt{d}}S_0 + \frac{1}{\sqrt{N}}\sum_r\ang{r}E(h-r)\hat{T}_0T_r\,.\]
	
	For the right hand side we calculate
	\begin{align*}
	\rho(T_0')\hat{T}_0\rho(T_0) = \Big(&\lambda S_{0}S_{0}' + \frac{1}{\sqrt{N}}\sum_{k}E(k)\hat{T}_0T_k'\Big)\rho(T_0)
	 = \frac{\lambda^2}{\sqrt{d}\sqrt{N}}\sum_{k}S_0T_k' + \frac{\lambda^{-1}}{N}\sum_{h,k}\frac{E(k)}{\ang{h;k}}\hat{T}_0S_hS_h'\\
	 &+\frac{1}{N}\sum_{r,k}E(k-r)\ang{-r}E(-r)\hat{T}_0T_{r}T_{k}'\,.
	\end{align*}
	Let us now compare the coefficients on both sides. For $\hat{T}_0S_hS_h'$ the identity we need is
	\[\lambda \ang{h}E(h) = \frac{1}{\sqrt{N}}\sum_{k}E(k)\ang{h;-k}\,,\]
	which is exactly the Fourier transform of~$E$. For $S_0T_k'$ we need
	\[\frac{\lambda}{N}\sum_h\ang{h}^2\ang{h;-k}E(-h)E(k-h) = \frac{\lambda^2}{\sqrt{N}} - \frac{\lambda^{-1}}{d\ang{k}}\,.\]
	This also holds as can be seen using the Fourier transform for~$b$ again
	\begin{align*}
	\frac{\lambda}{N}&\sum_h\ang{h}^2\ang{h;-k}E(-h)E(k-h) = \frac{\lambda}{N\ang{k}}\sum_h \ang{-h}E(-h)\ang{k-h}E(k-h)\\
	&=\frac{\lambda^{-1}}{N^2\ang{k}}\sum_{r,s,h} E(r)E(s)\ang{r;h}\ang{s;h-k}
	  = \frac{\lambda^{-1}}{N\ang{k}}\sum_{r} E(r)E(-r)\ang{r;k}\\
	 &= \frac{\lambda^{-1}}{N\ang{k}}\sum_{r} \big(\ang{r}^{-1}+(N/d^2-1)\delta(r)\big)\ang{r;k}
	  =\frac{\lambda^2}{\sqrt{N}}-\frac{\lambda^{-1}}{d\ang{k}}\,,
	\end{align*}
	where we used $N/d^2-1=-N/d$ and the Gaussian Fourier transform $\frac{1}{\sqrt{N}}\sum_{r}\ang{r}^{-1}\ang{r;k}=\lambda^3\ang{k}$. Finally, comparing the coefficients of $\hat{T}_0T_rT_k'$, we need
	\[-\lambda^{-1}E(0)\frac{\ang{r}}{\ang{k}}+\frac{\ang{r}}{\sqrt{N}}\sum_{h}\ang{h}^2E(-h)E(k-h)E(h-r)\ang{h;k}^{-1} = E(k-r)\ang{-r}E(-r)\]
	or equivalently
	\[-\lambda^{-1}E(0)+\frac{1}{\sqrt{N}}\sum_{h}\ang{y-h}E(y-h)E(h)\ang{x-h}E(x-h) = \ang{x-y}E(x)E(y)\,,\]
	which is precisely~\eqref{eq:pentagonintegraldual}.
\end{proof}
With~\eqref{eq:izumikey} we can now finish the proof of~\eqref{eq:rho2fusion}. For this we want to show that
	\begin{equation} \label{eq:hatT0intertwiner}
	\rho^2(T_0)\hat{T}_0 = \hat{T}_0\rho(T_0)
	\end{equation}
We already have 
	\[\rho(T_0')\big(\rho^2(T_0)\hat{T}_0 - \hat{T}_0\rho(T_0)\big)=0\,,\]
let us show that 
	\[\rho(S_0')\big(\rho^2(T_0)\hat{T}_0 - \hat{T}_0\rho(T_0)\big)=0\,.\]
Indeed, using~\eqref{eq:rhotprimethat} we have
	\begin{align*}
	\rho(S_0'\rho(T_0))\hat{T}_0 = \frac{\lambda}{\sqrt{d}\sqrt{N}}\sum_{k}\rho(T_k')\hat{T}_0 = 
	\frac{\lambda^2}{\sqrt{d}\sqrt{N}}\sum_{k}\ang{k}S_{-k}S_{-k}'+\frac{\lambda}{N\sqrt{d}}\sum_{h,k}\ang{k}E(h-k)\ang{k;h}^{-1}\hat{T}_{k}T_{h}'
	\end{align*}
and using~\eqref{eq:rhoS0primeThat}, we get that
	\[
	\rho(S_0')\hat{T}_0\rho(T_0) = \frac{1}{\sqrt{N}\sqrt{d}}\sum_{r}\frac{T_r'}{\ang{r}}\rho(T_0)
	 = \frac{1}{N\sqrt{d}}\sum_{r}\ang{r}^{-1}\Big(\lambda^{-1}\sum_{h\in G}\ang{h;r}^{-1}S_hS_h'
	 +\sum_{k\in G}\ang{r-k}E(r-k)T_{k-r}T_k'\Big)\,,
	\]
and we get equality using Gaussian Fourier transform and Fourier transform of~$E$ again. Note that
	\[U_g\big(\rho^2(T_0)\hat{T}_0 - \hat{T}_0\rho(T_0)\big) = \big(\rho^2(T_0)\hat{T}_0 - \hat{T}_0\rho(T_0)\big)\,,\]
and
	\[\rho(U_g)\big(\rho^2(T_0)\hat{T}_0 - \hat{T}_0\rho(T_0)\big) = \ang{g}^{-1}\alpha_{-g}\big(\rho^2(T_0)\hat{T}_0 - \hat{T}_0\rho(T_0)\big)U_g\,,\]
so using equivariance we get
	\[\rho(T_g')\big(\rho^2(T_0)\hat{T}_0 - \hat{T}_0\rho(T_0)\big)=
	\rho(S_g')\big(\rho^2(T_0)\hat{T}_0 - \hat{T}_0\rho(T_0)\big)=0\]
for all $g$. Then using completeness relation~\eqref{eq:leavitt2Nb} (after applying $\rho$ to it) gives~\eqref{eq:hatT0intertwiner}. Applying $\alpha_{-g}$ to both sides then gives
	\[\rho^2(T_0)\hat{T}_g = \hat{T}_g\rho(T_0)\,,\]
and since $\hat{T}_g$, $g\in G$ span the same $\CC$-vector space as $T_g$, $g\in G$, we get 
	\[\rho^2(T_0)T_g = T_g\rho(T_0)\,.\]
Finally, since (by~\eqref{eq:rho2Ug})
	\[T_h=U_{-h}T_0\,,\qquad\qquad \rho^2(U_{-h})T_g=T_g\rho(U_{-h})\,,\]
we get
	\[\rho^2(T_h)T_g = \rho^2(U_{-h})\rho^2(T_0)T_g = \rho^2(U_{-h})T_g\rho(T_0) = T_g\rho(U_{-h})\rho(T_0) = T_g\rho(T_h)\,.\]
Together with~\eqref{eq:rho2Sg} and~\eqref{eq:Sgintertwine}, this proves~\eqref{eq:rho2fusion}.

\bibliographystyle{abbrv}
\bibliography{biblio}

\begin{thebibliography}{10}

\bibitem{AK14b}
J.~E. Andersen and R.~Kashaev.
\newblock Complex quantum {C}hern--{S}imons, 2014.
\newblock Preprint, \texttt{arXiv:1409.1208}.

\bibitem{AK14a}
J.~E. Andersen and R.~Kashaev.
\newblock A {TQFT} from quantum {T}eichm{\"u}ller theory.
\newblock {\em Comm. Math. Phys.}, 330(3):887--934, 2014.
\newblock \texttt{arXiv:1109.6295}.

\bibitem{AFMY17}
M.~Appleby, S.~Flammia, G.~McConnell, and J.~Yard.
\newblock {SIC}s and algebraic number theory.
\newblock {\em Foundations of Physics}, 47(8):1042--1059, 2017.

\bibitem{AFK25}
M.~Appleby, S.~T. Flammia, and G.~S. Kopp.
\newblock A constructive approach to {Z}auner's conjecture via the {S}tark
  conjectures, 2025.
\newblock Preprint, \texttt{arXiv:2501.03970}.

\bibitem{Bar04}
E.~W. Barnes.
\newblock On the theory of the multiple gamma function.
\newblock {\em Trans. Cambridge Philos. Soc.}, 19:374--425, 1904.

\bibitem{DV21}
H.~Darmon and J.~Vonk.
\newblock Singular moduli for real quadratic fields: a rigid analytic approach.
\newblock {\em Duke Math. J.}, 170(1):23--93, 2021.

\bibitem{DK24}
S.~Dasgupta and M.~Kakde.
\newblock {B}rumer--{S}tark units and explicit class field theory.
\newblock {\em Duke Math. J.}, 173(8):1477--1555, 2024.

\bibitem{Dim15}
T.~Dimofte.
\newblock Complex {C}hern--{S}imons theory at level $k$ via the 3d--3d
  correspondence.
\newblock {\em Comm. Math. Phys.}, 339:619--662, 2015.
\newblock \texttt{arXiv:1409.0857}.

\bibitem{EGNO17}
P.~Etingof, S.~Gelaki, D.~Nikshych, and V.~Ostrik.
\newblock {\em Tensor Categories}.
\newblock Mathematical surveys and monographs. American Mathematical Society,
  2017.

\bibitem{ENO05}
P.~Etingof, D.~Nikshych, and V.~Ostrik.
\newblock On fusion categories.
\newblock {\em Annals of Mathematics}, pages 581--642, 2005.

\bibitem{EG17}
D.~E. Evans and T.~Gannon.
\newblock Non-unitary fusion categories and their doubles via endomorphisms.
\newblock {\em Advances in Mathematics}, 310:1--43, 2017.

\bibitem{Fad95}
L.~D. Faddeev.
\newblock Discrete {H}eisenberg--{W}eyl group and modular group.
\newblock {\em Lett. Math. Phys.}, 34(3):249--254, 1995.
\newblock \texttt{arXiv:hep-th/9504111}.

\bibitem{FK94}
L.~D. Faddeev and R.~M. Kashaev.
\newblock Quantum dilogarithm.
\newblock {\em Modern Phys. Lett. A}, 9(5):427--434, 1994.
\newblock \texttt{arXiv:hep-th/9310070}.

\bibitem{Gannon26}
T.~Gannon, A.~Schopieray, and H.~Yadav.
\newblock On {H}aagerup-{I}zumi fusion categories, 2026.
\newblock Preprint, \texttt{arXiv:2609.25185}.

\bibitem{GK17}
S.~Garoufalidis and R.~Kashaev.
\newblock From state integrals to $q$-series.
\newblock {\em Math. Res. Lett.}, 24(3):781--801, 2017.

\bibitem{GKZ}
S.~Garoufalidis, R.~Kashaev, and D.~Zagier.
\newblock An $\mathrm{SL}_2(\mathbb{Z})$-extension of {F}addeev's quantum
  dilogarithm, 2018.

\bibitem{GZ23}
S.~Garoufalidis and D.~Zagier.
\newblock Knots and their related $q$-series.
\newblock {\em SIGMA Symmetry Integrability Geom. Methods Appl.}, 19:Paper No.
  082, 39 pp., 2023.
\newblock \texttt{arXiv:2304.09377}.

\bibitem{GZ24}
S.~Garoufalidis and D.~Zagier.
\newblock Knots, perturbative series and quantum modularity.
\newblock {\em SIGMA Symmetry Integrability Geom. Methods Appl.}, 20:Paper No.
  055, 87 pp., 2024.
\newblock \texttt{arXiv:2111.06645}.

\bibitem{Gon08}
A.~B. Goncharov.
\newblock Pentagon relation for the quantum dilogarithm and quantized
  {$M_{0,5}^{cyc}$}.
\newblock In M.~Kapranov, Y.~I. Manin, P.~Moree, S.~Kolyada, and L.~Potyagailo,
  editors, {\em Geometry and Dynamics of Groups and Spaces: In {M}emory of
  {A}lexander {R}eznikov}, pages 415--428. Birkh{\"a}user Basel, Basel, 2008.

\bibitem{Hil00}
D.~Hilbert.
\newblock {M}athematische {P}robleme.
\newblock {\em Nachrichten von der K{\"o}nigl. Gesellschaft der Wissenschaften
  zu G{\"o}ttingen, Math.-Phys. Klasse}, pages 253--297, 1900.
\newblock English transl.: Bull. Amer. Math. Soc. \textbf{8} (1902), 437--479.

\bibitem{Huang26}
T.-C. Huang.
\newblock Cyclic {H}aagerup–{I}zumi fusion categories at every odd order,
  2026.
\newblock Preprint, \texttt{arXiv:2609.15986}.

\bibitem{Izu93}
M.~Izumi.
\newblock Subalgebras of infinite {C}*-algebras with finite {W}atatani indices
  {I}. {C}untz algebras.
\newblock {\em Communications in Mathematical Physics}, 155(1):157--182, 1993.

\bibitem{Izu00}
M.~Izumi.
\newblock The structure of sectors associated with {L}ongo--{R}ehren inclusions
  {I}. general theory.
\newblock {\em Communications in Mathematical Physics}, 213(1):127--179, 2000.

\bibitem{Izu01}
M.~Izumi.
\newblock The structure of sectors associated with {L}ongo--{R}ehren inclusions
  {II}: examples.
\newblock {\em Reviews in Mathematical Physics}, 13(05):603--674, 2001.

\bibitem{Kop21}
G.~S. Kopp.
\newblock {SIC}-{POVM}s and the {S}tark conjectures.
\newblock {\em International Mathematics Research Notices},
  2021(18):13812--13838, 09 2021.

\bibitem{Kop24}
G.~S. Kopp.
\newblock The {S}hintani--{F}addeev modular cocycle: {S}tark units from
  $q$-{P}ochhammer ratios, 2024.
\newblock Preprint, \texttt{arXiv:2411.06763}.

\bibitem{KK03}
N.~Kurokawa and S.-y. Koyama.
\newblock Multiple sine functions.
\newblock {\em Forum Math.}, 15:839--876, 2003.

\bibitem{LM33}
C.~G. Latimer and C.~C. MacDuffee.
\newblock A correspondence between classes of paired matrices and classes of
  ideals.
\newblock {\em Annals of Mathematics}, 34(2):313--316, 1933.

\bibitem{Lea65}
W.~G. Leavitt.
\newblock The module type of homomorphic images.
\newblock {\em Duke Math. J.}, 32(1):305--311, 1965.

\bibitem{Lub99}
D.~Lubinsky.
\newblock The size of $(q;q)_n$ for $q$ on the unit circle.
\newblock {\em Journal of Number Theory}, 76(2):217--247, 1999.

\bibitem{Neu81}
W.~D. Neumann.
\newblock A calculus for plumbing applied to the topology of complex surface
  singularities and degenerating complex curves.
\newblock {\em Transactions of the American Mathematical Society},
  268(2):299--344, 1981.

\bibitem{ST61}
G.~Shimura and Y.~Taniyama.
\newblock {\em Complex Multiplication of Abelian Varieties and Its Applications
  to Number Theory}, volume~6 of {\em Publications of the Mathematical Society
  of Japan}.
\newblock The Mathematical Society of Japan, Tokyo, 1961.

\bibitem{Shi77}
T.~Shintani.
\newblock On a {K}ronecker limit formula for real quadratic fields.
\newblock {\em J. Fac. Sci. Univ. Tokyo Sect. IA Math.}, 24(1):167--199, 1977.

\bibitem{Shi78}
T.~Shintani.
\newblock On certain ray class invariants of real quadratic fields.
\newblock {\em J. Math. Soc. Japan}, 30(1):139--167, 1978.

\bibitem{Sta76}
H.~M. Stark.
\newblock {$L$}-functions at $s=1$. {III}. {T}otally real fields and
  {H}ilbert's twelfth problem.
\newblock {\em Advances in Math.}, 22(1):64--84, 1976.

\bibitem{Sta80}
H.~M. Stark.
\newblock {$L$}-functions at $s=1$. {IV}. {F}irst derivatives at $s=0$.
\newblock {\em Advances in Math.}, 35(3):197--235, 1980.

\bibitem{Tan07}
B.~A. Tangedal.
\newblock Continued fractions, special values of the double sine function, and
  {S}tark units over real quadratic fields.
\newblock {\em J. Number Theory}, 124:291--313, 2007.

\bibitem{Tao05}
T.~Tao.
\newblock An uncertainty principle for cyclic groups of prime order.
\newblock {\em Mathematical Research Letters}, 12(1):121--127, 2005.

\bibitem{Tate84}
J.~Tate.
\newblock {\em Les {C}onjectures de {S}tark sur les {F}onctions {$L$} d'{A}rtin
  en $s=0$}, volume~47 of {\em Progr. Math.}
\newblock Birkh{\"a}user, Boston, MA, 1984.
\newblock Notes d'un cours {\`a} {O}rsay r{\'e}dig{\'e}es par D.~Bernardi et
  N.~Schappacher.

\bibitem{Yal26}
B.~Yalkinoglu.
\newblock A note on {S}hintani’s invariant.
\newblock {\em Functiones et Approximatio Commentarii Mathematici}, page
  1–15, Jan. 2026.

\bibitem{Yam08}
S.~Yamamoto.
\newblock On {K}ronecker limit formulas for real quadratic fields.
\newblock {\em J. Number Theory}, 128(2):426--450, 2008.

\bibitem{Yam10b}
S.~Yamamoto.
\newblock Factorization of {S}hintani's ray class invariant for totally real
  fields.
\newblock {\em RIMS K{\^o}ky{\^u}roku Bessatsu}, B19:249--254, 2010.

\bibitem{Yam10a}
S.~Yamamoto.
\newblock On {S}hintani's ray class invariant for totally real number fields.
\newblock {\em Math. Ann.}, 346:449--476, 2010.

\bibitem{Zag07}
D.~Zagier.
\newblock The dilogarithm function.
\newblock In P.~Cartier, P.~Moussa, B.~Julia, and P.~Vanhove, editors, {\em
  Frontiers in {N}umber {T}heory, {P}hysics, and {G}eometry {II}: On
  {C}onformal {F}ield {T}heories, {D}iscrete {G}roups and {R}enormalization},
  pages 3--65. Springer Berlin Heidelberg, Berlin, Heidelberg, 2007.

\bibitem{Zau11}
G.~Zauner.
\newblock Quantum designs: Foundations of a noncommutative design theory.
\newblock {\em International Journal of Quantum Information}, 09(01):445–507,
  Feb. 2011.

\end{thebibliography}

\end{document}